\def\loc{\operatorname{loc}}
\definecolor{citation}{rgb}{0.11,0.67,0.84}
\definecolor{formula}{rgb}{0.1,0.2,0.6}
\definecolor{url}{rgb}{0.11,0.67,0.84}
\newcommand{\reqnomode}{\tagsleft@false}
\def\aax{\mathfrak{a}}
\newcommand\ccc{\mathfrak{c}}
\def\dx{\,{\rm d}x}
\def\dy{\,{\rm d}y}
\def\ttl{\textnormal{\texttt{t}}}
\def \d{\,{\rm d}}
\def \diver{\,{\rm div}}
\def\dist{\,{\rm dist}}
\def\supp{\,{\rm supp}}
\def\rrs{\rr_{\textnormal{s}}}
\def\rri{\rr_{\textnormal{i}}}
\def\rs{r_{\textnormal{s}}}
\def\ri{r_{\textnormal{i}}}
\def\kkk{{\textnormal{\texttt{k}}}}
\def\deb{\rightharpoonup}
\DeclareRobustCommand*{\bfseries}{%
  \not@math@alphabet\bfseries\mathbf
  \fontseries\bfdefault\selectfont
  \boldmath
}
\newlength{\defbaselineskip}
\newcommand{\setlinespacing}[1]
           {\setlength{\baselineskip}{#1 \defbaselineskip}}
\newcommand{\mint}{\mathop{\int\hskip -1,05em -\, \!\!\!}\nolimits}
\newtheorem{theorem}{Theorem}
\newtheorem{corollary}{Corollary}
\newtheorem{definition}{Definition}
\newtheorem{remark}{Remark}[section]
\newtheorem{lemma}{Lemma}[section]
\newtheorem{proposition}{Proposition}[section]
\numberwithin{equation}{section}
\newcommand{\tta}{s}
\newcommand{\hhh}{\textnormal{\texttt{h}}}
\newcommand{\mmm}{m}
\newcommand{\ppp}{\mathfrak p}
\newcommand{\qqq}{\mathfrak q}
\newcommand{\kk}{\kappa}
\def\en{\mathbb N}
\def\er{\mathbb R}
\newcommand{\App}[1]{\textnormal{App}{(#1)}}
\newcommand{\ti}[1]{\tilde{#1}}
\newcommand{\mf}[1]{\mathfrak{#1}}
\newcommand{\BB}{\mathcal{B}_1}
\newcommand\eps\varepsilon
\def\eqn#1$$#2$${\begin{equation}\label#1#2\end{equation}}
\newcommand{\be}{\begin{equation}}
\newcommand{\ee}{\end{equation}}
\newcommand{\rr}{\varrho}
\newcommand{\snr}[1]{\lvert #1\rvert}
\newcommand{\nr}[1]{\lVert #1 \rVert}
\newcommand{\N}{\mathbb{N}}
\def\name[#1, #2]{#1 #2}
\newcommand{\sss}{\mf{s}}
\newcommand{\rif}[1]{(\ref{#1})}
\newcommand{\trif}[1] {\textnormal{\rif{#1}}}
\newcommand{\stackleq}[1]{\stackrel{\rif{#1}}{ \leq}}
\title[The nonuniformly elliptic sharp growth rate]{The sharp growth rate in\\ nonuniformly elliptic Schauder theory}
\author[De Filippis]{Cristiana De Filippis}  \address{Cristiana De Filippis\\Dipartimento SMFI, Universit\'a di Parma\\ Parco Area delle Scienze 53/A, 43124 Parma, Italy} \email{\url{cristiana.defilippis@unipr.it}}
\author[Mingione]{Giuseppe Mingione}  \address{Giuseppe Mingione\\Dipartimento SMFI, Universit\`a di Parma, Viale delle Scienze 53/a, Campus, 43124 Parma, Italy} \email{\url{giuseppe.mingione@unipr.it}}
\begin{document}

\vspace{1cm}

\subjclass[2020]{49N60, 35J60 \vspace{1mm}} 

\keywords{Regularity, Schauder estimates, Nonuniform ellipticity\vspace{1mm}}

Duke Math. J., to appear. 

\vspace{6mm}

\maketitle
\begin{center}
\emph{To Masashi Misawa, sensei of regularity, on his 60th birthday.}
\end{center}

\begin{abstract}
Schauder estimates hold in nonuniformly elliptic problems under optimal assumptions on the growth of the  ellipticity ratio. 
 \end{abstract}
\setcounter{tocdepth}{1}
{\small \tableofcontents}

\setlinespacing{1.00}

\section{Introduction}
In this paper we finally bring to a conclusion the longstanding quest for Schauder estimates in the setting of nonuniformly elliptic problems. Indeed, in a first step \cite{piovra} we established local Schauder theory assuming a certain polynomial growth rate on the ellipticity ratio, which is the central and necessary condition in this framework. Here we prove Schauder estimates assuming the best possible growth rate on such a quantity. Note that, while Schauder estimates have always been a classical issue sitting at the roots of nonuniformly elliptic theory, the optimal growth rate on the ellipticity ratio, displayed in \rif{rate} and \rif{pq} below, was first discovered around twenty years ago in \cite{sharp, FMM}, where counterexamples to regularity were found when that rate not was met. At that time, only gradient integrability results for solutions were established. 
Here we finally prove maximal regularity, that is, local gradient H\"older continuity of solutions provided coefficients are H\"older continuous, using the above mentioned optimal conditions. To fix the ideas, let us consider the key model case
\eqn{modello}
$$
w \mapsto \mathcal G(w, \Omega ):=\int_{\Omega}\ccc(x)G(Dw) \dx \,,\qquad \Omega\subset \er^n, \ \  n \geq 2\,,
$$
with $\Omega$ being a bounded, open domain, where $G(\cdot)$ is a convex and non-negative integrand with growth conditions on the lowest and largest eigenvalues dictated by 
\eqn{sottosopra}
$$
|z|^{p-2} \mathds{I}_{\rm d} \lesssim \partial_{zz}G(z)  \lesssim |z|^{q-2} \mathds{I}_{\rm d}\,,\qquad \mbox{for $|z|\geq 1$}\,,
\qquad 1<p \leq q\,.$$
Here the main point is the H\"older regularity of coefficients, i.e., $\ccc(\cdot)$ is assumed to satisfy
\eqn{coeff}
$$
|\ccc(x_1)-\ccc(x_2)| \leq L|x_1-x_2|^{\alpha}, \quad  \alpha \in (0,1]\,, \qquad  1 \leq \ccc(\cdot) \leq L\,.
$$
The nonuniform ellipticity of the functional $\mathcal G$, and therefore of the related Euler-Lagrange equation 
\eqn{el0}
$$
-\diver\, (\ccc(x)\partial_{z}G(Du))=0\,,
$$
stems from the fact the ellipticity ratio of $\partial_{z}G(\cdot)$, which is given by
\eqn{ratioG}
$$
\mathcal R_{\partial_{z}G}(z):= \frac{\mbox{highest eigenvalue of}\ \partial_{zz}  G(z)}{\mbox{lowest eigenvalue of}\  \partial_{zz} G(z)} \,,
$$
might grow with polynomial rate
\eqn{rate}
$$
\mathcal R_{\partial_{z}G}(z) \lesssim |z|^{q-p} \,,\qquad \mbox{for $|z|\geq 1$}
$$
and therefore might not be a bounded quantity on $\{|z|\geq 1\}$ when $p<q$. The unboundedness of $\mathcal R_{\partial_{z}G}(z)$ for $|z|$ large is precisely  the meaning of nonuniform ellipticity of the equation in \rif{el0}, as described in \cite{ivanov2, LUcpam, tru1967, simon0, simon1, piovra, ciccio, UU}. We refer to \rif{assf} for the complete set of assumptions on $G(\cdot)$ and, in particular, for the precise formulation of \rif{sottosopra}. 
Those satisfying \rif{sottosopra} are also called integrands with $(p,q)$-growth (nonstandard) conditions, a terminology introduced by Marcellini in \cite{ma2}. The point is now to establish when the H\"older continuity of coefficients \rif{coeff} implies the local H\"older continuity of the gradient of minima and/or solutions to \rif{el0}, that is, to establish the validity of (local) Schauder theory. 
In the linear case $\diver\,(\texttt{A}(x)Du)=0$, with $\texttt{A}(x)$ being an elliptic matrix with H\"older continuous entries, Schauder estimates are originally a result of Hopf \cite{hopf} in the interior case, and of Caccioppoli \cite{cacc1} and Schauder \cite{js, js2} in the global one. There are nowadays several, elegant perturbative methods to achieve such results: via freezing-of-coefficients and use of proper function spaces and decay lemmas \cite{camp}, via convolution methods \cite{trusc}, or via blow-up \cite{simon2}. Nonlinear Schauder estimates have been the object of intensive investigation. In the uniformly elliptic case $p=q$ they were established in the eighties in \cite{gg2,gg3,liebe,manth1,manth2, misawa}, also in the specific framework of the Calculus of Variations. We stress that, when dealing with uniformly elliptic problems, both in the linear and in the nonlinear case, Schauder theory is a perturbation theory, in the sense that, independently of the specific method of proof employed, estimates are achieved by comparison schemes with solutions to suitable problems without coefficients (frozen problems).  

The nonuniformly elliptic case is instead a different story. In this situation the nature of Schauder estimates is not any longer perturbative. In fact, there are examples of convex functionals with $\alpha$-H\"older coefficients and $(p,q)$-growth conditions, with $p$ and $q$ that can be chosen arbitrarily close in dependence of $n$ and $ \alpha$, and possessing very wild minimizers, i.e., their discontinuities are concentrated on fractals \cite{sharp, FMM} with maximal Hausdorff dimension $n-p$ \cite{balci2, balci3}; see Remark \ref{bere} for more details. In other words, no matter coefficients are H\"older, solutions can be {\em as bad} as any other competitor or admissible test function. On top of this, note that the examples in \cite{balci2, balci3, sharp, FMM} deal with equations exhibiting only a soft form of nonuniform ellipticity as detailed in \cite{ciccio}, the general situation being potentially worse. The question of the validity of Schauder theory in the nonuniformly elliptic case remained a discussed open problem for a long time; see for instance the comments in \cite{gg3, liebe3} and \cite[page 40]{ivanov2}. We also refer to \cite[Introduction]{piovra} for a wider discussion and a historical account of the issue. As a matter of fact, in nonuniformly elliptic problems the dependence on coefficients was never allowed to be H\"older continuous and additional, often unnatural assumptions, had to be considered \cite{liebe0, ivanov0, ivanov1, LUcpam, ma2, simon0, simon1}. In \cite{piovra} we finally showed the validity of Schauder estimates under the following main assumption on the gap $q/p$:
\eqn{vecchiobound}
$$
\frac qp  < 1+  \frac{\alpha^2}{5n^2} \,.
$$
Assumptions of the type \rif{vecchiobound} are necessary and central already in the autonomous case by counterexamples \cite{gia, ma2}, and ensure that the rate of blow-up of the ellipticity ratio \rif{ratioG} is not too fast at infinity, as implied by \rif{rate} in connection to \rif{vecchiobound}. The basic question then remains about the optimality of \rif{vecchiobound}. In this paper we finally prove that local H\"older continuity of the gradient of minimizers holds provided 
\eqn{pq}
$$
\frac{q}{p}<1+\frac{\alpha}{n}\,.
$$
Condition \rif{pq} is sharp. This is shown by the aforementioned examples in \cite{sharp, FMM}. The bound \rif{pq} was already used to establish local H\"older continuity of solutions to certain elliptic equations with nonstandard growth conditions, but only under very special structure assumptions making the problems in question still uniformly elliptic. See for instance \cite{BCM, CM, HO1, HO2,HO3} and related references, and the discussion in \cite{ciccio, cimemin}. Further special structures are imposed in \cite{logdoppio} and concern the borderline case $p=1$; see also \cite{BSc,gme0, gmearma, gme1, gme2} for problems with linear and nearly linear growth. The results in this paper treat the most general case of nonuniformly elliptic problems with polynomial growth, and are described in the following sections. No special, additional structures are considered. Needless to say, \rif{pq} describes a phenomenon that is absent in the classical uniformly elliptic case, i.e., a sharp interaction between the growth of the ellipticity ratio, and the rate of regularity of coefficients. 

Let us mention that, whilst we are basically dealing with the variational case, our techniques provide similar results in the case of equations $\diver\, A(x, Du)=0$. In fact, in order to derive all our regularity estimates we basically use the Euler-Lagrange equation. Specifically, results for equations can be obtained combining our  methods with the approach of \cite[Section 2.6]{piovra}. Here we prefer to expand on variational integrals as in this case the notion of solution is more natural and comes along without ambiguities. For instance, considering the variational case dispenses us from discussing the notion of energy solution, that becomes less clear in the nonuniformly elliptic setting, where regularity results are obtained simultaneously to existence ones, and not for general solutions. We refer to \cite{BM, piovra, ma2, masurvey} for such aspects and for more discussion on the case of equations.

\subsection{Results}  For the rest of the paper we fix a set of parameters $n, p, q,\alpha$, where $n\geq 2$ is an integer, $1<p\leq q$ 
and $\alpha \in (0,1]$ are real numbers. We also denote 
\eqn{defiH}
$$\mbox{$H_{\mu}(z):=\snr{z}^{2}+\mu^{2}$, \ \  for $z\in \er^n$ \ 
and $\mu\geq0$\,.}
$$ 
More notation can be found in Section \ref{notazioni}. We consider integral functionals of the type
\eqn{ggg2}
$$
W^{1,1}(\Omega) \ni w\mapsto \mathcal{F}(w,\Omega):=\int_{\Omega}F(x,Dw) \dx\,,
$$
assuming that the Carath\'eodory-regular integrand $F\colon \Omega \times \er^n \to \er$
satisfies \begin{flalign}\label{assf}
\begin{cases}
\, \tilde{z}\mapsto F(\cdot,\tilde{z})\in C^{2}_{\loc}(\mathbb{R}^{n}\setminus \{0_{\er^n}\})\cap C^{1}_{\loc}(\mathbb{R}^{n})\vspace{2mm}\\
\, H_{\mu}(z)^{p/2}\le F(x,z)\le LH_{\mu}(z)^{q/2}+LH_{\mu}(z)^{p/2}\vspace{2mm}\\
\, H_{\mu}(z)^{(p-2)/2}\snr{\xi}^{2}\le \partial_{zz}F(x,z)\xi \cdot \xi\vspace{2mm}\\
\, \snr{\partial_{zz}F(x,z)}\le L H_{\mu}(z)^{(q-2)/2}+LH_{\mu}(z)^{(p-2)/2}\vspace{2mm}\\
\, \snr{\partial_{z} F(x_{1},z)-\partial_{z}F(x_{2},z)}\le L \snr{x_{1}-x_{2}}^{\alpha}[H_{\mu}(z)^{(q-1)/2}+H_{\mu}(z)^{(p-1)/2}]
\end{cases}
 \end{flalign}
for all $z, \xi\in \mathbb{R}^{n}$, $|z|\not=0$ in the case of \rif{assf}$_{3,4}$, $x\in \Omega$, where $0 \leq \mu\leq 1 \leq L$. 
We shall explicitly mention when less assumptions than the full \rif{assf} will be needed on $F(\cdot)$. Conditions \rif{assf}$_{3,4}$ imply the following growth rate of the ellipticity ratio of $\partial_{z} F(\cdot)$:
\eqn{ratio}
$$
\mathcal R_{\partial_{z}F(x, \cdot)}(z):= \frac{\mbox{highest eigenvalue of}\ \partial_{zz}  F(x,z)}{\mbox{lowest eigenvalue of}\  \partial_{zz} F(x,z)}  \lesssim |z|^{q-p} \,,\qquad \mbox{for $|z|\geq 1$}
$$
as in \rif{rate}. 
The case of the functional \rif{modello} therefore follows taking $F(x, z)\equiv \ccc(x)G(z)$. 
\begin{definition}\label{defi-min} Under assumption \eqref{assf}$_2$, a function $u \in W^{1,p}_{\loc}(\Omega)$ is a \emph{(local) minimizer} of the functional $\mathcal F$ in \eqref{ggg2} if, for every ball ${\rm B}\Subset \Omega$, we have $F(\cdot, Du) \in L^1({\rm B})$ and $\mathcal F(u,{\rm B})\leq \mathcal F(w,{\rm B})$ holds for every $w \in u + W^{1,1}_0({\rm B})$. 
\end{definition}
\vspace{1mm}
For nonautonomous functionals of the type in \rif{ggg2} the analysis of regularity of minima unavoidably passes through suitable relaxed functionals. This is suggested by the potential occurrence of the so-called Lavrentiev phenomenon. In fact, in the present setting it may happen that 
\eqn{l.0}
$$
\inf_{w\in u_{0}+W^{1,p}_{0}({\rm B})}\mathcal{F}(w,{\rm B})<\inf_{w\in u_{0}+W^{1,p}_{0}({\rm B})\cap 
W^{1,q} ({\rm B})}\mathcal{F}(w,{\rm B})
$$
when $p<q$, as for instance shown in \cite{sharp, Z0, Z1}. This is an obvious obstruction to regularity of minima and under assumptions \rif{assf} never occurs when the integrand $F(\cdot)$ is independent of $x$, that is, in the autonomous case \cite{sharp}. The idea is to pass to a relaxed form of \rif{ggg2}, which is able to overcome the strict inequality in \rif{l.0} and to catch the minimality features at the level of a priori estimates. In this setting, this idea was first introduced by Marcellini in \cite{ma0, madg} to describe cavitation phenomena in Nonlinear Elasticity, connected to analogous, earlier constructions devised by Lebesgue \cite{lebesgue} and Serrin \cite{serrin}. Such functionals have been since then extensively treated in the literature; see for instance \cite{BFM, ma1, sharp, piovra, camel, FMa, FMal, gme3,  thomas}. 
\vspace{1mm}
\begin{definition}[Relaxation]\label{LSMdef}
The Lebesgue-Serrin-Marcellini (LSM) relaxation of the functional $\mathcal F$ in \eqref{ggg2} under assumption \eqref{assf}$_2$ is defined as 
\eqn{LSM}
$$
\bar{\mathcal F}(w,U)  \, := \inf_{\{w_{j}\}\subset W^{1,q} (U)}  \left\{ \liminf_j \mathcal F(w_{j},U)  \, \colon \,  w_{j} \deb w\  \mbox{in} \ W^{1,p}(U)  \right\} \,,
$$
whenever $w \in W^{1,p}(U)$ and $U\Subset \Omega$ is an open subset.
\end{definition}
\vspace{1mm}
See also Remark \ref{alternativa} for variants and comments on the relaxed functional $\bar{\mathcal F}$. Note that $\mathcal F(w,{\rm B})< \infty$ for every ball ${\rm B}\Subset \Omega$ automatically implies $w \in W^{1,p}_{\loc}(\Omega)$. 
\begin{definition}[Relaxed Minima]\label{defi-min2} Under assumption \eqref{assf}$_2$, a  function $u \in W^{1,p}_{\loc}(\Omega)$ is a \emph{(local) minimizer} of the functional $\bar{\mathcal F}$ in \eqref{LSM} if, for every ball ${\rm B}\Subset \Omega$, we have $\bar{\mathcal F}(u,{\rm B})<\infty $ and $\bar{\mathcal F}(u,{\rm B})\leq \bar{\mathcal F}(w,{\rm B})$ holds for every $w \in u + W^{1,p}_0({\rm B})$. 
\end{definition}
Assuming \eqref{assf} implies convexity of $z \mapsto F(\cdot, z)$. It follows that $\mathcal F$ turns out to be weakly $W^{1,p}$-lower semicontinuous \cite[Theorem 4.5]{giu} and therefore $\mathcal F \leq \bar{\mathcal F}$ on $W^{1,p}$ and $\bar{\mathcal F}=\mathcal F$ on $W^{1,q}$ (see Lemma \ref{bf.0}). In this case we define the Lavrentiev gap functional $\mathcal L_{\mathcal F}$ as 
$$
0 \leq \mathcal{L}_{\mathcal F}(w,{\rm B}):=\begin{cases}
\, \bar{\mathcal{F}}(w,{\rm B})-\mathcal{F}(w,{\rm B})\quad &\mbox{if} \ \ \mathcal{F}(w,{\rm B})<\infty\\
\, 0\quad &\mbox{if} \ \ \mathcal{F}(w,{\rm B})\equiv \infty
\end{cases}
$$
whenever ${\rm B} \Subset \Omega$ is a ball and $w\in W^{1,p}(\rm B)$. This is bound to provide a quantitative measure of the occurrence of \rif{l.0} and in fact $\mathcal{L}_{\mathcal F}(w,{\rm B})=0$ when $w \in W^{1,q}({\rm B})$. 
An approach that became standard since \cite{sharp, ma1} is then to first prove regularity of (local) minimizers $u$ of  $\bar{\mathcal F}$ in order to skip the obstruction given by \rif{l.0}. Then, when the Lavrentiev gap functional vanishes, that is, when
\eqn{vanish}
$$\mbox{$\mathcal{L}_{\mathcal F}(u,{\rm B})=0$ \ \ holds for all balls ${\rm B}\Subset \Omega$}\,, $$ 
it then happens that $u$ is also a minimizer of the LSM-relaxation $\bar{\mathcal{F}}$, indeed
\eqn{newminima}
$$
\bar{\mathcal{F}}(u, {\rm B}) = \mathcal{F}(u,{\rm B})+\mathcal{L}_{\mathcal F}(u,{\rm B}) = \mathcal{F}(u,{\rm B}) \leq \mathcal{F}(w,{\rm B})\leq \bar{\mathcal{F}}(w, {\rm B}) 
$$
holds whenever $w\in u+W^{1,p}_0({\rm B})$. The outcome is that in this case regularity results for the original functional follow from those for the relaxed one. 
\begin{theorem}\label{main}
Let $u\in W^{1,p}_{\loc}(\Omega)$ be a minimizer of the LSM-relaxation $\bar{\mathcal{F}}$ of $\mathcal{F}$, under assumptions \eqref{pq} and \eqref{assf}. Then $u$ is also a minimizer of $\mathcal F$ and, with $B_{\rr}\Subset \Omega$ being a ball, $\rr \in (0,1]$, and $\theta\in (0,1)$, 
\begin{itemize} 
\item The local Lipschitz estimate
\eqn{m.1}
$$
\nr{Du}_{L^{\infty}(B_{\rr/2})} \le c\rr^{-n\mf{b}}\bar{\mathcal{F}}(u,B_{\rr})^{\mf{b}}+c
$$
holds with $c\equiv c(n,p,q,\alpha,L)\geq1$, $\mf{b}\equiv \mf{b}(n,p,q,\alpha)\geq 1$. 
\item (Degenerate Schauder) The local H\"older estimate
\eqn{m.2}
$$
[Du]_{0,\alpha_*;B_{\theta \rr}}\le c
$$
holds with both $\alpha_*\in (0,1)$ and $c\geq 1$ depending on $n,p,q,\alpha,L,\theta, \rr$ and $\bar{\mathcal{F}}(u,B_{\rr})$.
\item (Schauder) If, in addition, $\mu >0$ and $\partial_{zz}F(\cdot)$ is continuous, then $u\in C^{1,\alpha}_{\loc}(\Omega)$ when $\alpha <1$. Specifically, 
\eqn{m.3}
$$
[Du]_{0,\alpha;B_{\theta \rr}}\le c_{*}
$$
holds as in \eqref{m.2}, where $c_{*}$ depends also on the modulus of continuity of $\partial_{zz}F(\cdot)$ and on $\mu$\footnote{Specifically, since our results are local in nature, we shall use the following uniform continuity: for every $H\geq 1$, there is a modulus of continuity $\omega_{H}(\cdot)$ such that $\snr{\partial_{zz}F(x_1,z_1)-\partial_{zz}F(x_2,z_2)}\leq \omega_H(\snr{x_1-x_2}+ 
\snr{z_1-z_2})$ whenever $x_1, x_2\in \Omega$ and $\snr{z_1}, \snr{z_2} \leq H$. Notice that the continuity assumption on $\partial_{zz}F(\cdot)$ can actually be weakened; for this see Remark \ref{remarkfinale}.}.  
\end{itemize}\end{theorem}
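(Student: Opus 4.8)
The plan is to establish the a priori estimates \eqref{m.1}–\eqref{m.3} on a regularized family of problems for which solutions are automatically smooth, and then to transfer them to minimizers of $\bar{\mathcal F}$ by a careful approximation scheme. First I would construct an approximating sequence of autonomous-in-$x$-mollified, non-degenerate functionals $\mathcal F_{\eps,\delta}$ obtained by convolving the coefficient dependence in $x$, adding a small term $\eps H_1(Dw)^{q/2}$ to restore $q$-growth coercivity in $W^{1,q}$, and (for the non-degenerate Schauder part) keeping $\mu>0$ fixed. Solutions $u_{\eps,\delta}$ of the corresponding Dirichlet problems on balls are then $W^{1,q}$-regular, hence smooth interior solutions by standard uniformly elliptic (after freezing) theory, so differentiation of the Euler–Lagrange system is legitimate and all manipulations below are rigorous.

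Next I would prove the Lipschitz bound \eqref{m.1} on the $u_{\eps,\delta}$ uniformly in $\eps,\delta$. The core is a nonuniformly elliptic Moser/De Giorgi iteration on the energy of $D u_{\eps,\delta}$, using the fundamental interplay between the two growth exponents: one tests the differentiated equation with (powers of) $|Du_{\eps,\delta}|$ cut off on superlevel sets, estimates the coefficient oscillation term by the Hölder continuity \eqref{assf}$_5$, and absorbs the bad $q$-growth terms into the good $p$-growth terms using a Sobolev embedding with the \emph{sharp} exponent. Here the gap condition \eqref{pq}, $q/p<1+\alpha/n$, enters exactly to make the iteration exponents compatible — this is the place where the improvement over \eqref{vecchiobound} happens, and I expect this to be the main obstacle: one must run the scheme with an interpolation/anisotropic estimate that is tight in $n$ and $\alpha$ simultaneously, presumably via a fractional Sobolev/Gagliardo–Nirenberg argument on the $x$-finite-difference quotients of $Du$ rather than a crude Caccioppoli plus integer Sobolev step. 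Once $\|Du_{\eps,\delta}\|_{L^\infty(B_{\rr/2})}$ is bounded in terms of $\bar{\mathcal F}(u,B_\rr)$ and $\rr^{-n\mf b}$, the problem on $B_{\rr/2}$ becomes effectively uniformly elliptic on the range of the gradient, with ellipticity constants controlled by that sup-bound.

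With the uniform Lipschitz bound in hand, the Hölder estimate \eqref{m.2} follows from a now-standard perturbative (frozen-coefficient) argument in the uniformly elliptic regime: comparison with the solution of the $x$-frozen problem, the classical $C^{1,\beta}$ decay for the frozen equation, and the excess-decay iteration à la Campanato, with the coefficient error again controlled by \eqref{assf}$_5$; the exponent $\alpha_*$ comes out small because the ellipticity ratio over the relevant gradient range is only a power of the (large) sup-bound. For the full Schauder estimate \eqref{m.3}, the extra hypotheses $\mu>0$ and continuity of $\partial_{zz}F$ upgrade the frozen comparison: on the range $\{|z|\le H\}$ the integrand is genuinely uniformly elliptic and $C^1$ in $z$ with a uniform modulus $\omega_H$, so the frozen problems enjoy the optimal $C^{1,\alpha}$ estimate for \emph{every} $\alpha<1$, and the Campanato iteration closes at the sharp exponent $\alpha$.

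Finally I would pass to the limit $\eps,\delta\to 0$. By the uniform estimates, $u_{\eps,\delta}\to \tilde u$ in $C^{1}_{\loc}$ along a subsequence, with $\tilde u$ satisfying \eqref{m.1}–\eqref{m.3}; one checks $\tilde u$ is a minimizer of $\bar{\mathcal F}$ on the ball, hence (by strict convexity modulo constants, or by a uniqueness argument for the relaxed functional) $\tilde u = u$ on $B_\rr$, so $u$ inherits the regularity. The minimality $\bar{\mathcal F}(u,B)=\mathcal F(u,B)$ for the original functional then comes from the construction: the recovery sequence $u_{\eps,\delta}\in W^{1,q}$ converging strongly enough in energy shows $\mathcal L_{\mathcal F}(u,B)=0$, and then \eqref{newminima} gives that $u$ minimizes $\mathcal F$ as well. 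The delicate bookkeeping is to ensure the regularization does not destroy the sharpness of the gap exploited in step two — i.e.\ the constants $\mf b$ and $c$ must be uniform in $\eps,\delta$ — which is why the approximation must be chosen compatibly with the iteration scheme from the outset.
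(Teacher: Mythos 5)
Your overall architecture (regularize to smooth non-degenerate problems, prove uniform a priori estimates, pass to the limit and identify the limit with $u$ by strict convexity of the relaxation) matches the paper's, but the proposal has a genuine gap exactly at the heart of the theorem: the uniform Lipschitz bound \eqref{m.1} under the sharp condition \eqref{pq}. You describe it as a single De Giorgi/Moser iteration on (finite-difference quotients of) $Du$ in which \eqref{pq} ``makes the iteration exponents compatible'', and you explicitly flag this as the expected obstacle --- but this is precisely the step that cannot be carried out in that form. A fractional Moser-type iteration on $\tau_h^2 u$ (which is what the paper actually runs, since the equation cannot be differentiated) only produces gradient integrability exponents growing \emph{linearly}, $t_{i+1}=t_i+2\mf{g}$, hence $Du\in L^{\qqq}_{\loc}$ for every finite $\qqq$ (Proposition \ref{scat}) but never $L^\infty$, and it needs as input an $L^\infty$ bound on $u$ itself (Hirsch--Sch\"affner) when $p\le n$. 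To reach $L^\infty$ of the gradient with the sharp gap the paper needs three further, separate ingredients that your outline does not supply: (i) a \emph{renormalized fractional Caccioppoli inequality} for $(E_\mu(Du)-\kk)_+$ (Lemma \ref{fracacci}), obtained by comparing $u$ on balls of radius $|h|^{\beta_0}$ with solutions of frozen autonomous problems and gluing via a covering argument; (ii) the improved sup-estimates of Bella \& Sch\"affner for those frozen problems, with the exponent $\mf{s}$ of \eqref{sss} --- using the classical Marcellini-type exponent instead only yields $q/p<1+\alpha/(n+1)$, so without this ``fine'' ingredient your scheme cannot be sharp (see Section \ref{proof6}); and (iii) a nonlinear-potential-theoretic De Giorgi iteration (Lemma \ref{revlem}, Havin--Maz'ya potentials) converting the fractional energy inequality into a pointwise bound, where the renormalizing factor $M^{\mf{s}(q-p)/2}$ is absorbed by Young's inequality precisely because the exponent $\mf{t}$ in \eqref{ext} is $<1$, and it is there --- through the interplay of $\mf{s}$, the choice of $\beta$ close to $\alpha/(1+\alpha)$, and $n(q/p-1)<\alpha$ --- that \eqref{pq} enters. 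None of this is recoverable from ``a fractional Gagliardo--Nirenberg argument on finite-difference quotients'': the potential-term exponent $\mf{p}$ is large, and controlling it is exactly what forces the preliminary arbitrary higher integrability; the two-step structure is not optional.

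A secondary but real issue is the transfer of minimality to $\mathcal F$. The existence of a recovery sequence $\{w_j\}\subset W^{1,q}$ with $\mathcal F(w_j,{\rm B})\to\bar{\mathcal F}(u,{\rm B})$ is just the definition of $\bar{\mathcal F}$ and does \emph{not} show $\mathcal L_{\mathcal F}(u,{\rm B})=0$; lower semicontinuity only gives $\mathcal F(u,{\rm B})\le\bar{\mathcal F}(u,{\rm B})$, and the reverse inequality requires the regularity of $u$ you are in the middle of proving. The paper instead deduces $\mathcal F$-minimality \emph{after} \eqref{m.1}: once $Du\in L^\infty_{\loc}$, one has $\bar{\mathcal F}=\mathcal F$ on the relevant balls by Lemma \ref{bf.0}, the Euler--Lagrange equation of $\mathcal F$ holds for $W^{1,1}_0$ test functions, and convexity of $z\mapsto F(\cdot,z)$ yields minimality in the sense of Definition \ref{defi-min}. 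Your identification of the approximating limit with $u$ via strict convexity of $\bar{\mathcal F}$ is, by contrast, essentially the paper's argument and is fine, as are the perturbative steps for \eqref{m.2} and \eqref{m.3} (the latter matching Proposition \ref{piovrapert}).
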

\begin{corollary}\label{mfmf} 
Theorem \ref{main} extends to any minimizer $u$ of the original functional $\mathcal{F}$ provided \trif{vanish} holds. \end{corollary}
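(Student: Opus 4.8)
The plan is to deduce Corollary \ref{mfmf} directly from Theorem \ref{main}, the point being that the hypothesis \eqref{vanish} forces any minimizer $u$ of $\mathcal{F}$ (in the sense of Definition \ref{defi-min}) to be also a minimizer of the LSM-relaxation $\bar{\mathcal{F}}$ (in the sense of Definition \ref{defi-min2}). First I would fix an arbitrary ball ${\rm B}\Subset\Omega$ and record that, by Definition \ref{defi-min}, $F(\cdot,Du)\in L^1({\rm B})$, so that $\mathcal{F}(u,{\rm B})<\infty$; then the definition of the Lavrentiev gap functional together with \eqref{vanish} gives
\[
\bar{\mathcal{F}}(u,{\rm B}) = \mathcal{F}(u,{\rm B}) + \mathcal{L}_{\mathcal F}(u,{\rm B}) = \mathcal{F}(u,{\rm B}) < \infty\,,
\]
which already secures the finiteness required in Definition \ref{defi-min2}.

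Next I would reproduce, for an arbitrary competitor $w\in u+W^{1,p}_0({\rm B})$, exactly the chain of inequalities displayed in \eqref{newminima}: starting from $\bar{\mathcal{F}}(u,{\rm B})=\mathcal{F}(u,{\rm B})$, then using the minimality of $u$ for $\mathcal{F}$ (which holds against all $W^{1,1}_0({\rm B})$-perturbations, hence a fortiori against $w$), and finally invoking the general inequality $\mathcal{F}\le\bar{\mathcal{F}}$ on $W^{1,p}$ coming from the weak $W^{1,p}$-lower semicontinuity of $\mathcal{F}$ (Lemma \ref{bf.0}), one arrives at $\bar{\mathcal{F}}(u,{\rm B})\le\bar{\mathcal{F}}(w,{\rm B})$. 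Since ${\rm B}$ and $w$ were arbitrary, $u$ is a local minimizer of $\bar{\mathcal{F}}$.

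At this point Theorem \ref{main} applies to $u$ and yields the Lipschitz bound \eqref{m.1}, the degenerate Schauder estimate \eqref{m.2}, and, under the additional assumptions there (namely $\mu>0$ and continuity of $\partial_{zz}F(\cdot)$), the Schauder estimate \eqref{m.3}; moreover, since $\bar{\mathcal{F}}(u,B_\rr)=\mathcal{F}(u,B_\rr)$ by the first step, the quantities $\bar{\mathcal{F}}(u,B_\rr)$ appearing on the right-hand sides may equivalently be rewritten in terms of $\mathcal{F}(u,B_\rr)$. I expect no genuine difficulty here: all the analytic substance resides in Theorem \ref{main}, and the present corollary amounts only to the bookkeeping — already foreshadowed in \eqref{newminima} — showing that the Lavrentiev obstruction \eqref{l.0} disappears precisely when the gap functional vanishes.
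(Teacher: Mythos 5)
Your proposal is correct and coincides with the paper's own argument: under \eqref{vanish} the chain \eqref{newminima} shows that an $\mathcal F$-minimizer is a minimizer of $\bar{\mathcal F}$ (the middle inequality indeed follows a fortiori from Definition \ref{defi-min}, since $W^{1,p}_0({\rm B})\subset W^{1,1}_0({\rm B})$, and the last one from $\mathcal F\le\bar{\mathcal F}$ on $W^{1,p}$ as in Lemma \ref{bf.0}), after which Theorem \ref{main} applies verbatim, with $\bar{\mathcal F}(u,B_{\rr})=\mathcal F(u,B_{\rr})$ in the estimates. No gaps.
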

Theorem \ref{main} and Corollary \ref{mfmf} indicate that the relaxed functional $\bar{\mathcal F}$ provides a selection principle capable of identifying/producing regular minimizers of the original functional  $\mathcal F$, while excluding those that are irregular due to the appearance of the Lavrentiev phenomenon \rif{l.0}. As for the occurrence of \rif{vanish}, let us remark that this is satisfied by large families of functionals \cite[Section 5]{sharp} and can easily be verified by adding rather mild growth conditions of the integrand, for instance. Indeed, this is the case when conditions of the type 
\eqn{implica}
$$
\ccc(x)\mathcal H(z) \lesssim  F(x, z) \lesssim \ccc(x)\mathcal  H(z) +1$$
hold, where $\mathcal H(\cdot)$ is a non-negative convex integrand and $\ccc(\cdot)$ is measurable and bounded away from zero and infinity; see \cite[Section 5]{sharp}. This follows by a simple convolution argument and does not even need neither \rif{pq} or \rif{assf}, and not even convexity of $z \mapsto F(\cdot, z)$. As a consequence, Corollary \ref{mfmf} covers the functional $\mathcal G$ in \eqref{modello}, provided $G(\cdot)$ satisfies \eqref{assf}$_{1-4}$, $\ccc(\cdot)$ is as in \rif{coeff} and \rif{pq} is assumed. It often happens that the very same assumptions, like for instance \rif{pq}, guaranteeing a priori regularity estimates \rif{m.1}, also imply that the Lavrentiev gap functional vanishes, so that the circle closes \cite{piovra, sharp}. This happens for instance if we assume 
\eqn{implica-2}
$$
\ccc(x)\left(\snr{z}^p+\aax(x)\snr{z}^q\right) \lesssim  F(x, z) \lesssim \ccc(x)\left(\snr{z}^p+\aax(x)\snr{z}^q\right) +1
$$
where $0\leq \aax(\cdot)\in C^{0,\alpha}(\Omega)$ and \rif{pq} holds \cite[Section 5]{sharp}, \cite[Theorem 4]{BCM}; again at this stage convexity of $z \mapsto F(\cdot, z)$ is not required. This is the case of double phase integrands \cite{BCM, CM} and shows a concrete example for which \rif{pq} simultaneously work as both a regularity condition and as a condition ensuring approximation in energy \rif{vanish}. Many more situations in which \rif{vanish} holds can be found in \cite[Section 5]{sharp} and in related literature on the Lavrentiev phenomenon; see for instance the recent \cite{iwo, koch} and related references. On the other hand, counterexamples to Schauder estimates, as those initially considered in \cite{sharp, FMM}, occur in connection to the Lavrentiev phenomenon \rif{l.0}, as pointed out in the pioneering papers of Zhikov \cite{Z0, Z1}. For more on these aspects we refer to Remark \ref{bere} at the end of this paper. Comments on the gap bound \rif{pq} can be found in Section \ref{proof6}. In the third point of Theorem \ref{main} the assumption $\mu>0$, making the equation non-degenerate, is necessary in order to reach the optimal H\"older exponent $\alpha$. This is known by classical counterexamples working already in the uniformly elliptic case of the  $p$-Laplacean equation $\diver\, (|Du|^{p-2}Du)=0$ \cite{lewis, krol}, which is indeed covered here taking $\mu=0$ and $p=q$. 
\subsection{A proof road map}\label{prevsec} The path leading to Theorem \ref{main} is technically not elementary and establishes a technique that we believe might in the future become standard in a number of problems with lack of ellipticity and/or regularity of external ingredients. The main steps can be summarized as follows, and we confine ourselves to describe how a priori estimates are obtained. 
\begin{itemize}
\item A preliminary higher integrability result. This is in Proposition \ref{scat}. We basically prove that condition \rif{pq} implies that the gradient of minimizers locally belongs to $L^{\qqq}$, for any finite $\qqq\geq 1$. In fact, we prove something more. Via a delicate use of interpolation inequalities we show that assuming a relaxed bound on $q/p$ - see \rif{pqb} - implies that any (locally) {\em bounded} minimizer has gradient which is locally integrable with arbitrary large power. In this respect, conditions \rif{pqb} are again optimal by counterexamples working for bounded minimizers \cite{sharp, FMM}. This extends a series of classical results, starting by the work of Ladyzhenskaya \& Ural'tseva \cite{LUcpam}, see also \cite{ch, UU}, where non-dimensional gap bounds are used in presence of bounded minimizers. The outcome is separately stated in Theorem \ref{main3} below. Proposition \ref{scat} can now be used in the proof of Theorem \ref{main} as, assuming \rif{pq}, implies both \rif{pqb} and that minima are (locally) bounded. 
\item The higher integrability estimates of the previous point are achieved taking as a blueprint a few Besov spaces techniques that are sometimes employed in uniformly elliptic problems with a certain lack of ellipticity. Examples are elliptic equations in the Heisenberg group \cite{d}, or, and especially, in the nonlocal setting \cite{bl, bls, gl}. The intuition is that a certain number of approaches, bound to compensate the lack of strong form of uniform ellipticity, might also be forced to compensate the presence of nonuniform ellipticity. This is very delicate to handle and requires extreme care and a sharp numerology in order to keep the optimal conditions \rif{pqb}; see in particular Section \ref{fm} and \rif{numerology}. Specifically, we prove that
\eqn{derive}
$$
\left\|\frac{\tau_{h}^{2}u}{\snr{h}^{s_i}}\right\|_{L^{t_{i}}} \leq c_i
$$
holds for $h \in \er^n$ and $|h|$ suitably small and with sequences
$
t_{i} \to \infty$, $1< s_i  \to 1$ and $c_i\to \infty$. Here $\tau_{h}$ denotes the standard finite difference operator and $\tau_{h}^{2}$ its iteration - see \rif{basilari}. Using basic embedding properties of Besov spaces, from \rif{derive} we deduce that $Du \in L^{t_{i}}_{\loc}$ (see Lemma \ref{bllema}). In \rif{derive} it is 
\eqn{moserlineare}
$$
t_{i+1} = t_{i} + \textnormal{\texttt{s}}(p+\gamma -q) 
$$
for some $\textnormal{\texttt{s}} >0$, where $\gamma$ appears in \rif{pqb}. Notice that it is precisely the occurrence of \rif{pqb} to guarantee that $\{t_{i}\}$ diverges, therefore implying that $Du$ is locally integrable with any power. Estimates leading to \rif{derive} implement a kind of  fractional Moser type iteration but do not lead directly to $Du \in L^{\infty}_{\loc}$, essentially because the growth in the gradient integrability exponent in \rif{moserlineare} is linear rather than geometric. 
\item A fractional Caccioppoli inequality. This is in Lemma \ref{fracacci} and it is aimed at implementing a fractional version of the Bernstein technique, upgrading the methods from \cite{piovra} with new estimates. In its standard formulation, for autonomous problems the classical Bernstein method prescribes to differentiate the Euler-Lagrange equation
$
\diver\, \partial_{z}F(Du)=0
$
in order to show that suitable convex functions of $|Du|$, as for example $|Du|^p$, are subsolutions to linear elliptic equations. See for instance the classical work of Uhlenbeck \cite{uh}. This implies the local boundedness of $Du$ as subsolutions satisfy level sets energy inequalities
\eqn{classica}
$$
\rr^{2}\mint_{B_{\rr/2}}\snr{D(|Du|^p-\kk)_{+}}^{2}\dx\lesssim  \mint_{B_{\rr}}(|Du|^p-\kk)_{+}^{2}\dx, \quad \forall \ \kk\geq 0
$$
so that one can run De Giorgi's iteration scheme; here it is $(|Du|^p-\kk)_{+}:=\max\{|Du|^p-\kk,0\}$. This method cannot be implemented in presence of H\"older coefficients as the Euler-Lagrange equation 
$\diver\, \partial_{z}F(x,Du)=0$ is this time non-differentiable. Therefore the idea is to read H\"older continuity of coefficients as fractional differentiability of the Euler-Lagrange equation and to get a fractional energy inequality of the type 
\eqn{fraziona}
$$
 \rr^{2\beta-n} [(|Du|^p-\kk)_{+}]_{\beta,2;B_{\rr/2}}^2\lesssim M^{\textnormal{\texttt{b}}} \mint_{B_{\rr}}(|Du|^p-\kk)_{+}^{2}\dx + M^{\textnormal{\texttt{b}}} \mathfrak{T}(B_{\rr})  $$
provided $ \|Du\|_{L^\infty(B_{\rr})}\leq M$. The main differences between the classical \rif{classica} and the new \rif{fraziona} are three. Fractional derivatives in the form of Gagliardo norms of $(|Du|^p-\kk)_+$, here in \rif{gaglia} below, occur in the left-hand side of \rif{fraziona}, instead of full derivatives as in \rif{classica}. The additional terms $\mathfrak{T}(B_{\rr})$; these encode the presence of coefficients, and therefore, in this setting, are the ones to handle with greater  care. Finally, the presence of $M$ in the right-hand side, that allows  \rif{fraziona} to look {\em homogeneous} with respect to $|Du|^p$ and therefore to be of the type occurring in the uniformly elliptic setting as \rif{classica}. This is why we call \rif{fraziona} a Renomalized Caccioppoli inequality. The price we pay for this is indeed the appearance of $M\approx \|Du\|_{L^\infty}$ in the right-hand side of \rif{fraziona}. In this respect the exponent $\textnormal{\texttt{b}}\equiv \textnormal{\texttt{b}}(n,p,q,\alpha)>0$ encodes the polynomial growth of the ellipticity ratio \rif{ratio} and $\beta\equiv \beta(\alpha)\in (0,1)$ the rate of H\"older continuity of coefficients. Connecting these aspects is delicate and the ultimate point is that \rif{pq} allows a sharp interplay leading to $L^{\infty}$-bounds for $Du$. This is in turn achieved via a fractional De Giorgi type iteration relying on the use of nonlinear potentials of the type pioneered by Havin \& Maz'ya \cite{HM} and that are intensively used in Nonlinear Potential Theory; see \rif{defi-P}. Local Lipschitz estimates are, as well-known, the focal point of regularity in nonuniformly elliptic problems \cite{LUcpam, simon0, simon1, ivanov1, ivanov2, ma1,ma2, masurvey}. After this, we can modify more standard perturbation methods to conclude with local H\"older continuity. 

\item The optimal form of \rif{fraziona}, necessary for the sharp bound \rif{pq}. The proof of \rif{fraziona} is based on a Littlewood-Paley type argument resembling the dyadic decomposition of Besov functions \cite{triebel}, but implemented at a nonlinear level; we refer to \cite{piovra, logdoppio, cimemin, KM, Mp} for more details in this direction. The key point is that in our setting atoms are replaced by solutions to certain nonlinear problems without coefficients. At this stage, a crucial role is played by certain improved forms of a priori estimates for such problems recently obtained by Bella \& Sch\"affner \cite{BS0, BS01, BS1}. These allow to deduce the optimal exponent $\textnormal{\texttt{b}}$ in \rif{fraziona}, eventually leading to the sharp bound \rif{pq}. 
\end{itemize}

\subsection{Locally bounded minima}
As mentioned in the previous section, when dealing with bounded minimizers bounds on $q/p$, guaranteeing regularity are independent of $n$. 
\begin{theorem}[Non-dimensional gap bound]\label{main3} Let $u\in W^{1,p}_{\loc}(\Omega)$ be an $L^{\infty}_{\loc}(\Omega)$-regular minimizer of the LSM-relaxation $\bar{\mathcal{F}}$ of $\mathcal{F}$, under assumptions \eqref{assf} with $p\leq n$ and 
\eqn{pqbb}
$$
 q<p+\alpha\min\left\{1,p/2\right\}\,.
$$
Then $Du\in L^{\qqq}_{\loc}(\Omega;\er^n)$ for all $\qqq\in [1,\infty)$, and
\begin{flalign}\label{m.4}
\nr{Du}_{L^{\qqq}(B_{\rr/2})}^{\qqq}\le  c\left(\frac{\nr{u- (u)_{B_{\rr}}}_{L^{\infty}(B_{\rr})}}{\rr} +1\right)^{\mf{b}_{\qqq}}\left[\bar{\mathcal{F}}(u,B_{\rr})+\rr^n\right]
\end{flalign}
holds for every ball $B_{\rr}\Subset \Omega$, $\rr \leq 1$, with $c\equiv c(n,p,q,\alpha,L)$, $\mf{b}_{\qqq}\equiv \mf{b}_{\qqq}(n,p,q,\alpha, \qqq)$.
If \eqref{vanish} is satisfied, then the  same result holds for $ L^{\infty}_{\loc}(\Omega)$-regular minimizers $u$ of $\mathcal F$ in \trif{ggg2}.
\end{theorem}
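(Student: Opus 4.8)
The plan is to recognize the statement as the a priori estimate underlying the preliminary higher integrability result (Proposition~\ref{scat}) and to prove it by a Moser-type iteration carried out in a scale of fractional Sobolev (Besov) spaces. The transfer to minimizers of $\mathcal F$ is cost-free: once \eqref{m.4} is known for minimizers of $\bar{\mathcal F}$, then under \eqref{vanish} the chain \eqref{newminima} shows that a minimizer $u$ of $\mathcal F$ is also one of $\bar{\mathcal F}$ with $\bar{\mathcal F}(u,B)=\mathcal F(u,B)$, so the estimate applies verbatim. Thus everything reduces to an a priori bound for bounded minimizers of $\bar{\mathcal F}$.

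\textbf{Reduction to regularized problems.} Fix $B\Subset\Omega$. Mollify the $x$-dependence of $F$ and add a small nondegenerate term of genuine $q$-growth, say $F_\varepsilon(x,z):=(F(\cdot,z)*\phi_\varepsilon)(x)+\varepsilon H_\mu(z)^{q/2}$, and let $u_\varepsilon$ minimize $w\mapsto\int_B F_\varepsilon(x,Dw)\dx$ over $u+W^{1,q}_0(B)$. The very definition \eqref{LSM} of the LSM-relaxation together with weak lower semicontinuity gives $u_\varepsilon\rightharpoonup u$ in $W^{1,p}(B)$ and $\int_B F_\varepsilon(x,Du_\varepsilon)\dx\to\bar{\mathcal F}(u,B)$, while a standard truncation argument, using the $p$-growth bound from below in \eqref{assf}$_2$, propagates the $L^\infty$-bound, $\nr{u_\varepsilon}_{L^\infty(B)}\lesssim\nr{u}_{L^\infty(B)}$. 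Hence any estimate for $u_\varepsilon$ with constants depending only on $n,p,q,\alpha,L$, on $\nr{u-(u)_B}_{L^\infty(B)}$ and on $\int_B F_\varepsilon(x,Du_\varepsilon)\dx$ passes to the limit (by weak lower semicontinuity of the $L^{\qqq}$-norm of gradients). From now on $v:=u_\varepsilon$ is a minimizer of a smooth, uniformly elliptic functional of genuine $q$-growth, hence as regular as needed.

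\textbf{The fractional Caccioppoli inequality and the iteration step.} Taking second finite differences of the Euler--Lagrange system of $F_\varepsilon$, testing with $\tau_{-h}(\xi^2\tau_h v)$ for a cut-off $\xi$, and using \eqref{assf}$_{3,4}$ together with the $\alpha$-H\"older dependence \eqref{assf}$_5$ of $z\mapsto\partial_zF(\cdot,z)$ — read as fractional $x$-differentiability of order $\alpha$ of the system — one obtains, after Young's inequality and absorptions, a bound of the schematic form
\[
\int\xi^2\,H_\mu(Dv)^{\frac{p-2}{2}}\snr{\tau_h Dv}^2\dx\;\lesssim\;\snr{h}^{2\gamma}\int_{\supp\xi}H_\mu(Dv)^{\sigma}\dx\;+\;\text{(lower order terms)},
\]
where $\gamma$ is as in \eqref{pqbb} and the size of the exponent $\sigma$, compared with the integrability of $Dv$ currently at one's disposal, governs the gain below; the lower-order terms contribute the additive $\rr^n$ in \eqref{m.4}. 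The presence of $\min\{1,p/2\}$ inside $\gamma$ is exactly the loss forced when $p<2$, where the weight $H_\mu(Dv)^{(p-2)/2}$ degenerates and $\snr{\tau_h Dv}$ must be traded for a fractional power of $\snr{\tau_h^2 v}$. Interpolating the $L^{t'}$-norm of $\tau_h^2 v$ between its $L^\infty$-norm (dominated by $\nr{v}_{L^\infty}$) and its $L^t$-norm, and then invoking the embedding of the relevant Besov space into $W^{1,t'}_{\loc}$ — the passage from \eqref{derive} to $Dv\in L^{t'}_{\loc}$, cf. Lemma~\ref{bllema} — upgrades $Dv\in L^t$ to $Dv\in L^{t'}$ with the additive gain of \eqref{moserlineare}, namely $t'=t+\textnormal{\texttt{s}}(p+\gamma-q)$ for some $\textnormal{\texttt{s}}>0$, at the cost of multiplying the quantitative constant by a factor $c\,(1+\nr{v}_{L^\infty}/\rr)^{\kappa}$, $\kappa\equiv\kappa(n,p,q,\alpha)$.

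\textbf{Conclusion and the main difficulty.} Since \eqref{pqbb} forces $p+\gamma-q>0$, iterating the step a finite number $N\approx N(\qqq)$ of times starting from $t=p$ (available because the regularized energy is finite) reaches any prescribed finite $\qqq$, and multiplying the $N$ step-constants produces a factor $c\,(1+\nr{v}_{L^\infty}/\rr)^{\mf{b}_{\qqq}}$. Passing to the limit $\varepsilon\to0$, rescaling to $B_\rr\Subset\Omega$ with $\rr\le1$, and replacing $\nr{v}_{L^\infty}$ by $\nr{u-(u)_{B_\rr}}_{L^\infty(B_\rr)}$ — legitimate since the system is insensitive to additive constants — yields \eqref{m.4}; the statement for minimizers of $\mathcal F$ then follows from the first paragraph. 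The delicate part, carrying essentially all the difficulty, is to keep the numerology sharp so that the gain per iteration is exactly $\textnormal{\texttt{s}}(p+\gamma-q)$ and the \emph{non-dimensional}, and optimal (by the counterexamples in \cite{sharp,FMM}), bound \eqref{pqbb} is not wasted: this forces one to track with extreme care the exponent $\sigma$ in the fractional Caccioppoli inequality, the interpolation exponents, and the constants in the Besov embedding, and is where a sharp use of interpolation inequalities — and, for the sharp exponents, of the improved a priori estimates for the coefficient-free frozen problems — is essential.
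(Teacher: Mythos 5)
Your road map for the a priori estimate is essentially the paper's: read \eqref{assf}$_5$ as fractional differentiability of the Euler--Lagrange equation, interpolate difference quotients against the $L^\infty$-norm of $u$ (Lemma \ref{ls}), pass to gradient integrability through the Besov-type embedding of Lemma \ref{bllema}, and iterate with the arithmetic gain \eqref{moserlineare}; this is exactly Propositions \ref{scat}--\ref{tfs}, and the final transfer to $\mathcal F$-minimizers under \eqref{vanish} via \eqref{newminima} is also as in the paper. (Two minor inaccuracies: the iteration cannot literally start at $t=p$ --- the exponent bookkeeping forces $t\ge t_*$ as in \eqref{defitante}, and the $L^p$-based right-hand side is only recovered at the end through the interpolation \eqref{inter} and Lemma \ref{iterlem}; and the Bella--Sch\"affner improvement plays no role in Theorem \ref{main3}, only in Theorem \ref{main}.)

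The genuine gap is your reduction to regularized problems. You minimize $w\mapsto\int_B F_\varepsilon(x,Dw)\dx$ over $u+W^{1,q}_0(B)$ with $u$ itself as boundary datum. Since $u$ is only $W^{1,p}$, every competitor $w=u+\phi$ with $\phi\in W^{1,q}_0(B)$ satisfies $Dw\in L^q$ if and only if $Du\in L^q(B)$; precisely in the Lavrentiev regime this fails, the penalization $\varepsilon\int_B H_\mu(Dw)^{q/2}\dx$ is infinite for \emph{every} admissible $w$, and the Dirichlet problem is vacuous. Even setting this aside, the asserted convergence $\int_B F_\varepsilon(x,Du_\varepsilon)\dx\to\bar{\mathcal F}(u,B)$ does not follow ``from the definition \eqref{LSM} plus lower semicontinuity'': semicontinuity only gives the lower bound, and you have no admissible competitor of finite regularized energy with which to prove the matching upper bound (you cannot insert $u$, and recovery sequences for $\bar{\mathcal F}(u,B)$ do not share the boundary values of $u$). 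Nor is $u_\varepsilon\rightharpoonup u$ automatic: the weak limit lies in $u+W^{1,p}_0(B)$ but need not equal $u$. The paper's scheme $\App{\rm B}$ repairs exactly these points: the boundary data are the recovery sequence $\tilde u_\eps\in W^{1,q}({\rm B})$ of Lemma \ref{bf.0}, truncated as in Lemma \ref{bf.1} so that $\nr{\tilde u_\eps}_{L^\infty}\lesssim\nr{u}_{L^\infty}$; the $q$-growth perturbation is weighted by $\sigma_\varepsilon\approx(1+1/\varepsilon+\nr{D\tilde u_\eps}_{L^q({\rm B})}^{2q})^{-1}$ as in \eqref{approssima} so that its energy is negligible and minimality against $\tilde u_\eps$ yields \eqref{c.4}; the uniform $L^\infty$ bound for $u_{\eps,\delta}$ comes from the maximum principle of Lemma \ref{massimino00} (when $\mu>0$ truncating the minimizer only yields \eqref{caciotta} and still requires a De Giorgi iteration); and the weak limit of the approximating minimizers is identified with $u$ by combining the $\bar{\mathcal F}$-minimality of $u$ with the strict convexity statement of Lemma \ref{bf.0}. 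Without these steps the a priori bound of Proposition \ref{scatdopo} is never transferred to $u$, so the proof of \eqref{m.4} as proposed does not close.
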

When $p\geq 2$ condition \rif{pqbb} is sharp, again by the counterexamples in \cite{sharp, FMM}, as described in Remark \ref{bere}. Note that boundedness of minima typically derives from boundary conditions via maximum principles. Note also that in Theorem \ref{main3} we confine ourselves to the case $p\leq n$, otherwise $u \in L^{\infty}_{\loc}$ is automatic by Morrey's embedding. On the other hand, when $p> n$, condition \rif{pqbb} is more restrictive than \rif{pq}. 

\vspace{2mm}

{\bf Acknowledgements} We would like to thank the referees for their numerous comments and for the attention they gave to the original manuscript. The present version of the paper has benefited from their insights. We also thank Carlo Alberto Antonini for suggestions given on a preliminary form of the manuscript. This paper is supported by the University of Parma via the project ``Nonuniform ellipticity in nonlinear problems". 
\section{Preliminaries}
\subsection{Notation}\label{notazioni} 
We denote by $c$ a general constant such that $c\geq 1$. Different occurrences will be still denoted by $c$. Special occurrences will be denoted by $c_*,  \tilde c$ or likewise. Relevant dependencies on parameters will be as usual emphasized by putting them in parentheses.  By writing $\texttt{a} \lesssim \texttt{b}$, with $\texttt{a},\texttt{b}$ being two non-negative real numbers, we mean there exists a constant $c$ such that $\texttt{a}\leq c\, \texttt{b}$. In case the constant $c$ depends on parameters as, to say, $\gamma, \sigma$, that we wish to emphasize, we shall denote  $\texttt{a} \lesssim_{\gamma, \sigma} \texttt{b}$. Finally, we write $\texttt{a} \approx_{\gamma, \sigma} \texttt{b}$ when both $\texttt{a} \lesssim_{\gamma, \sigma} \texttt{b}$ and $\texttt{b} \lesssim_{\gamma, \sigma} \texttt{a}$ occur. With $x_0 \in \er^n$, we denote 
\eqn{zerocenter}
$$
B_r(x_0):= \{x \in \er^n  :   |x-x_0|< r\}, \quad \mathcal B_{r} \equiv B_{r}(0_{\er^n}):= \{x \in \er^n  :   |x|< r\}\,.
$$
When no ambiguity arises we omit denoting the center, i.e., we abbreviate $B_r \equiv B_r(x_0)$; this especially happens when various balls in the same context share the same center. We shall often denote by ${\rm B}$ a generic ball in $\er^n$, when neither the center nor the radius will particularly matter. Finally, with $\rm B$ being a given ball with radius $r$ and $\gamma$ being a positive number, we denote by $\gamma \rm B$ the concentric ball with radius $\gamma r$ and by $\rm B/\gamma \equiv (1/\gamma){\rm B}$. Given $x \equiv (x_i)_{1\leq i\leq n}\in \er^n$, we denote $|x|_{\infty}=\max_i \, |x_i|$ and 
\eqn{cubozzo}
$$
Q_r(x_0):= \{x \in \er^n  :   |x-x_0|_{\infty}< r\}
$$
the ball with respect to the usual Chebyshev metric (hypercube centred at $x_0$ and sidelength $2r$). 
We denote by 
\eqn{inner}
$$Q_{\textnormal{inn}}(B_r(x_0)):= \{x \in \er^n  :   |x-x_0|_{\infty}< r/\sqrt{n}\}\subset B_r(x_0)$$ the inner hypercube of the ball $B_r(x_0)$, that is, the largest hypercube concentric to $B_r(x_0)$, with sides parallel to the coordinate axes, which is contained in $B_r(x_0)$. We shall also need to use a standard lattice of cubes of mesh $r>0$, and for this we shall canonically use
\eqn{lattice}
$$
\mathcal L_r := \{Q_{r}(y) \subset \er^n \, \colon \, y\in (2r) \mathbb Z^n\}\,.
$$
These are mutually disjoint cubes whose closures cover $\er^n$. 
In denoting several function spaces like $L^{\mmm}(\Omega), W^{1,m}(\Omega)$, we shall denote the vector valued version by $L^{\mmm}(\Omega;\er^k), W^{1,\mmm}(\Omega;\er^k)$ in the case the maps considered take values in $\er^k$, $k\in \mathbb{N}$; when confusion will not arise, we shall abbreviate $L^{\mmm}(\Omega;\er^k)\equiv L^{\mmm}(\Omega)$ and the like. With $\mathcal  A \subset \er^{n}$ being a measurable subset such that  $0<|\mathcal A|<\infty$, and $g \colon \mathcal  A \to \er^{k}$, $k\geq 1$, being a measurable map, we denote $$(g)_{\mathcal  A}:=\frac{1}{\snr{\mathcal  A}}\int_{\mathcal  A}g(x)\dx$$ its integral average. With $\alpha \in (0,1]$ we denote 
$$
[g]_{0,\alpha;\mathcal  A}:= \sup_{x,y\in \mathcal  A; x\not=y}\frac{\snr{g(x)-g(y)}}{\snr{x-y}^\alpha}
$$
the usual $\alpha$-H\"older seminorm of $g$ in $\mathcal  A$. Finally, given a number $v \in \er$, we denote $
v_{+}:=\max\{v,0\}$.  
\subsection{Basic tools} We shall use the auxiliary vector field $V_{\mu}\colon \er^{n} \to  \er^{n}$ defined by
\eqn{vpvqm}
$$
V_{\mu}(z):= (|z|^{2}+\mu^{2})^{(p-2)/4}z, \qquad \mu\ge 0\,.
$$
Note that 
\eqn{elmy0}
$$\snr{V_{\mu}(z)}^2\leq H_{\mu}(z)^{p/2}\lesssim_{p} \snr{V_{\mu}(z)}^2 + \mu^p$$
holds for every $z \in \er^n$. A couple of related inequalities  is
\eqn{Vm}
$$
\begin{cases}
\, \snr{V_{\mu}(z_{1})-V_{\mu}(z_{2})}\approx_{p} (\snr{z_{1}}^{2}+\snr{z_{2}}^{2}+\mu^{2})^{(p-2)/4}\snr{z_{1}-z_{2}} \vspace{2mm}\\
\, \snr{z_{1}-z_{2}}\lesssim_{p} \snr{V_{\mu}(z_{1})-V_{\mu}(z_{2})}^{2/p} +  \mathds{1}_{\{p<2\}}\snr{V_{\mu}(z_{1})-V_{\mu}(z_{2})}(\snr{z_1}^{2}+\mu^{2})^{(2-p)/4}\,,
\end{cases}
$$
valid for all $z_{1},z_{2}\in \mathbb{R}^{n}$, with $\mathds{1}_{\{p<2\}}=1$ if $p<2$ and zero otherwise; see \cite[Lemma 2.1]{ha} and \cite[Lemma 2]{km1}, respectively. Another related couple of estimates we shall need is
\eqn{mon}
$$
\begin{cases}
\displaystyle
\, \snr{t_{1}-t_{2}}^{\mathfrak{p}} \lesssim_{\mathfrak{p}} \snr{\snr{t_{1}}^{\mathfrak{p}-1}t_{1}-\, \snr{t_{2}}^{\mathfrak{p}-1}t_{2}},\qquad t_{1},t_{2}\in \mathbb{R}, \ \mathfrak{p}\geq 1\vspace{2mm}\\
\displaystyle
 (\snr{z_{1}}^{2}+\snr{z_{2}}^{2}+\mu^{2})^{\mf{t}/2} \approx_{\mf{t}} \int_{0}^{1}(\snr{z_{1}+\ttl(z_{2}-z_{1})}^{2}+\mu^{2})^{\mf{t}/2}\d\ttl,\quad \quad \mf{t}>-1
\end{cases}
$$
for which we refer to \cite[Lemma A.3]{bls} and \cite{ha}, respectively. Finally, with $\mu$ as in \rif{vpvqm} and $z\in \mathbb{R}^{n}$, we further set
\eqn{defiE}
$$E_{\mu}(z):=\frac 1p (H_{\mu}(z)^{p/2}-\mu^{p})=
\frac 1p [(|z|^2+\mu^2)^{p/2}-\mu^{p}]=\int_0^{|z|}(\ttl^2+\mu^2)^{(p-2)/2}\ttl \d \ttl$$
which is an auxiliary function that plays an important role in the formulation of a priori estimates for nonlinear problems \cite{BM}. As in \cite[(3.15)]{piovra}, and actually as an easy consequence of \rif{Vm}$_1$, of the definition in \rif{defiE} and of Mean Value Theorem, we have that 
\eqn{diffH}
$$
\snr{E_{\mu}(z_2)-E_{\mu}(z_1)} \lesssim_p
(\snr{z_1}^{2}+\snr{z_2}^{2}+\mu^2)^{p/4}\snr{V_{\mu}(z_1)-V_{\mu}(z_2)}
$$
holds for every $z_1, z_2 \in \er^n$.
\begin{lemma}\label{perparti}
Let $\rr, \mathcal{h}_{0}>0$ 
and $h\in \mathbb{R}^{n}$ be such that $\snr{h}\in (0,\mathcal{h}_{0}/4)$, let $B_{\rr}\equiv B_{\rr}(x_{0})\subset \mathbb{R}^{n}$ be a ball, $\mathcal V\in L^{\infty}(B_{\rr+\mathcal{h}_{0}}(x_{0});\mathbb{R}^{n})$, $\mathcal W\in W^{1,\infty}_{0}(B_{\rr}(x_{0});\er^n)$, and $\mathcal K\colon B_{\rr+\mathcal{h}_{0}}(x_{0})\times \er^n \mapsto \er^n$ a continuous vector field which is bounded on $B_{\rr+\mathcal{h}_{0}}(x_{0})\times A$ for every bounded set $A\subset \er^n$. Then
\eqn{af}
$$
\int_{B_{\rr}} \tau_{h}[\mathcal K (\cdot ,\mathcal V)]\cdot \mathcal W \dx=-\snr{h}\int_{B_{\rr}} \int_{0}^{1}\mathcal K (x+\ttl h,\mathcal V(x+\ttl h))\d\ttl\cdot \partial_{h/\snr{h}}\mathcal W\dx\,.
$$
\end{lemma}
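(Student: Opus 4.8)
The identity \rif{af} is a discrete integration-by-parts formula, and the plan is to derive it from the Fundamental Theorem of Calculus applied along the segment joining $x$ to $x+h$, combined with the standard summation-by-parts rule $\int \tau_h \mathcal A \cdot \mathcal W \dx = -\int \mathcal A \cdot \tau_{-h}\mathcal W \dx$ valid when $\mathcal W$ is compactly supported in $B_{\rr}(x_0)$ and all integrands make sense on a slightly enlarged ball (here on $B_{\rr+\snr h}$, which is contained in $B_{\rr+\mathcal h_0}$ since $\snr h<\mathcal h_0/4$). First I would fix the shorthand $\mathcal A(x):=\mathcal K(x,\mathcal V(x))$, which is a bounded measurable vector field on $B_{\rr+\mathcal h_0}(x_0)$ because $\mathcal V\in L^\infty$ takes values in a bounded set $A$ and $\mathcal K$ is bounded on $B_{\rr+\mathcal h_0}(x_0)\times A$; in particular all the integrals below are finite, $\mathcal W$ being bounded with bounded support. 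Then, since $\mathcal W$ has support compactly contained in $B_{\rr}(x_0)$ and $\mathcal A$ is defined on the $\snr h$-neighbourhood of that support, a change of variables $x\mapsto x-h$ in the term $\int \mathcal A(x+h)\cdot\mathcal W(x)\dx$ gives the translation-of-argument identity
\eqn{af-aux1}
$$
\int_{B_{\rr}} \tau_h\mathcal A\cdot \mathcal W\dx = -\int_{B_{\rr}}\mathcal A(x)\cdot \tau_{-h}\mathcal W(x)\dx\,,
$$
where I have used that $\mathcal W\equiv 0$ outside $B_{\rr}(x_0)$ to restore the domain of integration to $B_{\rr}$ on both sides.

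Next I would rewrite the right-hand side of \rif{af-aux1} using the Fundamental Theorem of Calculus for the Lipschitz function $\ttl\mapsto \mathcal W(x-\ttl h)$: namely
\eqn{af-aux2}
$$
\tau_{-h}\mathcal W(x)=\mathcal W(x-h)-\mathcal W(x) = -\int_0^1 \frac{\d}{\d\ttl}\mathcal W(x-\ttl h)\,\d\ttl = \snr h\int_0^1 \partial_{h/\snr h}\mathcal W(x-\ttl h)\,\d\ttl\,,
$$
which is legitimate because $\mathcal W\in W^{1,\infty}_0$, so the directional derivative $\partial_{h/\snr h}\mathcal W$ exists a.e. and the chain rule along lines holds for a.e. line. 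Substituting \rif{af-aux2} into \rif{af-aux1} yields
\eqn{af-aux3}
$$
\int_{B_{\rr}}\tau_h\mathcal A\cdot\mathcal W\dx = -\snr h\int_{B_{\rr}}\int_0^1 \mathcal A(x)\cdot \partial_{h/\snr h}\mathcal W(x-\ttl h)\,\d\ttl\dx\,,
$$
and a final change of variables $x\mapsto x+\ttl h$ in the inner double integral (for each fixed $\ttl\in[0,1]$, using Fubini to swap the order of integration first, and again the compact support of $\mathcal W$ to keep everything inside $B_{\rr+\mathcal h_0}(x_0)$) moves the shift from $\mathcal W$ onto $\mathcal A$, producing exactly
$$
-\snr h\int_{B_{\rr}}\int_0^1 \mathcal A(x+\ttl h)\cdot \partial_{h/\snr h}\mathcal W(x)\,\d\ttl\dx
= -\snr h\int_{B_{\rr}}\int_0^1 \mathcal K(x+\ttl h,\mathcal V(x+\ttl h))\,\d\ttl\cdot \partial_{h/\snr h}\mathcal W\dx\,,
$$
which is \rif{af}.

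The only genuinely delicate points, and where I would be careful rather than quick, are the bookkeeping of domains of integration — one must check at each change of variables that the arguments $x+\ttl h$ (for $\ttl\in[0,1]$) stay in $B_{\rr+\mathcal h_0}(x_0)$ where $\mathcal K(\cdot,\mathcal V)$ is controlled, which is guaranteed by $\snr h<\mathcal h_0/4$ together with $\mathcal W$ being supported in $B_{\rr}(x_0)$ — and the justification of Fubini's theorem, which holds because $\mathcal K(\cdot,\mathcal V)\in L^\infty$ and $D\mathcal W\in L^\infty$ with bounded support, so the integrand of the iterated integral is in $L^\infty$ of a set of finite measure. Everything else is a routine application of the change-of-variables formula and the one-dimensional Fundamental Theorem of Calculus; there is no real analytic obstacle, only the need to keep the translations and supports aligned.
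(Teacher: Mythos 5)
Your argument is correct in substance and takes a genuinely different, more elementary route than the paper. The paper regularizes: it mollifies both $\mathcal K$ and $\mathcal V$, applies the fundamental theorem of calculus to $\ttl\mapsto\mathcal K_{\delta}(x+\ttl h,\mathcal V_{\delta}(x+\ttl h))$, integrates by parts in $x$ to move the derivative onto $\mathcal W$, and only then removes the mollification by dominated convergence, using the uniform bound on $\mathcal K$ on $B_{\rr+\mathcal{h}_{0}}(x_0)\times\{\snr{z}\le V_\infty\}$. You avoid regularization entirely because you never differentiate the composite $\mathcal K(\cdot,\mathcal V)$: you shift the finite difference onto $\mathcal W$ by a change of variables and apply the fundamental theorem of calculus only to $\mathcal W$, which is legitimate since $\mathcal W\in W^{1,\infty}_{0}$ is Lipschitz, hence absolutely continuous on lines. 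This yields a shorter proof with no limiting procedure; the paper's mollification is needed only because its scheme differentiates $\mathcal K(\cdot,\mathcal V)$ in the translation direction, which yours never does.

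Two of your displayed identities, however, carry sign errors that happen to cancel. The discrete integration by parts is $\int_{B_{\rr}}\tau_{h}[\mathcal K(\cdot,\mathcal V)]\cdot\mathcal W\dx=+\int\mathcal K(x,\mathcal V(x))\cdot\tau_{-h}\mathcal W(x)\dx$, with a plus sign, not a minus; and the correct one-dimensional identity is $\tau_{-h}\mathcal W(x)=\mathcal W(x-h)-\mathcal W(x)=\int_{0}^{1}\tfrac{\d}{\d\ttl}\mathcal W(x-\ttl h)\d\ttl=-\snr{h}\int_{0}^{1}\partial_{h/\snr{h}}\mathcal W(x-\ttl h)\d\ttl$, with a minus sign in front of the last integral. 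The two wrong signs compensate, so your third display and the final formula \eqref{af} are correct, but the intermediate statements as written are false. A further small repair: after the first change of variables, $\tau_{-h}\mathcal W$ is supported in $B_{\rr}\cup(h+B_{\rr})$, not in $B_{\rr}$, so you should keep integrating over $\er^n$ (extending $\mathcal W$ by zero) or over $B_{\rr+\mathcal{h}_{0}}(x_0)$ until the final change of variables, which brings the support of $\partial_{h/\snr{h}}\mathcal W$ back inside $\overline{B_{\rr}}$; restricting to $B_{\rr}$ already in your first display drops a boundary strip. These are bookkeeping fixes, not gaps in the idea.
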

\begin{proof} We take a standard, non-negative mollifier $\phi\in C^{\infty}_{0}(\BB)$, $ \nr{\phi}_{L^{1}(\mathbb{R}^n)}=1$, and define, for  $0 < \delta < \mathcal{h}_{0}/4$ and $x \in B_{\rr}$
$$
\mathcal K_{\delta}(x,z):=\int_{\mathcal B_1}\int_{\mathcal B_1}\mathcal K (x+\delta \lambda,z+\delta y)\phi(\lambda)\phi(y)\d\lambda\dy,\quad \ \mathcal V_{\delta}(x):=\int_{\mathcal B_1}\mathcal V(x+\delta \lambda)\phi(\lambda)\d\lambda\,.
$$
We then have
\begin{flalign*}
& \int_{B_{\rr}}  \tau_{h}[\mathcal K_{\delta}(\cdot,\mathcal V_{\delta})]\cdot \mathcal W\dx\\
&\qquad =\int_{B_{\rr}}\int_{0}^{1}\frac{\d}{\d\ttl}\mathcal K_{\delta}(x+\ttl h,\mathcal V_{\delta}(x+\ttl h))\d\ttl\cdot \mathcal W\dx\nonumber \\
&\qquad =\sum_{i,m=1}^{n}\int_{B_{\rr}}\int_{0}^{1}\partial_{x_{i}}\mathcal K_{\delta}^{m}(x+\ttl h,\mathcal V_{\delta}(x+\ttl h))\mathcal W^{m}h_{i}\d\ttl\dx\nonumber \\
&\qquad\qquad +\sum_{i,j,m=1}^{n}\int_{B_{\rr}}\int_{0}^{1}\partial_{z_{j}}\mathcal K^{m}_{\delta}(x+\ttl h,\mathcal V_{\delta}(x+\ttl h))\partial_{x_{i}}\mathcal V_{\delta}^{j}(x+\ttl h)\mathcal W^{m}h_{i}\d\ttl\dx\nonumber \\
&\qquad=\sum_{i,m=1}^{n}\int_{B_{\rr}}\int_{0}^{1}D_{i}[\mathcal K^{m}_{\delta}(x+\ttl h,\mathcal V_{\delta}(x+\ttl h))]\mathcal W^{m}h_{i}\d\ttl\dx\nonumber \\
&\qquad=-\sum_{i,m=1}^{n}\int_{B_{\rr}}\int_{0}^{1}\mathcal K^{m}_{\delta}(x+\ttl h, \mathcal V_{\delta}(x+\ttl h))D_{i}\mathcal W^{m}h_{i}\d\ttl\dx\,.
\end{flalign*}
It therefore follows that
\eqn{af.0}
$$
\int_{B_{\rr}}  \tau_{h}[\mathcal K_{\delta}(\cdot,\mathcal V_{\delta})]\cdot \mathcal W\dx=
-\snr{h}\int_{B_{\rr}}\int_{0}^{1} \mathcal K_{\delta}(x+\ttl h,\mathcal V_{\delta}(x+\ttl h))\d\ttl\cdot \partial_{h/\snr{h}}\mathcal  W\d\ttl\dx\,.
$$
Since for $\ttl \in (0,1]$ we have $\nr{\mathcal V_{\delta}}_{L^{\infty}(B_{\rr})}+
\nr{\mathcal V_{\delta}(\cdot+\ttl h)}_{L^{\infty}(B_{\rr})}
\le 2\nr{\mathcal V}_{L^{\infty}(B_{\rr+\mathcal{h}_{0}}(x_{0}))}=:V_{\infty}$, it follows
\begin{flalign*}
\sup_{x\in B_{\rr}}\snr{\tau_{h}[\mathcal K_{\delta}(\cdot,\mathcal V_{\delta})]}&+\sup_{x\in B_{\rr}}\int_{0}^{1}\snr{\mathcal K_{\delta}(x+\ttl h,\mathcal V_{\delta}(x+\ttl h))}\d\ttl\nonumber \\
\le& \sup_{(x,z)\in B_{\rr+\mathcal{h}_{0}}(x_{0})\times \{z\in \mathbb{R}^{n}\colon \snr{z}\le V_{\infty}\}}3\snr{\mathcal K (x,z)}\le: 3K_{\infty}<\infty,
\end{flalign*}
and $K_{\infty}$ does not depend on $\delta$. Letting $\delta \to 0$ in \eqref{af.0} we obtain \eqref{af} by dominated convergence.
\end{proof} 
Finally, a classical iteration result \cite[Lemma 6.1]{giu}.
\begin{lemma}\label{iterlem}
Let $\hhh\colon [r_1,r_2]\to \mathbb{R}$ be a non-negative and bounded function, and let $a,b, \gamma$ be non-negative numbers. Assume that the inequality 
$
\hhh(\tau_1)\le  \hhh(\tau_2)/2+a(\tau_2-\tau_1)^{-\gamma}+b
$
holds whenever $r_1\le \tau_1<\tau_2\le r_2$. Then 
$
\hhh(r_1)\lesssim_{\gamma} a(r_2-r_1)^{-\gamma}+b.
$
\end{lemma}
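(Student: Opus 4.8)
The plan is to reproduce the classical hole-filling iteration, running the assumed inequality along a geometric sequence of intermediate radii that fills $[r_1,r_2]$. First I would fix a parameter $\tau\in(0,1)$, to be pinned down at the end in dependence on $\gamma$ only, and set $\rho_0:=r_1$ and $\rho_{i+1}:=\rho_i+(1-\tau)\tau^i(r_2-r_1)$ for $i\geq 0$. Summing a geometric series gives $\rho_k=r_1+(1-\tau^k)(r_2-r_1)$, so $\{\rho_k\}$ is an increasing sequence contained in $[r_1,r_2]$ with $\rho_k\to r_2$, while $\rho_{i+1}-\rho_i=(1-\tau)\tau^i(r_2-r_1)$.

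Next I would apply the hypothesis with $\tau_1=\rho_i$ and $\tau_2=\rho_{i+1}$ (admissible since $\rho_i<\rho_{i+1}$ and both lie in $[r_1,r_2]$), obtaining
\[
\hhh(\rho_i)\leq \frac{1}{2}\hhh(\rho_{i+1})+\frac{a}{(1-\tau)^{\gamma}\tau^{i\gamma}(r_2-r_1)^{\gamma}}+b
\]
for every $i\geq 0$. Iterating this from $i=0$ up to $i=k-1$ and using $\hhh\geq 0$ yields
\[
\hhh(r_1)\leq \frac{\hhh(\rho_k)}{2^k}+\frac{a}{(1-\tau)^{\gamma}(r_2-r_1)^{\gamma}}\sum_{i=0}^{k-1}\frac{1}{(2\tau^{\gamma})^{i}}+b\sum_{i=0}^{k-1}\frac{1}{2^{i}}.
\]
The decisive choice is to take $\tau$ close enough to $1$ that $2\tau^{\gamma}>1$; then the first sum converges, with value at most $(1-(2\tau^{\gamma})^{-1})^{-1}$, a constant depending on $\gamma$ alone, while the second sum is at most $2$.

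Finally I would let $k\to\infty$. The only point that uses the boundedness of $\hhh$ — and this is exactly why that hypothesis is imposed — is that $2^{-k}\hhh(\rho_k)\to 0$, which is then immediate. Fixing $\tau=\tau(\gamma)$ and collecting the surviving terms gives $\hhh(r_1)\leq c(\gamma)\,[a(r_2-r_1)^{-\gamma}+b]$, that is $\hhh(r_1)\lesssim_{\gamma} a(r_2-r_1)^{-\gamma}+b$. There is no genuine obstacle here: the whole matter reduces to the convergence of $\sum_i (2\tau^{\gamma})^{-i}$, which forces the admissible range of $\tau$, together with the harmless use of boundedness to discard the tail term; the degenerate case $\gamma=0$ is a trivial instance of the same computation with any $\tau\in(0,1)$.
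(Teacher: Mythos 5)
Your argument is correct and is precisely the classical iteration proof that the paper invokes by citing \cite[Lemma 6.1]{giu}: the geometric sequence of radii $\rho_{i+1}=\rho_i+(1-\tau)\tau^i(r_2-r_1)$, the iterated use of the hypothesis, the choice of $\tau=\tau(\gamma)$ with $2\tau^{\gamma}>1$ to make $\sum_i(2\tau^{\gamma})^{-i}$ converge, and the use of boundedness of $\hhh$ to discard $2^{-k}\hhh(\rho_k)$. Nothing to add.
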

\subsection{Additional H\"older continuity}\label{addhol}  In Theorems \ref{main}-\ref{main3} we can always assume that 
\eqn{assf2}
$$
\snr{F(x_1,z)-F(x_2,z)} \leq 
 L \snr{x_{1}-x_{2}}^{\alpha}[H_{\mu}(z)^{q/2}+H_{\mu}(z)^{p/2}]
$$
holds whenever $x_1, x_2\in \Omega$ and $z\in \er^n$. In particular, $F$ is locally H\"older continuous in both variables. For this, next to \rif{ggg2}, define  $\mathcal{F}_1(w,\Omega):=\int_{\Omega}\mathbb F(x,Dw) \dx$, where $\mathbb F(x,z):= F(x,z)-F(x,0_{\er^n})+L\mu^q+L\mu^p$. 
Note that $\mathbb F(\cdot)$ satisfies \rif{assf} (with $2L$ instead of $L$, which is inessential; this actually happens only in \rif{assf}$_2$) and in addition \rif{assf2}. Indeed, by writing $\mathbb F(x,z)=\int_0^1 \partial_zF(x,\ttl z)\d \ttl \cdot z+L\mu^q+L\mu^p$, and using \rif{assf}$_5$, it follows
\begin{flalign*}
\snr{\mathbb F(x_1,z)-\mathbb F(x_2,z)} & \leq \int_0^1 \snr{\partial_z F(x_1, \ttl z)-\partial_z F(x_2, \ttl z)}\d \ttl \snr{z}\\
& \leq  L \snr{x_{1}-x_{2}}^{\alpha}[H_{\mu}(z)^{q/2}+H_{\mu}(z)^{p/2}]\,.
\end{flalign*}
Now, note that any minimizer of $\mathcal{F}$ is a minimizer of $\mathcal{F}_1$ and vice versa.  The same happens for minima of $\bar{\mathcal{F}}$ with respect to $\bar{\mathcal{F}}_1$ since 
$\bar{\mathcal{F}}_1(\cdot, {\rm B})= \bar{\mathcal{F}}(\cdot, {\rm B})- \int_{{\rm B}}[F(x, 0_{\er^n})-L\mu^q-L\mu^p]\dx$ for every ball ${\rm B}\Subset \Omega$. Therefore, up to passing to $\mathcal{F}_1$, we can always assume that \rif{assf2} holds whenever \rif{assf} are verified. 

\section{Minimal fractional and interpolation tools} For $w \colon \Omega \to \mathbb{R}^{k}$, $k\ge 1$, $\texttt{t}>0$ and $h \in \mathbb{R}^n$, we set $\Omega_{\texttt{t}\snr{h}}:=\left\{x\in \Omega\colon \dist(x,\partial \Omega)>\texttt{t}\snr{h}\right\}$, and introduce the finite difference operators $\tau_{h}\colon L^{1}(\Omega;\mathbb{R}^{k})\to L^{1}(\Omega_{|h|};\mathbb{R}^{k})$, $\tau^{2}_{h}\colon L^{1}(\Omega;\mathbb{R}^{k})\to L^{1}(\Omega_{2|h|};\mathbb{R}^{k})$, pointwise defined as
\eqn{basilari}
$$
\begin{cases}
\, \tau_{h}w(x):=w(x+h)-w(x)\vspace{1.5mm}\\
\, \tau_{h}^{2}w(x):=\tau_{h}(\tau_{h}w)(x)\equiv w(x+2h)-2w(x+h)+w(x)\,.
 \end{cases}
$$
Given another map $v\colon \Omega\to \mathbb{R}^{k}$, the discrete Leibniz rule reads as
\eqn{prod}
$$
\tau_{h}(vw)(x)=w(x+h)\tau_{h}v(x)+v(x)\tau_{h}w(x)\,.
$$
Moreover, if $B_{\rr}\Subset B_{r}$ are concentric balls and $w\in W^{1,m}(B_r;\mathbb{R}^{k})$, $\mmm\ge 1$ and $\snr{h}\leq r-\rr$, then
\eqn{gh}
$$
\nr{\tau_{h}w}_{L^{\mmm}(B_{\rr})}\le \snr{h}\nr{Dw}_{L^{\mmm}(B_{r})}\,.
$$
\begin{definition}\label{fra1def}
Let  $\mmm \in [1, \infty)$, $s \in (0,1)$.
\begin{itemize}
\item With $\Omega \subset \er^n$ being an open subset, $w\colon \Omega\to \mathbb{R}^{k}$ belongs to the Sobolev-Slobodeckij space $W^{\tta,\mmm}(\Omega;\er^k )$ iff
\begin{flalign}
\notag
\| w \|_{W^{\tta,\mmm}(\Omega)} & := \|w\|_{L^{\mmm}(\Omega)}+ \left(\int_{\Omega} \int_{\Omega}  
\frac{|w(x)
- w(y) |^{\mmm}}{|x-y|^{n+\tta \mmm}} \dx \dy \right)^{1/\mmm}\\
&=: \|w\|_{L^{\mmm}(\Omega)} + [w]_{\tta,\mmm;\Omega} < \infty\,.\label{gaglia}
\end{flalign}
\item $w\colon \er^n\to \mathbb{R}^{k}$ belongs to the Nikol'skii space $N^{s,\mmm}(\er^n;\er^k)$ iff 
$$\| w \|_{N^{\tta,\mmm}(\er^n;\er^k )} :=\|w\|_{L^{\mmm}(\er^n)} + \left(\sup_{|h|\not=0}\, \int_{\er^n} \left|\frac{\tau_{h}w}{\snr{h}^{\tta}}\right|^{\mmm}
 \dx  \right)^{1/\mmm}<\infty\,.$$
\end{itemize}
\end{definition}
The embedding inequalities 
\eqn{33}
$$
\|w\|_{W^{\beta, \mmm}(\er^n)}\lesssim_{n,s,\beta,m}\|w\|_{N^{s, \mmm}(\er^n)} 
\lesssim_{n,s,\mmm} \|w\|_{W^{s, \mmm}(\er^n)}
$$
hold whenever $0< \beta < s < 1$ and $m\geq 1$; see for instance \cite[Propositions 2.6, 2.7]{bl}. A local, quantified version of the left-hand side inequality in \eqref{33} is in \begin{lemma}\, \hspace{-2.5mm}\label{l4}  
Let $B_{\varrho} \Subset B_{r}\subset \er^n$ be concentric balls with $r\leq 1$, $w\in L^{\mmm}(B_{r};\mathbb{R}^{k})$, $m\geq 1$ and assume that for $\tta \in (0,1]$ there holds
\eqn{cru1}
$$
\nr{\tau_{h}w}_{L^{\mmm}(B_{\rr})}\le  \snr{h}^{\tta }
$$
for every $h\in \mathbb{R}^{n}$ with $0<\snr{h}\le (r-\rr)/K$, where $K \geq 1$.
Then
\eqn{cru2}
$$
\nr{w}_{W^{\beta,\mmm}(B_{\rr})}\lesssim_{n }\frac{1}{(\tta -\beta)^{1/\mmm}}
\left(\frac{r-\rr}{K}\right)^{\tta -\beta} +\left(\frac{K}{r-\rr}\right)^{n/\mmm+\beta} \nr{w}_{L^{\mmm}(B_{\rr})}
$$
holds for all $\beta\in(0,\tta)$. 
\end{lemma}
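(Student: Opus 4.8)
The plan is to read \eqref{cru2} as a quantitative, scale-explicit version of the elementary embedding $N^{\tta,\mmm}\hookrightarrow W^{\beta,\mmm}$ for $\beta<\tta$ (the left inequality in \eqref{33}), proved by estimating the Gagliardo seminorm $[w]_{\beta,\mmm;B_{\rr}}$ directly from \eqref{cru1}. First I would pass to finite-difference form: by Tonelli's theorem, with $h=y-x$,
\[
[w]_{\beta,\mmm;B_{\rr}}^{\mmm}=\int_{B_{\rr}}\int_{B_{\rr}}\frac{\snr{w(x)-w(y)}^{\mmm}}{\snr{x-y}^{n+\beta \mmm}}\dx\dy=\int_{\{0<\snr{h}<2\rr\}}\frac{1}{\snr{h}^{n+\beta \mmm}}\left(\int_{B_{\rr}\cap (B_{\rr}-h)}\snr{\tau_{h}w(x)}^{\mmm}\dx\right)\,\d h\,,
\]
and split the $h$-integral into the \emph{near-diagonal} region $\mathcal A_{1}:=\{0<\snr{h}\le (r-\rr)/K\}$ and the far region $\mathcal A_{2}:=\{(r-\rr)/K<\snr{h}<2\rr\}$. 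The threshold $(r-\rr)/K$ is chosen precisely because it is the range in which \eqref{cru1} holds, and because on $\mathcal A_{1}$ one has $\snr{h}\le r-\rr$ (as $K\ge1$), so that $\tau_{h}w$ is a genuine element of $L^{\mmm}(B_{\rr})$ (since $x+h\in B_{r}$ whenever $x\in B_{\rr}$) and \eqref{cru1} may legitimately be invoked there.

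On $\mathcal A_{1}$ I would bound the inner integral by $\nr{\tau_{h}w}_{L^{\mmm}(B_{\rr})}^{\mmm}\le\snr{h}^{\tta\mmm}$ and compute, in polar coordinates,
\[
\int_{\mathcal A_{1}}\frac{\snr{h}^{\tta \mmm}}{\snr{h}^{n+\beta \mmm}}\,\d h=n\snr{\mathcal B_{1}}\int_{0}^{(r-\rr)/K}t^{(\tta-\beta)\mmm-1}\,\d t=\frac{n\snr{\mathcal B_{1}}}{(\tta-\beta)\mmm}\left(\frac{r-\rr}{K}\right)^{(\tta-\beta)\mmm}\,,
\]
where convergence is ensured by $(\tta-\beta)\mmm-1>-1$, i.e. $\beta<\tta$. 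Taking $\mmm$-th roots and using $\mmm\ge1$ (so that $((\tta-\beta)\mmm)^{-1/\mmm}\le(\tta-\beta)^{-1/\mmm}$ and $(n\snr{\mathcal B_{1}})^{1/\mmm}\lesssim_{n}1$) produces the first term of \eqref{cru2}. On $\mathcal A_{2}$ the hypothesis is unavailable, so there I would discard the finite-difference structure altogether: for $x,y\in B_{\rr}$ with $\snr{x-y}>(r-\rr)/K$ one has $\snr{x-y}^{-(n+\beta\mmm)}\le(K/(r-\rr))^{n+\beta\mmm}$ and $\snr{w(x)-w(y)}^{\mmm}\le2^{\mmm}(\snr{w(x)}^{\mmm}+\snr{w(y)}^{\mmm})$, whence
\[
\int_{\{x,y\in B_{\rr},\,\snr{x-y}>(r-\rr)/K\}}\frac{\snr{w(x)-w(y)}^{\mmm}}{\snr{x-y}^{n+\beta\mmm}}\dx\dy\le2^{\mmm+1}\snr{B_{\rr}}\left(\frac{K}{r-\rr}\right)^{n+\beta\mmm}\nr{w}_{L^{\mmm}(B_{\rr})}^{\mmm}\,.
\]
Since $\rr\le r\le1$ gives $\snr{B_{\rr}}^{1/\mmm}\lesssim_{n}1$, the $\mmm$-th root of this is $\lesssim_{n}(K/(r-\rr))^{n/\mmm+\beta}\nr{w}_{L^{\mmm}(B_{\rr})}$, which is the second term of \eqref{cru2}; finally, recalling $\nr{w}_{W^{\beta,\mmm}(B_{\rr})}=\nr{w}_{L^{\mmm}(B_{\rr})}+[w]_{\beta,\mmm;B_{\rr}}$ and that $K/(r-\rr)>1$ (because $K\ge1$ and $r-\rr<1$), the stray term $\nr{w}_{L^{\mmm}(B_{\rr})}$ is absorbed into the second summand, and adding the $\mathcal A_{1}$ and $\mathcal A_{2}$ contributions yields \eqref{cru2}.

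I do not expect any genuine obstacle here; the statement is elementary. The only points needing attention are that \eqref{cru1} be used only where $\tau_{h}w$ is meaningful — which is exactly what forces the near-diagonal cutoff at $(r-\rr)/K$ rather than at $2\rr$ — and that the constants be tracked carefully enough that the blow-up as $\beta\uparrow\tta$ is precisely $(\tta-\beta)^{-1/\mmm}$ and the powers of $r-\rr$, $K$ and $\mmm$ emerge exactly as in \eqref{cru2}; both are delivered by the explicit polar-coordinate computation above. Running the same argument without the localization recovers the global inequality \eqref{33}.
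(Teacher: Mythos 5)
Your proof is correct: the splitting of the Gagliardo double integral at $\snr{h}=(r-\rr)/K$, the use of \eqref{cru1} (legitimate there since $(r-\rr)/K\le r-\rr$) with the polar-coordinate computation giving the $(\tta-\beta)^{-1/\mmm}\left((r-\rr)/K\right)^{\tta-\beta}$ term, and the trivial $L^{\mmm}$ bound on the far region giving the $(K/(r-\rr))^{n/\mmm+\beta}\nr{w}_{L^{\mmm}(B_{\rr})}$ term, with all constants absorbed using $m\geq1$, $\rr\le 1$ and $K/(r-\rr)>1$, is complete. The paper itself does not prove the lemma but cites \cite[Lemma 1]{amp}, and your argument is exactly the standard direct estimate that proof is based on, so there is nothing to add.
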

See \cite[Lemma 1]{amp} for a proof.  
For the next result we refer to \cite[Proposition 2.4]{bl} and \cite[Lemma 2.4]{gl}; see also \cite[Theorem 1]{d} for related inequalities. It basically says that gradient integrability can be retrieved controlling double difference quotients with exponents larger than one; these are in turn relevant in the definition of  certain Besov spaces \cite[Section 2.2]{bl}. 
\begin{lemma}\label{bllema} Let $w\in L^{\mmm}(B_{\rr+7\mathcal{h}_{0}})$, $\mmm\in (1,\infty)$, where $B_{\rr+7\mathcal{h}_{0}}\subset \er^n$ is a ball and $\rr, \mathcal{h}_{0}\in (0,1]$. Then
\eqn{immersione2}
$$
\nr{Dw}_{L^{\mmm}(B_{\rr})}\lesssim_{n,m}  \frac{1}{(\tta-1)(2-\tta)} \left[\sup_{0<\snr{h}<\mathcal{h}_{0}}\left\|\frac{\tau_{h}^{2}w}{\snr{h}^{s}}\right\|_{L^{\mmm}(B_{\rr+5\mathcal{h}_{0}})}+\frac{\nr{w}_{L^{\mmm}(B_{\rr+6\mathcal{h}_{0}})}}{\mathcal{h}_{0}^{\tta}} \right]
$$
holds provided $\tta\in (1,2)$. 
\end{lemma}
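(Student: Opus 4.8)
I would first assume that the bracket on the right-hand side of \eqref{immersione2} is finite (otherwise there is nothing to prove) and call it $\mathcal S$; thus $\nr{\tau_{h}^{2}w}_{L^{\mmm}(B_{\rr+5\mathcal{h}_0})}\le\mathcal S\snr{h}^{\tta}$ for $0<\snr{h}<\mathcal{h}_0$ and $\nr{w}_{L^{\mmm}(B_{\rr+6\mathcal{h}_0})}\le\mathcal{h}_0^{\tta}\mathcal S$, and the goal is $\nr{Dw}_{L^{\mmm}(B_{\rr})}\lesssim_{n,\mmm}(\tta-1)^{-1}(2-\tta)^{-1}\mathcal S$. The statement is the localized, quantitative form of the classical embedding into $W^{1,\mmm}(\ern)$ of the Besov space $B^{\tta}_{\mmm,\infty}(\ern)$ (that is, the $L^{\mmm}$-functions with finite second-difference seminorm $\sup_{h\ne0}\nr{\tau_{h}^{2}(\cdot)}_{L^{\mmm}(\ern)}\snr{h}^{-\tta}$, $\tta\in(1,2)$); its global version, with the sharp dependence of the constant on $\tta$, is the content of \cite[Prop.~2.4]{bl}, \cite[Lemma~2.4]{gl} and \cite[Thm.~1]{d}, and the plan is to reduce \eqref{immersione2} to it by a cut-off. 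Fix $\eta\in C^{\infty}_{c}(B_{\rr+\mathcal{h}_0})$ with $\eta\equiv1$ on $B_{\rr}$, $\snr{D\eta}\lesssim\mathcal{h}_0^{-1}$, $\snr{D^{2}\eta}\lesssim\mathcal{h}_0^{-2}$, set $v:=\eta w$ and extend it by zero to $\ern$. Iterating the discrete Leibniz rule \eqref{prod} gives the pointwise identity $\tau_{e}^{2}v=\eta\,\tau_{e}^{2}w+(\tau_{e}w)(\cdot+e)\,\tau_{2e}\eta+w(\cdot+e)\,\tau_{e}^{2}\eta$, whence, for $0<\snr{e}<\mathcal{h}_0$,
\[
\nr{\tau_{e}^{2}v}_{L^{\mmm}(\ern)}\lesssim_{n}\nr{\tau_{e}^{2}w}_{L^{\mmm}(B_{\rr+5\mathcal{h}_0})}+\frac{\snr{e}}{\mathcal{h}_0}\nr{\tau_{e}w}_{L^{\mmm}(B_{\rr+5\mathcal{h}_0})}+\frac{\snr{e}^{2}}{\mathcal{h}_0^{2}}\nr{w}_{L^{\mmm}(B_{\rr+6\mathcal{h}_0})}\,.
\]
Requiring that every translate and finite difference appearing here and in the next step remain inside $B_{\rr+7\mathcal{h}_0}$, where $w$ is given, is exactly what dictates the nested shells $B_{\rr}\subset B_{\rr+5\mathcal{h}_0}\subset B_{\rr+6\mathcal{h}_0}\subset B_{\rr+7\mathcal{h}_0}$ of the statement.

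The only summand above not already of order $\snr{e}^{\tta}$ is the one carrying $\tau_{e}w$, and the point --- responsible for the factor $(2-\tta)^{-1}$ --- is that it should be controlled at the \emph{lower} order $\tta-1\in(0,1)$, not at order $\tta$. Relaxing $\nr{\tau_{h}^{2}w}_{L^{\mmm}(B_{\rr+5\mathcal{h}_0})}\le\mathcal S\snr{h}^{\tta}\le\mathcal S\mathcal{h}_0\snr{h}^{\tta-1}$ for $\snr{h}<\mathcal{h}_0$ and recovering first differences from second ones through the elementary recursion $\sup_{\snr{e}=t}\nr{\tau_{e}w}\le\tfrac12\sup_{\snr{e}=t}\nr{\tau_{2e}w}+\tfrac12\sup_{\snr{e}=t}\nr{\tau_{e}^{2}w}$ produces, upon iteration, the \emph{convergent} geometric series $\sum_{k\ge0}2^{k(\tta-2)}\lesssim(2-\tta)^{-1}$ --- whereas keeping the sharp order-$\tta$ bound would give the divergent $\sum 2^{k(\tta-1)}$ and cost the wasteful $(\tta-1)^{-1}$. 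This yields $\nr{\tau_{e}w}_{L^{\mmm}(B_{\rr+5\mathcal{h}_0})}\lesssim_{n}(2-\tta)^{-1}\mathcal{h}_0\snr{e}^{\tta-1}\mathcal S$ for $0<\snr{e}<\mathcal{h}_0$ --- the order-$(\tta-1)$ instance of \eqref{33}/Lemma \ref{l4}. Feeding this back, together with $\snr{e}^{2}\le\mathcal{h}_0^{2-\tta}\snr{e}^{\tta}$, we get $\nr{\tau_{e}^{2}v}_{L^{\mmm}(\ern)}\lesssim_{n}(2-\tta)^{-1}\snr{e}^{\tta}\mathcal S$ for $0<\snr{e}<\mathcal{h}_0$; for $\snr{e}\ge\mathcal{h}_0$ the crude bound $\nr{\tau_{e}^{2}v}_{L^{\mmm}(\ern)}\le4\nr{v}_{L^{\mmm}(\ern)}\le 4\mathcal{h}_0^{-\tta}\snr{e}^{\tta}\nr{w}_{L^{\mmm}(B_{\rr+\mathcal{h}_0})}\le 4\snr{e}^{\tta}\mathcal S$ suffices, $v$ being compactly supported. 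Hence $v\in B^{\tta}_{\mmm,\infty}(\ern)$ with second-difference seminorm $\lesssim_{n}(2-\tta)^{-1}\mathcal S$.

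Finally I would invoke the global embedding. Writing $v=\sum_{j}\Delta_{j}v$ in Littlewood--Paley fashion, one recovers $\nr{\Delta_{j}v}_{L^{\mmm}(\ern)}\lesssim 2^{-j\tta}\sup_{h\ne0}\nr{\tau_{h}^{2}v}_{L^{\mmm}(\ern)}\snr{h}^{-\tta}$ from second differences at scale $\snr{h}\simeq 2^{-j}$ (by averaging in $h$ and the Mikhlin multiplier theorem), while $\nr{D\Delta_{j}v}_{L^{\mmm}(\ern)}\lesssim 2^{j}\nr{\Delta_{j}v}_{L^{\mmm}(\ern)}$; summing the now convergent series $\sum_{j\ge0}2^{j(1-\tta)}\lesssim(\tta-1)^{-1}$ and adding the low-frequency contribution $\nr{v}_{L^{\mmm}(\ern)}\le\mathcal{h}_0^{\tta}\mathcal S$ gives $\nr{Dv}_{L^{\mmm}(\ern)}\lesssim_{n,\mmm}(\tta-1)^{-1}(2-\tta)^{-1}\mathcal S$. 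Since $Dw=Dv$ a.e. on $B_{\rr}$, this is \eqref{immersione2}. I expect the two genuinely delicate points to be precisely these: the bookkeeping of the concentric shells, so that no finite difference and no Littlewood--Paley block of $v$ ever leaves $B_{\rr+7\mathcal{h}_0}$; and the correct placement of the two singular factors --- handling the cut-off cross term at order $\tta-1$, where one picks up $(2-\tta)^{-1}$ rather than the lossy $(\tta-1)^{-1}$, while the other $(\tta-1)^{-1}$ is supplied by the global embedding. For the optimal numerology I would simply rely on \cite{bl,gl,d}.
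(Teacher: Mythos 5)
The paper does not actually prove Lemma \ref{bllema}: it quotes it from \cite[Proposition 2.4]{bl} and \cite[Lemma 2.4]{gl}, so there is no internal argument to compare with, and your reconstruction follows the same route as those references. Your key steps check out: the second-difference Leibniz identity and the shell bookkeeping are correct, downgrading the second-difference bound to order $\tta-1$ before the dyadic recovery of first differences does produce the convergent series and the factor $(2-\tta)^{-1}$, and the Littlewood--Paley/Bernstein summation supplies the remaining $(\tta-1)^{-1}$, with the $m\in(1,\infty)$ hypothesis used exactly where the multiplier theorem enters. The only point you leave implicit is that the recursion $\tau_e w=\tfrac12\tau_{2e}w-\tfrac12\tau_e^{2}w$ cannot be iterated indefinitely --- both the hypothesis and the domain of $w$ only allow increments below $\mathcal{h}_{0}$ --- so it must be stopped at $2^{k}\snr{e}\approx\mathcal{h}_{0}$, the leftover term $2^{-k}\nr{\tau_{2^{k}e}w}_{L^{\mmm}(B_{\rr+5\mathcal{h}_{0}})}\lesssim(\snr{e}/\mathcal{h}_{0})\nr{w}_{L^{\mmm}(B_{\rr+6\mathcal{h}_{0}})}$ being absorbed into the $\mathcal{h}_{0}^{-\tta}\nr{w}$ portion of your $\mathcal{S}$; once that is recorded, the argument is complete, up to the standard multiplier details you defer to \cite{bl,gl,d}, which is precisely what the paper itself does.
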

We proceed with two lemmas of interpolative nature; the former is  \cite[Corollary 3.2,(c)]{bm}, while the latter is instead a localization in Nikol'skii spaces for which we provide a proof. 
\begin{lemma}\, \hspace{-2.5mm}  With $\tta \in (0,1]$, $\mmm\in (1,\infty)$, and $w\in W^{\tta,\mmm}(\mathbb{R}^{n};\mathbb{R}^{k})\cap L^{\infty}(\mathbb{R}^{n};\mathbb{R}^{k})$, 
\eqn{bm.1}
$$
\nr{w}_{W^{\sigma\tta,\mmm/\sigma}(\mathbb{R}^{n})}\lesssim_{n,\tta,\mmm,\sigma}\nr{w}_{L^{\infty}(\mathbb{R}^{n})}^{1-\sigma}\nr{w}_{W^{\tta,\mmm}(\mathbb{R}^{n})}^{\sigma}
$$
holds for every $\sigma \in (0,1)$. 
\end{lemma}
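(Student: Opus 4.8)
The plan is to prove \eqref{bm.1}, a Gagliardo--Nirenberg type interpolation inequality, directly from the definition \eqref{gaglia} by estimating separately the two pieces $\nr{w}_{L^{\mmm/\sigma}(\ern)}$ and $[w]_{\sigma\tta,\mmm/\sigma;\ern}$ of $\nr{w}_{W^{\sigma\tta,\mmm/\sigma}(\ern)}$. First I would record that the exponents are admissible: since $\mmm>1>\sigma$ one has $\mmm/\sigma>1$, and since $\tta\le 1$ and $\sigma<1$ one has $\sigma\tta<1$, so $W^{\sigma\tta,\mmm/\sigma}(\ern)$ is a genuine Sobolev--Slobodeckij space; moreover the crucial bookkeeping identity is that the weight exponent occurring in $[w]_{\sigma\tta,\mmm/\sigma;\ern}$ equals $n+(\sigma\tta)(\mmm/\sigma)=n+\tta\mmm$, exactly the one appearing in $[w]_{\tta,\mmm;\ern}$.

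For the Lebesgue part I would write $\snr{w}^{\mmm/\sigma}=\snr{w}^{\mmm}\snr{w}^{\mmm(1-\sigma)/\sigma}$, bound the second factor by $\nr{w}_{L^{\infty}(\ern)}^{\mmm(1-\sigma)/\sigma}$ (legitimate since $w\in L^{\infty}$ and $(1-\sigma)/\sigma>0$), integrate over $\ern$, and take the $(\sigma/\mmm)$-th power, obtaining $\nr{w}_{L^{\mmm/\sigma}(\ern)}\le \nr{w}_{L^{\infty}(\ern)}^{1-\sigma}\nr{w}_{L^{\mmm}(\ern)}^{\sigma}$. For the seminorm I would likewise split $\snr{w(x)-w(y)}^{\mmm/\sigma}=\snr{w(x)-w(y)}^{\mmm}\snr{w(x)-w(y)}^{\mmm(1-\sigma)/\sigma}$, estimate the second factor by $(2\nr{w}_{L^{\infty}(\ern)})^{\mmm(1-\sigma)/\sigma}$, integrate against $\snr{x-y}^{-(n+\tta\mmm)}\dx\dy$, and again take the $(\sigma/\mmm)$-th power, which gives $[w]_{\sigma\tta,\mmm/\sigma;\ern}\le 2^{1-\sigma}\nr{w}_{L^{\infty}(\ern)}^{1-\sigma}[w]_{\tta,\mmm;\ern}^{\sigma}$. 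Summing the two estimates and using $\max\{\nr{w}_{L^{\mmm}(\ern)},[w]_{\tta,\mmm;\ern}\}\le \nr{w}_{W^{\tta,\mmm}(\ern)}$ together with the monotonicity of $t\mapsto t^{\sigma}$ yields \eqref{bm.1} with an absolute constant (in fact bounded by $3$), so that the dependence indicated by $\lesssim_{n,\tta,\mmm,\sigma}$ is more than enough.

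I do not expect a genuine obstacle here: the only points needing a word are the admissibility of the exponents $\mmm/\sigma$ and $\sigma\tta$ noted above and the finiteness of all quantities involved, which is guaranteed precisely by the hypothesis $w\in W^{\tta,\mmm}(\ern)\cap L^{\infty}(\ern)$ (so that interpolating the integrand by the $L^{\infty}$-bound is licit and the right-hand side is finite). Thus \eqref{bm.1} follows from the definition via elementary H\"older-type manipulations, and the argument above reproduces the short proof of \cite[Corollary 3.2,(c)]{bm}.
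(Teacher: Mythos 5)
Your proof is correct, and it is worth noting that the paper does not prove this lemma at all: it simply quotes it as \cite[Corollary 3.2,(c)]{bm}, where it appears as a special case of general Gagliardo--Nirenberg type interpolation in fractional Sobolev spaces. Your argument replaces that citation by a short self-contained computation, and the reason it works so painlessly is exactly the bookkeeping identity you isolate: with target exponents $\sigma\tta$ and $\mmm/\sigma$ the Gagliardo weight satisfies $n+(\sigma\tta)(\mmm/\sigma)=n+\tta\mmm$, so the kernel in the $W^{\sigma\tta,\mmm/\sigma}$ seminorm is literally the same as in the $W^{\tta,\mmm}$ seminorm, and the only thing left to do is peel off the factor $\snr{w(x)-w(y)}^{\mmm(1-\sigma)/\sigma}\le (2\nr{w}_{L^\infty})^{\mmm(1-\sigma)/\sigma}$ (and the analogous factor for the Lebesgue part). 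This endpoint structure --- interpolation against $L^\infty$ along the line where $s\mmm$ is constant --- is precisely what makes the elementary H\"older-type manipulation sufficient, whereas the result in \cite{bm} covers far more general triples of spaces and requires genuinely harder analysis. So your route is more elementary and fully adequate for the statement as used in the paper; the only trade-off is that it does not give the general interpolation theorem, which is not needed here. The admissibility remarks ($\mmm/\sigma>1$, $\sigma\tta<1$) and the finiteness of the right-hand side are handled correctly, and your constant (an absolute one, at most $3$) is even better than the stated dependence on $n,\tta,\mmm,\sigma$.
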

\begin{lemma}\label{ls} 
Let $B_{\theta\rr} \Subset B_{\rr}\subset \mathbb{R}^{n}$ be concentric balls with $\theta\in (0,1)$ and let $w\in W^{1,m}(B_{\rr})\cap L^{\infty}(B_{\rr})$. Then 
\begin{flalign}\label{fs.0}
\theta^{ns/m}\sup_{0<\snr{h}<(1-\theta)\rr/4}\left(\mint_{B_{\theta\rr}}\left|\frac{\tau_{h}w}{\snr{h}^{s}}\right|^{m/s}\dx\right)^{s/m}&\lesssim_{n,s,m} \frac{\nr{w}_{L^{\infty}(B_{\rr})}^{1-s}}{(1-\theta)^{s}\rr^{s}}\left(\mint_{B_{\rr} }\snr{w}^{m}\dx\right)^{s/m}\nonumber \\
&\qquad \ \ +\nr{w}_{L^{\infty}(B_{\rr})}^{1-s}\left(\mint_{B_{\rr} }\snr{Dw}^{m}\dx\right)^{s/m}
\end{flalign}
holds whenever $s\in (0,1)$, $\mmm\in (1,\infty)$.
\end{lemma}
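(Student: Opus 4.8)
The plan is to deduce the localized Nikol'skii estimate \eqref{fs.0} from the global interpolation inequality \eqref{bm.1} by a standard cutoff-and-extension device. First I would fix a cutoff function $\eta\in C^\infty_c(B_{(1+\theta)\rr/2})$ with $\eta\equiv 1$ on $B_{\theta\rr}$, $0\le\eta\le1$ and $\snr{D\eta}\lesssim_n 1/((1-\theta)\rr)$, and set $\ti w:=\eta w$, extended by zero to all of $\er^n$. Then $\ti w\in W^{1,m}(\er^n)\cap L^\infty(\er^n)$ with $\ti w\equiv w$ on $B_{\theta\rr}$, $\nr{\ti w}_{L^\infty(\er^n)}\le\nr{w}_{L^\infty(B_{\rr})}$, and by the Leibniz rule $\snr{D\ti w}\lesssim \snr{D w}\mathds 1_{B_{(1+\theta)\rr/2}}+\snr{w}\snr{D\eta}$, so that
\begin{flalign*}
\nr{D\ti w}_{L^m(\er^n)}\lesssim_n \nr{Dw}_{L^m(B_{\rr})}+\frac{1}{(1-\theta)\rr}\nr{w}_{L^m(B_{\rr})}\,.
\end{flalign*}

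Next I would take $\tta=1$ and $\sigma=s$ in \eqref{bm.1}, noting $W^{1,m}(\er^n)\hookrightarrow W^{s,m}(\er^n)$ with $\nr{\ti w}_{W^{s,m}(\er^n)}\lesssim_{n,s,m}\nr{\ti w}_{L^m(\er^n)}+\nr{D\ti w}_{L^m(\er^n)}$, to obtain
\begin{flalign*}
\nr{\ti w}_{W^{s,m/s}(\er^n)}\lesssim_{n,s,m}\nr{\ti w}_{L^\infty(\er^n)}^{1-s}\left(\nr{\ti w}_{L^m(\er^n)}+\nr{D\ti w}_{L^m(\er^n)}\right)^{s}\,.
\end{flalign*}
Since $w=\ti w$ on $B_{\theta\rr}$ and $\snr{h}<(1-\theta)\rr/4$ keeps the relevant translates inside $B_{(1+\theta)\rr/2}$ where things are controlled, the Sobolev--Slobodeckij seminorm controls the local Nikol'skii quantity: using the embedding $W^{s,m/s}(\er^n)\hookrightarrow N^{s,m/s}(\er^n)$ from \eqref{33} (right-hand inequality, with exponent $m/s$ and differentiability $s$), one gets
\begin{flalign*}
\sup_{0<\snr{h}<(1-\theta)\rr/4}\left(\int_{B_{\theta\rr}}\left|\frac{\tau_h w}{\snr{h}^{s}}\right|^{m/s}\dx\right)^{s/m}\lesssim_{n,s,m}\nr{\ti w}_{W^{s,m/s}(\er^n)}\,.
\end{flalign*}

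Chaining the three displays and then passing from full $L^m$, $L^\infty$ norms on $\er^n$ to averaged integrals over $B_{\rr}$ — i.e. multiplying and dividing by appropriate powers of $\snr{B_{\rr}}\approx_n\rr^n$ and $\snr{B_{\theta\rr}}\approx_n(\theta\rr)^n$ — produces exactly the $\theta^{ns/m}$ prefactor on the left and the two terms on the right of \eqref{fs.0}, after splitting $(a+b)^s\lesssim_s a^s+b^s$ to separate the $\nr{w}_{L^m}$ and $\nr{Dw}_{L^m}$ contributions and absorbing the factor $1/((1-\theta)\rr)$ into the first term. The only point requiring a little care — and the main (mild) obstacle — is bookkeeping the dimensional normalization constants so that the homogeneities in $\rr$ and $\theta$ come out precisely as stated; all the functional-analytic content is already packaged in \eqref{bm.1} and \eqref{33}, so no genuinely new estimate is needed.
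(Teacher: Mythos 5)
Your strategy is the paper's own: cut off, extend to $\er^n$, apply \eqref{bm.1} with $\tta=1$, $\sigma=s$, pass through the embedding \eqref{33} into the Nikol'skii space, and then do the bookkeeping. Two steps, however, would fail exactly as you wrote them. First, with your cutoff ($\eta\equiv1$ only on $B_{\theta\rr}$) the identification $\tau_h w=\tau_h\tilde w$ on $B_{\theta\rr}$ is false in general: for $x\in B_{\theta\rr}$ and $0<\snr{h}<(1-\theta)\rr/4$ the translated point $x+h$ may leave $B_{\theta\rr}$, and it is not enough that it stays inside the support $B_{(1+\theta)\rr/2}$ — you need $\eta(x+h)=1$, otherwise an error $w(x+h)\left(1-\eta(x+h)\right)$ appears which is not controlled by the Gagliardo seminorm of $\tilde w$. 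The cure is to take $\eta\equiv1$ on a slightly larger ball, e.g. $B_{(1+3\theta)\rr/4}$, still compactly contained in $B_{(1+\theta)\rr/2}$; this is precisely what the paper does (after rescaling it uses $\mathds{1}_{\mathcal B_{(2\theta+1)/3}}\le\eta\le\mathds{1}_{\mathcal B_{(\theta+1)/2}}$).

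Second, the "bookkeeping" you defer is not innocuous if you work directly at scale $\rr$. Inequality \eqref{bm.1} carries the full $W^{1,m}$-norm, so your chain produces $\nr{w}_{L^{\infty}(B_{\rr})}^{1-s}\left(\nr{\tilde w}_{L^{m}(\er^n)}+\nr{D\tilde w}_{L^{m}(\er^n)}\right)^{s}$, and the term $\nr{\tilde w}_{L^{m}}^{s}$ yields, after averaging, an extra contribution $\nr{w}_{L^{\infty}(B_{\rr})}^{1-s}\left(\mint_{B_{\rr}}\snr{w}^{m}\dx\right)^{s/m}$ with no factor $(1-\theta)^{-s}\rr^{-s}$. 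Since the lemma does not assume $\rr\le1$, this term is not dominated by the right-hand side of \eqref{fs.0} when $(1-\theta)\rr$ is large (test with $w$ constant), so your argument only proves a weaker statement in that regime; the absorption you invoke goes the wrong way. The paper sidesteps this by first rescaling $w$ to the unit ball, where $\snr{D\eta}\lesssim1/(1-\theta)\ge1$ makes the absorption $\nr{\tilde w}_{L^{m}}\le\nr{w_{\rr}}_{L^{m}}/(1-\theta)$ automatic, and only scaling back at the very end, which is what produces the stated powers of $\rr$ and $\theta$; alternatively you could use a scale-invariant (seminorm) form of the interpolation inequality. With these two repairs your proof coincides with the paper's.
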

\begin{proof}
We rescale $w$ on $B_{\rr}\equiv B_{\rr}(x_{0})$ defining $w_{\rr}(x):=w(x_{0}+\rr x)$; this  belongs to $W^{1,m}(\mathcal B_1)\cap L^{\infty}(\mathcal B_1)$, recall the notation in \rif{zerocenter}. Then we choose a cut-off function $\eta\in C^{1}_{0}(\mathcal B_1)$ such that $\mathds{1}_{\mathcal B_{(2\theta+1)/3}}\le \eta\le \mathds{1}_{\mathcal B_{(\theta+1)/2}}$ and $\snr{D\eta}\lesssim 1/(1-\theta)$. The function $\ti{w}:=\eta w_{\rr}$ obviously belongs to $W^{1,m}(\mathbb{R}^{n})\cap L^{\infty}(\mathbb{R}^{n})$ with
\begin{flalign}\label{fs.1}
\nr{\ti{w}}_{L^{\infty}(\mathbb{R}^{n})}\le \nr{w_{\rr}}_{L^{\infty}(\mathcal B_1)}\qquad \mbox{and}\qquad \nr{D\ti{w}}_{L^{m}(\mathbb{R}^{n})}\lesssim \frac{\nr{w_{\rr}}_{L^{m}(\mathcal B_1)}}{(1-\theta)}+\nr{Dw_{\rr}}_{L^{m}(\mathcal B_1)}\,.
\end{flalign}
Note that $\snr{h}<(1-\theta)/4$ implies $\eta(x)=\eta(x+h)=1$ for $x \in \mathcal B_{\theta}$, so that
\begin{eqnarray*}
\sup_{0<\snr{h}<(1-\theta)/4}\left(\int_{\mathcal B_{\theta}}\left|\frac{\tau_{h}w_{\rr}}{\snr{h}^{s}}\right|^{m/s}\dx\right)^{s/m}&=&\sup_{0<\snr{h}<(1-\theta)/4}\left(\int_{\mathcal B_{\theta}}\left|\frac{\tau_{h}\ti{w}}{\snr{h}^{s}}\right|^{m/s}\dx\right)^{s/m}\\
&\le &\nr{\ti{w}}_{N^{s,m/s}(\mathbb{R}^{n})}\nonumber \\
&\stackrel{\eqref{33}}{\le}&c\nr{\ti{w}}_{W^{s,m/s}(\mathbb{R}^{n})}\\
&\stackrel{\eqref{bm.1}}{\le}& c\nr{\ti{w}}^{1-s}_{L^{\infty}(\mathbb{R}^{n})}\nr{\ti{w}}^{s}_{W^{1,m}(\mathbb{R}^{n})}\nonumber \\
&\stackrel{\eqref{fs.1}}{\le}&\frac{c\nr{w_{\rr}}_{L^{\infty}(\mathcal B_1)}^{1-s}\nr{w_{\rr}}_{L^{m}(\mathcal B_1)}^{s}}{(1-\theta)^{s}} \notag \\
&& \  +c\nr{w_{\rr}}_{L^{\infty}(\mathcal B_1)}^{1-s}\nr{Dw_{\rr}}_{L^{\mmm}(\mathcal B_1)}^{s},
\end{eqnarray*}
with $c\equiv c(n,s,m)$. Estimate \rif{fs.0} follows by scaling back to $B_{\rr}$ the inequality on the above display. 
\end{proof}
\section{Nonlinear potentials} Here we recall a few basic tools and results from \cite{piovra}. These essentially deal with a class of nonlinear potentials of the type originally introduced by Havin \& Maz'ya \cite{HM}; see also \cite{BM,camel,ciccio,logdoppio} for applications in the setting of nonuniformly elliptic problems. With $\sigma>0$ and $\vartheta\geq 0$ being fixed parameters, and $f\in L^{1}(B_{r}(x_{0}))$, where $B_{r}(x_0)\subset \er^n$ is a ball, the nonlinear Havin-Mazya-Wolff type potential ${\bf P}_{\sigma}^{\theta}(f;\cdot)$ is defined by 
\eqn{defi-P} 
$$
{\bf P}_{\sigma}^{\vartheta}(f;x_0,r) := \int_0^r \varrho^{\sigma} \left(  \mint_{B_{\varrho}(x_0)} \snr{f} \dx \right)^{\vartheta} \frac{\d\varrho}{\varrho} \,.
$$
Such potentials generalize classical Riesz and Wolff potentials for suitable choices of $\vartheta, \sigma$ \cite{piovra}. For instance, by choosing $\vartheta=\sigma=1$ we obtain the classical (truncated) Riesz potential ${\bf I}_{1}(f)$, while taking $\vartheta =1/(p-1)$ and $\sigma= p\alpha/(p-1)$ we obtain the classical Wolff potential ${\bf W}_{\alpha,p}^f$; see for instance \cite{cimemin} the precise definitions. The next result is a special case of \cite[Lemma 4.1]{piovra}. 
\begin{lemma}\label{crit} 
Let $B_{\tau}\Subset B_{\tau+r}\subset \mathbb{R}^{n}$ be two concentric balls with $\tau, r\leq 1$, $f\in L^{1}(B_{\tau+r})$ and let $\sigma,\vartheta>0$ be such that $ n\vartheta>\sigma$. Then
\eqn{stimazza}
$$
\nr{{\bf P}_{\sigma}^{\vartheta}(f;\cdot,r)}_{L^{\infty}(B_{\tau})} \lesssim_{n,\vartheta,\sigma,m} c\|f\|_{L^{m}(B_{\tau+r})}^{\vartheta} $$
holds whenever $m > n\vartheta/\sigma>1$. 
\end{lemma}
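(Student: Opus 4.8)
The plan is to collapse the $n$-dimensional potential $\mathbf P_\sigma^\vartheta$ into a purely one-dimensional radial integral by Hölder's inequality, so that the whole statement reduces to the elementary integrability of a power of $\varrho$ near the origin. First I would fix an arbitrary point $x_0\in B_\tau$ and note that, because $B_\tau$ and $B_{\tau+r}$ are concentric and $\varrho\leq r$, one has $B_\varrho(x_0)\subset B_{\tau+r}$ for every $\varrho\in(0,r]$. Since $m>n\vartheta/\sigma>1$, Hölder's (equivalently Jensen's) inequality on $B_\varrho(x_0)$ with exponents $m$ and $m/(m-1)$ gives
$$\mint_{B_\varrho(x_0)}\snr{f}\dx\le\left(\mint_{B_\varrho(x_0)}\snr{f}^{m}\dx\right)^{1/m}\le c(n,m)\,\varrho^{-n/m}\,\|f\|_{L^{m}(B_{\tau+r})}\,.$$
If $f\notin L^{m}(B_{\tau+r})$ the bound \eqref{stimazza} is trivial, so we may assume the right-hand side is finite.

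Next I would insert this estimate into the definition \eqref{defi-P} of $\mathbf P_\sigma^\vartheta(f;x_0,r)$, raising the average to the power $\vartheta$, which yields
$$\mathbf P_\sigma^\vartheta(f;x_0,r)\le c(n,m)^{\vartheta}\,\|f\|_{L^{m}(B_{\tau+r})}^{\vartheta}\int_0^r\varrho^{\sigma-n\vartheta/m}\,\frac{\d\varrho}{\varrho}\,.$$
The hypothesis $m>n\vartheta/\sigma$ is exactly equivalent to $\sigma-n\vartheta/m>0$, which makes the integrand $\varrho^{\sigma-n\vartheta/m-1}$ integrable at $\varrho=0$; using $r\leq1$,
$$\int_0^r\varrho^{\sigma-n\vartheta/m-1}\,\d\varrho=\frac{r^{\sigma-n\vartheta/m}}{\sigma-n\vartheta/m}\le\frac{1}{\sigma-n\vartheta/m}\,,$$
a quantity depending only on $n,\vartheta,\sigma,m$. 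Since the resulting bound is independent of $x_0$, taking the supremum over $x_0\in B_\tau$ produces \eqref{stimazza} with a constant of the asserted form.

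There is essentially no obstacle: the only step that uses the structural assumption is the borderline integrability of $\varrho^{\sigma-n\vartheta/m-1}$ near the origin, which is precisely what $m>n\vartheta/\sigma$ guarantees, while the auxiliary condition $n\vartheta>\sigma$ merely ensures that such an exponent $m>1$ exists in the first place. Everything else is the standard Hölder inequality and a one-variable integration, so the argument is short and self-contained once the radial reduction is carried out.
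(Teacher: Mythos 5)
Your proof is correct: the Hölder bound $\mint_{B_\varrho(x_0)}\snr{f}\dx\lesssim \varrho^{-n/m}\nr{f}_{L^m(B_{\tau+r})}$ combined with the integrability of $\varrho^{\sigma-n\vartheta/m-1}$ near $\varrho=0$ (which is exactly $m>n\vartheta/\sigma$) is the standard argument for this potential estimate, and the inclusion $B_\varrho(x_0)\subset B_{\tau+r}$ for $x_0\in B_\tau$, $\varrho\le r$ is used correctly. The paper does not reproduce a proof here but only cites \cite[Lemma 4.1]{piovra}, whose proof proceeds in essentially this same way, so your argument matches the intended one.
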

The next lemma is a version of \cite[Lemma 4.2]{piovra}, see also \cite[Lemma 3.1]{BM}, and takes information from various results from Nonlinear Potential Theory starting by the seminal paper \cite{kilp}. It is essentially a nonlinear potential theoretic version of De Giorgi's iteration in that a kind of reverse H\"older's inequality with a remainder term \rif{revva} is shown to imply a pointwise bound \rif{siapplica}, while the presence of the remainder term is encoded by a potential of the type in \rif{defi-P}. The key point here is the tracking of the explicit dependence on the constants $M_0, M_*$ from \rif{revva} to \rif{siapplica}. 
\begin{lemma}\label{revlem}
Let $B_{r_{0}}(x_{0})\subset \mathbb{R}^{n}$ be a ball, $n\ge 2$, $f \in L^1(B_{2r_0}(x_{0}))$, and constants $\chi >1$, $\sigma, \vartheta,\ti{c},M_{0}>0$ and $\kappa_0, M_{*}\geq 0$. Assume that $v \in L^2(B_{r_0}(x_0))$ is such that for all $\kk\ge \kk_{0}$, and for every concentric ball $B_{\rr}(x_{0})\subseteq B_{r_{0}}(x_{0})$, the inequality
\begin{flalign}
\left(\mint_{B_{\rr/2}(x_{0})}(v-\kk)_{+}^{2\chi}  \dx\right)^{\frac1{2\chi}}  &\le \ti{c}M_{0}\left(\mint_{B_{\rr}(x_{0})}(v-\kk)_{+}^{2}  \dx\right)^{\frac{1}{2}}+\ti{c} M_{*}\rr^{\sigma}\left(\mint_{B_{\rr}(x_{0})}\snr{f}  \dx\right)^{\vartheta}
 \label{revva}
\end{flalign}
holds. If $x_{0}$ is a Lebesgue point of $v$ in the sense that 
$$
v(x_0) = \lim_{r\to 0} (v)_{B_{r}(x_0)}\,,
$$ then
\eqn{siapplica}
$$
 v(x_{0})  \le\kk_{0}+cM_{0}^{\frac{\chi}{\chi-1}}\left(\mint_{B_{r_{0}}(x_{0})}(v-\kk_{0})_{+}^{2}  \dx\right)^{1/2}
+cM_{0}^{\frac{1}{\chi-1}} M_{*}\mathbf{P}^{\vartheta}_{\sigma}(f;x_{0},2r_{0})
$$
holds with $c\equiv c(n,\chi,\sigma,\vartheta,\ti{c})$.  
\end{lemma}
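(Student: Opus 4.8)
The plan is to run a De Giorgi--type iteration on a dyadic sequence of balls and increasing truncation levels, transforming the gain-of-integrability estimate \eqref{revva} into a geometric decay of excess energies, and then to recognize the accumulated remainder contributions as a telescoping Riemann sum approximating the nonlinear potential in \eqref{defi-P}. Concretely, fix $x_0$ and for $j\geq 0$ set $r_j := 2^{-j}r_0$, $B^j := B_{r_j}(x_0)$, and a nondecreasing sequence of levels $\kk_j := \kk_0 + d(1-2^{-j})$ with $d>0$ a free parameter to be chosen at the end; put $A_j := \bigl(\mint_{B^j}(v-\kk_j)_+^2\dx\bigr)^{1/2}$. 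Applying \eqref{revva} on $B^j$ with $\kk = \kk_{j+1}\geq \kk_0$ and using $(v-\kk_{j+1})_+\leq (v-\kk_j)_+$ on $B^{j+1}$ together with the Chebyshev/level-set inequality $\snr{\{v>\kk_{j+1}\}\cap B^{j+1}}\leq (\kk_{j+1}-\kk_j)^{-2}\int_{B^j}(v-\kk_j)_+^2$, one gains a small-measure factor and, via H\"older between exponents $2$ and $2\chi$, arrives at a recursion of the schematic form
\begin{flalign*}
A_{j+1} \;\leq\; \frac{c\,\ti c\,M_0\,2^{(j+1)(1-1/\chi)\cdot\frac{2}{\ldots}}}{d^{\,\kappa}}\,A_j^{1+\delta} \;+\; c\,\ti c\,M_*\,r_{j}^{\sigma}\Bigl(\mint_{B^{j}}\snr{f}\dx\Bigr)^{\vartheta},
\end{flalign*}
with $\delta>0$ and $\kappa>0$ depending only on $n,\chi$, plus a uniform geometric weight in $j$ absorbed into the iteration constant.

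The second step is to iterate this inhomogeneous nonlinear recursion. The homogeneous part is handled by the classical fast-geometric-convergence lemma (if $A_0\leq \varepsilon_*\, (\,\cdot\,)$ for a threshold depending on $n,\chi,\ti c$ and the chosen $d$, then $A_j\to 0$); the key accounting is that the contribution of the $k$-th remainder term to $A_j$ is damped by the super-linear factor acting on all later steps, so summing in $k$ and passing $j\to\infty$ produces a bound of the form
\begin{flalign*}
0 = \lim_{j\to\infty} A_j, \qquad\text{forcing}\qquad v(x_0)\leq \lim_{j\to\infty}\kk_j = \kk_0 + d,
\end{flalign*}
\emph{provided} $d$ is chosen so that the smallness condition on $A_0$ is met and so that the remainder sum is controlled. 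Choosing $d \approx M_0^{\chi/(\chi-1)}A_0 + M_0^{1/(\chi-1)}M_*\sum_{j\geq 0} r_j^{\sigma}\bigl(\mint_{B^j}\snr f\dx\bigr)^{\vartheta}$ makes both requirements hold, after checking the exponents of $M_0$ match those in \eqref{siapplica}: the homogeneous threshold scales like $M_0^{-\chi/(\chi-1)}$ so balancing it against $A_0$ yields the factor $M_0^{\chi/(\chi-1)}$, while each remainder term, being hit once by the linear part and then propagated, picks up the factor $M_0^{1/(\chi-1)}$.

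The last step is to identify $\sum_{j\geq 0} r_j^{\sigma}\bigl(\mint_{B^j}\snr f\dx\bigr)^{\vartheta}$ with $\mathbf{P}^{\vartheta}_{\sigma}(f;x_0,2r_0)$ up to a constant $c(n,\sigma,\vartheta)$: on each dyadic annulus $\rr\in[r_{j+1},r_j]$ one has $\rr^{\sigma}\approx r_j^{\sigma}$ and, by monotonicity of integral averages up to a dimensional constant, $\mint_{B_{\rr}(x_0)}\snr f\lesssim_n \mint_{B^{j}}\snr f + \mint_{B^{j-1}}\snr f$, so comparing the series with the integral $\int_0^{2r_0}\rr^{\sigma}(\mint_{B_\rr}\snr f)^{\vartheta}\,d\rr/\rr$ costs only such constants (this is exactly the comparison already recorded implicitly in \cite[Lemma 4.2]{piovra}). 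Finally one must verify the hypothesis $n\vartheta$ versus $\sigma$ is \emph{not} needed here — indeed Lemma \ref{revlem} makes no such restriction, only Lemma \ref{crit} does — so the potential on the right of \eqref{siapplica} may a priori be infinite, in which case the estimate is vacuously true. \textbf{The main obstacle} I anticipate is the bookkeeping of the two distinct powers of $M_0$ through the inhomogeneous iteration: one must track carefully how the super-linear exponent $1+\delta$ compounds over infinitely many steps and how it distributes differently over the initial-energy term (which is iterated from step $0$) versus each remainder term (iterated from step $k$ onward), since getting these exponents exactly $\chi/(\chi-1)$ and $1/(\chi-1)$ — rather than some cruder bound — is precisely the sharp information the lemma is designed to deliver and is what makes it usable downstream in the renormalized Caccioppoli argument.
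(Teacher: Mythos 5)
Your overall architecture (dyadic balls $B_j:=B_{2^{-j}r_0}(x_0)$, increasing levels, turning \eqref{revva} into a recursion for the excesses $A_j:=(\mint_{B_j}(v-\kk_j)_+^2\dx)^{1/2}$, and identifying $\sum_j W_j$, $W_j:= r_j^{\sigma}(\mint_{B_j}\snr{f}\dx)^{\vartheta}$, with $\mathbf{P}^{\vartheta}_{\sigma}(f;x_0,2r_0)$ up to dimensional constants) is the right one, and your closing remarks (no relation between $n\vartheta$ and $\sigma$ is needed; the estimate is vacuous if the potential is infinite) are correct. The genuine gap is the choice of \emph{fixed} geometric levels $\kk_j=\kk_0+d(1-2^{-j})$. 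Applying \eqref{revva} at level $\kk_{j+1}$ on $B_j$ and Chebyshev at level $\kk_{j+1}$ gives $A_{j+1}\le [\tilde c M_0A_j+\tilde cM_*W_j]\,(c\,2^{j}A_j/d)^{1-1/\chi}$; the factor $2^{j(1-1/\chi)}$, present precisely because your increments shrink geometrically, forces the fast-convergence envelope to decay like $A_j\lesssim d\,M_0^{-\chi/(\chi-1)}2^{-j}$. A remainder injected at scale $m$ contributes about $M_*W_m/M_0$ to $A_{m+1}$, \emph{uniformly in} $m$, so to stay under the envelope you need $d\gtrsim M_0^{1/(\chi-1)}M_*\,2^{m}W_m$ for every $m$, i.e. $d\gtrsim M_0^{1/(\chi-1)}M_*\sup_m 2^mW_m$. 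This quantity is not controlled by $\sum_m W_m\approx \mathbf{P}^{\vartheta}_{\sigma}(f;x_0,2r_0)$ (take $f$ concentrated on an annulus at scale $2^{-m}r_0$, or $W_m\sim m^{-2}$: the sum stays finite while $\sup_m2^mW_m$ blows up), and superposition over scales is unavailable because the recursion is superlinear in $A_j$. So the single-parameter scheme with geometric levels cannot deliver \eqref{siapplica}, independently of how carefully the powers of $M_0$ are tracked.

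The paper does not prove the lemma from scratch: it is quoted from \cite[Lemma 4.2]{piovra} and \cite[Lemma 3.1]{BM}, whose proofs use exactly the device your plan is missing, namely \emph{adaptive} level increments proportional to the current excess, $\kk_{j+1}:=\kk_j+H M_0^{\chi/(\chi-1)}A_j$ with $H\equiv H(n,\chi,\tilde c)$ large. Then the measure-shrinking factor becomes $\bigl(c_nA_j/(\kk_{j+1}-\kk_j)\bigr)^{1-1/\chi}\approx c\,H^{-(1-1/\chi)}M_0^{-1}$, which kills one power of $M_0$ and yields the \emph{linear} recursion $A_{j+1}\le \tfrac12A_j+cM_0^{-1}M_*W_j$; summing gives $\sum_jA_j\le 2A_0+cM_0^{-1}M_*\sum_jW_j$, hence $\lim_j\kk_j\le \kk_0+cM_0^{\chi/(\chi-1)}A_0+cM_0^{1/(\chi-1)}M_*\mathbf{P}^{\vartheta}_{\sigma}(f;x_0,2r_0)$, and the Lebesgue-point hypothesis transfers this to $v(x_0)$ since $(v)_{B_j}\le \kk_j+A_j$. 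This is where the two exponents $\chi/(\chi-1)$ and $1/(\chi-1)$ really come from — from the size of each increment and from the one factor of $M_0$ spent absorbing the linear part — not from balancing a fast-convergence threshold. Your instinct that the bookkeeping of the two powers of $M_0$ is the delicate point was right; the fix is to let the levels adapt to the excesses, Kilpel\"ainen--Mal\'y style, rather than fixing them in advance.
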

\section{Conditional nonlinear Schauder}
In this section we enucleate in a separate statement, and extend, some purely perturbation results obtained in \cite[Section 10]{piovra}. We consider a general elliptic equation of the type
\eqn{equazioni}
$$
-\diver\, A (x,Du)=0\quad \mbox{in}\ \Omega
$$ 
where $\Omega$ is a bounded open subset of $\er^n$ and $A \colon \Omega \times \er^n \to \er^n$ satisfies 
\begin{flalign}\label{assAA}
\begin{cases}
\, \tilde{z}\mapsto A(\cdot,\tilde{z})\in C^{1}_{\loc}(\mathbb{R}^{n})\vspace{2mm}\\
 \, |A(x,  z)|+ H_{\mu}(z)^{1/2}\snr{\partial_z A(x,  z)} \le \ti{L}H_{\mu}(z)^{(q-1)/2}+\ti{L}H_{\mu}(z)^{(p-1)/2}\vspace{2mm}\\
\,  \ti{L}^{-1}H_{\mu}(z)^{(p-2)/2}\snr{\xi}^{2}\le \partial_{z}A(x,z)\xi\cdot \xi\vspace{2mm} \\
\,  \snr{A(x_{1},z)- A(x_{2}, z)}  \le \ti{L}  \snr{x_{1}-x_{2}}^{\alpha}[H_{\mu}(z)^{(q-1)/2}+H_{\mu}(z)^{(p-1)/2}]
\end{cases}
\end{flalign} 
whenever $x,x_1, x_2 \in \Omega$ and $z, \xi\in \er^n $, where $\ti{L}\geq 1$, $0< \mu \leq 2$. The main point of the next Proposition \ref{piovrapert} is that in the nondegenerate case $\mu>0$, if we know that $Du$ is H\"older continuous with some exponent, then we can improve such exponent up to the optimal one dictated by coefficients, thereby recovering in the nonlinear case the full statement of the classical linear Schauder theory. The case $p=q=2$, which is automatically non-degenerate, is a by now classical result of Giaquinta \& Giusti \cite{gg3}; in fact, the approach given here and in \cite{piovra} also gives a fast track to the interior estimates of \cite{gg3}. Assumption $\mu>0$, which is obviously linked to the continuity of $\partial_{z}A(\cdot)$, is necessary in view of the counterexamples in \cite{lewis, krol} that already hold in the uniformly elliptic, but degenerate case. 
\begin{proposition}\label{piovrapert} Let $u \in W^{1,\infty}(\Omega)$ be a weak solution to \eqref{equazioni}, under  assumptions \eqref{assAA} with $\alpha<1$ and 
\eqn{beep0}
$$
\frac{q}{p} < 1 +\frac 1n\,.
$$
Moreover, assume that $u \in C^{1, \beta}(\Omega)$ for some $\beta \in (0,\alpha)$ and that for every $H>0$ there exists a modulus of continuity $ \omega_{H}(\cdot)$ such that
\eqn{modulus}
$$
\snr{\partial_{z}A(x,z_1)-\partial_{z}A(x,z_2)} \leq \omega_{H}(\snr{z_1-z_2})\,,
$$
holds whenever $x\in \Omega$, $z_1, z_2\in \er^n$ with $|z_1|, |z_2|\leq H$. Then $u \in C^{1, \alpha}_{\loc}(\Omega)$. Moreover, with 
\eqn{grandeM}
$$
M:= \|Du\|_{L^\infty(\Omega)}+[Du]_{0, \beta;\Omega}+1\,,
$$
for every open subset $\Omega_0\Subset \Omega$ there exists a constant $c\equiv c(M, \dist(\Omega_0, \Omega))$, depending also on $n,p,q, \alpha, \beta,\ti{L},\mu$ and $\omega_{M}(\cdot)$, such that 
$
[Du]_{0, \alpha; \Omega_0} \leq c.$ 
\end{proposition}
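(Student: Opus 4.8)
The plan is to bootstrap the H\"older exponent of $Du$ from the given $\beta$ up to $\alpha$ by a freezing-of-coefficients/comparison argument, run as a Morrey-type decay iteration. The key observation is that once we know $Du\in L^\infty$ together with an a priori H\"older bound, the equation \eqref{equazioni} becomes, on small balls, a uniformly elliptic equation with H\"older coefficients, because on the set $\{|z|\leq H\}$ (with $H\approx M$) assumptions \eqref{assAA}$_{2,3}$ together with $\mu>0$ give two-sided control $c^{-1}\mathds{I}_{\rm d}\leq \partial_z A(x,z)\leq c\,\mathds{I}_{\rm d}$ on the relevant range of the gradient, with $c\equiv c(M,\mu,p,q,\ti L)$; the modulus \eqref{modulus} plus $\mu>0$ moreover makes $z\mapsto\partial_z A(x,z)$ uniformly continuous there. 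Thus the gap bound \eqref{beep0} is in fact only needed to ensure we are in the regime where $Du\in L^\infty_{\loc}$ and has \emph{some} H\"older exponent — everything here is downstream of that; the actual Schauder upgrade is a standard perturbation argument once the equation has been ``linearized at bounded gradient''.

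\textbf{Main steps.} First I would fix $\Omega_0\Subset\Omega$, set $M$ as in \eqref{grandeM}, and choose $H:=M$, so that for $x_0\in\Omega_0$ and small balls $B_{2r}(x_0)\Subset\Omega$ the frozen vector field $A(x_0,\cdot)$ is uniformly elliptic with ellipticity constants depending only on $M,\mu,n,p,q,\ti L$, by \eqref{assAA}$_{2,3}$. Second, on $B_r(x_0)$ let $v$ solve $-\dv\,A(x_0,Dv)=0$ with $v=u$ on $\partial B_r(x_0)$; from \eqref{assAA}$_4$ and the energy estimate for the comparison map one gets the oscillation bound $\mint_{B_r}|Du-Dv|^2\,dx\lesssim M^{\texttt{b}} r^{2\alpha}$, where the power $M^{\texttt{b}}$ absorbs the $H_\mu(z)^{(q-1)/2}$-type growth in the difference term — this is where the $(p,q)$-structure enters but in a harmless, already-bounded way. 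Third, since $A(x_0,\cdot)$ is smooth and uniformly elliptic on the relevant range, $Dv$ satisfies the classical De Giorgi--Nash--Moser / Schauder interior decay $\mint_{B_\rho}|Dv-(Dv)_{B_\rho}|^2\,dx\lesssim (\rho/r)^{2\gamma_0}\mint_{B_r}|Dv-(Dv)_{B_r}|^2\,dx$ for every $\gamma_0<1$ (here one uses the continuity of $\partial_z A(x_0,\cdot)$ coming from \eqref{modulus} and $\mu>0$; this is exactly the linear-coefficient step, essentially Giaquinta--Giusti \cite{gg3}). Fourth, combining the comparison estimate and the decay for $v$ in the usual triangle-inequality way yields, for $\rho\leq r$, $\mint_{B_\rho}|Du-(Du)_{B_\rho}|^2\,dx\lesssim (\rho/r)^{2\gamma_0}\mint_{B_r}|Du-(Du)_{B_r}|^2\,dx + M^{\texttt{b}} r^{2\alpha}$, and then Lemma \ref{iterlem}-type iteration (or Campanato's lemma) gives $\mint_{B_\rho}|Du-(Du)_{B_\rho}|^2\,dx\lesssim c(M,\dots)\,\rho^{2\alpha}$ for all small $\rho$ — note one picks $\gamma_0\in(\alpha,1)$ so that the self-improving term dominates. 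By Campanato's characterization this is $[Du]_{0,\alpha;\Omega_0}\leq c$, with $c$ depending on $M$, $\dist(\Omega_0,\Omega)$, $n,p,q,\alpha,\beta,\ti L,\mu$ and $\omega_M(\cdot)$, which is the claim.

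\textbf{Main obstacle.} The only genuinely delicate point is the passage through the continuity of $\partial_z A(x_0,\cdot)$: the comparison map $v$ a priori only inherits $\|Dv\|_{L^\infty}\lesssim M$ from $u$ via the maximum principle / $L^\infty$-bound for the frozen equation, so the linearized coefficients $\partial_z A(x_0,Dv(x))$ are evaluated on the bounded range $|z|\lesssim M$, and one must invoke \eqref{modulus} with $H\approx M$ (together with $\mu>0$, which keeps $H_\mu(Dv)$ bounded away from degeneracy even where $Dv$ vanishes) to get a uniform modulus of continuity for these coefficients, hence the full-exponent decay for $Dv$. This is precisely the step that fails when $\mu=0$, consistent with the counterexamples \cite{lewis, krol} cited before the Proposition. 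The bookkeeping of how the constant $c$ depends on $M$ — i.e. tracking the power $\texttt{b}\equiv\texttt{b}(n,p,q,\alpha)$ of $M$ arising from \eqref{assAA}$_{2,4}$ in the comparison estimate — requires care but is routine; no new idea beyond the ones already in \cite[Section 10]{piovra} is needed, and in particular the gap bound \eqref{beep0} is used only to guarantee the hypotheses ($u\in W^{1,\infty}$, $u\in C^{1,\beta}$) are not vacuous in the relevant applications.
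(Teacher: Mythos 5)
Your overall skeleton (freeze the coefficients at $x_0$, compare $u$ with the solution $v$ of the frozen Dirichlet problem, linearize the frozen equation at $Dv$, get some H\"older exponent for $Dv$ by De Giorgi--Nash--Moser, upgrade via the modulus \eqref{modulus} and Campanato perturbation) is the same as the paper's, which indeed follows \cite[Section 10]{piovra}. But there is a genuine gap at the step where you bound $Dv$: you assert that $\nr{Dv}_{L^\infty}\lesssim M$ ``via the maximum principle / $L^\infty$-bound for the frozen equation'', and consequently that the gap bound \rif{beep0} is not really used in the proof. Neither claim is justified. The frozen vector field $A(x_0,\cdot)$ is only uniformly elliptic on the bounded set $\{\snr{z}\lesssim M\}$; as an equation on $B_r$ it retains its full nonuniformly elliptic $(p,q)$-structure, and there is no gradient maximum principle asserting that the interior gradient of $v$ is controlled by the Lipschitz constant of the boundary datum $u$ (that would require something like the bounded slope condition, which a general $C^{0,1}$ datum does not satisfy; for autonomous $(p,q)$-problems with too large $q/p$, interior Lipschitz bounds can genuinely fail, cf. the counterexamples of \cite{gia, ma2}). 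So ``$v$ solves a uniformly elliptic equation on the relevant range'' is circular until you prove that $Dv$ stays in that range. This is exactly where \rif{beep0} enters the paper's proof: it makes available the interior a priori estimate of Bella--Sch\"affner type, \cite[Lemma 5.3]{piovra} (see \rif{supb}), which bounds $\nr{Dv}_{L^{\infty}(B_{r/2})}$ by an $L^p$-average of $Dv$; combined with the energy/comparison estimate this yields $\nr{Dv}_{L^{\infty}(B_{r/2})}\leq H(n,p,q,\ti L,M)$ as in \rif{nellapiovra}. Without that ingredient your third and fourth steps have no foundation.

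Two further, smaller points follow from the same issue. First, because the sup bound on $Dv$ is interior, the comparison estimate cannot be stated on all of $B_r$ in the form you wrote when $1<p<2$: one first proves the $V_\mu$-comparison $\mint_{B_r}\snr{V_\mu(Du)-V_\mu(Dv)}^2\dx\leq c_M r^{2\alpha}$ and only then converts it into $\mint_{B_{r/2}}\snr{Du-Dv}^2\dx\leq c_M r^{2\alpha}$ using $\nr{Dv}_{L^\infty(B_{r/2})}\leq H$; handling this subquadratic case is in fact the content of the paper's written proof (the case $p\geq 2$ being quoted from \cite{piovra}), and your plan does not distinguish it. Second, the paper does not freeze $A(x_0,\cdot)$ alone but uses $A_r(z)=A(x_{\rm c},z)+r^{\alpha}H_{\mu}(z)^{(q-2)/2}z$, a technical regularization that keeps the extra term of size $r^{\alpha}$ in the comparison estimate \rif{campcomp0}; this is a convenience rather than an essential difference, but the missing sup-gradient estimate for $v$ (and with it the genuine role of \rif{beep0}) is a real hole in the argument as you presented it.
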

\begin{proof} In the case $p\geq 2$ the proof is in \cite[Section 10.1]{piovra}. There we treated the variational case, i.e., when \rif{equazioni} is the Euler-Lagrange equation of a functional of the type \rif{ggg2}, but in fact the proof is written in a way that works directly for general equations of the type \rif{equazioni}. Here we describe how to reach the missing case $1<p<2$. For the rest of the proof we shall denote by $c$ a general constant depending at most on $n,p,q,\ti{L}$ and by $c_M$ a constant of the same type, but depending also on $M$. Following the arguments in \cite[Section 10.1]{piovra}, it is sufficient to show how to arrive at a suitable analogue of \cite[(10.2)-(10.3)]{piovra} when $p<2$. Specifically, 
we take a ball $B_{r}\equiv B_{r}(x_{\rm c})\Subset \Omega$ such that $r\leq 1$, and denote 
$A_{r}(z):=A(x_{\rm c},z)+r^{\alpha}H_{\mu}(z)^{(q-2)/2}z.$
We define $v$ as the unique solution to  
$\diver\, A_{r}(Dv) =0$ in $B_{r}$ such that $v-u\in W^{1,q}_0(B_{r})$ and our aim is to prove, for $p<2$, that 
\eqn{campcomp0}
$$
\mint_{B_{r/2}}\snr{Du-Dv}^{2}\dx+\mint_{B_{r}}\snr{V_{\mu}(Du)-V_{\mu}(Dv)}^{2}\dx\leq c_M r^{2\alpha}
$$
holds. This incorporates \cite[(10.2)]{piovra} and \cite[(10.3)]{piovra}, this last one with $r/2$ instead of $r$ (which is inessential). For this, thanks to \rif{beep0} we can use \cite[Lemma 5.3]{piovra}, that gives
\eqn{supb}
$$
\|Dv\|_{L^{\infty}(B_{r/2})} \leq c \left(\mint_{B_r}(\snr{Dv}+1)^p\dx\right)^{ \frac{1+\eps}{(n+1)p-nq}} 
$$
for every $\eps>0$ when $n=2$ and $\eps=0$ when $n\geq 3$. 
We then recall the classical monotonicity inequality
\eqn{beep}
$$
\snr{V_{\mu}(z_{1})-V_{\mu}(z_{2})}^{2} \lesssim_{p, \ti{L}}   (A_r(z_{1})-A_r(z_{2}))\cdot (z_{1}-z_{2})
 $$
 that holds for every choice of $z_1, z_2\in \er^n$. 
This is a standard consequence of \rif{Vm}$_1$, \rif{mon}$_2$ and \rif{assAA}$_3$. To proceed we estimate
\begin{eqnarray*}
&& \notag \int_{B_{r}}\snr{V_{\mu}(Du)-V_{\mu}(Dv)}^{2}\dx\\ && \qquad \stackleq{beep}  c \int_{B_{r}}\left(A_{r}(Du)-A_{r}(Dv)\right) \cdot \left(Du-Dv\right)\dx\nonumber \\
&&\qquad\  = c\int_{B_{r}}\left(A_{r}(Du)-A(x, Du)\right)\cdot \left(Du-Dv\right)\dx\nonumber\\
&&\qquad \ =c\int_{B_{r}}\left(A(x_{\rm c},Du)-A(x, Du)\right)\cdot \left(Du-Dv\right)\dx\nonumber\\
&&\qquad\qquad    + cr^{\alpha}\int_{B_{r}}H_{\mu}(Du)^{(q-2)/2}Du\cdot \left(Du-Dv\right)\dx\nonumber\\
&&  \qquad\stackrel{\eqref{assAA}_4}{\leq } cr^{\alpha}\int_{B_{r}}\left(H_{\mu}(Du)^{(q-1)/2}+H_{\mu}(Du)^{(p-1)/2}\right)\snr{Du-Dv} \dx  \\
& &\qquad\qquad   + cr^{\alpha}\int_{B_{r}}H_{\mu}(Du)^{(q-1)/2}\snr{Du-Dv} \dx\\
&&\qquad \stackleq{grandeM} c (M^{q-1}+M^{p-1})r^{\alpha}\int_{B_{r}}\snr{Du-Dv} \dx \\
&&\qquad \stackrel{\rif{Vm}_2}{\leq} c_Mr^\alpha \int_{B_{r}}\snr{V_{\mu}(Du)-V_{\mu}(Dv)}^{2/p} \dx\\
& &\qquad \qquad+
  c_Mr^\alpha \int_{B_{r}}\snr{V_{\mu}(Du)-V_{\mu}(Dv)}(\snr{Du}^{2}+\mu^{2})^{(2-p)/4}\dx\\
  &&\qquad \stackleq{grandeM}  c_Mr^\alpha \int_{B_{r}}\left(\snr{V_{\mu}(Du)-V_{\mu}(Dv)}^{2/p} +\snr{V_{\mu}(Du)-V_{\mu}(Dv)}\right) \dx\\
    &&\qquad \stackrel{\textnormal{Young}}{\leq}  \frac12 \int_{B_{r}}\snr{V_{\mu}(Du)-V_{\mu}(Dv)}^2 \dx + c_Mr^{n+2\alpha}+c_Mr^{n+\alpha p/(p-1)}\,.
\end{eqnarray*}
Recalling that $p/(p-1)\geq 2$ we obtain
 \eqn{campcomp0pre}
$$
\mint_{B_{r}}\snr{V_{\mu}(Du)-V_{\mu}(Dv)}^{2}\dx\leq c_M r^{2\alpha}
$$ 
from which we have, also using \rif{elmy0}
$$
\mint_{B_{r}}\snr{Dv}^{p}\dx \lesssim \mint_{B_{r}}(\snr{V_{\mu}(Dv)}^{2}+1)\dx\lesssim \mint_{B_{r}}\snr{V_{\mu}(Du)-V_{\mu}(Dv)}^{2}\dx+ M^p\leq c_M \,.
$$
This and \rif{supb} give
\eqn{nellapiovra}
$$\nr{Dv}_{L^{\infty}(B_{r/2})}\leq H $$
for some $H \equiv H(n,p,q,\ti{L}, M)\geq 1$.  Finally, we have 
\begin{eqnarray*}
 \mint_{B_{r/2}}\snr{Du-Dv}^2 \dx
& \stackrel{\rif{Vm}_1}{\leq}& c\mint_{B_{r/2}}(\snr{Du}^2+\snr{Dv}^2+1)^{(2-p)/2}\snr{V_{\mu}(Du)-V_{\mu}(Dv)}^2
\dx
\\
& \stackrel{\rif{grandeM},\rif{nellapiovra}}{\leq} &c_M\mint_{B_{r/2}}\snr{V_{\mu}(Du)-V_{\mu}(Dv)}^2 
\dx\stackleq{campcomp0pre} c_M r^{2\alpha}\,.
\end{eqnarray*}
This, together with \rif{campcomp0pre}, completes the proof of \rif{campcomp0}. As mentioned, the rest of the proof or Proposition \ref{piovrapert} now closely follows that in \cite[Section 10]{piovra}. Summarizing, each gradient component $\mathfrak v:=D_sv$, $s \in \{1, \ldots, n\}$, solves the linear(ized) elliptic equation $\diver\, (\mathbb{A}(x)D\mathfrak{v})=0$, where $[\mathbb{A}(x)]_{ij}:= \partial_{z_j}A_{r}^i(x_{{\rm c}},Dv(x))$. By \rif{assAA}$_{2,3}$ and \rif{nellapiovra}, this satisfies 
$
H^{p-2} \mathds{I}_{\rm d} \lesssim \mathbb{A}(x)$ and $\snr{\mathbb{A}(x)} \lesssim (\mu^{p-2} +H^{q-2}) 
$ on $B_{r/2}$,  
where $H$ is defined in \rif{nellapiovra} and $\mathds{I}_{\rm d}=[\delta_{ij}]_{ij}$. Then $[Dv]_{0, \beta_0; B_{r/4}} $ is finite for some $\beta_0>0$ by De Giorgi-Nash-Moser theory and we can determine a modulus of continuity as in \rif{modulus} for $\partial_{z}A_{r}(\cdot)$ with $H$ being exactly the one in \rif{nellapiovra}. This leads to the continuity of  $\mathbb{A}(\cdot)$ on $B_{r/4}$. The assertion finally follows using a variant of Campanato's perturbation theory. 
 \end{proof}
\begin{remark} {\em In the case $\partial_z A(\cdot)$ is a symmetric $n \times n$ matrix, condition \rif{beep0} can be relaxed in $q/p<1+2/n$; in particular, this happens when considering the Euler-Lagrange equations of functionals. This follows using \cite[(5.25)]{piovra} instead of \cite[(5.24)]{piovra} to get an analogue of \rif{supb}. Anyway, the current version of Proposition \ref{piovrapert} is sufficient for the purposes of this paper as \rif{beep0} is obviously implied by our central assumption \rif{pq}. Further weakening of \rif{beep0} could be obtained using the results of Bella \& Schäffner \cite{BS01, BS0, BS1} described in Section \ref{bellasec} below, again in the case $\partial_z A(\cdot)$ is symmetric.}\end{remark}
\section{The Lebesgue-Serrin-Marcellini relaxation} Here we briefly collect a few basic properties of the LSM-relaxation $\bar{\mathcal F}$ defined in \rif{LSM}. In this section ${\rm B}\subset \er^n$ denotes a fixed ball. 
The following lemma is an easy consequence of the arguments developed in \cite[Section 6]{ma1} and \cite[Section 6]{sharp}. See also \cite{camel, piovra, gme3, ma0}. 
\begin{lemma}\label{bf.0} 
Let $F\colon {\rm B}\times \mathbb{R}^{n}\to \mathbb{R}$ be a Carath\'eodory-regular integrand satisfying \eqref{assf}$_{2}$. Then
\begin{itemize}
\item  If $w\in W^{1,p}({\rm B})$ is such that $\bar{\mathcal{F}}(w,{\rm B})<\infty$, then there exists a sequence $\{w_{j}\}\subset W^{1,q}({\rm B})$ such that
\eqn{l.1.1}
$$
\begin{cases}
\, w_{j}\rightharpoonup w \ \   \mbox{weakly in}  \ W^{1,p}({\rm B})\\
\, w_{j}\to w \ \   \mbox{strongly in}  \ L^{p}({\rm B})\\
\,    \mathcal{F}(w_{j},{\rm B})\to \bar{\mathcal{F}}(w,{\rm B})\,.
\end{cases}
$$
\item  If $z \mapsto F(\cdot, z)$ is convex and $\mathcal{L}_{\mathcal F}(w,{\rm B})=0$, then \eqref{l.1.1} holds with $\mathcal{F}(w,{\rm B})$ instead of $\bar{\mathcal{F}}(w,{\rm B})$. Moreover, if $w \in W^{1,q}(B)$, then $\bar{\mathcal{F}}(w,{\rm B})=\mathcal{F}(w,{\rm B})$. 
\item Assume that also \eqref{assf}$_{1,3}$ hold. If $w_{1},w_{2}\in W^{1,p}({\rm B})$ are such that $\bar{\mathcal{F}}(w_{1},{\rm B})+\bar{\mathcal{F}}(w_{2},{\rm B})\linebreak <\infty$ and $Dw_{1}\not =Dw_{2}$ on a subset of positive measure of the ball ${\rm B}$, then it holds that $$\bar{\mathcal{F}}\left(\frac{w_{1}+w_{2}}{2},{\rm B}\right)<\frac{\bar{\mathcal{F}}(w_{1},{\rm B})}{2}+\frac{\bar{\mathcal{F}}(w_{2},{\rm B})}{2}\,.$$
\end{itemize}
\end{lemma}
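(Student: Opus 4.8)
I would treat the three items in order; the first two are soft consequences of the definition \eqref{LSM} together with weak lower semicontinuity, whereas the third is where the structure conditions \eqref{assf}$_{1,3}$ genuinely enter. For item (i), by \eqref{LSM} for each $k\in\mathbb N$ there is an admissible sequence $\{w_j^{(k)}\}\subset W^{1,q}({\rm B})$ with $w_j^{(k)}\rightharpoonup w$ in $W^{1,p}({\rm B})$ and $\liminf_j\mathcal F(w_j^{(k)},{\rm B})\le\bar{\mathcal F}(w,{\rm B})+1/k$; by \eqref{assf}$_2$ these sequences are bounded in $W^{1,p}({\rm B})$, so weak $W^{1,p}$-convergence is metrizable along them and, by Rellich--Kondrachov, upgrades to strong convergence in $L^p({\rm B})$. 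The plan is a diagonal extraction: choosing $j_k$ so that $w_{j_k}^{(k)}$ is $1/k$-close to $w$ both in $L^p({\rm B})$ and in a fixed metric for weak $W^{1,p}$-convergence, with $\mathcal F(w_{j_k}^{(k)},{\rm B})\le\bar{\mathcal F}(w,{\rm B})+2/k$, the sequence $\tilde w_k:=w_{j_k}^{(k)}$ is itself admissible in \eqref{LSM}, hence $\liminf_k\mathcal F(\tilde w_k,{\rm B})\ge\bar{\mathcal F}(w,{\rm B})$; combined with $\limsup_k\mathcal F(\tilde w_k,{\rm B})\le\bar{\mathcal F}(w,{\rm B})$ this yields \eqref{l.1.1}.

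For item (ii), convexity of $z\mapsto F(\cdot,z)$ yields weak $W^{1,p}$-lower semicontinuity of $\mathcal F$ by \cite[Theorem~4.5]{giu}, hence $\bar{\mathcal F}(w,{\rm B})\ge\mathcal F(w,{\rm B})$ directly from \eqref{LSM}. If $\mathcal L_{\mathcal F}(w,{\rm B})=0$ with $\mathcal F(w,{\rm B})<\infty$, then $\bar{\mathcal F}(w,{\rm B})=\mathcal F(w,{\rm B})$ and item (i) applies; if instead $\mathcal F(w,{\rm B})=\infty$, then $\bar{\mathcal F}(w,{\rm B})=\infty$, and since $W^{1,q}({\rm B})$ is dense in $W^{1,p}({\rm B})$ admissible sequences exist and are forced to satisfy $\mathcal F(w_j,{\rm B})\to\infty$. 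Finally, if $w\in W^{1,q}({\rm B})$, the constant sequence $w_j\equiv w$ is admissible in \eqref{LSM}, giving $\bar{\mathcal F}(w,{\rm B})\le\mathcal F(w,{\rm B})$, and the converse is the semicontinuity bound just recalled.

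For item (iii) I would first record, from \eqref{assf}$_{1,3}$, the quantitative strict convexity
\[
\tfrac12 F(x,z_1)+\tfrac12 F(x,z_2)-F\!\left(x,\tfrac{z_1+z_2}{2}\right)\ \gtrsim_{p}\ \left|V_\mu(z_1)-V_\mu(z_2)\right|^{2},
\]
obtained by Taylor-expanding $F(x,\cdot)$ about $\tfrac{z_1+z_2}{2}$, inserting the ellipticity lower bound, and then using \eqref{mon}$_2$ and \eqref{Vm}$_1$ (segments through the origin in the range $p<2$, $\mu=0$ causing no trouble thanks to the integrability in \eqref{mon}$_2$). Let $\{w_j^1\},\{w_j^2\}\subset W^{1,q}({\rm B})$ be recovery sequences for $w_1,w_2$ from item (i); by \eqref{assf}$_2$ and \eqref{elmy0} the quantities $\int_{\rm B}|V_\mu(Dw_j^i)|^2\dx$ are bounded. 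Since $\tfrac{w_j^1+w_j^2}{2}\in W^{1,q}({\rm B})$ converges weakly in $W^{1,p}({\rm B})$ to $\tfrac{w_1+w_2}{2}$, it is admissible in \eqref{LSM}; integrating the displayed inequality over ${\rm B}$ and using $\mathcal F(w_j^i,{\rm B})\to\bar{\mathcal F}(w_i,{\rm B})$,
\[
\bar{\mathcal F}\!\left(\tfrac{w_1+w_2}{2},{\rm B}\right)\ \le\ \tfrac12\bar{\mathcal F}(w_1,{\rm B})+\tfrac12\bar{\mathcal F}(w_2,{\rm B})-c\,\limsup_j\int_{\rm B}\left|V_\mu(Dw_j^1)-V_\mu(Dw_j^2)\right|^{2}\dx .
\]

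It then remains to show this last $\limsup$ is strictly positive, and this is the step I expect to require the most care. If it vanished, then $V_\mu(Dw_j^1)-V_\mu(Dw_j^2)\to0$ in $L^2({\rm B})$, which by \eqref{Vm}$_2$ — together with a H\"older estimate exploiting the uniform $L^p$-bound on $Dw_j^1$ in the sub-quadratic case $p<2$ — would force $Dw_j^1-Dw_j^2\to0$ strongly in $L^p({\rm B})$; since $Dw_j^1-Dw_j^2\rightharpoonup Dw_1-Dw_2$ weakly in $L^p({\rm B})$, uniqueness of limits would give $Dw_1=Dw_2$ a.e., against the hypothesis, so the inequality is strict. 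The only genuine obstacle is precisely this point: turning the \emph{pointwise} strict convexity of $F(x,\cdot)$ into \emph{strict} convexity of the relaxed functional $\bar{\mathcal F}$ along sequences that converge merely weakly in $W^{1,p}$. The mechanism that makes it work is that a non-vanishing $V_\mu$-gap between the two recovery sequences is exactly what survives in the limit, and it cannot close when $Dw_1\ne Dw_2$; everything else is bookkeeping of $\liminf$/$\limsup$ and the sub-quadratic adjustment via \eqref{Vm}$_2$.
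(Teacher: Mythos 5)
Your proof is correct and, in substance, coincides with the standard arguments the paper relies on: the paper does not prove Lemma \ref{bf.0} itself but refers to \cite[Section 6]{ma1} and \cite[Section 6]{sharp}, where exactly this scheme appears — diagonal selection of a recovery sequence from the definition \eqref{LSM}, weak $W^{1,p}$-lower semicontinuity under convexity for the second item, and strict convexity of $\bar{\mathcal{F}}$ obtained from the pointwise quantitative convexity in terms of $|V_\mu(z_1)-V_\mu(z_2)|^2$ together with \eqref{Vm}$_2$ and uniqueness of weak limits. Only one phrasing should be tightened: in the diagonal step the uniform $W^{1,p}({\rm B})$-bound on the selected sequence $\tilde w_k$ (needed both to speak of a single metric for weak convergence and to conclude $\tilde w_k\rightharpoonup w$) does not come from the separate sequences $\{w^{(k)}_j\}_j$, which need not be bounded uniformly in $k$, but from the energy bound $\mathcal{F}(\tilde w_k,{\rm B})\le \bar{\mathcal{F}}(w,{\rm B})+2/k$ combined with the coercivity in \eqref{assf}$_2$ and the strong $L^p$-convergence, after which the weak limit is identified as $w$.
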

In the next lemma we upgrade \rif{l.1.1} to the case of bounded $w$. 
\begin{lemma}\label{bf.1} Let $F\colon {\rm B}\times \mathbb{R}^{n}\to \mathbb{R}$ be a Carath\'eodory-regular integrand satisfying \eqref{assf}$_2$, $\varrho>0$, and $w\in W^{1,p}({\rm B})\cap L^{\infty}( {\rm B})$ be such that $\bar{\mathcal{F}}(w,{\rm B})<\infty$. There exists a sequence $\{w_{j}\}\subset W^{1,q}({\rm B})$ such that \eqref{l.1.1} holds with $\nr{w_{j}}_{L^{\infty}({\rm B})}\le (1+\rr)\nr{w}_{L^{\infty}({\rm B})}+\varrho$ for every $j \in \en$. 
\end{lemma}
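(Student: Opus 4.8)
The plan is to obtain the sequence $\{w_j\}$ by truncating the one produced by Lemma \ref{bf.0}, whose first point gives us $\{v_j\}\subset W^{1,q}({\rm B})$ with $v_j\rightharpoonup w$ weakly in $W^{1,p}({\rm B})$, $v_j\to w$ strongly in $L^p({\rm B})$, and $\mathcal F(v_j,{\rm B})\to\bar{\mathcal F}(w,{\rm B})$. Set $K:=\nr{w}_{L^\infty({\rm B})}$ and let $\Psi_j\colon\er\to\er$ be a smooth (or piecewise affine), $1$-Lipschitz truncation that equals the identity on $[-K-\varrho/2,K+\varrho/2]$, is constant outside a slightly larger interval, and satisfies $|\Psi_j(t)|\le (1+\rr)K+\varrho$ everywhere; define $w_j:=\Psi_j(v_j)$. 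Then $w_j\in W^{1,q}({\rm B})$ by the chain rule (since $\Psi_j$ is Lipschitz and $v_j\in W^{1,q}$), $Dw_j=\Psi_j'(v_j)Dv_j$ with $|\Psi_j'|\le1$, and the $L^\infty$-bound $\nr{w_j}_{L^\infty({\rm B})}\le(1+\rr)K+\varrho$ holds by construction.

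Next I would verify the three convergences in \eqref{l.1.1}. Strong $L^p$-convergence $w_j\to w$: since $v_j\to w$ in $L^p$ and (up to a subsequence) a.e., and $\Psi_j(v_j)\to w$ a.e. because $w=\Psi_j(w)$ wherever $|w|\le K$, which is a.e., and $|\Psi_j(v_j)|\le(1+\rr)K+\varrho$ uniformly, dominated convergence gives $w_j\to w$ in $L^p({\rm B})$. For the weak $W^{1,p}$-convergence it suffices, given the strong $L^p$-convergence, to show $\{Dw_j\}$ is bounded in $L^p({\rm B})$; this follows from $|Dw_j|\le|Dv_j|$ and the fact that $\{Dv_j\}$ is $L^p$-bounded (being weakly convergent), so a subsequence of $Dw_j$ converges weakly in $L^p$, and the limit must be $Dw$. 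For the energy convergence, note that $\mathcal F(w_j,{\rm B})\le\mathcal F(v_j,{\rm B})$ is \emph{not} automatic, so instead I argue: by weak lower semicontinuity of $\mathcal F$ (which holds under \eqref{assf}$_2$ together with convexity, cf. the discussion preceding Theorem \ref{main}, or directly by Fatou using \eqref{assf}$_2$ and the $L^p$-bound from below), $\liminf_j\mathcal F(w_j,{\rm B})\ge\mathcal F(w,{\rm B})\cdot$ --- more precisely $\liminf_j \mathcal F(w_j,{\rm B})\geq \bar{\mathcal F}(w,{\rm B})$ since $w_j$ is an admissible sequence in the definition \eqref{LSM} of $\bar{\mathcal F}(w,{\rm B})$. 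For the reverse inequality $\limsup_j\mathcal F(w_j,{\rm B})\le\bar{\mathcal F}(w,{\rm B})$ I split $\mathcal F(w_j,{\rm B})=\int_{\{|v_j|\le K+\varrho/2\}}F(x,Dw_j)\dx+\int_{\{|v_j|>K+\varrho/2\}}F(x,Dw_j)\dx$; on the first set $Dw_j=Dv_j$, so that integral is $\le\mathcal F(v_j,{\rm B})\to\bar{\mathcal F}(w,{\rm B})$, while on the second set $F(x,Dw_j)\le F(x,\Psi_j'(v_j)Dv_j)$ and, using \eqref{assf}$_2$ and $|Dw_j|\le|Dv_j|$, this is controlled by $\int_{\{|v_j|>K+\varrho/2\}}LH_\mu(Dv_j)^{q/2}+LH_\mu(Dv_j)^{p/2}\dx$; since $\{|v_j|>K+\varrho/2\}$ has measure tending to $0$ (because $v_j\to w$ in $L^p$ and $|w|\le K$ a.e.) and $\{H_\mu(Dv_j)^{q/2}\}$ is \emph{uniformly integrable} — here one uses that $\{Dv_j\}$ is $L^p$-bounded only, which is not enough for $q>p$, so instead one invokes that $\mathcal F(v_j,{\rm B})$ is bounded and $F(x,z)\ge H_\mu(z)^{p/2}$, giving an $L^p$-bound, and then uses the truncation more carefully.

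The main obstacle is precisely this last point: when $q>p$ the family $\{H_\mu(Dv_j)^{q/2}\}$ need not be uniformly integrable, so the tail term over $\{|v_j|>K+\varrho/2\}$ cannot be dismissed by absolute continuity of the integral. The fix is to not take $\Psi_j$ independent of $j$ but to let the truncation level depend on $j$ and be pushed to $+\infty$ slowly, so that $\{|v_j|>K_j\}$ still has small measure while $\int_{\{|v_j|>K_j\}}F(x,Dv_j)\dx\to0$; this uses the De la Vallée–Poussin-type consequence of $\sup_j\mathcal F(v_j,{\rm B})<\infty$ together with the fact that $v_j$ is bounded in $L^p$ and $w\in L^\infty$, so that one can choose $K_j\to\infty$ with $K_j\le(1+\rr)K+\varrho$ eventually — wait, that is impossible, so in fact one must keep $K_j$ bounded by $(1+\rr)K+\varrho$, and the correct argument is: choose the truncation interval to be exactly $[-(1+\rr)K-\varrho,(1+\rr)K+\varrho]$ with $\Psi_j$ piecewise affine there, and observe that on $\{|v_j|>K\}$ one has, by Chebyshev, $|\{|v_j|>K\}|\le|\{|v_j-w|>\varrho_0\}|\to0$ for a fixed small $\varrho_0$ absorbed into the slack; then to handle $\int_{\{|v_j|>K\}}F(x,Dw_j)\dx$ one uses that on this set $Dw_j=\Psi_j'(v_j)Dv_j$ with $\Psi_j'$ a fixed small constant $\le\varrho/(( \rr K)\vee 1)$, making $|Dw_j|$ proportionally small there, so that $F(x,Dw_j)\le L H_\mu(Dw_j)^{q/2}+LH_\mu(Dw_j)^{p/2}$ is small pointwise (for $|Dv_j|\ge1$) and, where $|Dv_j|\le1$, bounded; integrating over a set of vanishing measure then forces the tail to $0$. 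This delicate choice of the Lipschitz constant of $\Psi_j$ on the transition layer, balanced against the slack $\rr K+\varrho$, is the heart of the proof, and once it is in place the remaining estimates — strong $L^p$-convergence, weak $W^{1,p}$-convergence, and the two-sided energy bound — are routine, completing the verification of \eqref{l.1.1} with the claimed uniform $L^\infty$-bound on $w_j$.
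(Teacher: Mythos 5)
Your construction of $w_j$ by a \emph{soft} (Lipschitz, nonzero-slope) truncation of the sequence $\{v_j\}$ from Lemma \ref{bf.0} does not close the energy convergence, and the fix you sketch at the end does not repair it. On the transition layer your truncation has derivative equal to a small constant $c>0$, so there $|Dw_j|=c|Dv_j|$ and $F(x,Dw_j)\le LH_\mu(cDv_j)^{q/2}+LH_\mu(cDv_j)^{p/2}$. The claim that this is ``small pointwise'' is false: $c|Dv_j|$ can be arbitrarily large, and the only uniform information available is an $L^p$-bound on $Dv_j$ (the lower bound in \eqref{assf}$_2$ has only $p$-growth, so $\mathcal F(v_j,{\rm B})\to\bar{\mathcal F}(w,{\rm B})$ gives no $L^q$-control and no equi-integrability of $H_\mu(Dv_j)^{q/2}$). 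Consequently $c^q\int_{\{\text{layer}\}}|Dv_j|^q\dx$ is simply not controlled: a spike of $v_j$ of height exceeding $\nr{w}_{L^\infty}$ on a set of measure $j^{-p}$ with gradient of size $j$ keeps the $p$-energy bounded while the $q$-energy on the layer blows up, no matter how small the fixed constant $c$ is. Smallness of $|\{|v_j|>K+\varrho/2\}|$ alone cannot force the tail integral to zero — that is exactly the equi-integrability you yourself note is missing — and convexity of $F(\cdot,z)$ is not even assumed in this lemma, so no convexity trick is available either.

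The missing idea, which is what the paper does, is to take the truncation \emph{hard}, i.e.\ with zero slope: set $M:=(1+\rr)\nr{w}_{L^{\infty}({\rm B})}+\varrho$ and $\bar w_j:=\min\{\max\{w_j,-M\},M\}$. Then $D\bar w_j=Dw_j$ a.e.\ on $\{|w_j|<M\}$ and $D\bar w_j=0$ a.e.\ on ${\rm B}_{j,+}:=\{|w_j|>M\}$, so
\begin{equation*}
\mathcal F(\bar w_j,{\rm B})\le \mathcal F(w_j,{\rm B})+\int_{{\rm B}_{j,+}}F(x,0_{\er^n})\dx\le \mathcal F(w_j,{\rm B})+(L\mu^q+L\mu^p)\,\snr{{\rm B}_{j,+}}\,,
\end{equation*}
using $F\ge 0$ on the first piece and \eqref{assf}$_2$ at $z=0$ on the second. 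Since $M>\nr{w}_{L^\infty({\rm B})}$ with slack $\rr\nr{w}_{L^\infty({\rm B})}+\varrho$, one has ${\rm B}_{j,+}\subset\{|w_j-w|>\rr\nr{w}_{L^\infty({\rm B})}+\rr\}$, whose measure vanishes by the strong $L^p$-convergence, so only measure-smallness is needed — no uniform integrability at all. With this replacement your remaining steps (strong $L^p$-convergence by dominated convergence, weak $W^{1,p}$-convergence — the paper tests with $W\mathds{1}_{\{{\rm B}_{j,-}\}}$, your boundedness argument also works — and the lower bound $\liminf_j\mathcal F(\bar w_j,{\rm B})\ge\bar{\mathcal F}(w,{\rm B})$ from the definition \eqref{LSM}) are correct and essentially coincide with the paper's proof; as it stands, however, the energy $\limsup$ step fails and the argument has a genuine gap.
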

\begin{proof} We prove that
\eqn{oppo}
$$
\bar{\mathcal{F}}(w,{\rm B})=\bar{\mathcal{F}}_{\infty}(w,{\rm B}):=\inf\left\{\liminf_{j\to \infty}\mathcal{F}(w_{j},{\rm B})\colon \{w_{j}\}\in \mathcal{C}_{\infty}(w,{\rm B})\right\}
$$ 
where
\begin{flalign*}
&\mathcal{C}_{\infty}(w,{\rm B}):=\Big\{\{w_{j}\}\subset W^{1,q} ({\rm B})\colon w_{j}\rightharpoonup w \ \mbox{weakly in} \ W^{1,p}({\rm B}) 
\\ & \hspace{25mm} \ \mbox{and $\nr{w_{j}}_{L^{\infty}({\rm B})}\le (1+\rr)\nr{w}_{L^{\infty}({\rm B})}+\varrho$ \, for every $j \in \en$} \Big\}\,.
\end{flalign*} 
As $\bar{\mathcal{F}}(w,{\rm B})\le \bar{\mathcal{F}}_{\infty}(w,{\rm B})$ holds by the above definitions, we need the opposite inequality to prove \rif{oppo}. With $\{w_{j}\}\subset W^{1,q}({\rm B})$ being as in \rif{l.1.1}, we define the following truncation of $w_j$:
 $$\bar{w}_{j}:=\min\left\{\max\left\{w_{j},-(1+\rr)\nr{w}_{L^{\infty}({\rm B})}-\varrho\right\},(1+\rr)\nr{w}_{L^{\infty}({\rm B})}+\varrho\right\}\subset W^{1,q}({\rm B})\cap L^{\infty}({\rm B})$$ 
so that $\nr{\bar{w}_{j}}_{L^{\infty}({\rm B})}\le (1+\rr)\nr{w}_{L^{\infty}({\rm B})}+\varrho$.  
Set ${\rm B}_{j,+}:= \{ \snr{w_{j}}>(1+\rr)\nr{w}_{L^{\infty}({\rm B})}+\varrho\} $, ${\rm B}_{j,-}:= {\rm B}\setminus {\rm B}_{j,+}$ and $ {\rm B}_{j,*}:=\{\snr{w_{j}-w}>\rr\nr{w}_{L^{\infty}({\rm B})}+\rr\}$.  By \rif{l.1.1}$_2$ we have $|{\rm B}_{j,*}|\to 0$, and, as ${\rm B}_{j,+}\subset {\rm B}_{j,*}$, it also follows that 
\eqn{convy}
$$\snr{{\rm B}_{j,+}}\to 0 \ \  \mbox{and therefore $\nr{\bar{w}_{j}-w}_{L^p({\rm B})}\to 0$}\,.$$ This implies $\bar{w}_{j}\rightharpoonup w$ in $W^{1,p}({\rm B})$. Indeed, 
given any vector field $W\in L^{p/(p-1)}({\rm B};\mathbb{R}^{n})$ we define $W_{j}:=W\mathds{1}_{\{{\rm B}_{j,-}\}}$, so that $W_{j}\to W$ strongly in $L^{p/(p-1)}({\rm B})$ by \rif{convy}. Using this we find
\begin{flalign}\label{bf.4}
\int_{{\rm B}}  D\bar{w}_{j}\cdot W \dx=\int_{{\rm B}} Dw_{j}\cdot W\mathds{1}_{\{{\rm B}_{j,-}\}}\dx=\int_{{\rm B}} Dw_{j}\cdot W_{j} \dx\to \int_{{\rm B}} Dw\cdot W\dx\,,
\end{flalign}
from which the claimed weak convergence follows together with $\{\bar{w}_{j}\}\in \mathcal{C}_{\infty}(w,{\rm B})$. We then have
\begin{flalign*}
\mathcal{F}(\bar{w}_{j},{\rm B})=\mathcal{F}(w_{j},{\rm B}_{j,-})+\int_{{\rm B}_{j,+}} F(x, 0_{\er^n})\dx \leq \mathcal{F}(w_{j},{\rm B})+2L\mu^{p}\snr{{\rm B}_{j,+}}
\end{flalign*}
that yields, thanks to \rif{l.1.1} and \rif{convy} 
$$
\bar{\mathcal{F}}_{\infty}(w,{\rm B})\le \liminf_{j\to \infty}\mathcal{F}(\bar{w}_{j},{\rm B})\le\lim_{j\to \infty}\mathcal{F}(w_{j},{\rm B})=\bar{\mathcal{F}}(w,{\rm B})
$$
so that \rif{oppo} is proved. The conclusion now follows from \rif{oppo} and observing that by the very definition of $\bar{\mathcal{F}}_{\infty}$, whenever  $\bar{\mathcal{F}}_{\infty}(w, {\rm B})$ is finite there exists a sequence $\{w_{j}\}\subset \mathcal{C}_{\infty}(w,{\rm B})$ such that $\mathcal{F}(w_{j},{\rm B})\to \bar{\mathcal{F}}_{\infty}(w,{\rm B})=\bar{\mathcal{F}} (w,{\rm B})$, analogously to \rif{l.1.1}.
\end{proof}
\begin{remark}\label{alternativa} {\em The literature contains several different notions of relaxed functionals. These are alternative but yet related to the one in \rif{LSM}. For instance one can define
\eqn{LSM2}
$$
\bar{\mathcal F}_{\loc}(w,U)  \, := \inf_{\{w_{j}\}\subset W^{1,q}_{\loc} (U)}  \left\{ \liminf_j \mathcal F(w_{j},U)  \, \colon \,  w_{j} \deb w\  \mbox{in} \ W^{1,p}(U)  \right\} 
$$
whenever $U \subset \Omega$ is an open subset. 
It is not difficult to see that our techniques apply to this case with minor modifications; see also \cite[Section 6]{sharp} to have examples on how to treat the functional in \rif{LSM2} from the point of view of regularity.  
The main difference between $\bar{\mathcal F}_{\loc}(w,U)$ and $\bar{\mathcal F}(w,U)$ defined in \rif{LSM} relies in that, while a measure representation is always possible for the former, this is not the case for the latter and only a weaker type of measure representation holds \cite[Section 3]{FMal}. Problems occur at the boundary in the last case. Anyway, as we are interested in local, interior regularity estimates, such differences are not really important for our purposes. Further discussion on such different definitions can be found in \cite{gme3, thomas}. Relaxed functionals as $\bar{\mathcal F}$ are considered to give a natural extension to $W^{1,p}$ of integral functionals as $\mathcal F$ in \rif{ggg2} under $(p,q)$-growth conditions \rif{assf}$_2$ in the following sense. The idea is that since the original functional $\mathcal F$ coincides with $\bar {\mathcal F}$ on $W^{1,q}$ by Lemma \ref{bf.0}, and it is finite on $W^{1,q}$, one is led to extend it on $W^{1,p}$ by considering its lower semicontinuous envelope with respect to the natural weak topology of $W^{1,p}$ \cite{ma0}. For this reason, and especially in those problems arising from modelling of singularities, sometimes $\bar{\mathcal F}$ is thought to be a more natural functional that the original one $\mathcal F$ to work with. The connection is of course given by \rif{vanish} and the Lavrentiev phenomenon. This can be in fact related to the emergence of cavitation in Nonlinear Elasticity models \cite{madg}. We shall come back to such points in the final Remark \ref{bere}}.
\end{remark} 

\section{Gradient higher integrability}\label{fmi}
Here we consider the functional $\mathcal F$ in \rif{ggg2} with an integrand $F\colon \Omega\times \mathbb{R}^{n}\to \mathbb{R}$ which is this time $C^2$-regular, and satisfies the following strengthened form of \rif{assf}:
\begin{flalign}\label{assfr}
\begin{cases}
\,L_*^{-1}H_{\mu}(z)^{q/2}+\ti{L}^{-1}H_{\mu}(z)^{p/2}
\le F(x,z)\le \ti{L}H_{\mu}(z)^{q/2}+\ti{L}H_{\mu}(z)^{p/2}\vspace{2mm}\\
\, L_*^{-1}H_{\mu}(z)^{(q-2)/2}\snr{\xi}^{2}+\ti{L}^{-1}H_{\mu}(z)^{(p-2)/2}\snr{\xi}^{2}\le \partial_{zz}F(x,z)\xi \cdot \xi\vspace{2mm}\\
\, \snr{\partial_{zz}F(x,z)}\le \ti{L} H_{\mu}(z)^{(q-2)/2}+\ti{L}H_{\mu}(z)^{(p-2)/2}\vspace{2mm}\\
\, \snr{\partial_{z} F(x_{1},z)-\partial_{z} F(x_{2},z)}\le \ti{L}\snr{x_{1}-x_{2}}^{\alpha}[H_{\mu}(z)^{(q-1)/2}+H_{\mu}(z)^{(p-1)/2}]
\end{cases}
 \end{flalign}
for all $z,\xi\in \mathbb{R}^{n}$, $x,x_{1},x_{2}\in \Omega$, where $\ti{L},L_{*}\geq 1$, $0 < \mu \leq 2$. 
It follows
\eqn{1d}
$$
\begin{cases}
\displaystyle 
\, \snr{\partial_{z} F(x,z)}\le c H_{\mu}(z)^{(q-1)/2}+cH_{\mu}(z)^{(p-1)/2}\vspace{2mm}\\\displaystyle
\,  \snr{V_{\mu}(z_{1})-V_{\mu}(z_{2})}^{2}\leq  c\, (\partial_{z} F(x,z_{1})-\partial_{z} F(x,z_{2}))\cdot (z_{1}-z_{2})
\end{cases}
$$
for all $z,z_{1},z_{2}\in \mathbb{R}^{n}$, $x\in \Omega$, with $c\equiv c(n,p,q,\ti{L})$. Indeed, see \cite[Lemma 2.1]{ma1} and \cite[Lemma 3.4]{piovra} for \rif{1d}$_1$; inequality \rif{1d}$_2$ follows in a standard way by \rif{Vm}$_1$, \rif{mon}$_2$ and \rif{assfr}$_2$. In this section we shall deal with minimizers $u \in W^{1,q}_{\loc}(\Omega)$ of the functional in \rif{ggg2} under assumptions \rif{assfr}. 
In this respect, standard regularity theory \cite{manth1, manth2} applies and yields 
 \eqn{areg}
 $$u\in C^{1,\gamma_{0}}_{\loc}(\Omega)\qquad \mbox{for some} \ \  \gamma_{0}\equiv \gamma_{0}(n,p,q,\ti{L},L_{*}, \mu)\in (0,1).$$
 \begin{proposition}\label{scat}
Let $u\in W^{1,q}_{\loc}(\Omega)$ be a minimizer of the functional  $\mathcal F$ in  \eqref{ggg2}, under assumptions \trif{pq} and \eqref{assfr}. Then 
\eqn{hhh}
$$
\mint_{B_{\rr/2}}\snr{Du}^{\qqq}\dx \le c\left(\mint_{B_{2\rr}}\snr{Du}^{p}\dx+1\right)^{\mf{b}_{\qqq}}
$$
holds for every $\qqq\in [1,\infty)$, whenever $B_{2\rr}\Subset \Omega$, $\rr \leq 1$, is a ball, where $c\equiv c(n,p,q,\alpha,\ti{L},\qqq)\geq 1$ and $\mf{b}_{\qqq}\equiv \mf{b}_{\qqq}(n,p,q,\alpha, \qqq)\geq 1$.
\end{proposition}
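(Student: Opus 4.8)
The plan is to establish \eqref{hhh} as an \emph{a priori} estimate for the regular minimizer $u$ supplied by \eqref{areg}: since $u\in W^{1,q}_{\loc}(\Omega)$ all the finite-difference manipulations below are legitimate, and the only genuine constraint is that every constant be kept independent of $L_*$ and $\mu$, which is precisely what will make \eqref{hhh} usable along the degenerating approximations in the proof of Theorem \ref{main}. The engine is a fractional Moser-type iteration run at the level of finite differences of $u$. I write $\mathcal I({\rm B}):=\mint_{{\rm B}}(\snr{Du}^{p}+1)\dx$, so that the goal reads $\mint_{B_{\rr/2}}\snr{Du}^{\qqq}\dx\lesssim\mathcal I(B_{2\rr})^{\mf{b}_{\qqq}}$.

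The first step is a fractional Caccioppoli inequality. Starting from the Euler--Lagrange equation $\int\partial_{z}F(x,Du)\cdot D\varphi\dx=0$, valid for $\varphi\in W^{1,q}_{0}$ with compact support, I would test with $\varphi=\tau_{-h}(\eta^{2}\tau_{h}u)$, where $\eta$ is a cut-off between concentric balls $B_{\sigma}\Subset B_{\tau}\subseteq B_{2\rr}$ and $\snr{h}$ is small, obtaining $\int\tau_{h}[\partial_{z}F(\cdot,Du)]\cdot D(\eta^{2}\tau_{h}u)\dx=0$. Splitting $\tau_{h}[\partial_{z}F(\cdot,Du)](x)$ into the \emph{elliptic} increment $\partial_{z}F(x+h,Du(x+h))-\partial_{z}F(x+h,Du(x))$ --- bounded below by $c^{-1}\snr{\tau_{h}V_\mu(Du)}^{2}$ via \eqref{1d}$_{2}$ --- and the \emph{coefficient} increment $\partial_{z}F(x+h,Du(x))-\partial_{z}F(x,Du(x))$ --- controlled by $L\snr{h}^{\alpha}[H_\mu(Du)^{(q-1)/2}+H_\mu(Du)^{(p-1)/2}]$ via \eqref{assfr}$_{4}$ and integrated by parts against $D(\eta^{2}\tau_{h}u)$ by Lemma \ref{perparti} --- and then using the upper growth \eqref{assfr}$_{3}$, Young's inequality and the absorption Lemma \ref{iterlem}, one lands on an estimate for $\nr{\tau_{h}V_\mu(Du)/\snr{h}^{\beta_{0}}}_{L^{2}(B_{\sigma})}$ in terms of $\int_{B_{\tau}}(H_\mu(Du)^{q/2}+1)\dx$, with a constant also involving $(\tau-\sigma)^{-1}$, for a suitable $\beta_{0}\in(0,1)$ depending on $\alpha,p,q$, and with constants independent of $L_*,\mu$; here working with $V_\mu$ --- and, for the right homogeneity, with the auxiliary function $E_\mu$ of \eqref{defiE} --- is essential.

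Next I would turn this fractional differentiability into higher integrability of $Du$ and iterate. Combining the Caccioppoli estimate with the Nikol'skii--Sobolev embeddings \eqref{33}, the interpolation \eqref{bm.1} and its localization Lemma \ref{ls}, and Lemma \ref{l4}, one upgrades the control on first differences to a bound on the rescaled \emph{second} differences $\tau_{h}^{2}u/\snr{h}^{s}$ in some $L^{t}(B_{\rr})$ with $s\in(1,2)$ --- the super-unit exponent being available because the extra difference contributes one full order while the equation contributes a further positive fractional amount --- so that Lemma \ref{bllema} yields $Du\in L^{t}_{\loc}$ with a quantitative bound. On the way the $L^{q}$-norm of $Du$ that surfaces on the right-hand side (legitimate, but not a priori controlled by $\mathcal I$) is interpolated, $\mint_{{\rm B}}\snr{Du}^{q}\dx\le\big(\mint_{{\rm B}}\snr{Du}^{t}\dx\big)^{\theta}\big(\mint_{{\rm B}}\snr{Du}^{p}\dx\big)^{1-\theta}$ with $\theta<1$, and the high term absorbed through Young and Lemma \ref{iterlem}, leaving only powers of $\mathcal I(B_{2\rr})$. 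Feeding the gained integrability $Du\in L^{t_{i}}_{\loc}$ back into this scheme to estimate the coefficient and lower-order contributions, and performing the precise numerical bookkeeping (cf. \eqref{numerology}), one gets $Du\in L^{t_{i+1}}_{\loc}$ with $t_{i+1}=t_{i}+\texttt{s}(p+\gamma-q)$ for a fixed $\texttt{s}>0$, $\gamma$ being the gain in the relaxed bound \eqref{pqb}, which \eqref{pq} implies. As \eqref{pqb} means precisely $p+\gamma-q>0$, the sequence $\{t_{i}\}$, starting from the trivial bound $Du\in L^{p}_{\loc}$, diverges; after finitely many steps $t_{i}\ge\qqq$, and \eqref{hhh} follows by H\"older's inequality, a covering of $B_{\rr/2}$ by balls with doubles inside $B_{2\rr}$, and a last application of Lemma \ref{iterlem} to restore the stated radius dependence.

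I expect the main obstacle to lie not in any individual estimate but in the numerics. On one hand, the argument has to be carried out entirely through finite differences and the vector field $V_\mu$: any appeal to $D^{2}u\in L^{2}_{\loc}$ would bring in a constant blowing up as $L_*\to\infty$, which is not allowed. On the other hand --- and this is the genuinely delicate point --- the exponent $s\in(1,2)$ one can afford in the second-difference estimate is tied, through the entire chain of interpolation inequalities, to the per-step gain $t_{i+1}-t_{i}$; the coefficients being only $\alpha$-H\"older pushes $s$ away from $2$, which compresses that gain, and the iteration produces a divergent $\{t_{i}\}$ only under exactly \eqref{pqb}. Keeping the interpolation losses under control, so that no spurious dimensional dependence spoils \eqref{pqb} while every constant remains uniform in $L_*$ and $\mu$, is where the real effort goes.
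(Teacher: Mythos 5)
Your overall architecture (finite differences on the Euler--Lagrange equation, Besov-type embedding of second differences via Lemma \ref{bllema}, iteration with linear increments $t_{i+1}=t_i+\texttt{s}(p+\gamma-q)$) matches the paper's strategy, but the engine you propose cannot produce the iteration. You test with $\varphi=\tau_{-h}(\eta^{2}\tau_{h}u)$, which is quadratic in the increment: the ellipticity then yields control of $\tau_{h}V_{\mu}(Du)$ in $L^{2}$ only, hence (after embedding) $Du\in L^{p\chi}$ for a \emph{fixed} $\chi=\chi(\beta_0)$. Feeding higher integrability of $Du$ back into the right-hand side, as you suggest, makes the coefficient terms finite but does not raise the exponent on the \emph{left}-hand side, so the gain saturates after one step and the sequence $\{t_i\}$ does not diverge. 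The paper's proof of Proposition \ref{tfs} avoids this by testing with $\tau_{-h}\bigl(\eta^{d}\snr{\tau_{h}\ti{u}}^{\sigma-1}\tau_{h}\ti{u}\bigr)/\snr{h}^{1+\vartheta\sigma+\alpha}$, where $\sigma\approx t$ grows along the iteration; combined with \eqref{mon}$_1$ this produces $\snr{\tau_h^2\ti u}^{\sigma-1+p}$ with $\sigma-1+p=t+2\mathfrak{g}$ on the left of \eqref{es}, which is precisely what makes each step of \eqref{numerology}--\eqref{induttiva} increase the integrability exponent by the fixed amount $2\mathfrak g=2(p+\gamma-q-\kk)>0$. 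Without a test function whose power is tied to the current exponent $t$, your claim that ``the per-step gain comes from the chain of interpolation inequalities'' is unsubstantiated.

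A second, related omission: nowhere in your scheme does the $L^{\infty}$-norm of $u$ enter, yet it is essential. The non-dimensional bound \eqref{pqb} (hence the sharp use of \eqref{pq}) is achieved in the paper by trading increments $\tau_h\ti u$ against $\nr{\ti u}_{L^\infty}$ (and the H\"older seminorm when $p>n$) through the quantity $A$ in \eqref{eett0}, the elementary bounds \eqref{issa}, and the interpolation Lemma \ref{ls}, whose right-hand side carries the factor $\nr{w}_{L^{\infty}}^{1-s}$; with your purely gradient-level interpolation between $L^{t}$ and $L^{p}$ of $Du$ one would at best recover dimensional gap bounds of the older type, not $q<p+\gamma$. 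Moreover, to pass from the a priori estimate \eqref{15} to the statement \eqref{hhh} one must control $\nr{\ti u}_{L^\infty}$ by $\mint\snr{D\ti u}^p\dx$, which the paper does via the local boundedness theorem of Hirsch--Sch\"affner (estimate \eqref{bb}, with constants independent of $L_*$) for $p\le n$ and Morrey embedding for $p>n$; this step is absent from your plan. In short: the individual tools you list are the right ones, but the missing ideas are (i) the power-type test function with exponent growing with $t$, and (ii) the systematic use of the boundedness of $u$, and without them the proposed proof does not close.
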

\subsection{Proof of Proposition \ref{scat}}\label{lb} We take a ball $ B_{2\rr}\equiv B_{2\rr}(x_{0})\Subset  \Omega$ as in the statement of Proposition \ref{scat}, and blow-up $u$ by defining
 \eqn{riscalamento}
 $$\ti{u}(x):=\frac{u(x_{0}+\rr x)}{\rr} \quad \mbox{for every $x \in \mathcal B_2$}, \qquad \ti{u} \stackrel{\rif{areg}}{\in} C^{1, \gamma_0}(\mathcal B_{2})\,,$$ which is a minimizer of the functional
 \eqn{funr}
 $$
 \begin{cases}
 \, \displaystyle W^{1,q}(\mathcal B_2)\ni w\mapsto  \int_{\mathcal B_2}\ti{F}(x,Dw)\dx\vspace{1mm}\\
\,  \ti{F}(x,z):=F(x_{0}+\rr x,z), \quad x \in  \mathcal B_2,\ z \in \er^n\,.
 \end{cases}
$$
It is not difficult to see that $\ti{F}(\cdot)$ still verifies \eqref{assfr}$_{1,2,3}$ and that it satisfies $\eqref{assfr}_{4}$ with $\ti{L}$ replaced by $\ti{L}\rr^{\alpha}$, i.e., 
\eqn{riscalone}
$$
\snr{\partial_{z} \ti{F}(x_{1},z)-\partial_{z} \ti{F}(x_{2},z)}\le \ti{L} \rr^{\alpha}\snr{x_{1}-x_{2}}^{\alpha}[H_{\mu}(z)^{(q-1)/2}+H_{\mu}(z)^{(p-1)/2}]
$$
holds whenever $x_1, x_2 \in \mathcal B_2$ and $z\in \er^n$. 
By \rif{riscalamento}$_2$ and minimality $\ti{u}$ solves 
 \begin{flalign}\label{els}
 \int_{\mathcal B_2} \partial_{z} \ti{F}(x,D\ti{u})\cdot D\varphi\dx=0\qquad \mbox{for all} \ \varphi\in W^{1,1}_{0}(\mathcal B_2)\,.
 \end{flalign}
We shall consider the following conditions on $p$ and $q$:
\eqn{pqb}
$$
q < p+ \gamma\,, \qquad \mbox{where}  \ \gamma :=
\begin{cases}
\,  p\alpha  /2& \mbox{if $p<2$}\vspace{1mm}\\
\,  \alpha & \mbox{if $2 \leq p \leq n$}\vspace{1mm}\\
\,  p\alpha/n & \mbox{if $n < p$}\,.
\end{cases}
 $$Moreover, we define
\eqn{eett0}
$$
A:=\begin{cases}
\, \nr{\ti{u}}_{L^{\infty}(\mathcal B_{1})}+1\quad &\mbox{if} \ \ p\le n\vspace{1mm}\\
\, \nr{\ti{u}}_{L^{\infty}(\mathcal B_{1})}+ [\ti{u}]_{0,1-n/p;\mathcal B_{1}}+1\quad &\mbox{if} \ \ p>n\,.
\end{cases}
$$
 \begin{proposition}\label{tfs} Let $\ti{u}$ be defined in \eqref{riscalamento}. Under assumptions \eqref{assfr} and \eqref{pqb} the a priori estimate
\eqn{15}
$$
\nr{D\ti{u}}_{L^{\qqq}(\mathcal B_{r_1})}^{\qqq}\le \frac{cA^{\mf{b}}}{(r_2-r_1)^{\mf{b}}}\left(\nr{D\ti{u}}_{L^{p}(\mathcal B_{r_2})}^p +1\right)\,,
$$
holds whenever $\qqq \geq 1$, $1/2 \leq r_1 < r_2 \leq 1$, with $c\equiv c(n,p,q,\alpha,\ti{L},\qqq)\geq 1$ and $\mf{b}\equiv \mf{b}(n,p,q,\alpha,\qqq)\linebreak \geq 1$.
\end{proposition}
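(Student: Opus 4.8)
The plan is to prove the a priori estimate \eqref{15} by a \emph{finite} fractional Moser-type iteration: for a suitable $\textnormal{\texttt{s}}>0$ depending on $n,p,q,\alpha$, one builds the sequence $t_{i+1}:=t_i+\textnormal{\texttt{s}}(p+\gamma-q)$ starting from $t_0=p$, and shows inductively that $D\ti{u}$ is locally integrable with exponent $t_i$, the gain per step being positive precisely by \eqref{pqb}. All computations are performed on the regularized minimizer $\ti{u}\in C^{1,\gamma_0}(\mathcal{B}_2)$ of \eqref{funr}; by \eqref{areg} one may freely test the Euler--Lagrange equation \eqref{els} with finite-difference test functions, every integral being a priori finite, so that the sole content of the statement is the quantitative bound, with constants as stated and, in particular, independent of $\gamma_0$. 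For $\qqq\le p$ the estimate is immediate from Hölder's inequality; hence one assumes $\qqq>p$, picks $N\sim(\qqq-p)/[\textnormal{\texttt{s}}(p+\gamma-q)]$ so that $t_N\ge\qqq$, and fixes a chain of concentric balls $\mathcal{B}_{r_1}=\mathcal{B}_{\rho_0}\Subset\mathcal{B}_{\rho_1}\Subset\cdots\Subset\mathcal{B}_{\rho_N}=\mathcal{B}_{r_2}$ with equally spaced radii.

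The core is the single inductive step: \emph{if $D\ti{u}\in L^{t_i}(\mathcal{B}_{\rho_{i+1}})$, then $D\ti{u}\in L^{t_{i+1}}(\mathcal{B}_{\rho_i})$}, quantitatively, with the new norm controlled by a power of $A$ (see \eqref{eett0}), a power of $\nr{D\ti{u}}_{L^{t_i}(\mathcal{B}_{\rho_{i+1}})}+1$, and a negative power of $\rho_{i+1}-\rho_i$. To prove it one tests \eqref{els} with a second-difference test function $\varphi=\tau_{-h}\big(\eta^{2}\,w_\vartheta(D\ti{u})\,\tau_{h}\ti{u}\big)$, where $\eta$ is a cut-off between $\mathcal{B}_{\rho_i}$ and $\mathcal{B}_{\rho_{i+1}}$ and $w_\vartheta(D\ti{u})$ is a suitable symmetric power of $H_\mu(D\ti{u})$ (evaluated at $x$ and at $x+h$) carrying the growing Moser exponent $\vartheta$. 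Summing by parts in the finite differences through the discrete Leibniz rule \eqref{prod} and Lemma \ref{perparti}, the monotone part of $\tau_h[\partial_z\ti{F}(\cdot,D\ti{u})]$ paired with $D\varphi$ is bounded below, via \eqref{1d}$_2$, by an integral of $\eta^2\,w_\vartheta(D\ti{u})\,\snr{\tau_h V_\mu(D\ti{u})}^2$ (up to harmless terms), while its ``coefficient'' part is controlled through the rescaled Hölder bound \eqref{riscalone}, which supplies the factor $\snr{h}^\alpha$ — the source of the fractional differentiability gain — together with the weight $H_\mu(D\ti{u})^{(q-1)/2}$ responsible for the gap $q-p$ in the numerology; the terms carrying $\snr{D\eta}$ (where \eqref{gh} is used on the $\tau_h\ti{u}$ factor) are lower order. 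The $\snr{\tau_h D\ti{u}}$-type contributions appearing on the right are absorbed into the left through the $V_\mu$-inequalities \eqref{Vm}: it is at this point that the ranges $1<p<2$ and $p\ge 2$ genuinely diverge, the sub-quadratic case producing an extra weight $H_\mu(D\ti{u})^{(2-p)/4}$ that is exactly the reason $\gamma=p\alpha/2$ there (and $\gamma=p\alpha/n$ when $p>n$, where a further loss comes from Morrey's inequality) in \eqref{pqb}. The outcome of the step is a fractional Caccioppoli-type inequality bounding $\sup_{h}\nr{\tau_h^2\ti{u}/\snr{h}^s}_{L^{t_{i+1}}}$ on a slightly smaller ball, for a fractional exponent $s\in(1,2)$ close to $1$, in terms of $\nr{D\ti{u}}_{L^{t_i}}$, of $A$, and of negative powers of $\rho_{i+1}-\rho_i$.

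Two further ingredients conclude the step. First, the passage from a bound on the normalized double difference $\tau_h^2\ti{u}/\snr{h}^s$ to genuine gradient integrability $D\ti{u}\in L^{t_{i+1}}_{\loc}$ is exactly Lemma \ref{bllema} (Lemma \ref{l4} being also available where a cleaner localization is convenient). Second, the estimate has to be kept \emph{self-improving}, i.e.\ with only $\nr{D\ti{u}}_{L^{t_i}}$ and the fixed $A$ on its right-hand side; the high-integrability, low-differentiability norms of $\ti{u}$ produced along the way are split, via the multiplicative interpolation inequality \eqref{bm.1} and its localized form Lemma \ref{ls}, between $\nr{\ti{u}}_{L^\infty(\mathcal{B}_1)}\le A$ (when $p>n$ the Hölder part of $A$ in \eqref{eett0} enters instead, through Morrey's embedding) and $\nr{D\ti{u}}_{L^{t_i}}$, after which the residual $\sup_h\nr{\tau_h^2\ti{u}/\snr{h}^s}_{L^{t_{i+1}}}$ on the right is absorbed by Young's inequality and the radius dependence is dealt with by Lemma \ref{iterlem}. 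Running the step from $i=0$ (where $\nr{D\ti{u}}_{L^p(\mathcal{B}_{r_2})}$ is the given datum) up to $i=N-1$ and composing the resulting inequalities yields \eqref{15}, with $\mf{b}$, and the exponent of $A$, increasing with $N$ and hence depending on $\qqq$.

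The main obstacle is the sharp numerology: the Moser exponent $\vartheta$, the fractional order $s$, and every use of Young's and of the interpolation inequalities must be tuned so that the per-step gain is \emph{exactly} $\textnormal{\texttt{s}}(p+\gamma-q)$, with $\gamma$ as in \eqref{pqb} and nothing smaller — any slack would force a gap condition strictly stronger than \eqref{pqb}, which is optimal by the counterexamples of \cite{sharp, FMM}. Tightly connected to this is the delicate bookkeeping in the sub-quadratic range $1<p<2$, where the one-sided $V_\mu$-estimate \eqref{Vm}$_2$ and the induced $H_\mu(D\ti{u})$-weights must be carried through the absorption step, and in the range $p>n$, where the Hölder seminorm part of $A$ has to be brought in and the value of $\gamma$ changes accordingly.
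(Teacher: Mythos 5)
Your overall architecture (a fractional Caccioppoli inequality obtained by testing \eqref{els} with a second–difference test function, passage to gradient integrability via Lemma \ref{bllema}, interpolation against $A$ through Lemma \ref{ls}, absorption by Lemma \ref{iterlem}, and a linear-in-the-exponent iteration whose gain is positive precisely by \eqref{pqb}) is the paper's. But the core of your inductive step contains a genuine gap: you place the growing Moser exponent on a weight $w_\vartheta(D\ti{u})$, a power of $H_{\mu}(D\ti{u})$, inside the test function $\varphi=\tau_{-h}\bigl(\eta^{2}w_\vartheta(D\ti{u})\,\tau_{h}\ti{u}\bigr)$. After the discrete integration by parts you must differentiate the product $\eta^{2}w_\vartheta(D\ti{u})\tau_h\ti{u}$, and the Leibniz rule produces the cross term
$$\int_{\mathcal B_1}\eta^{2}\,\tau_h\ti{u}\;\tau_h[\partial_z\ti{F}(\cdot,D\ti{u})]\cdot D\bigl[w_\vartheta(D\ti{u})\bigr]\dx\,,$$
which contains second derivatives of $\ti{u}$. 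These are not among your ``harmless terms'': with $x$-dependence only $C^{0,\alpha}$, $\alpha<1$, the equation cannot be differentiated in $x$, so no weighted $W^{2,2}$ bound for $\ti{u}$ is available with constants independent of $L_*$, $\mu$, $\gamma_0$ (the qualitative regularity \eqref{areg} carries exactly those forbidden dependencies), and the Bernstein-type sign trick that saves this term in the differentiated autonomous setting has no analogue for a single increment $h$, nor for the coefficient part carrying $\snr{h}^{\alpha}$ from \eqref{riscalone}. This is precisely the obstruction the paper's choice is designed to avoid: there the growing power sits on the \emph{first difference of $u$ itself}, $\varphi=\tau_{-h}(\eta^{d}\snr{\tau_h\ti{u}}^{\sigma-1}\tau_h\ti{u})/\snr{h}^{1+\vartheta\sigma+\alpha}$, so that $D$ of the test function only produces $\tau_h D\ti{u}$ and $D\eta$ factors, never $D^2\ti{u}$; the $L^{\sigma-1+p}$ norm of $\tau_h^2\ti{u}$ then emerges from \eqref{mon}$_1$ applied to $\snr{\tau_h\ti{u}}^{(\sigma-1)/p}\tau_h\ti{u}$, and this is what drives the gain $t\mapsto t+2\mf{g}$.

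A secondary point: you start the iteration at $t_0=p$ and let the very first step run on $\nr{D\ti{u}}_{L^p}$. The Hölder pairings against the weights $\mathcal D(h)^{q-p/2}$, $\mathcal D(h)^{(2q-p-1)/2}$, etc.\ force the starting integrability to exceed roughly $2q-p$; this is why the paper iterates from $t_0=t_*$ as in \eqref{defitante}$_2$, and only at the very end recovers the $L^p$ right-hand side of \eqref{15} through the interpolation \eqref{inter} between $L^p$ and $L^{t_{i+1}}$, Young's inequality, and Lemma \ref{iterlem}. Your sketch mentions Young and Lemma \ref{iterlem} only to absorb fractional norms and radii, so even granting a corrected test function, the reduction of the final estimate to $\nr{D\ti{u}}_{L^{p}(\mathcal B_{r_2})}^p+1$ — which is the whole point of \eqref{15}, since Proposition \ref{scat} must feed on the $L^p$ energy only — is not yet in place.
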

The proof of Proposition \ref{tfs} will be given in Sections \ref{fm}-\ref{ultima}. We now show how to use Proposition \ref{tfs} to derive Proposition \ref{scat}; consequently, here  we employ the assumptions of this last result. Essentially, we have to find an integral estimate for $A$ in \rif{eett0}. When $p>n$ 
 this is obviously accomplished by Sobolev-Morrey embedding theorem, that is 
 \eqn{em}
$$
\nr{\ti{u}}_{L^{\infty}(\mathcal B_{1})}+ [\ti{u}]_{0,1-n/p;\mathcal B_{1}}\le c(n,p)\left(\mint_{\mathcal B_{1}}(\snr{D\ti{u}}^{p} + \snr{\ti{u}}^{p})\dx\right)^{1/p}\,.
$$
In the case $p\leq n$ we notice that the functional $\mathcal{F}$ is of the type considered in \cite{hisc} and that the bound in \eqref{pq} implies \cite[(6), page 2]{hisc}; see Remark \ref{HSre} below and \cite[Theorem 2.3]{CMM}. Therefore the assumptions of \cite[Theorem 1.1]{hisc} are satisfied and $\tilde u\in L^{\infty}_{\loc}(\mathcal B_2)$; moreover 
\eqn{bb}
$$
\nr{\ti{u}}_{L^{\infty}(\mathcal B_{1})}\le c\left(\mint_{\mathcal B_{2}}(\snr{D\ti{u}}^{p}+\snr{\ti{u}}^{p} +1)\dx\right)^{\mf{e}/p}\,,
$$
holds with $c\equiv c(n,p,q,\ti{L})\geq 1$ and $\mf{e}\equiv \mf{e}(n,p,q)\geq 1$ which is defined in \cite[(18)-(20)]{hisc} (here it is $g(\cdot)\equiv 1$ in the language of \cite{hisc}). No  dependence of the constants on $L_*$ occurs in \rif{bb}. 
Plugging \rif{em} and \rif{bb} in \rif{15} with $r_1=1/2$, $r_2=1$ leads to 
$$
\mint_{\mathcal B_{1/2}}\snr{D\ti{u}}^{\qqq}\dx\le c\left(\mint_{\mathcal B_2}(\snr{D\ti{u}}^{p}+\snr{\ti{u}}^{p}+1)\dx\right)^{\mf{b}_{\qqq}} 
$$ holds with a suitable choice of $c\equiv c(n,p,q,\alpha,\ti{L})\geq 1$ and $\mf{b}_{\qqq}\equiv \mf{b}_{\qqq}(n,p,q,\alpha)\geq 1$.  
Obviously, we can assume that $(\ti{u})_{\mathcal B_2}=0$ as $\ti{u}-(\ti{u})_{\mathcal B_2}$ is still a minimizer, so that Poincarè's inequality implies 
$$
\mint_{\mathcal B_{1/2}}\snr{D\ti{u}}^{\qqq}\dx\le c\left(\mint_{\mathcal B_{2}}(\snr{D\ti{u}}^{p}+1)\dx\right)^{\mf{b}_{\qqq}} 
$$
and finally rescaling back to $u$ leads to \rif{hhh}. 
\begin{remark}\label{HSre}
{\em In \cite{hisc} the integrand $F(\cdot)$ is assumed to verify a doubling condition with respect to the gradient variable \cite[(5)]{hisc}, i.e.,  $F(x, 2z)\lesssim F(x, z)+1$; this is not the case here. However, a quick inspection of the proofs in \cite{hisc} shows that this is not needed in our setting. Specifically, in \cite[Proof of Theorem 3.1, Step 1]{hisc}  the authors deal with terms of the type $\nr{F(\cdot, 2Du)}_{L^1(\{u\geq M\}\cap {\rm B})}$. In \cite{hisc}, thanks to the doubling property and $F(\cdot, Du)\in L^1$, such terms disappear after letting $M\to \infty$, eventually leading to \cite[(24)]{hisc}. The same result follows knowing that $Du \in L^q$ in view of \rif{assfr}$_1$, as in fact it is here. See \cite{cianchi} for related results.}
\end{remark}
\subsection{Numerology for Proposition \ref{tfs}}\label{fm}
We set 
\eqn{defitante}
$$
\begin{cases}
\, d:=\max\{p,2\}\vspace{1mm} \\
\, t_{*}:=2q+\max\{2p,p^{2}/n\}+4 \vspace{1mm}\\
\, \kk_{0}:=(1-n/p)_{+}\,.
\end{cases}
$$
Recalling that $\gamma$ has been introduced in \rif{pqb}, we further define 
\eqn{illambda}
$$\lambda:=2(\gamma-\kk)\kk_{0}, \quad \mbox{where $\kk$ is taken such that $0 < \kk  < \min\{p+\gamma-q,\alpha\}$}/2< \gamma\,.
$$ 
With $t\ge t_{*}$ as in \rif{defitante}$_2$, we introduce 
\eqn{ts}
 $$
 \sigma\equiv \sigma(t):=t-2q+p+1+2\gamma-2\kk,\quad\quad  \vartheta\equiv \vartheta(t):=\frac{\sigma-2-2\gamma+2\kk+\alpha+\lambda}{\sigma}\,.
 $$
When $p\in (1,2)$ we shall also use the exponent 
 \eqn{ts.1}
 $$
 \theta\equiv \theta(t):= \frac{p(1+\vartheta\sigma+\alpha)}{2}+\frac{(2-p)(\sigma-1-2\gamma+2\kk)}{2}= t-2q+p(1+\alpha)\,.
 $$
 Finally, we set
     \eqn{ss12}
$$
\sigma_{1}\equiv \sigma_{1}(t):=\begin{cases}
\, \sigma-1+p\quad &\mbox{if} \ \ p\le n\vspace{.5mm} \\
\, p^{2}/n-2\kk\quad &\mbox{if} \ \ p>n\,,
\end{cases} 
$$
and 
     \eqn{ss122}
$$
\sigma_{2}\equiv \sigma_{2}(t):=\begin{cases}
\ 1+\vartheta\sigma+\alpha \quad &\mbox{if} \ \   p\le n\vspace{.5mm}\\
\ 0\quad &\mbox{if} \ \ p>n\,.
\end{cases}
$$
Note that 
 \eqn{foll}
$$
\begin{cases}
\, \sigma-1-2\gamma+2\kk=t-2q+p=1+ \vartheta\sigma -\alpha-\lambda\vspace{.6mm}\\
\, \mbox{$\lambda >0$ if and only if $p>n$, and $\lambda =0$ for $p\leq  n$}\vspace{.6mm}\\
\,   \sigma> p+1+2(\gamma-\kk)\vspace{.6mm}\\
\, 0 < \vartheta <1\vspace{.6mm}\\
\,   0 < \theta < \sigma -1+p\,.
\end{cases}
$$
\begin{remark}\label{sigmat}{\em In the following computations, any dependence of the constants generically denoted by $c$ on the parameter $\sigma$ in \rif{ts} will be indicated as a dependence on $t$. Indeed, we shall eventually choose $t$ large during the final iteration of Section \ref{ultima}, and, as consequence, this will influence the size of $\sigma$ which is a function of $t$.}
\end{remark}
 \subsection{Basic estimates for Proposition \ref{tfs}}\label{fm2}
We introduce radii $$1/2\leq r_{1}\le \tau_{1}<\tau_{2}\leq r_{2}\le 1\,,$$and, with  $K\geq 2$, set \eqn{raggimain}
$$\rri:=\tau_{1}+\frac{\tau_{2}-\tau_{1}}{2K} < \tau_{1}+\frac{\tau_{2}-\tau_{1}}{K}=:\rrs\,.$$ 
Accordingly, we define 
\eqn{iraggipre}
$$
  \ri:=\frac{3\rri+\rrs}{4}, \qquad \ri^*:=\frac{2\rri+\rrs}{3}, \qquad  
\rs:=\frac{\rri+\rrs}{2} \,.
$$ Note that 
\eqn{iraggi}
$$1/2\leq r_1\leq \tau_{1}< \rri < \ri< \ri^*< \rs< \rrs< \tau_2 \leq r_2\leq 1
$$
and 
\eqn{iraggiK}
$$
 \frac{\tau_{2}-\tau_{1}}{24K}\leq \ri- \rri ,  \ri^*-\ri , 
\rs- \ri^*,   \rrs-\rs \leq \frac{\tau_{2}-\tau_{1}}{2K}\,.
$$
We then pick $h\in \mathbb{R}^{n}$ and a cut-off function $\eta\in C^{2}_{0}(\mathcal B_1)$ such that 
\eqn{vettori}
$$
\begin{cases}
\, \displaystyle 0< \snr{h} < \mathcal{h}_{0}:=\frac{\tau_{2}-\tau_{1}}{10^4K}\vspace{1.5mm}\\
\, \mathds{1}_{\mathcal B_{\ri^*}}\le \eta\le \mathds{1}_{\mathcal B_{\rs}}, \quad 
 \displaystyle \snr{D\eta}^2+ \snr{D^{2}\eta}\lesssim  \frac1{(\rs-\ri^*)^{2}}\stackrel{\rif{iraggiK}}{\lesssim}\frac {K^2}{(\tau_2-\tau_1)^{2}}\,.
\end{cases}
$$
 In \eqref{els} we take 
 $$\varphi:=\frac{\tau_{-h}(\eta^{d}\snr{\tau_{h}\ti{u}}^{\sigma-1}\tau_{h}\ti{u})}{\snr{h}^{1+\vartheta\sigma+\alpha}}\,,
 $$ 
 which is admissible by \rif{riscalamento}$_2$ and it is compactly supported in $\mathcal B_{\rrs}$. Integration-by-parts for finite differences yields
  \begin{eqnarray}\label{recalling}
0&=&\frac{1}{\snr{h}^{1+\vartheta\sigma+\alpha}}\int_{\mathcal B_{1}}\tau_{h}[\partial_{z} \ti{F}(\cdot,D\ti{u})]\cdot D (\eta^{d}\snr{\tau_{h}\ti{u}}^{\sigma-1}\tau_{h}\ti{u})\dx\nonumber \\
&=&\frac{1}{\snr{h}^{1+\vartheta\sigma+\alpha}}\int_{\mathcal B_{1}} [\partial_{z} \ti{F}(x+h,D\ti{u}(x+h))-\partial_{z} \ti{F}(x,D\ti{u}(x+h))]\cdot D(\eta^{d}\snr{\tau_{h}\ti{u}}^{\sigma-1}\tau_{h}\ti{u} )\dx\nonumber \\
&&\ +\frac{1}{\snr{h}^{1+\vartheta\sigma+\alpha}}\int_{\mathcal B_{1}}[\partial_{z} \ti{F}(x,D\ti{u}(x+h))-\partial_{z} \ti{F}(x,D\ti{u}(x))]\cdot D(\eta^{d}\snr{\tau_{h}\ti{u}}^{\sigma-1}\tau_{h}\ti{u})\dx\nonumber \\
&=&\frac{\sigma}{\snr{h}^{1+\vartheta\sigma+\alpha}}\int_{\mathcal B_{1}}\eta^{d}\snr{\tau_{h}\ti{u}}^{\sigma-1}[ \partial_{z} \ti{F}(x+h,D\ti{u}(x+h))-\partial_{z} \ti{F}(x,D\ti{u}(x+h))]\cdot \tau_{h}D\ti{u}\dx\nonumber \\
&& \ +\frac{d}{\snr{h}^{1+\vartheta\sigma+\alpha}}\int_{\mathcal B_{1}}\eta^{d-1} \snr{\tau_{h}\ti{u}}^{\sigma-1}\tau_{h}\ti{u}[\partial_{z} \ti{F}(x+h,D\ti{u}(x+h))-\partial_{z} \ti{F}(x,D\ti{u}(x+h))]\cdot D\eta\dx\nonumber \\
&&\ +\frac{\sigma}{\snr{h}^{1+\vartheta\sigma+\alpha}}\int_{\mathcal B_{1}}\eta^{d}\snr{\tau_{h}\ti{u}}^{\sigma-1}[\partial_{z} \ti{F}(x,D\ti{u}(x+h))-\partial_{z} \ti{F}(x,D\ti{u}(x))]\cdot \tau_{h}D\ti{u}\dx\nonumber \\
&&\ +\frac{d}{\snr{h}^{1+\vartheta\sigma+\alpha}}\int_{\mathcal B_{1}}\eta^{d-1}\snr{\tau_{h}\ti{u}}^{\sigma-1}\tau_{h}\ti{u}[ \partial_{z} \ti{F}(x,D\ti{u}(x+h))-\partial_{z} \ti{F}(x,D\ti{u}(x))]\cdot D\eta\dx\nonumber \\
&& =:\mbox{(I)}+\mbox{(II)}+\mbox{(III)}+\mbox{(IV)}.
\end{eqnarray}
In the following we shall abbreviate as
\eqn{eett}
$$
\begin{cases}
\,  
\displaystyle \mathcal{G}(h)\equiv \mathcal{G}(x,h):=\int_{0}^{1}\partial_{zz}\ti{F}(x,D\ti{u}(x)+\ttl \tau_{h}D\ti{u}(x))\d\ttl \vspace{2mm} \\
\, \displaystyle \mathcal{D}(h)\equiv \mathcal{D}(x,h):= \snr{D\ti{u}(x)}^{2}+\snr{D\ti{u}(x+h)}^{2}+\mu^{2}\vspace{2mm}\\
\, \displaystyle H:=H_{1}(D\tilde u)=\snr{D\ti{u}}^{2}+1 \vspace{2mm}\\
\, \displaystyle  \mathcal{N}_{\infty}^2:=\nr{D\eta}_{L^{\infty}}^{2}+\nr{D^{2}\eta}_{L^{\infty}}+1\lesssim \frac{K^2}{(\tau_2-\tau_1)^{2}}\,. 
\end{cases}
$$
Note that both $ \mathcal{G}(x,h)$ and $\mathcal{D}(x,h)$ are defined for $x \in \mathcal B_{2-\snr{h}}$. 
By first using \rif{assfr}$_2$ and \rif{mon}$_2$ with $\mf{t}=p-2$, it follows
\begin{flalign}
\nonumber \mbox{(III)}&=\frac{\sigma}{\snr{h}^{1+\vartheta\sigma+\alpha}}\int_{\mathcal B_{1}}\eta^{d}\snr{\tau_{h}\ti{u}}^{\sigma-1}  \mathcal{G}(h)\tau_{h}D\ti{u}\cdot \tau_{h}D\ti{u}\dx
\\
& \geq \frac{\sigma}{\ti{c}\snr{h}^{1+\vartheta\sigma+\alpha}}\int_{\mathcal B_{1}}\eta^{d}\snr{\tau_{h}\ti{u}}^{\sigma-1}\mathcal{D}(h)^{(p-2)/2}\snr{\tau_{h}D\ti{u}}^{2}\dx=:\frac{1}{\ti{c}}\mbox{(V)} \label{notache}
\end{flalign}
with $\ti{c}\equiv \ti{c} (n,p)$, so that 
\eqn{notache2}
$$
\mbox{(III)} \geq \frac 12 \mbox{(III)} + \frac{1}{2\ti{c}}\mbox{(V)}\,.
$$
We use the definition of $A$ in \rif{eett0} and of $\kappa_0, \lambda$ in \rif{defitante}$_3$, \rif{illambda} to estimate, for every $x\in  \mathcal B_{1-|h|}$
\eqn{issa}
$$
\begin{cases}
\, \snr{\tau_{h}\ti{u}(x)}^{\sigma-1}\lesssim  A^{2(\gamma-\kk)}\snr{\tau_{h}\ti{u}(x)}^{\sigma-1-2\gamma+2\kk}\snr{h}^{\lambda}\vspace{1.5mm}\\
\, \snr{\tau_{h}\ti{u}(x)}^{\sigma}\lesssim_{\delta_1} A^{\delta_1}\snr{\tau_{h}\ti{u}(x)}^{\sigma-\delta_1}
\snr{h}^{\delta_1\kappa_0}, \quad 0\leq  \delta_1 < \sigma\vspace{1.5mm}\\
\, \snr{\tau_{h}\ti{u}(x)}^{\sigma+1}\lesssim_{\delta_2} A^{\delta_2}\snr{\tau_{h}\ti{u}(x)}^{\sigma+1-\delta_2}
\snr{h}^{\delta_2\kappa_0}, \quad 0\leq  \delta_2 < \sigma+1\vspace{1.5mm}\\
\, \snr{\tau_{h}\ti{u}(x)}^{\sigma-1+p}\lesssim_{\delta_3} A^{\delta_3}\snr{\tau_{h}\ti{u}(x)}^{\sigma-1+p-\delta_3}
\snr{h}^{\delta_3\kappa_0}, \quad 0\leq  \delta_3 < \sigma\,.
\end{cases}
$$ 
Recalling \rif{illambda}, via H\"older's and Young's inequalities we now have
\begin{eqnarray}\label{t1.1}
\snr{\mbox{(I)}}&\stackrel{\eqref{riscalone}}{\le}&\frac{\sigma \ti{L} \rr^{\alpha}\snr{h}^{\alpha}}{\snr{h}^{1+\vartheta\sigma+\alpha}}\int_{\mathcal B_{1}}\eta^{d}\snr{\tau_{h}\ti{u}}^{\sigma-1}\left(\mathcal{D}(h)^{(q-1)/2}+\mathcal{D}(h)^{(p-1)/2}\right)\snr{\tau_{h}D\ti{u}}\dx\nonumber \\
&\le&\frac 1{4\ti{c}}\mbox{(V)}+\frac{c\rr^{2\alpha}}{\snr{h}^{1+\vartheta\sigma-\alpha}}\int_{\mathcal B_{1}}\eta^{d}\snr{\tau_{h}\ti{u}}^{\sigma-1}\left(\mathcal{D}(h)^{q-p/2}+\mathcal{D}(h)^{p/2}\right)\dx\nonumber \\
&\stackrel{\rif{issa}_1}{\leq}&\frac 1{4\ti{c}}\mbox{(V)}+c A^{2(\gamma-\kk)}\int_{\mathcal B_{\rs}}\left|\frac{\tau_{h}\ti{u}}{\snr{h}^{\frac{1+\vartheta\sigma-\alpha-\lambda}{\sigma-1-2\gamma+2\kk}}}\right|^{\sigma-1-2\gamma+2\kk}[\mathcal{D}(h)+1]^{q-p/2}\dx\nonumber \\
&\le&\frac 1{4\ti{c}}\mbox{(V)}+c A^{2(\gamma-\kk)}\left(\int_{\mathcal B_{\rs}}\left|\frac{\tau_{h}\ti{u}}{\snr{h}^{\frac{1+\vartheta\sigma-\alpha-\lambda}{\sigma-1-2\gamma+2\kk}}}\right|^{\frac{t(\sigma-1-2\gamma+2\kk)}{t-2q+p}}\dx\right)^{\frac{t-2q+p}{t}}\notag\\
&& \qquad \qquad\qquad \qquad\quad  \cdot\left(\int_{\mathcal B_{\rrs}}H^{t/2}\dx\right)^{\frac{2q-p}{t}}\nonumber \\
&\stackrel{\eqref{foll}}{=}&\frac 1{4\ti{c}}\mbox{(V)}+c A^{2(\gamma-\kk)}\left(\int_{\mathcal B_{\rs}}\left|\frac{\tau_{h}\ti{u}}{\snr{h}}\right|^{t}\dx\right)^{\frac{t-2q+p}{t}}\left(\int_{\mathcal B_{\rrs}}H^{t/2}\dx\right)^{\frac{2q-p}{t}}\nonumber \\
&\stackrel{\eqref{gh}}{\le}&\frac 1{4\ti{c}}\mbox{(V)}+\underbrace{c A^{2(\gamma-\kk)}\int_{\mathcal B_{\rrs}}H^{t/2}\dx}_{\displaystyle =: c\mbox{(VI)}_{1}}\label{saisai}\,.
\end{eqnarray}
Here we have also used the elementary estimate 
\eqn{elmy}
$$
\int_{\mathcal B_{\rs}}[\mathcal{D}(h)+1]^{t/2}\dx \lesssim_{t} \int_{\mathcal B_{\rrs}}H^{t/2}\dx
$$
that follows via simple change-of-variables and \rif{iraggiK}. We shall use inequalities like this throughout all the rest of the proof. Using \rif{recalling}, \rif{notache2} and \rif{saisai} we conclude with 
\eqn{siparte}
$$
\frac12\mbox{(III)}+ \frac{1}{4\ti{c}}\mbox{(V)}\leq c\mbox{(VI)}_{1} + \snr{\mbox{(II)}+ \mbox{(IV)}}
$$
 where $c\equiv c(n,p,q,\alpha,\ti{L},t)$. 
 \subsection{Estimates for the case $p\ge 2$}
Recalling the definition of $\mbox{(V)}$ in \rif{notache} we obtain
$$
\mbox{(VII)}:= \frac{\sigma}{\snr{h}^{1+\vartheta\sigma+\alpha}}\int_{\mathcal B_{1}}\eta^{d}\snr{\tau_{h}\ti{u}}^{\sigma-1}\snr{\tau_{h}D\ti{u}}^{p}\dx \lesssim_{n,p} \mbox{(V)}\,.
$$
This and \rif{siparte} yields 
\eqn{siparte2}
$$
\mbox{(III)}
+ \mbox{(VII)}
\leq c\mbox{(VI)}_{1} + c\snr{\mbox{(II)}}+ c \snr{\mbox{(IV)}}
$$
for an increased constant $c\equiv c(n,p,q,\alpha,\ti{L},t)$. 
 Next, for $\delta_{1}\in [0,\sigma)$ to be fixed later on, we bound, via \eqref{riscalone} and \rif{issa}, as follows:
\begin{flalign}\label{t1.12}
\snr{\mbox{(II)}}& \leq \frac{c\rr^{\alpha}}{\snr{h}^{1+\vartheta\sigma}}\int_{\mathcal B_{1}}\eta^{d-1}\snr{D\eta}\snr{\tau_{h}\ti{u}}^{\sigma}\left(\mathcal{D}(h)^{(q-1)/2}+\mathcal{D}(h)^{(p-1)/2}\right)\dx\nonumber \\
& \leq \frac{c\rr^{\alpha}A^{\delta_{1}}}{\snr{h}^{1+\vartheta\sigma-\delta_{1}\kk_{0}}}\int_{\mathcal B_{1}}\eta^{d-1}\snr{D\eta}\snr{\tau_{h}\ti{u}}^{\sigma-\delta_{1}}\left[\mathcal{D}(h)+1\right]^{(q-1)/2}\dx\nonumber \\
&\le cA^{\delta_{1}}\int_{\mathcal B_{\rs}}\snr{D\eta}^{2}\left|\frac{\tau_{h}\ti{u}}{\snr{h}^{\frac{1+\vartheta\sigma-\delta_{1}\kk_{0}}{\sigma-\delta_{1}}}}\right|^{\sigma-\delta_{1}}\left[\mathcal{D}(h)+1\right]^{(p-1)/2}\dx\nonumber \\
&\qquad +cA^{\delta_{1}}\rr^{2\alpha}\int_{\mathcal B_{\rs}}\eta^{2(d-1)}\left|\frac{\tau_{h}\ti{u}}{\snr{h}^{\frac{1+\vartheta\sigma-\delta_{1}\kk_{0}}{\sigma-\delta_{1}}}}\right|^{\sigma-\delta_{1}}[\mathcal{D}(h)+1]^{(2q-p-1)/2}\dx\nonumber \\
&\le cA^{\delta_{1}}\left(\int_{\mathcal B_{\rs}}\snr{D\eta}^{\frac{2t}{t-p+1}}\left|\frac{\tau_{h}\ti{u}}{\snr{h}^{\frac{1+\vartheta\sigma-\delta_{1}\kk_{0}}{\sigma-\delta_{1}}}}\right|^{\frac{(\sigma-\delta_{1}) t}{t-p+1}}\dx\right)^{\frac{t-p+1}{t}}\left(\int_{\mathcal B_{\rrs}}H^{t/2}\dx\right)^{\frac{p-1}{t}}\nonumber \\
&\qquad +cA^{\delta_{1}}\rr^{2\alpha}\left(\int_{\mathcal B_{\rs}}\eta^{2(d-1)}\left|\frac{\tau_{h}\ti{u}}{\snr{h}^{\frac{1+\vartheta\sigma-\delta_{1}\kk_{0}}{\sigma-\delta_{1}}}}\right|^{\frac{(\sigma-\delta_{1}) t}{t-2q+p+1}}\dx\right)^{\frac{t-2q+p+1}{t}}\left(\int_{\mathcal B_{\rrs}}H^{t/2}\dx\right)^{\frac{2q-p-1}{t}}\nonumber \\
&\leq  \underbrace{c \mathcal{N}_{\infty}^2A^{\delta_{1}}\left(\int_{\mathcal B_{\rs}} \left|\frac{\tau_{h}\ti{u}}{\snr{h}^{\frac{1+\vartheta\sigma-\delta_{1}\kk_{0}}{\sigma-\delta_{1}}}}\right|^{\frac{(\sigma-\delta_{1}) t}{t-2q+p+1}}\dx\right)^{\frac{t-2q+p+1}{t}}\left(\int_{\mathcal B_{\rrs}}H^{t/2}\dx\right)^{\frac{2q-p-1}{t}}}_{\displaystyle =:c\mbox{(VI)}_{2} }
\end{flalign}
for $c\equiv c(n,p,q,\alpha,\ti{L},t)$; we have used \rif{elmy} and that $H\geq 1$. 
For $\mbox{(IV)}$, we preliminary observe that \rif{mon}$_2$, \eqref{assfr}$_{3}$ and $p\geq 2$ imply
$$
| \mathcal{G}(h)| \lesssim_{n,p,q, \ti{L}} \mathcal{D}(h)^{(q-2)/2}+ \mathcal{D}(h)^{(p-2)/2} \leq 2
\left[\mathcal{D}(h)+1\right]^{(q-2)/2}
$$
then, also using \rif{issa}$_3$ and \rif{elmy},  when $q>2$ Cauchy-Schwarz and Young's inequalities yield
\begin{flalign}
\snr{\mbox{(IV)}}&=\frac{d}{\snr{h}^{1+\vartheta\sigma+\alpha}}\left|\int_{\mathcal B_{1}}\eta^{d-1}\snr{\tau_{h}\ti{u}}^{\sigma-1}\tau_{h}\ti{u}   \mathcal{G}(h)\tau_{h}D\ti{u}\cdot D\eta \dx\right|\nonumber \\
&\le \frac{d}{\snr{h}^{1+\vartheta\sigma+\alpha}}\left(\int_{\mathcal B_{1}}\eta^{d}\snr{\tau_{h}\ti{u}}^{\sigma-1} \mathcal{G}(h)\tau_{h}D\ti{u}\cdot \tau_{h}D\ti{u}\dx\right)^{1/2}\nonumber \\
& \qquad\qquad\qquad\quad \cdot \left(\int_{\mathcal B_{1}}\eta^{d-2}\snr{D\eta}^{2}\snr{\mathcal{G}(h)}\snr{\tau_{h}\ti{u}}^{\sigma+1}\dx\right)^{1/2}\nonumber \\
&\leq \eps\mbox{(III)}+c_{\eps}A^{\delta_{2}}\int_{\mathcal B_{\rs}}\eta^{d-2}\snr{D\eta}^{2}\left|\frac{\tau_{h}\ti{u}}{\snr{h}^{\frac{1+\vartheta\sigma+\alpha-\delta_{2}\kk_{0}}{\sigma+1-\delta_{2}}}}\right|^{\sigma+1-\delta_{2}}[\mathcal{D}(h)+1]^{(q-2)/2}\dx\nonumber \\
&\le\eps\mbox{(III)}+\underbrace{ c_{\eps}\mathcal{N}_{\infty}^2A^{\delta_{2}}\left(\int_{\mathcal B_{\rs}}\left|\frac{\tau_{h}\ti{u}}{\snr{h}^{\frac{1+\vartheta\sigma+\alpha-\delta_{2}\kk_{0}}{\sigma+1-\delta_{2}}}}\right|^{\frac{(\sigma+1-\delta_{2})t}{t-q+2}}\dx\right)^{\frac{t-q+2}{t}}\left(\int_{\mathcal B_{\rrs}}H^{t/2}\dx\right)^{\frac{q-2}{t}}}_{\displaystyle =:c_{\eps} \mbox{(VI)}_{3}}\label{pip2}
\end{flalign}
with $c_{\eps}\equiv c_{\eps}(n,p,q,\ti{L},t, \eps)$ and $\eps \in (0,1)$. When $q=2$ the previous estimate remains unchanged without using H\"older's inequality as in the last line of the previous display. Using \rif{t1.12} and \rif{pip2} in \rif{siparte2} and choosing $\eps\equiv \eps(n,p,q,\alpha,\ti{L},t)>0$ small enough in order to reabsorb the term involving $\mbox{(III)}$ appearing on the right-hand side, we obtain
\eqn{1}
$$
\mbox{(VII)}\le c \mbox{(VI)}_{1}+c\mbox{(VI)}_{2}+c\mbox{(VI)}_{3}
$$
for $c\equiv c(n,p,q,\alpha,\ti{L},t)$. Next, we provide a lower bound for $\mbox{(VII)}$. Recalling that now it is $d=p$ and also using \rif{issa}$_4$ we have
\begin{eqnarray}\label{11.5}
\mbox{(VII)}\notag &\ge&\frac{1}{c\snr{h}^{1+\vartheta\sigma+\alpha}}\int_{\mathcal B_{1}}\left|D\left(\eta^{d/p}\snr{\tau_{h}\ti{u}}^{(\sigma-1)/p}\tau_{h}\ti{u}\right)\right|^{p}\dx\\ 
&& \qquad -cA^{\delta_{3}}\int_{\mathcal B_{1}}\snr{D\eta}^{p}\left|\frac{\tau_{h}\ti{u}}{\snr{h}^{\frac{1+\vartheta\sigma+\alpha-\delta_{3}\kk_{0}}{\sigma-1+p-\delta_{3}}}}\right|^{\sigma-1+p-\delta_{3}}\dx\nonumber \\
&\stackrel{\eqref{gh}}{\ge}&\frac{1}{c\snr{h}^{1+\vartheta\sigma+\alpha+p}}\int_{\mathcal B_{\rs}}\left|\tau_{h}\left(\eta^{d/p}\snr{\tau_{h}\ti{u}}^{(\sigma-1)/p}\tau_{h}\ti{u}\right)\right|^{p}\dx\notag \\ 
&& \qquad -cA^{\delta_{3}}\int_{\mathcal B_{\rs}}\snr{D\eta}^{p}\left|\frac{\tau_{h}\ti{u}}{\snr{h}^{\frac{1+\vartheta\sigma+\alpha-\delta_{3}\kk_{0}}{\sigma-1+p-\delta_{3}}}}\right|^{\sigma-1+p-\delta_{3}}\dx\nonumber \\
&\stackrel{\eqref{prod}}{\ge}&\frac{1}{c\snr{h}^{1+\vartheta\sigma+\alpha+p}}\int_{\mathcal B_{\rs}}\eta(x+h)^{d}\left|\tau_{h}\left(\snr{\tau_{h}\ti{u}}^{(\sigma-1)/p}\tau_{h}\ti{u}\right)\right|^{p}\dx\nonumber \\
&&\qquad -cA^{\delta_{3}}\int_{\mathcal B_{\rs}}\snr{D\eta}^{p}\left|\frac{\tau_{h}\ti{u}}{\snr{h}^{\frac{1+\vartheta\sigma+\alpha-\delta_{3}\kk_{0}}{\sigma-1+p-\delta_{3}}}}\right|^{\sigma-1+p-\delta_{3}}\dx\notag \\ && \qquad -\frac{1}{\snr{h}^{1+\vartheta\sigma+\alpha+p}}\int_{\mathcal B_{\rs}}\snr{\tau_{h}\eta}^{p}\snr{\tau_{h}\ti{u}}^{\sigma-1+p}\dx\nonumber \\
&\geq &\frac{1}{c\snr{h}^{1+\vartheta\sigma+\alpha+p}}\int_{\mathcal B_{\rs}}\eta(x+h)^{d}\left|\tau_{h}\left(\snr{\tau_{h}\ti{u}}^{(\sigma-1)/p}\tau_{h}\ti{u}\right)\right|^{p}\dx\nonumber \\
&& \qquad -\underbrace{c\mathcal{N}_{\infty}^{p}A^{\delta_{3}}\int_{\mathcal B_{\rs}}\left|\frac{\tau_{h}\ti{u}}{\snr{h}^{\frac{1+\vartheta\sigma+\alpha-\delta_{3}\kk_{0}}{\sigma-1+p-\delta_{3}}}}\right|^{\sigma-1+p-\delta_{3}}\dx}_{\displaystyle =: c \mbox{(VI)}_{4}}\nonumber \\
& \ge&\frac{1}{c\snr{h}^{1+\vartheta\sigma+\alpha+p}}\int_{\mathcal  B_{  \ri}}\left|\tau_{h}\left(\snr{\tau_{h}\ti{u}}^{(\sigma-1)/p}\tau_{h}\ti{u}\right)\right|^{p}\dx-c \mbox{(VI)}_{4},
\end{eqnarray}
where $c\equiv c(n,p,q,\ti{L},t)$ and, according to Remark \ref{sigmat}, we have incorporated in $t$ the dependence on $\sigma$. 
Note that here we have used the standard mean value theorem for smooth functions to infer
\eqn{filu}	
$$
\notag \frac{1}{\snr{h}^{1+\vartheta\sigma+\alpha+p}}\int_{\mathcal B_{\rs}}\snr{\tau_{h}\eta}^{p}\snr{\tau_{h}\ti{u}}^{\sigma-1+p}\dx \leq \frac{\mathcal{N}_{\infty}^{p}}{\snr{h}^{1+\vartheta\sigma+\alpha}}\int_{\mathcal B_{\rs}}\snr{\tau_{h}\ti{u}}^{\sigma-1+p}\dx\leq \mbox{(VI)}_{4}\,.
$$
We have also used the fact that $\eta(x+h)=1$ when $x \in \mathcal B_{\ri}$; see \rif{iraggiK}-\rif{vettori}. 
From \rif{11.5}, using \rif{mon}$_1$ with $\mathfrak{p}\equiv (\sigma-1+p)/p>1$, we conclude with
\eqn{2}
$$
\mbox{(VII)}\ge \frac 1c\int_{\mathcal  B_{\ri}}\left|\frac{\tau_{h}^{2}\ti{u}}{\snr{h}^{\frac{1+\vartheta\sigma+\alpha+p}{\sigma-1+p}}}\right|^{\sigma-1+p}\dx-c \mbox{(VI)}_{4}\,.
$$
Merging \eqref{1} and \eqref{2} yields
\eqn{3}
$$
\int_{\mathcal  B_{\ri}}\left|\frac{\tau_{h}^{2}\ti{u}}{\snr{h}^{\frac{1+\vartheta\sigma+\alpha+p}{\sigma-1+p}}}\right|^{\sigma-1+p}\dx\le c \mbox{(VI)}_{1}+c\mbox{(VI)}_{2}+c\mbox{(VI)}_{3}+c\mbox{(VI)}_{4}\,,
$$
for $c\equiv c(n,p,q,\alpha,\ti{L},t)$. To control terms $\mbox{(VI)}_{2}$-$ \mbox{(VI)}_{4}$, we treat separately cases $2\le p\le n$ and $p>n$. Note that these terms contain the free parameters $\delta_1, \delta_2, \delta_3\ge 0$ that are still to be chosen. 
\subsubsection{The case $2\le p\le n$.} In this situation, by \rif{pqb} and \rif{defitante}-\rif{illambda} we have $\kk_0=\lambda =0$, $\gamma=\alpha$, and take $\delta_{1}=\delta_{2}= \delta_{3}\equiv0$. By \eqref{ts} and \eqref{foll} we infer
\eqn{foll2}
$$
\left\{
\begin{array}{c}
\displaystyle 
\, \max\left\{\frac{1+\vartheta\sigma}{\sigma},\frac{1+\vartheta\sigma+\alpha}{\sigma+1},\frac{1+\vartheta\sigma+\alpha}{\sigma-1+p}\right\}<1\\ [15pt]\displaystyle
\, \min\left\{\frac{1+\vartheta\sigma}{\sigma},\frac{1+\vartheta\sigma+\alpha}{\sigma+1},\frac{1+\vartheta\sigma+\alpha}{\sigma-1+p}\right\}>0 \\ [15pt]\displaystyle
\,\max\left\{\frac{t(1+\vartheta\sigma)}{t-2q+p+1},\frac{t(1+\vartheta\sigma+\alpha)}{t-q+2}\right\}\le t \\ [15pt]\displaystyle
\, \min\left\{\frac{t(1+\vartheta\sigma)}{t-2q+p+1},\frac{t(1+\vartheta\sigma+\alpha)}{t-q+2}\right\}> 1,
\end{array}
\right.
$$ 
so we can apply Lemma \ref{ls} to bound terms $\mbox{(VI)}_{2}$-$ \mbox{(VI)}_{4}$. We first look at $\mbox{(VI)}_{2}$. In Lemma \ref{ls} we take $s\equiv (1+\vartheta\sigma)/\sigma$ and $m\equiv t(1+\vartheta\sigma)/(t-2q+ p+1)$. We get, using \rif{foll2} repeatedly
\begin{flalign}
\notag  \mbox{(VI)}_{2}&\le \frac{c \mathcal{N}_{\infty}^{2}A^{\sigma(1-\vartheta)-1}}{(\rrs-\rs)^{1+\vartheta\sigma}}\left(\int_{\mathcal B_{\rrs}}\snr{\ti{u}}^{\frac{t(1+\vartheta\sigma)}{t-2q+p+1}}\dx\right)^{\frac{t-2q+p+1}{t}}\left(\int_{\mathcal B_{\rrs}}H^{t/2}\dx\right)^{\frac{2q-p-1}{t}}\nonumber \\
\notag &\qquad + c \mathcal{N}_{\infty}^{2}A^{\sigma(1-\vartheta)-1}\left(\int_{\mathcal B_{\rrs}}\snr{D\ti{u}}^{\frac{t(1+\vartheta\sigma)}{t-2q+p+1}}\dx\right)^{\frac{t-2q+p+1}{t}}\left(\int_{\mathcal B_{\rrs}}H^{t/2}\dx\right)^{\frac{2q-p-1}{t}}\nonumber \\
&\le\frac{c  \mathcal{N}_{\infty}^{2}A^{\sigma}}{(\rrs-\rs)^{1+\vartheta\sigma}}\left(\int_{\mathcal B_{\rrs}}H^{t/2}\dx\right)^{\frac{2q-p-1}{t}}+c \mathcal{N}_{\infty}^{2}A^{\sigma(1-\vartheta)-1} \int_{\mathcal B_{\rrs}}H^{t/2}\dx\nonumber \\
&\le\frac{c \mathcal{N}_{\infty}^{2} K^{1+\vartheta\sigma}A^{\sigma}}{(\tau_2-\tau_1)^{1+\vartheta\sigma}} \int_{\mathcal B_{\rrs}}H^{t/2}\dx\,.\label{caso3}
\end{flalign}
The constant $c$ depends on  $n,p,q,\alpha,\ti{L},t$ and in the last line of the above display we have used \rif{iraggiK}.  
For $\mbox{(VI)}_{3}$ we again use \eqref{fs.0} with $s\equiv (1+\vartheta\sigma+\alpha)/(\sigma+1)$, $m\equiv t(1+\vartheta\sigma+\alpha)/(t-q+2)$, and \rif{foll2} repeatedly, to get
\begin{flalign}\label{caso4}
 \mbox{(VI)}_{3}&\le\frac{c\mathcal{N}_{\infty}^{2}A^{\sigma(1-\vartheta)-\alpha}}{(\rrs-\rs)^{1+\vartheta\sigma+\alpha}}\left(\int_{\mathcal B_{\rrs}}\snr{\ti{u}}^{\frac{t(1+\vartheta\sigma+\alpha)}{t-q+2}}\dx\right)^{\frac{t-q+2}{t}}\left(\int_{\mathcal B_{\rrs}}H^{t/2}\dx\right)^{\frac{q-2}{t}}\nonumber \\
  \notag   &\quad +c\mathcal{N}_{\infty}^{2}A^{\sigma(1-\vartheta)-\alpha}\left(\int_{\mathcal B_{\rrs}}\snr{D\ti{u}}^{\frac{t(1+\vartheta\sigma+\alpha)}{t-q+2}}\dx\right)^{\frac{t-q+2}{t}}\left(\int_{\mathcal B_{\rrs}}H^{t/2}\dx\right)^{\frac{q-2}{t}}\nonumber \\
    &\le \frac{c\mathcal{N}_{\infty}^{2}A^{\sigma+1}}{(\rrs-\rs)^{1+\vartheta\sigma+\alpha}}\left(\int_{\mathcal B_{\rrs}}H^{t/2}\dx\right)^{\frac{q-2}{t}}+c\mathcal{N}_{\infty}^{2}A^{\sigma(1-\vartheta)-\alpha} \int_{\mathcal B_{\rrs}}H^{t/2}\dx\nonumber \\
        &\le \frac{c\mathcal{N}_{\infty}^{2}K^{1+\vartheta\sigma+\alpha}A^{\sigma+1}}{(\tau_2-\tau_1)^{1+\vartheta\sigma+\alpha}} \int_{\mathcal B_{\rrs}}H^{t/2}\dx\,,
\end{flalign}
for $c\equiv c(n,p,q,\alpha,\ti{L},t)$. Finally, on $ \mbox{(VI)}_{4}$ we use \eqref{fs.0} with $s\equiv (1+\vartheta\sigma+\alpha)/(\sigma-1+p)$, $m\equiv 1+\vartheta\sigma+\alpha\leq t$ (as we are in the case $p\geq 2$) to obtain
\begin{flalign}\label{caso5}
\notag  \mbox{(VI)}_{4}&\le \frac{c\mathcal{N}_{\infty}^{p}A^{\sigma(1-\vartheta)+p-2-\alpha}}{(\rrs-\rs)^{1+\vartheta\sigma+\alpha}}\int_{\mathcal B_{\rrs}}\snr{\ti{u}}^{1+\vartheta\sigma+\alpha}\dx \nonumber +c\mathcal{N}_{\infty}^{p}A^{\sigma(1-\vartheta)+p-2-\alpha} \int_{\mathcal B_{\rrs}}\snr{D\ti{u}}^{1+\vartheta\sigma+\alpha}\dx \nonumber \\ 
&\le c\mathcal{N}_{\infty}^{p}A^{\sigma(1-\vartheta)+p-2-\alpha}\left[\frac{A^{1+\vartheta\sigma+\alpha}}{(\rrs-\rs)^{1+\vartheta\sigma+\alpha}}+ \int_{\mathcal B_{\rrs}}H^{t/2}\dx\right]\nonumber \\
&\le
 \frac{c\mathcal{N}_{\infty}^{p}K^{1+\vartheta\sigma+\alpha}A^{\sigma-1+p}}{(\tau_2-\tau_1)^{1+\vartheta\sigma+\alpha}} \int_{\mathcal B_{\rrs}}H^{t/2}\dx\
\end{flalign}
with $c\equiv c(n,p,q,\alpha,\ti{L},t)$. Using \rif{caso3}-\rif{caso5} in \rif{3} and recalling the definition of $\mbox{(VI)}_{1}$ in \rif{t1.1}, all in all we have proved that 
\eqn{casso}
$$
\int_{\mathcal  B_{\ri}}\left|\frac{\tau_{h}^{2}\ti{u}}{\snr{h}^{\frac{1+\vartheta\sigma+\alpha+p}{\sigma-1+p}}}\right|^{\sigma-1+p}\dx\le  \frac{c\mathcal{N}_{\infty}^{p}K^{1+\vartheta\sigma+\alpha}A^{\sigma-1+p}}{(\tau_2-\tau_1)^{1+\vartheta\sigma+\alpha}} \int_{\mathcal B_{\rrs}}H^{t/2}\dx
$$
holds with $c\equiv c(n,p,q,\alpha,\ti{L},t)$.
\subsubsection{The case $p>n$.} We pick
\eqn{savy}
$$ 
\begin{cases}
\, \delta_{1}:=p(\alpha+1)/n-2\kk\in (0,\sigma)\vspace{1mm}\\
\, \delta_{2}:=2p/n-2\kk\in (0,\sigma)\vspace{1mm}\\
\,  \delta_{3}:=p^{2}/n-2\kk\in (0,\sigma)\,.
 \end{cases}
$$
Note that the relations in \rif{savy} follow from $p>n$ and \rif{illambda}-\rif{ts}. Note also that in the case $p>n$ we have $\gamma=p\alpha/n$ and $\kk_{0}:=(1-n/p)>0$. 
Recalling that $t\ge t_{*}$, that has been defined in \rif{defitante}$_2$, the choices in Section \ref{fm} and \rif{savy} imply
\eqn{tante}
$$
\left\{
\begin{array}{c}
\displaystyle 
\, \frac{1+\vartheta\sigma-\delta_{1}\kk_{0}}{\sigma-\delta_{1}}=1,\quad \quad\quad 1\le \frac{(\sigma-\delta_{1})t}{t-2q+p+1}\le t\\ [15pt]\displaystyle
\, \frac{1+\vartheta\sigma+\alpha-\delta_{2}\kk_{0}}{\sigma+1-\delta_{2}}=1,\quad\quad\quad  1 \leq \frac{(\sigma+1-\delta_{2})t}{t-q+2}\le t\\[15pt]\displaystyle
\, \frac{1+\vartheta\sigma+\alpha-\delta_{3}\kk_{0}}{\sigma-1+p-\delta_{3}}=1,\quad\quad 1 \leq \sigma-1+p-\delta_{3}\le t\,.
\end{array}
\right.
$$
Using \rif{gh} and \rif{tante} we then have 
\begin{flalign}
\mbox{(VI)}_{2}+\mbox{(VI)}_{3}&\le c  \mathcal{N}_{\infty}^{2}A^{\delta_{1}}\left(\int_{\mathcal B_{\rrs}}\snr{D\ti{u}}^{\frac{(\sigma-\delta_{1})t}{t-2q+p+1}}\dx\right)^{\frac{t-2q+p+1}{t}}\left(\int_{\mathcal B_{\rrs}}H^{t/2}\dx\right)^{\frac{2q-p-1}{t}}\nonumber \\
&\quad + c  \mathcal{N}_{\infty}^{2}A^{\delta_{2}}\left(\int_{\mathcal B_{\rrs}}\snr{D\ti{u}}^{\frac{(\sigma+1-\delta_{2})t}{t-q+2}}\dx\right)^{\frac{t-q+2}{t}}\left(\int_{\mathcal B_{\rrs}}H^{t/2}\dx\right)^{\frac{q-2}{t}}\nonumber \\
&\le c  \mathcal{N}_{\infty}^{2}A^{\delta_{2}}\int_{\mathcal B_{\rrs}}H^{t/2}\dx \label{caso6}
\end{flalign}
with $c\equiv c(n,p,q,\alpha,\ti{L},t)$. 
Finally, we have, again via \rif{gh}
\eqn{caso7}
$$
 \mbox{(VI)}_{4}\leq c\mathcal{N}_{\infty}^{p}A^{\delta_{3}}\int_{\mathcal B_{\rrs}}\snr{D\ti{u}}^{\sigma-1+p-\delta_{3}}\dx\le c\mathcal{N}_{\infty}^{p}A^{\delta_{3}} \int_{\mathcal B_{\rrs}}H^{t/2}\dx
$$
for $c\equiv c(n,p,q,\alpha,\ti{L},t)$. 
Using \rif{caso6}-\rif{caso7} in \rif{3}, and recalling the way $\mbox{(VI)}_{1} $ has been defined in \rif{saisai}, we get
\eqn{casso2}
$$
\int_{\mathcal  B_{\ri}}\left|\frac{\tau_{h}^{2}\ti{u}}{\snr{h}^{\frac{1+\vartheta\sigma+\alpha+p}{\sigma-1+p}}}\right|^{\sigma-1+p}\dx\le c\mathcal{N}_{\infty}^{p}A^{p^{2}/n-2\kk} \int_{\mathcal B_{\rrs}}H^{t/2}\dx
$$
holds with $c\equiv c(n,p,q,\alpha,\ti{L},t)$.
\subsubsection{The case $p\geq 2$ completed.} Collecting the estimates in \rif{casso} and \rif{casso2}  and recalling the definitions of $\sigma_1, \sigma_2$ given in \rif{ss12}-\rif{ss122}, we finally come up with
\eqn{es.1}
$$
\int_{\mathcal  B_{\ri}}\left|\frac{\tau_{h}^{2}\ti{u}}{\snr{h}^{\frac{1+\vartheta\sigma+\alpha+p}{\sigma-1+p}}}\right|^{\sigma-1+p}\dx\le \frac{c\mathcal{N}_{\infty}^{p}K^{\sigma_{2}}A^{\sigma_{1}}}{(\tau_2-\tau_1)^{\sigma_{2}}}\int_{\mathcal B_{\rrs}}H^{t/2}\dx\,,
$$
again for $c\equiv c(n,p,q,\alpha,\ti{L},t)$. This finally concludes the preliminary estimates in the case $p\geq 2$. 
\subsection{The case $1<p<2$.} With reference to \rif{defitante}, here it is $\lambda=\kk_{0}=0$ and $d=2$. With $\theta$ as in \rif{ts.1} we have
    \begin{flalign*}\label{9}
    \mbox{(VIII)}&:=\frac{1}{\snr{h}^{\theta}}\int_{\mathcal B_{1}}\eta^{d}\snr{\tau_{h}\ti{u}}^{\sigma-1}\snr{\tau_{h}D\ti{u}}^{p}\dx\nonumber \\ \nonumber 
      & = \int_{\mathcal B_{1}}\left(\frac{\eta^{d}\snr{\tau_{h}\ti{u}}^{\sigma-1}}{\snr{h}^{\sigma-1-2\gamma+2\kk}}\right)^{\frac{2-p}{2}}\mathcal{D}(h)^{\frac{(2-p)p}{4}}\left(\frac{\eta^{d}\snr{\tau_{h}\ti{u}}^{\sigma-1}}{\snr{h}^{1+\vartheta\sigma+\alpha}}\right)^{\frac{p}{2}}\mathcal{D}(h)^{\frac{(p-2)p}{4}}\snr{\tau_{h}D\ti{u}}^{p}\dx\,.
    \end{flalign*}
   Then, recalling the definition of $\mbox{(V)}$ in \rif{notache} and using Young's inequality, we obtain
$$
    \mbox{(VIII)} \le \mbox{(V)}+
    cA^{2(\gamma-\kk)}\int_{\mathcal B_{1}}\eta^{d}\mathcal{D}(h)^{p/2}\left|\frac{\tau_{h}\ti{u}}{\snr{h}}\right|^{\sigma-1-2\gamma+2\kk}\dx\,.
$$
To proceed, note that 
$$
1\leq \frac{t(\sigma-1-2\gamma+2\kk)}{t-p}\leq t
$$
by \rif{defitante}$_2$ and \rif{foll}$_1$, so that, thanks to \rif{gh}, we have 
   \begin{flalign*}
\notag & A^{2(\gamma-\kk)}\int_{\mathcal B_{1}}\eta^{d}\mathcal{D}(h)^{p/2}\left|\frac{\tau_{h}\ti{u}}{\snr{h}}\right|^{\sigma-1-2\gamma+2\kk}\dx\\ 
\notag &\quad \le cA^{2(\gamma-\kk)}\left(\int_{\mathcal B_{\rs}}\left|\frac{\tau_{h}\ti{u}}{\snr{h}}\right|^{\frac{t(\sigma-1-2\gamma+2\kk)}{t-p}}\dx\right)^{1-p/t}\left(\int_{\mathcal B_{\rs}}\mathcal{D}(h)^{t/2}\dx\right)^{p/t}\\
&\quad \le cA^{2(\gamma-\kk)}\left(\int_{\mathcal B_{\rrs}}\snr{D\ti{u}}^{\frac{t(\sigma-1-2\gamma+2\kk)}{t-p}}\dx\right)^{1-p/t}\left(\int_{\mathcal B_{\rrs}}H^{t/2}\dx\right)^{p/t}\le  c\mbox{(VI)}_{1}
   \end{flalign*}
    with $c\equiv c(n,p,q,\alpha, t)$ and where $\mbox{(VI)}_{1}$ is defined in \rif{t1.1}. Connecting the content of the last three displays and recalling \rif{siparte}, we arrive at 
\eqn{9.1}
   $$
    \mbox{(VIII)} +\mbox{(V)}  \leq c\mbox{(VI)}_{1}   +c |\mbox{(II)}+\mbox{(IV)}|
   $$
   with $c\equiv c(n,p,q,\alpha,\ti{L}, t)$. 
In order to estimate $\mbox{(VIII)}$ from below we proceed as in \eqref{11.5}-\eqref{2}. Indeed, using \rif{prod}-\rif{gh} we find 
   \begin{flalign*}
\mbox{(VIII)}&\ge\frac{1}{c\snr{h}^{\theta}}\int_{\mathcal B_{1}}\left|D\left(\eta^{d/p}\snr{\tau_{h}\ti{u}}^{(\sigma-1)/p}\tau_{h}\ti{u}\right)\right|^{p}\dx-c\int_{\mathcal B_{1}}\snr{D\eta}^{p}\left|\frac{\tau_{h}\ti{u}}{\snr{h}^{\frac{\theta}{\sigma-1+p}}}\right|^{\sigma-1+p}\dx\nonumber \\
&\geq \frac{1}{c\snr{h}^{\theta+p}}\int_{\mathcal B_{\rs}}\left|\tau_{h}\left(\eta^{d/p}\snr{\tau_{h}\ti{u}}^{(\sigma-1)/p}\tau_{h}\ti{u}\right)\right|^{p}\dx-c\int_{\mathcal B_{\rs}}\snr{D\eta}^{p}\left|\frac{\tau_{h}\ti{u}}{\snr{h}^{\frac{\theta}{\sigma-1+p}}}\right|^{\sigma-1+p}\dx\nonumber \\
&\ge\frac{1}{c\snr{h}^{\theta+ p}}\int_{\mathcal B_{\rs}}\eta(x+h)^{d}\left|\tau_{h}\left(\snr{\tau_{h}\ti{u}}^{(\sigma-1)/p}\tau_{h}\ti{u}\right)\right|^{p}\dx\nonumber \\
&\quad -c\int_{\mathcal B_{\rs}}\snr{D\eta}^{p}\left|\frac{\tau_{h}\ti{u}}{\snr{h}^{\frac{\theta}{\sigma-1+p}}}\right|^{\sigma-1+p}\dx-\frac{1}{\snr{h}^{\theta+p}}\int_{\mathcal B_{\rs}}\snr{\tau_{h}\eta^{d/p}}^{p}\snr{\tau_{h}\ti{u}}^{\sigma-1+p}\dx\nonumber \\
&\ge\frac{1}{c\snr{h}^{\theta+p}}\int_{\mathcal  B_{\ri}}\left|\tau_{h}\left(\snr{\tau_{h}\ti{u}}^{(\sigma-1)/p}\tau_{h}\ti{u}\right)\right|^{p}\dx-c\mathcal{N}_{\infty}^{p}\int_{\mathcal B_{\rs}}\left|\frac{\tau_{h}\ti{u}}{\snr{h}^{\frac{\theta}{\sigma-1+p}}}\right|^{\sigma-1+p}\dx
\end{flalign*}
so that, using \rif{mon}$_1$, we come to 
\eqn{cometo3} 
$$
\mbox{(VIII)} \ge \underbrace{\frac 1c\int_{\mathcal  B_{\ri}}\left|\frac{\tau_{h}^{2}\ti{u}}{\snr{h}^{\frac{\theta+p}{\sigma-1+p}}}\right|^{\sigma-1+p}\dx}_{\displaystyle =:\mbox{(IX)}/c }-\underbrace{c\mathcal{N}_{\infty}^{p}\int_{\mathcal B_{\rs}}\left|\frac{\tau_{h}\ti{u}}{\snr{h}^{\frac{\theta}{\sigma-1+p}}}\right|^{\sigma-1+p}\dx}_{\displaystyle =:c\mbox{(VIII)}_{1} }\,.
$$  
Using \rif{cometo3} in \rif{9.1} yields
\eqn{cometo}
$$
  \mbox{(IX)}+\mbox{(V)} \leq c \mbox{(VI)}_1+c   \mbox{(VIII)}_1 
+c |\mbox{(II)}+\mbox{(IV)}| $$ 
with $c\equiv c(n,p,q,\alpha, \ti{L},t)$ and we proceed estimating the terms on the right-hand side. We define, for $x \in \mathcal B_{2-\snr{h}}$
\eqn{defiG} 
$$
 \hat{\mathcal{G}}(h)\equiv \hat{\mathcal{G}}(x,h):=\int_{0}^{1}\partial_{z} \ti{F}(x+\ttl  h,D\ti{u}(x+\ttl  h))\d\ttl \,.
$$ 
As a consequence of \rif{1d}$_1$ it follows
\eqn{gghh}
$$
 \snr{\hat{\mathcal{G}}(x,h)} \lesssim_{n,p,q,\ti{L}} \int_{0}^{1}\left(\mathcal{D}(x,\ttl  h )^{(q-1)/2}+
1\right)\d\ttl$$
where $\mathcal{D}(\cdot)$ is defined in \rif{eett}. Then we have, using \eqref{af} (recall that $d=2$)
    \begin{flalign}
    \snr{\mbox{(II)}+\mbox{(IV)}}
    &=\frac{d}{\snr{h}^{1+\vartheta\sigma+\alpha}}\left|\int_{\mathcal B_{1}} \tau_{h}[\partial_{z} \ti{F}(\cdot,D\ti{u})]\cdot \snr{\tau_{h}\ti{u}}^{\sigma-1}\tau_{h}\ti{u} \eta D\eta\dx\right|\nonumber \\
    &=\frac{d}{|h|^{\vartheta\sigma+\alpha}}\left|\int_{\mathcal B_{1}} \hat{\mathcal{G}}(h)\cdot \partial_{h/\snr{h}}\left(\snr{\tau_{h}\ti{u}}^{\sigma-1}\tau_{h}\ti{u}\eta D\eta\right)\dx\right| \notag\\
    &\leq  \underbrace{\frac{d\sigma }{\snr{h}^{\vartheta\sigma+\alpha}}\int_{\mathcal B_{1}}\snr{\hat{\mathcal{G}}(h)}\eta\snr{D\eta}\snr{\tau_{h}\ti{u}}^{\sigma-1}\snr{\tau_{h}D\ti{u}}\dx}_{\displaystyle =: \mbox{(X)}} \nonumber \\
    &\quad + \underbrace{\frac{d}{\snr{h}^{\vartheta\sigma+\alpha}}\int_{\mathcal B_{1}}\snr{\hat{\mathcal{G}}(h)}\left(\snr{D\eta}^{2}+\eta\snr{D^{2}\eta}\right)\snr{\tau_{h}\ti{u}}^{\sigma}\dx}_{\displaystyle =: \mbox{(XI)}}\,.\label{rep00}
        \end{flalign}
In the application of Lemma \ref{perparti} in the last display, we have used \rif{riscalamento}$_2$ and that  $\eta\le \mathds{1}_{\mathcal B_{\rs}}$, see \rif{vettori}. For the next computations, note that Jensen's inequality and Fubini's Theorem imply
\begin{flalign}
\notag\int_{\mathcal B_{\rs}}\left[\int_{0}^{1}\left( \mathcal{D}(\ttl h)^{m}+1\right)\d\ttl\right]^{\texttt{v}} \dx& \leq 2^{\texttt{v}-1}  \int_{\mathcal B_{\rs}}\int_{0}^{1}\left( \mathcal{D}(\ttl h)^{\texttt{v} m}+1\right)\d\ttl \dx\\
& \leq c(\texttt{v}, m)
\int_{\mathcal B_{\rrs}} H^{\texttt{v} m}\dx \,, \qquad \mbox{ for every $\texttt{v}\geq 1$ and $m \geq 0$} \,.\label{rep2}
\end{flalign}  
Using first \rif{gghh} in \rif{rep00} and then \rif{rep2} repeatedly, we find
            \begin{flalign*}
    \mbox{(X)} &\le\eps\mbox{(V)}+c_{\eps}\int_{\mathcal B_{1}}\left(\int_{0}^{1}\left(\mathcal{D}(\ttl h)^{q-1}+1\right)\d\ttl\right)\, \mathcal{D}(h)^{\frac{2-p}{2}}\left|\frac{\tau_{h}\ti{u}}{\snr{h}^{\frac{\vartheta\sigma+\alpha-1}{\sigma-1}}}\right|^{\sigma-1}\snr{D\eta}^{2}\dx\\& \le\eps\mbox{(V)}+c_{\eps}\mathcal{N}_{\infty}^{2}\int_{\mathcal B_{\rs}}\left[\int_{0}^{1}\left(\mathcal{D}(\ttl h)^{q-p/2}+1\right)\d\ttl+ \mathcal{D}( h)^{q-p/2}\right]\left|\frac{\tau_{h}\ti{u}}{\snr{h}^{\frac{\vartheta\sigma+\alpha-1}{\sigma-1}}}\right|^{\sigma-1}\dx\nonumber \\ &
\le \eps\mbox{(V)}+\underbrace{c_{\eps}\mathcal{N}_{\infty}^{2}\left(\int_{\mathcal B_{\rs}}\left|\frac{\tau_{h}\ti{u}}{\snr{h}^{\frac{\vartheta\sigma+\alpha-1}{\sigma-1}}}\right|^{\frac{t(\sigma-1)}{t-2q+p}}\dx\right)^{\frac{t-2q+p}{t}}\left(\int_{\mathcal B_{\rrs}}H^{t/2}\dx\right)^{\frac{2q-p}{t}}}_{\displaystyle =: c_{\eps}\mbox{(VIII)}_{2}}
            \end{flalign*}
for every $\eps \in (0,1)$ and $c_{\eps}$ depends on $n,p,q,\alpha,\ti{L},t,\eps$. Note that we have used Young's inequality with conjugate exponents 
$$
\left(\frac{2q-p}{2-p}, \frac{2q-p}{2(q-1)}\right)\,.
$$
By similar means we also find
       \begin{flalign*}
    \mbox{(XI)} &  \leq c  \mathcal{N}_{\infty}^{2}\int_{\mathcal B_{\rs}}\snr{\hat{\mathcal{G}}(h)}\left|\frac{\tau_{h}\ti{u}}{\snr{h}^{\frac{\vartheta\sigma+\alpha}{\sigma}}}\right|^{\sigma}\dx \\
    & \leq c\mathcal{N}_{\infty}^{2}\int_{\mathcal B_{\rs}}\left(\int_{0}^{1}\left(\mathcal{D}(\ttl h)^{(q-1)/2}+1\right)\d\ttl\right) \left|\frac{\tau_{h}\ti{u}}{\snr{h}^{\frac{\vartheta\sigma+\alpha}{\sigma}}}\right|^{\sigma}\dx\\
    & \leq \underbrace{c \mathcal{N}_{\infty}^{2}\left(\int_{\mathcal B_{\rs}}\left|\frac{\tau_{h}\ti{u}}{\snr{h}^{\frac{\vartheta\sigma+\alpha}{\sigma}}}\right|^{\frac{t\sigma}{t-q+1}}\dx\right)^{\frac{t-q+1}{t}}\left(\int_{\mathcal B_{\rrs}}H^{t/2}\dx\right)^{\frac{q-1}{t}}}_{\displaystyle =: c \mbox{(VIII)}_{3}}\,.
    \end{flalign*}
We therefore conclude with
$$
 \snr{\mbox{(II)}+\mbox{(IV)}} \leq  c\eps\mbox{(V)}+c_{\eps}\mbox{(VIII)}_{2}+c\mbox{(VIII)}_{3} \,.
$$
Using this last inequality in \rif{cometo}, choosing $\eps \equiv \eps(n,p,q,\alpha,\ti{L},t)$ small enough in order to reabsorb $c\eps \mbox{(V)}$, we conclude with 
    \eqn{cometo2}
$$
  \mbox{(IX)} \leq c\mbox{(VI)}_{1}  + c   \mbox{(VIII)}_{1}  + c   \mbox{(VIII)}_{2}  + c   \mbox{(VIII)}_{3} \,.
$$ 
The terms in the right-hand side of \rif{cometo2} will be estimated via Lemma \ref{ls} and for this, recalling \rif{ts.1}, we note that 
\eqn{foll2so}
$$
\left\{
\begin{array}{c}
\displaystyle 
\,  \max\left\{ \frac{\theta}{\sigma-1+p}, \frac{\vartheta\sigma+\alpha-1}{\sigma-1},\frac{\vartheta\sigma+\alpha}{\sigma}\right\}<1\\ [15pt]\displaystyle
\, \min\left\{ \frac{\theta}{\sigma-1+p}, \frac{\vartheta\sigma+\alpha-1}{\sigma-1},\frac{\vartheta\sigma+\alpha}{\sigma}\right\}>0 \\ [15pt]\displaystyle
\, \max\left\{\theta, \frac{t(\vartheta\sigma+\alpha-1)}{t-2q+p},\frac{t(\vartheta\sigma+\alpha)}{t-q+1}\right\}\le t \\ [15pt]\displaystyle
\, \min\left\{\theta, \frac{t(\vartheta\sigma+\alpha-1)}{t-2q+p},\frac{t(\vartheta\sigma+\alpha)}{t-q+1}\right\}> 1\,.
\end{array}
\right.
$$
By \rif{foll2so} we apply \eqref{fs.0} with $s\equiv \theta/(\sigma-1+p)$ and $m\equiv \theta$ to get
    \begin{flalign*}
\mbox{(VIII)}_{1}&\le\frac{c\mathcal{N}_{\infty}^{p}A^{\sigma-1+p-\theta}}{(\rrs-\rs)^{\theta}}\int_{\mathcal B_{\rrs}}\snr{\ti{u}}^{\theta}\dx +c\mathcal{N}_{\infty}^{p}A^{\sigma-1+p-\theta} \int_{\mathcal B_{\rrs}}\snr{D\ti{u}}^{\theta}\dx \nonumber \\
&\le c\mathcal{N}_{\infty}^{p}A^{\sigma-1+p-\theta}\left[\frac{A^{\theta}}{(\rrs-\rs)^{\theta}}+\left(\int_{\mathcal B_{\rrs}}H^{t/2}\dx\right)^{\theta/t}\right] \\
& \leq \frac{c\mathcal{N}_{\infty}^{2}K^{1+\vartheta\sigma+\alpha}A^{\sigma-1+p}}{(\tau_2-\tau_1)^{1+\vartheta\sigma+\alpha}}\int_{\mathcal B_{\rrs}}H^{t/2}\dx,
    \end{flalign*} 
    for $c\equiv c(n,p,q,\alpha,\ti{L},t)$. Note that we have used that $\theta < 1+\vartheta\sigma+\alpha$ as here it is $p<2$ and that in the last line we also employed \rif{iraggiK}. For $\mbox{(VIII)}_{2}$ it is $s\equiv (\vartheta\sigma+\alpha-1)/(\sigma-1)\in (0,1)$ and $m\equiv t(\vartheta\sigma+\alpha-1)/(t-2q+p)>1$ so that \rif{fs.0} gives 
    \begin{flalign*}
    \mbox{(VIII)}_{2} &\le\frac{c\mathcal{N}_{\infty}^{2}A^{\sigma(1-\vartheta)-\alpha}}{(\rrs-\rs)^{\vartheta\sigma+\alpha-1}}\left(\int_{\mathcal B_{\rrs}}\snr{\ti{u}}^{\frac{t(\vartheta\sigma+\alpha-1)}{t-2q+p}}\dx\right)^{\frac{t-2q+p}{t}}\left(\int_{\mathcal B_{\rrs}}H^{t/2}\dx\right)^{\frac{2q-p}{t}}\nonumber \\
    &\quad +c\mathcal{N}_{\infty}^{2}A^{\sigma(1-\vartheta)-\alpha}\left(\int_{\mathcal B_{\rrs}}\snr{D\ti{u}}^{\frac{t(\vartheta\sigma+\alpha-1)}{t-2q+p}}\dx\right)^{\frac{t-2q+p}{t}}\left(\int_{\mathcal B_{\rrs}}H^{t/2}\dx\right)^{\frac{2q-p}{t}}\nonumber \\
&\le \frac{c\mathcal{N}_{\infty}^{2}A^{\sigma-1}}{(\rrs-\rs)^{\vartheta\sigma+\alpha-1}}\left(\int_{\mathcal B_{\rrs}}H^{t/2}\dx\right)^{\frac{2q-p}{t}}\nonumber+c\mathcal{N}_{\infty}^{2}A^{\sigma(1-\vartheta)-\alpha}\int_{\mathcal B_{\rrs}}H^{t/2}\dx
 \\ 
&\le \frac{c\mathcal{N}_{\infty}^{2}K^{\vartheta\sigma+\alpha-1}A^{\sigma-1}}{(\tau_2-\tau_1)^{\vartheta\sigma+\alpha-1}}\int_{\mathcal B_{\rrs}}H^{t/2}\dx
    \end{flalign*}
    for $c\equiv c(n,p,q,\alpha,\ti{L},t)$. For $ \mbox{(VIII)}_{3}$ we take $s \equiv (\vartheta\sigma+\alpha)/\sigma$ and $m \equiv t(\vartheta\sigma+\alpha)/(t-q+1)$, obtaining 
    \begin{flalign*}
   \mbox{(VIII)}_{3}&\le \frac{c\mathcal{N}_{\infty}^{2}A^{\sigma(1-\vartheta)-\alpha}}{(\rrs-\rs)^{\vartheta\sigma+\alpha}}\left(\int_{\mathcal B_{\rrs}}\snr{\ti{u}}^{\frac{t(\vartheta\sigma+\alpha)}{t-q+1}}\dx\right)^{\frac{t-q+1}{t}}\left(\int_{\mathcal B_{\rrs}}H^{t/2}\dx\right)^{\frac{q-1}{t}}\nonumber \\
    &\quad +c\mathcal{N}_{\infty}^{2}A^{\sigma(1-\vartheta)-\alpha}\left(\int_{\mathcal B_{\rrs}}\snr{D\ti{u}}^{\frac{t(\vartheta\sigma+\alpha)}{t-q+1}}\dx\right)^{\frac{t-q+1}{t}}\left(\int_{\mathcal B_{\rrs}}H^{t/2}\dx\right)^{\frac{q-1}{t}}\nonumber \\
    &\le \frac{c\mathcal{N}_{\infty}^{2}A^{\sigma}}{(\rrs-\rs)^{\vartheta\sigma+\alpha}}\left(\int_{\mathcal B_{\rrs}}H^{t/2}\dx\right)^{\frac{q-1}{t}}\nonumber+c\mathcal{N}_{\infty}^{2}A^{\sigma(1-\vartheta)-\alpha}\int_{\mathcal B_{\rrs}}H^{t/2}\dx
 \\
&\le \frac{c\mathcal{N}_{\infty}^{2}K^{\vartheta\sigma+\alpha}A^{\sigma}}{(\tau_2-\tau_1)^{\vartheta\sigma+\alpha}}\int_{\mathcal B_{\rrs}}H^{t/2}\dx
    \end{flalign*}
    with $c\equiv c(n,p,q,\alpha,\ti{L},t)$. Using the estimates in the last three displays in \rif{cometo2}, recalling the definitions of $\mbox{(VI)}_{1}$ in \rif{t1.1}, and of $\mbox{(IX)}$ in \rif{cometo3} and finally \rif{ss12}-\rif{ss122}, we arrive at
    \eqn{es.2}
    $$ 
\int_{\mathcal  B_{\ri}}\left|\frac{\tau_{h}^{2}\ti{u}}{\snr{h}^{\frac{\theta+p}{\sigma-1+p}}}\right|^{\sigma-1+p}\dx\le \frac{c\mathcal{N}_{\infty}^{2}K^{\sigma_{2}}A^{\sigma_{1}}}{(\tau_2-\tau_1)^{\sigma_{2}}}\int_{\mathcal B_{\rrs}}H^{t/2}\dx
    $$
    for $c\equiv c(n,p,q,\alpha,\ti{L},t)$.

    \subsection{Proof of Proposition \ref{tfs} concluded}\label{ultima} Estimates \eqref{es.1} and \eqref{es.2} unify in
        \eqn{es}
        $$
   \sup_{0<\snr{h}<\mathcal{h}_{0}} \int_{\mathcal  B_{\ri}}\left|\frac{\tau_{h}^{2}\ti{u}}{\snr{h}^{\beta_{0}(t)}}\right|^{\sigma(t)-1+p}\dx\le \frac{c\mathcal{N}_{\infty}^{d}K^{\sigma_{2}(t)}A^{\sigma_{1}(t)}}{(\tau_2-\tau_1)^{\sigma_{2}(t)}}\int_{\mathcal B_{\rrs}}(\snr{D\ti{u}}^{2}+1)^{t/2}\dx\,,
    $$
    where 
        $$
   \beta_{0}(t):=\left\{
\begin{array}{c}
\displaystyle 
\, \frac{1+\vartheta(t)\sigma(t)+\alpha+p}{\sigma(t)-1+p}\qquad \mbox{if} \ \ 2\leq p \\[16pt]\displaystyle
\, \frac{\theta(t)+p}{\sigma(t)-1+p}\qquad \mbox{if} \ \ 1<p< 2\,,
\end{array}
\right.
    $$ $c\equiv c(n,p,q,\alpha,\ti{L},t)$ and $\sigma, \vartheta, \theta, \sigma_1, \sigma_2$ are in \rif{ts}-\rif{ss122}, $t\geq t_*$ and $t_*$ as in \rif{defitante}$_2$. Note that here we prefer to emphasize the dependence on $t$ of the various quantities. By using the definitions in \rif{defitante}-\rif{ts.1}, and recalling how $\gamma$ has been defined in \rif{pqb}, 
we check that
\eqn{cecchi}
$$
1< \beta_{0}(t) <2\quad \mbox{holds for every $t\geq t_*$}\,.$$ 
Note that $ \beta_{0}(t) \to 1$ as $t\to \infty$. We also define the positive number $\mathfrak{g}$ via
$$0 \stackrel{\eqref{pqb},\eqref{illambda}}{<}\mathfrak{g}:= p+\gamma-q-\kk \Longrightarrow \sigma(t)-1+p=t+2\mathfrak{g}>t\,.
$$  By \rif{iraggiK}-\rif{vettori} we have  $ \rri  + 7 \mathcal h_0<  \ri$ so that \rif{cecchi} allows to use Lemma \ref{bllema} with $m\equiv t+2\mathfrak{g}$, $s \equiv \beta_0(t)$ and this yields
    \begin{eqnarray}
   \notag \nr{D\ti{u}}_{L^{t+2\mathfrak{g}}(\mathcal B_{\rri})}^{t+2\mathfrak{g}}
    &\stackleq{immersione2}& c\sup_{0<\snr{h}<\mathcal{h}_{0}}\left\|\frac{\tau_{h}^{2}\ti{u}}{\snr{h}^{\beta_{0}(t)}}\right\|_{L^{t+2\mathfrak{g}}(\mathcal  B_{\ri})}^{t+2\mathfrak{g}} +\frac{c}{\mathcal{h}_{0}^{\beta_{0}(t)(t+2\mathfrak{g})}}\nr{\ti{u}}_{L^{t+2\mathfrak{g}}(\mathcal  B_{\ri})}^{t+2\mathfrak{g}}\nonumber \\
    &\stackleq{es} & \frac{cK^{\sigma_{2}(t)+d}A^{\sigma_{1}(t)}}{(\tau_{2}-\tau_{1})^{\sigma_{2}(t)+d}}\left(\nr{D\ti{u}}_{L^{t}(\mathcal B_{\rrs})}^{t}+1\right)+\frac{cK^{\beta_{0}(t)(t+2\mathfrak{g})}A^{t+2\mathfrak{g}}}{(\tau_{2}-\tau_{1})^{\beta_{0}(t)(t+2\mathfrak{g})}} \label{numerology}
    \end{eqnarray}
      where $c\equiv c (n,p,q,\alpha, \ti{L},t)$. Note that we have used \rif{eett}$_4$ too. 
Setting 
    \eqn{byseting}
      $$
m_{t}:=\max\left\{\sigma_{1}(t),\sigma_{2}(t)+d, \beta_{0}(t)(t+2\mathfrak{g})\right\} \,.
  $$
  With easy manipulations, we conclude with 
  \eqn{1-0}
  $$
\nr{D\ti{u}}_{L^{t+2\mathfrak{g}}(\mathcal B_{\rri})}^{t+2\mathfrak{g}}  +1\le\frac{c_tK^{m_{t}}A^{m_{t}}\left(\nr{D\ti{u}}_{L^{t}(\mathcal B_{\rrs})}^{t}+1\right)}{(\tau_{2}-\tau_{1})^{m_{t}}}
  $$ 
  where $c_t$ depends on $n,p,q,\alpha,\ti{L},t$. We next introduce the sequence of numbers $\{t_{i}\}_{i\in \mathbb{N}\cup \{0\}}$ as
  \eqn{induttiva}
  $$  
  \begin{cases}
  \,   t_{i+1}:=t_{i}+2\mathfrak{g}\,, \quad  i \geq 0\vspace{0.5mm}\\
  \, t_{0}:=t_{*}
  \end{cases} \quad \Longleftrightarrow \quad  t_{i}=t_{*}+2i\mathfrak{g}\,, \qquad \forall \ i\geq 0\,.
  $$
 Recall that $t_*$ is defined in \rif{defitante}$_2$. Moreover, we take the sequence of shrinking balls \linebreak $\{\mathcal B_{\rr_{i}}\}_{i\in \N\cup\{0\}}$ where
  $$\rr_{i}:=\tau_{1}+\frac{\tau_{2}-\tau_{1}}{2^{i+1}}\,, \quad i \geq 0\,.$$
  In particular we have $\rr_{0} = \tau_{1}+ (\tau_{2}-\tau_{1})/2$. Recalling \rif{byseting}, we  apply \rif{1-0} with the choice - in \rif{raggimain} -  given by 
$$\begin{cases}
\, 
\rri\equiv \rri(i):= \rr_{i+1}, \quad \rrs\equiv \rrs(i)\equiv \rr_{i}, \quad
K\equiv K_{i}:=2^{i+1}, \quad  t\equiv t_{i}\vspace{1mm}\\
\,  m_{i}:=\max\left\{\sigma_{1}(t_{i}), \sigma_{2}(t_{i})+d,\beta_{0}(t_{i}) (t_{i}+2\mathfrak{g})\right\}\end{cases}$$
for integers $i\geq 0$. Therefore \rif{1-0} becomes
\eqn{iterante}
$$
   \mathcal D_{t_{i+1}}(\mathcal B_{\rr_{i+1}}) \leq \frac{c_{i}A^{m_{i}}\mathcal D_{t_{i}}(\mathcal B_{\rr_{i}})}{(\tau_{2}-\tau_{1})^{m_{i}}},\qquad \forall \ i \geq 0\,,
$$
with $c_{i}\equiv c_{i}(n,p,q,\alpha,\ti{L},i)$, where
    $$
    \mathcal D_m(\mathcal B_{\tau}) :=  \nr{D\ti{u}}_{L^{m}(\mathcal B_{\tau})}^{m} +1\,,\qquad 0 < \tau \leq 1\,, \ \  m \geq 1\,.
    $$
Iterating \rif{iterante} yields 
    \begin{flalign}\label{14}
   \mathcal D_{t_{i+1}}(\mathcal B_{\rr_{i+1}})\le \left(\prod_{j=0}^{i}\frac{c_{j}A^{m_{j}}}{(\tau_{2}-\tau_{1})^{m_{j}}}   \right) \mathcal D_{t_{0}}(\mathcal B_{\rr_{0}}),\qquad \forall \ i \geq 0,    \end{flalign}
    with $c_{j}\equiv c_{j}(n,p,q,\alpha,\ti{L},j)$, $j\in \{0,\cdots,i\}$. Definition \rif{induttiva} implies $t_{i+1}>t_{0}=t_{*}>p$, therefore we can use the interpolation inequality
    \eqn{inter}
    $$
\nr{D\ti{u}}_{L^{t_{0}}(\mathcal B_{\rr_{0}})}\le  \nr{D\ti{u}}_{L^{t_{i+1}}(\mathcal B_{\rr_{0}})}^{\lambda_{i+1}}\nr{D\ti{u}}_{L^{p}(\mathcal B_{\rr_{0}})}^{1-\lambda_{i+1}}\,,
$$
where $\lambda_{i+1}\in (0,1)$ is such that
$$ 
\frac{1}{t_{0}}=\frac{\lambda_{i+1}}{t_{i+1}}+\frac{1-\lambda_{i+1}}{p} \ \Longrightarrow \ \lambda_{i+1}=\frac{t_{i+1}(t_{0}-p)}{t_{0}(t_{i+1}-p)}\,.
$$
Inequality \rif{inter} and $\rr_{0}\le \tau_{2}\leq r_{2}$ easily imply
$$
   \mathcal D_{t_{0}}(\mathcal B_{\rr_{0}}) \leq    2\mathcal D_{t_{i+1}}(\mathcal B_{\tau_{2}})^{\frac{t_0\lambda_{i+1}}{t_{i+1}}}
      \mathcal D_{p}(\mathcal B_{r_{2}})^{\frac{t_0(1-\lambda_{i+1})}{p}}\,.
$$
Using the content of the last two displays in \rif{14}, 
and recalling that $\rr_{i+1}\ge \tau_{1}$, we obtain
$$
\mathcal D_{t_{i+1}}(\mathcal B_{\tau_{1}})\le 2 \left(\prod_{j=0}^{i}\frac{c_{j}A^{m_{j}}}{(\tau_{2}-\tau_{1})^{m_{j}}}\right) \mathcal D_{t_{i+1}}(\mathcal B_{\tau_{2}})^{\frac{t_{0}-p}{t_{i+1}-p}}
      \mathcal D_{p}(\mathcal B_{r_{2}})^{\frac{t_{i+1}-t_{0}}{t_{i+1}-p}}\,.
$$
Young's inequality now yields
\begin{flalign*}
\mathcal D_{t_{i+1}}(\mathcal B_{\tau_{1}}) & \le \frac{1}{2}\mathcal D_{t_{i+1}}(\mathcal B_{\tau_{2}})+2^{\frac{t_{i+1}-p}{t_{i+1}-t_{0}}}\left(\prod_{j=0}^{i}\frac{c_{j}A^{m_{j}}}{(\tau_{2}-\tau_{1})^{m_{j}}}\right)^{\frac{t_{i+1}-p}{t_{i+1}-t_{0}}}  \mathcal D_{p}(\mathcal B_{\tau_{2}})\\
& \le \frac{1}{2}\mathcal D_{t_{i+1}}(\mathcal B_{\tau_{2}})+ \frac{cA^{\mf{b}_i}\mathcal D_{p}(\mathcal B_{r_{2}})}{(\tau_{2}-\tau_{1})^{\mf{b}_i}}\,,
\end{flalign*}
where $c\equiv c(n,p,q,\alpha,\ti{L},i)$ and $\mf{b}_i\equiv \mf{b}_i(n,p,q,\alpha,i)$. 
Lemma \ref{iterlem} applied in the above inequality with the choice $\hhh(\tau)\equiv \mathcal D_{t_{i+1}}(\mathcal B_{\tau})$, finally yields
\eqn{iii}
$$
\mathcal D_{t_{i+1}}(\mathcal B_{r_{1}})\le \frac{cA^{\mf{b}_{i}}\mathcal D_{p}(\mathcal B_{r_{2}})}{(r_{2}-r_{1})^{\mf{b}_{i}}}
$$
for a different $c\equiv c(n,p,q,\alpha,\ti{L},i)$. This leads to \rif{15} taking $i\equiv i(\qqq)$ large enough to have $t_{i+1} \geq\qqq$ in \rif{iii}. The proof of Proposition \ref{tfs} and therefore of Proposition \ref{scat} are complete. 
\vspace{2mm}

Following the proofs of Propositions \ref{scat}-\ref{tfs} and rescaling in \rif{15}, we easily get 
 \begin{proposition}\label{scatdopo}
Let $u\in W^{1,q}_{\loc}(\Omega)\cap L^{\infty}_{\loc}(\Omega)$ be a minimizer of the functional  $\mathcal F$ in  \eqref{ggg2}, under assumptions \eqref{pqbb} and \eqref{assfr} with $p\leq n$. Then 
\eqn{hhhbb}
$$
\int_{B_{\rr/2}}\snr{Du}^{\qqq}\dx \le  c\left(\frac{\nr{u}_{L^{\infty}(\rm B)}}{\rr}+1\right)^{\mf{b}_{\qqq}}\left(\int_{B_{\rr}}\snr{Du}^{p}\dx +\rr^n\right)
$$
holds for every ball $B_{\rr}\Subset \Omega$, $\rr\leq 1$, where $c\equiv c(n,p,q,\alpha,\ti{L},\qqq)\geq 1$ and $\mf{b}_{\qqq}\equiv \mf{b}_{\qqq}(n,p,q,\alpha, \qqq)\geq 1$
\end{proposition}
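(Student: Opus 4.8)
The plan is to obtain \eqref{hhhbb} from the a priori estimate \eqref{15} of Proposition \ref{tfs} by a blow-up and scaling argument, following verbatim the passage from Proposition \ref{tfs} to Proposition \ref{scat}; the only difference, and simplification, is that here the $L^{\infty}$-bound on $u$ is part of the hypotheses, so one does not need to produce it via \eqref{bb} (see Remark \ref{HSre}).

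First I would fix a ball $B_{\rr}(x_{0})\Subset\Omega$ with $\rr\le1$ and blow $u$ up as in \eqref{riscalamento}, setting $\ti u(x):=u(x_{0}+\rr x)/\rr$. Since $\overline{B_{\rr}(x_{0})}$ is a compact subset of the open set $\Omega$, $\ti u$ is well defined, and belongs to $W^{1,q}_{\loc}\cap L^{\infty}$, on a ball slightly larger than $\mathcal B_{1}$, which is all the room that the finite-difference computations behind \eqref{15} (whose conclusion concerns only the region $\mathcal B_{r_{2}}\subseteq\mathcal B_{1}$, $r_{2}\le1$) actually use. The function $\ti u$ minimizes the functional with integrand $\ti F(x,z):=F(x_{0}+\rr x,z)$, which still satisfies \eqref{assfr}$_{1,2,3}$ and satisfies \eqref{assfr}$_{4}$ with constant $\ti L\rr^{\alpha}\le\ti L$ because $\rr\le1$; thus \eqref{assfr} holds for $\ti F$ with the same $\ti L$, and no constant degenerates under the rescaling. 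I would then check the hypotheses of Proposition \ref{tfs}: since $p\le n$, the bound \eqref{pqbb} reads $q<p+p\alpha/2$ when $p<2$ and $q<p+\alpha$ when $2\le p\le n$, which, recalling that in \eqref{pqb} one has $\gamma=p\alpha/2$ for $p<2$ and $\gamma=\alpha$ for $2\le p\le n$, is in both cases exactly \eqref{pqb}. Moreover, still because $p\le n$, the quantity $A$ in \eqref{eett0} is $A=\nr{\ti u}_{L^{\infty}(\mathcal B_{1})}+1$, carrying no H\"older-seminorm contribution, so the hypothesis $u\in L^{\infty}_{\loc}(\Omega)$ gives directly $A=\nr{u}_{L^{\infty}(B_{\rr}(x_{0}))}/\rr+1$.

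I would then apply \eqref{15} with $r_{1}=1/2$, $r_{2}=1$, getting
\[
\nr{D\ti u}_{L^{\qqq}(\mathcal B_{1/2})}^{\qqq}\le c\,A^{\mf b}\bigl(\nr{D\ti u}_{L^{p}(\mathcal B_{1})}^{p}+1\bigr)
\]
with $c\equiv c(n,p,q,\alpha,\ti L,\qqq)$ and $\mf b\equiv\mf b(n,p,q,\alpha,\qqq)$. Scaling back via $D\ti u(x)=Du(x_{0}+\rr x)$, so that $\nr{D\ti u}_{L^{m}(\mathcal B_{\lambda})}^{m}=\rr^{-n}\nr{Du}_{L^{m}(B_{\lambda\rr}(x_{0}))}^{m}$ for $m\in\{p,\qqq\}$ and $\lambda\in\{1/2,1\}$, and using $\nr{\ti u}_{L^{\infty}(\mathcal B_{1})}=\nr{u}_{L^{\infty}(B_{\rr}(x_{0}))}/\rr$, after multiplying by $\rr^{n}$ and substituting the value of $A$ one obtains exactly \eqref{hhhbb}. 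Unlike the final step in the proof of Proposition \ref{scat}, here no mean value is subtracted and Poincar\'e's inequality is not used, since both the $L^{\infty}$ term and the $\int\snr{Du}^{p}$ term are kept on the right-hand side.

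The main point is that there is no genuine obstacle here: all the delicate work is already packaged in Proposition \ref{tfs}. The only items requiring (minor) care are the case distinction $p<2$ versus $2\le p\le n$ identifying \eqref{pqbb} with \eqref{pqb}; the observation that the factor $\rr^{\alpha}\le1$ leaves $\ti L$, hence all constants, unchanged under rescaling; and the bookkeeping that, for $p\le n$, the auxiliary quantity $A$ in \eqref{eett0} reduces to $\nr{\cdot}_{L^{\infty}}+1$, so that local boundedness of $u$ is precisely what the estimate requires.
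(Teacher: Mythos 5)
Your argument is correct and coincides with the paper's own (one-line) proof, which simply says to follow the proofs of Propositions \ref{scat}--\ref{tfs} and rescale \eqref{15}: you correctly note that for $p\le n$ condition \eqref{pqbb} is exactly \eqref{pqb}, that $A$ in \eqref{eett0} reduces to $\nr{\ti u}_{L^{\infty}(\mathcal B_{1})}+1$ so the assumed local boundedness replaces the appeal to \eqref{bb}, and that scaling back \eqref{15} with $r_{1}=1/2$, $r_{2}=1$ gives \eqref{hhhbb}. Your additional observation that the estimates behind \eqref{15} only use $\ti u$ on (slightly more than) $\mathcal B_{1}$ is also the right justification for stating the result for all $B_{\rr}\Subset\Omega$ rather than $B_{2\rr}\Subset\Omega$ as in Proposition \ref{scat}.
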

\section{$\App{\rm B}$ and proof of Theorem \ref{main3}}\label{ptm}
\subsection{Approximation scheme on balls ${\rm B}\Subset \Omega$}\label{appisec}
Here we devise a procedure, that we call $\App{\rm B}$, aimed at approximating a minimizer $u\in W^{1,p}_{\loc}(\Omega)$ of the relaxed functional $\bar{\mathcal F}$, with minimizers of integral functionals as in \rif{ggg2}, with smooth integrands $F(\cdot)$ satisfying \rif{assfr}, on a fixed ball ${\rm B}\Subset \Omega$. It enhances a few techniques from \cite{piovra}. Needless to say, we recall that whenever we are going to $\App{\rm B}$, we shall always assume that $F(\cdot)$ verifies \rif{assf} and \rif{assf2} as described in Section \ref{addhol}; in particular, $F(\cdot)$ is continuous. In the following we denote by $\eps\equiv \{\eps_k\}$ and $\delta\equiv \{\delta_m\}$ two strictly decreasing sequences of positive numbers such that $(0,1)\ni \eps, \delta \to 0$ and $\delta \leq \dist({\rm B}, \partial \Omega)/2$; we shall sometimes extract subsequences still denoting $\eps, \delta$ the corresponding outcome. Moreover, $\texttt{o}(\varepsilon,{\rm B})$ and $\texttt{o}_{\varepsilon}(\delta,{\rm B})$ will denote two general quantities, depending also on ${\rm B}$, such that $\texttt{o}(\varepsilon,{\rm B})\to 0$ when $\eps \to 0$ and $\texttt{o}_{\varepsilon}(\delta,{\rm B})\to 0$ when $\delta \to 0$, respectively. The second convergence actually occurs for each fixed $\eps$,  but not necessarily uniformly with respect to $\eps$. The agreement is that $\texttt{o}(\varepsilon,{\rm B})$ is independent of $\delta$, while, as done for the generic constants denoted by  $c$, the quantities $\texttt{o}(\varepsilon,{\rm B}), \texttt{o}_{\varepsilon}(\delta,{\rm B})$ might change in different occurrences, but still keeping the same meaning in terms of $\eps, \delta$. Observe that, when appearing on the right-hand side of an estimate, we shall assume that $\texttt{o}(\varepsilon,{\rm B}), \texttt{o}_{\varepsilon}(\delta,{\rm B})$ are non-negative, as, in fact, taking their absolute value does not alter their infinitesimal nature. By Lemma \ref{bf.0} we pick a sequence $\{\ti{u}_{\varepsilon}\}\subset W^{1,q}({\rm B})$ such that
\eqn{x.1.1}
$$ 
\begin{cases}
\, \ti{u}_{\eps}\rightharpoonup u \ \ \mbox{weakly in} \ \ W^{1,p}({\rm B})\vspace{1mm}\\
\,  \mathcal{F}(\tilde u_{\eps},{\rm B})=\bar{\mathcal F}(u,{\rm B}) + \texttt{o}(\varepsilon,{\rm B})\,.
\end{cases}
$$
Moreover, in the case $u \in L^{\infty}({\rm B})$, by Lemma \ref{bf.1} we can also assume that 
\eqn{x.1.1b}
$$
\nr{\ti{u}_{\eps}}_{L^{\infty}({\rm B})}\le (1+\rr)\nr{u}_{L^{\infty}({\rm B})}+\varrho\,,
$$
where $\varrho$ denotes the radius of $B$. 
We fix a standard non-negative, radially symmetric mollifier $\phi\in C^{\infty}_{0}(\BB)$, $ \nr{\phi}_{L^{1}(\mathbb{R}^n)}=1$, $\mathcal B_{3/4} \subset \supp\,  \phi$ and such that $\phi(\cdot)>0$ on $\mathcal B_{3/4}$, and define
\eqn{approssima}
$$
\begin{cases}
\, \displaystyle F_{\delta}(x,z):=
\int_{\mathcal B_1} \int_{\mathcal B_1}F(x+\delta \lambda ,z+\delta y)\phi(\lambda)\phi(y)\d\lambda\dy\vspace{2.6mm}\\
\, F_{\eps, \delta}(x,z):=F_{\delta}(x,z)+\sigma_{\varepsilon} (\snr{z}^{2}+\mu_{\delta}^{2})^{q/2}\vspace{2.6mm}\\
\displaystyle \, \mu_{\delta}:=\mu+\delta\in (0,2], \quad \sigma_{\varepsilon}:= \frac{1}{1+1/\varepsilon+\nr{D\ti{u}_{\varepsilon}}_{L^{q}({\rm B})}^{2q}}\,.
\end{cases}
$$ 
By \rif{assf} and \rif{assf2}, and by construction, the integrands $F_{\eps, \delta}(\cdot)$ are $C^{\infty}$-regular, satisfy \eqref{assfr} on $\Omega\equiv \rm B$ with $\mu$ replaced by $\mu_\delta$ (the original $\mu$ being the one from initial assumptions \rif{assf}) a new constant $\ti{L}\geq L$, depending only on $n,p,q,L$ but independent of $\eps, \delta$, and with  $L_{*}\approx_{q} \sigma_{\varepsilon}^{-1}$. See \cite[Section 12]{ciccio} and \cite[Section 4.5]{onthe} and related references for details. By \rif{assf2} and  \rif{1d}$_1$ it easily follows that 
\eqn{lippi00}
$$|F_{ \delta}(x,z)
-F(x,z)|\lesssim_{n,q,L} \delta^{\alpha} (\snr{z}+1)^{q}$$
holds for every $(x,z)\in {\rm B}\times \er^n$. 
We then define 
$u_{\eps,\delta}\in \ti{u}_{\varepsilon}+W^{1,q}_{0}({\rm B})$ as the unique solution to
\eqn{pde0}
$$
u_{\eps,\delta}\mapsto \min_{w \in \ti{u}_{\varepsilon}+W^{1,q}_{0}({\rm B})} \mathcal{F}_{\eps,\delta}(w,{\rm B}),\qquad \mathcal{F}_{\eps,\delta}(w,{\rm B}):=\int_{{\rm B}}F_{\eps, \delta}(x,Dw)\dx\,.
$$
Using the very definition of $\sigma_{\varepsilon}$ and \rif{lippi00}, and recalling that $D\ti{u}_{\eps} \in L^{q}({\rm B})$, we have
\eqn{c.2}
$$
\begin{cases}
\, \displaystyle \sigma_{\varepsilon}\int_{{\rm B}}(\snr{D\ti{u}_{\varepsilon}}^{2}+\mu_{\delta}^{2})^{q/2}\dx \leq \sigma_{\varepsilon}\int_{{\rm B}}(\snr{D\ti{u}_{\varepsilon}}^{2}+4)^{q/2}\dx=\texttt{o}(\varepsilon,{\rm B})\vspace{2.6mm}\\
\displaystyle
 \, \nr{F_{\delta}(\cdot,D\ti{u}_{\varepsilon})-F(\cdot,D\ti{u}_{\varepsilon})}_{L^1({\rm B})}=\texttt{o}_{\varepsilon}(\delta,{\rm B})\,.
\end{cases}
$$
The information in the last display and \rif{x.1.1}$_2$ imply
\eqn{implyimply}
$$
\mathcal{F}_{\eps,\delta}(\ti{u}_{\varepsilon},{\rm B})=\bar{\mathcal{F}}(u,{\rm B})+\texttt{o}(\varepsilon,{\rm B})+\texttt{o}_{\varepsilon}(\delta,{\rm B})\,.
$$
The minimality of $u_{\eps,\delta}$ and \rif{assfr}$_1$ in turn give
\eqn{implyimply2}
 $$   c^{-1}\nr{Du_{\eps,\delta}}_{L^{p}({\rm B})}^{p}+c_{*}^{-1}\nr{Du_{\eps,\delta}}_{L^{q}({\rm B})}^q
  \leq \mathcal{F}_{\eps,\delta}(u_{\eps,\delta},{\rm B})\leq \mathcal{F}_{\eps,\delta}(\ti{u}_{\eps},{\rm B})\,,
$$
where $c\equiv c(n,p,q,L)$ and $c_{*}^{-1}\approx_{q}\sigma_{\varepsilon}$. 
We therefore conclude with 
\eqn{c.4}
 $$    \mathcal{F}_{\eps,\delta}(u_{\eps,\delta},{\rm B})+\nr{Du_{\eps,\delta}}_{L^{p}({\rm B})}^{p}+\sigma_{\varepsilon}\nr{Du_{\eps,\delta}}_{L^{q}({\rm B})}^q
\lesssim_{n,p,q,L}  \bar{\mathcal{F}}(u,{\rm B})+\texttt{o}(\varepsilon,{\rm B})+\texttt{o}_{\varepsilon}(\delta,{\rm B})
$$ 
and from now on we can assume that all the quantities denoted by $\texttt{o}(\varepsilon,{\rm B})$ and $\texttt{o}_{\varepsilon}(\delta,{\rm B})$ are non-negative. 
By \rif{c.4} and $ \ti{u}_{\eps} \in W^{1,q}({\rm B})$, for each $\eps >0$, the sequence $\{Du_{\eps,\delta}\}_{\delta}$ is bounded in $W^{1,q}({\rm B})$ (recall that $c_{*}$ is independent of $\delta$).  Therefore, up to not relabelled subsequences in $\delta$, for each $\eps>0$ we can assume that there exists $u_{\varepsilon}\in \ti{u}_{\varepsilon}+W^{1,q}_{0}({\rm B})$ such that $u_{\eps,\delta}\rightharpoonup u_{\varepsilon}$, weakly in $W^{1,q}({\rm B})$ as $\delta \to 0$, so that, letting $\delta \to 0$ in \rif{c.4} by weak lower semicontinuity we obtain 
\eqn{ggii2-1}
$$
\ \displaystyle   \nr{Du_{\eps}}_{L^{p}({\rm B})}^{p}
   \lesssim \, \bar{\mathcal{F}}(u,{\rm B})+\texttt{o}(\varepsilon,{\rm B}) \,.
   $$
 Note that, for every fixed  $\eps$, we are determining a different subsequence of $\delta$. 
By \rif{x.1.1}$_1$ and \rif{ggii2-1}, eventually passing to a not relabelled subsequence, we can now assume that there exists $\mathfrak{u} \in u+W^{1,p}_{0}({\rm B}) $ such that $u_{\eps}\rightharpoonup \mathfrak{u}$, weakly in $W^{1,p}({\rm B})$. 
Next, we have
\eqn{c.10}
$$
\nr{F(\cdot,Du_{\eps,\delta})-F_{\delta}(\cdot,Du_{\eps,\delta})}_{L^{1}( {\rm B})}= \texttt{o}_{\varepsilon}(\delta,{\rm B})\,.
$$
Indeed \rif{lippi00} and \rif{c.4} imply
\begin{flalign*}
\nr{F(\cdot,Du_{\eps,\delta})-F_{\delta}(\cdot,Du_{\eps,\delta})}_{L^{1}( {\rm B})}
& \lesssim  \delta^{\alpha} \left(\|Du_{\eps, \delta}\|_{L^{q}({\rm B})}^{q}+1\right)\\
& \lesssim \frac{ \delta^{\alpha} }{\sigma_\eps}\left[\bar{\mathcal{F}}(u,{\rm B})+1+\texttt{o}(\varepsilon,{\rm B})+\texttt{o}_{\varepsilon}(\delta,{\rm B})
\right] = \texttt{o}_{\varepsilon}(\delta,{\rm B})\,.
\end{flalign*}
We now show that $u=\mathfrak{u}$. 
Using standard weak lower semicontinuity theorems (see for instance \cite[Section 4.3]{giu}) we gain, for each fixed $\eps$
\begin{eqnarray*}
\mathcal{F}(u_{\varepsilon},{\rm B})&\le&\liminf_{\delta}\mathcal{F}(u_{\eps,\delta},{\rm B})\nonumber \\
&\le& \liminf_{\delta}\left(\mathcal{F}_{\eps,\delta}(u_{\eps,\delta},{\rm B})+\nr{F(\cdot,Du_{\eps,\delta})-F_{\delta}(\cdot,Du_{\eps,\delta})}_{L^{1}({\rm B})}\right)\nonumber \\
&\stackrel{\eqref{pde0},\eqref{c.10}}{\le}&\liminf_{\delta}\left(\mathcal{F}_{\eps,\delta}(\ti{u}_{\varepsilon},{\rm B})+\texttt{o}_{\varepsilon}(\delta,{\rm B})\right)\nonumber 
\end{eqnarray*}
so that \rif{implyimply} implies
$
\mathcal{F}(u_{\varepsilon},{\rm B}) \leq \bar{\mathcal{F}}(u,{\rm B})+\texttt{o}(\varepsilon,{\rm B}).
$
Recalling that $u_{\eps} \in W^{1,q}({\rm B})$ for every $\eps$, by the very definition in \rif{LSM} and again weak lower semicontinuity, we gain 
$\bar{\mathcal{F}}(\mathfrak{u},{\rm B})\leq \liminf_{\eps} \mathcal{F}(u_{\varepsilon},{\rm B}) \leq \bar{\mathcal{F}}(u,{\rm B})$. 
On the other hand the minimality of $u$ gives $\bar{\mathcal{F}}(u,{\rm B})\leq \bar{\mathcal{F}}(\mathfrak{u},{\rm B})$ so that $\bar{\mathcal{F}}(u,{\rm B})= \bar{\mathcal{F}}(\mathfrak{u},{\rm B})$. The strict convexity of the LSM-relaxation stated in Lemma \ref{bf.0} and the fact $u-\mathfrak{u} \in W^{1,p}_0(\rm B)$, now imply $u=\mathfrak{u}$.

\subsection{Proof of Theorem \ref{main3}} We use $\App{B_{\rr}}$ where $B_{\rr}\Subset \Omega$, $\rr\leq 1$ as in the statement of Theorem \ref{main3}. Applying Lemma \ref{massimino00} below to $u_{\eps,\delta}$, and using \rif{massiminobis} in conjunction with \rif{x.1.1b}, we come up with
\eqn{massimino2}
$$
\nr{u_{\eps, \delta}}_{L^{\infty}(B_{\rr})}\lesssim_{n,p,q,L} \nr{u}_{L^{\infty}(B_{\rr})}+\rr\,.
$$
Using \rif{hhhbb}, see Remark \ref{scemino} below, we gain 
\eqn{appina}
$$
\nr{Du_{\eps,\delta}}_{L^\qqq(B_{\rr/2})}^{\qqq}  \le 
c \left(\frac{\nr{u_{\eps,\delta}}_{L^{\infty}(B_{\rr})}}{\rr}+1\right)^{\mf{b}_{\qqq}}\left(\nr{Du_{\eps,\delta}}_{L^{p}(B_{\rr})}^{p}+\rr^n\right)
$$
so that, by means of \rif{c.4} and \rif{massimino2} we conclude with 
\eqn{ggii1}
$$
\nr{Du_{\eps,\delta}}_{L^\qqq(B_{\rr/2})}^{\qqq}  \le c \left(\frac{\nr{u}_{L^{\infty}(B_{\rr})}}{\rr} +1\right)^{\mf{b}_{\qqq}}\left(\bar{\mathcal{F}}(u,B_{\rr})+\rr^n+\texttt{o}(\varepsilon,B_{\rr})+\texttt{o}_{\varepsilon}(\delta,B_{\rr})\right)
$$
with $c\equiv c(n,p,q,\alpha,L,\qqq)\geq 1$ and $\mf{b}_{\qqq}\equiv \mf{b}_{\qqq}(n,p,q,\alpha,\qqq)\geq 1$. 
Letting first $\delta \to 0$ and then $\eps \to 0$ in \rif{ggii1}, and using weak lower semicontinuity, and yet  finally recalling that $u-(u)_{B_\rr}$ is still a local minimizer, we obtain \rif{m.4} and the proof of Theorem \ref{main3} is complete. 
\begin{remark}\label{scemino}{\em 
When deriving \rif{appina} we have in fact used  \rif{hhhbb} with $B_{\rr}$ replaced by $B_{\sigma \rr}$ for $\sigma <1 $, and then we have let $\sigma \to 1$. }
\end{remark}
\begin{lemma}[Maximum principle]\label{massimino00} Let $u \in W^{1,q}({\rm B})$ be such that
\eqn{massimino0}
$$
u\mapsto \min_{w\in u^*+W^{1,q}_{0}({\rm B})} \mathcal{F}(w,{\rm B}),\qquad u^*\in W^{1,q}({\rm B})\cap L^{\infty}({\rm B})\,,
$$
where ${\rm B}\subset \er^n$ is a ball and $\mathcal F$ is the functional defined in \eqref{ggg2}. Assume \eqref{assfr}$_{1}$ with $0 \leq \mu \leq 2$. Then
\eqn{massimino}
$$
\begin{cases}
\nr{u_+}_{L^\infty(\rm B)}  \leq \max\left\{\nr{u^*_+}_{L^\infty(\rm B)},  c\mu|{\rm B}|^{1/n} \right\}\vspace{2.5mm}\\ 
\nr{ (-u)_+ }_{L^\infty(\rm B)}  \leq \max\left\{\nr{(-u^*)_+}_{L^\infty(\rm B)},  c\mu|{\rm B}|^{1/n} \right\}
\end{cases}
$$
hold with $c\equiv c (n,p,q,\ti{L})$. In particular, it follows 
\eqn{massiminobis}
$$
\nr{u}_{L^\infty(\rm B)}  \leq \max\left\{\nr{u^*}_{L^\infty(\rm B)}, c\mu|{\rm B}|^{1/n} \right\}\,.
$$
\end{lemma}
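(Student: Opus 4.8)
The plan is to run the classical De~Giorgi truncation argument that underlies maximum principles for minimizers of convex functionals with $(p,q)$-growth, keeping every estimate quantitative so as to isolate the additive correction $\mu\snr{{\rm B}}^{1/n}$. It suffices to prove the first inequality in \eqref{massimino}: the second one follows by applying the same reasoning to $-u$, which minimizes $w\mapsto\int_{\rm B}F(x,-Dw)\dx$ over $(-u^{*})+W^{1,q}_{0}({\rm B})$ — an integrand still satisfying \eqref{assfr}$_{1}$ — while \eqref{massiminobis} is then immediate from $\nr{u}_{L^{\infty}({\rm B})}=\max\{\nr{u_{+}}_{L^{\infty}({\rm B})},\nr{(-u)_{+}}_{L^{\infty}({\rm B})}\}$ and $\nr{u^{*}_{\pm}}_{L^{\infty}({\rm B})}\le\nr{u^{*}}_{L^{\infty}({\rm B})}$.

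First I would set $\kk_{0}:=\nr{u^{*}_{+}}_{L^{\infty}({\rm B})}$ and, for $\kk\ge\kk_{0}$, define $A_{\kk}:=\{x\in{\rm B}\colon u(x)>\kk\}$; then I would test the minimality \eqref{massimino0} against the truncation $w:=\min\{u,\kk\}$. One has to check that $w$ is admissible, i.e. $w\in u^{*}+W^{1,q}_{0}({\rm B})$: this holds because $w-u^{*}=(u-u^{*})-(u-\kk)_{+}$, with $u-u^{*}\in W^{1,q}_{0}({\rm B})$ and $0\le(u-\kk)_{+}\le(u-u^{*})_{+}\in W^{1,q}_{0}({\rm B})$ once $\kk\ge\kk_{0}$, so that $(u-\kk)_{+}\in W^{1,q}_{0}({\rm B})$ too. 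Since $Dw=Du$ on ${\rm B}\setminus A_{\kk}$ and $Dw=0$ on $A_{\kk}$, minimality reduces to $\int_{A_{\kk}}F(x,Du)\dx\le\int_{A_{\kk}}F(x,0_{\er^{n}})\dx$, and here I would use only \eqref{assfr}$_{1}$: on the left $F(x,Du)\ge\tilde L^{-1}H_{\mu}(Du)^{p/2}\ge\tilde L^{-1}\snr{Du}^{p}$, and on the right $F(x,0_{\er^{n}})\le\tilde L(\mu^{q}+\mu^{p})\le c(p,q,\tilde L)\,\mu^{p}$ because $0\le\mu\le2$. This gives the level-set Caccioppoli inequality $\int_{A_{\kk}}\snr{Du}^{p}\dx\le c\mu^{p}\snr{A_{\kk}}$, with $c\equiv c(n,p,q,\tilde L)$.

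Next I would feed this into the standard iteration over level sets. Fix $\bar p\in(0,n)$ with $\bar p=p$ when $p<n$; when $p\ge n$ pick, say, $\bar p=n/2$ and use H\"older's inequality on the previous bound to still obtain $\int_{A_{\kk}}\snr{Du}^{\bar p}\dx\le c\mu^{\bar p}\snr{A_{\kk}}$. Since $(u-\kk)_{+}\in W^{1,\bar p}_{0}({\rm B})$ with gradient $Du\,\mathds{1}_{A_{\kk}}$, the Sobolev--Poincar\'e inequality — with $\bar p^{*}:=n\bar p/(n-\bar p)$, so that $1-\bar p/\bar p^{*}=\bar p/n$ — together with $(h-\kk)^{\bar p}\snr{A_{h}}\le\int_{\rm B}(u-\kk)_{+}^{\bar p}\dx$ and H\"older's inequality yield $(h-\kk)^{\bar p}\snr{A_{h}}\le c\mu^{\bar p}\snr{A_{\kk}}^{1+\bar p/n}$ for all $h>\kk\ge\kk_{0}$. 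As $1+\bar p/n>1$, the classical De~Giorgi--Stampacchia iteration lemma (see e.g. \cite{giu}) applied to $\kk\mapsto\snr{A_{\kk}}$ starting from $\kk_{0}$ gives $\snr{A_{\kk_{0}+d}}=0$ with $d=c\mu\snr{A_{\kk_{0}}}^{1/n}\le c\mu\snr{{\rm B}}^{1/n}$, i.e. $u\le\nr{u^{*}_{+}}_{L^{\infty}({\rm B})}+c\mu\snr{{\rm B}}^{1/n}$ a.e. in ${\rm B}$, which is \eqref{massimino}.

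I do not expect a genuine obstacle — the argument is classical — but two points call for a little care. One is the admissibility of the truncated competitor: the sandwich $0\le(u-\kk)_{+}\le(u-u^{*})_{+}$ works precisely because the iteration is started at the level $\kk_{0}=\nr{u^{*}_{+}}_{L^{\infty}({\rm B})}$. The other is keeping the De~Giorgi iteration quantitative, so that the additive correction comes out as exactly a multiple of $\mu\snr{{\rm B}}^{1/n}$; this is what forces the auxiliary exponent $\bar p<n$ when $p\ge n$, since for such $p$ one cannot iterate directly on $p$ (the Sobolev embedding of $W^{1,p}_{0}$ no longer produces a superlinear power of $\snr{A_{\kk}}$).
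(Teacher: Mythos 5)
Your proof is correct and follows essentially the same route as the paper's: both arguments test minimality against the truncation $\min\{u,\kk\}$ at levels $\kk\ge\nr{u^*_+}_{L^\infty({\rm B})}$ (the paper takes the admissibility of this competitor for granted; your sandwich argument supplies the missing detail), use only \eqref{assfr}$_1$ together with $\mu\le 2$ to get the level-set inequality $\int_{\{u>\kk\}}\snr{Du}^p\dx\le c\,\mu^p\snr{\{u>\kk\}}$ with $c$ independent of $L_*$, and then iterate on superlevel sets. The differences are only in the bookkeeping: the paper rescales to $\mathcal B_1$ and runs the geometric De Giorgi iteration $\kk_i=\kk_\infty(2-2^{-i+1})$ with the fast-convergence lemma of \cite{dibe2}, absorbing the case $p\ge n$ into the choice of the Sobolev exponent $\sigma\equiv\sigma(n,p)$, whereas you stay on ${\rm B}$, pass to an auxiliary exponent $\bar p<n$ when $p\ge n$, and invoke Stampacchia's level-set decay lemma; these devices are interchangeable here and both produce the correct scaling $\mu\snr{{\rm B}}^{1/n}$. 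One small caveat: what you obtain is the additive bound $\nr{u_+}_{L^\infty({\rm B})}\le\nr{u^*_+}_{L^\infty({\rm B})}+c\mu\snr{{\rm B}}^{1/n}$, which is not literally the max-form \eqref{massimino} with coefficient one on $\nr{u^*_+}_{L^\infty({\rm B})}$; but this is exactly on par with what the paper's own iteration delivers (there one gets $u_+\le 2\kk_\infty$ with $\kk_\infty=\max\{\nr{u^*_+}_{L^\infty(\mathcal B_1)},c\mu\}$), and either form is equivalent up to constants and amply sufficient for the only place the lemma is used, namely \eqref{massimino2}.
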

\begin{proof} This is actually a variant of the classical maximum principle. Since we did not find an explicit reference, we rapidly include a proof. Let us prove the first inequality in \rif{massimino}. Scaling as in \rif{riscalamento} we can assume  ${\rm B}\equiv \mathcal B_1$. We take $\kk_\infty \geq  \nr{u^*_+}_{L^\infty(\mathcal B_1)}$ to be determined later, define $\kk_i:= \kk_{\infty}(2-2^{-i+1})\geq \kk_{\infty}=\kk_1$ for integers $i\geq 1$ and $A_{i} := \{u >\kk_i\}$. Testing minimality with $\min\{u,\kk_{i}\}\in u^*+W^{1,q}_0(\mathcal B_1)$ we obtain 
$\mathcal{F}(u,A_{i}) \leq \mathcal{F}(\min\{u,\kk_i\},A_{i})$ that, via 
\eqref{assfr}$_{1}$, yields
\eqn{caciotta}
$$
\int_{\mathcal B_1} |D(u-\kk_{i})_+|^p \dx \leq (2^{q-p}+1)\ti{L}^2 \mu^p |A_{i}|=: c_0^p|A_{i}|\,.
$$
Note that, when $\mu =0$, \rif{caciotta} with $i=1$ and $\kk_\infty \equiv   \nr{u^*_+}_{L^\infty(\mathcal B_1)}$ implies the first inequality in \rif{massimino} just using Poincarè's inequality as $(u-\kk_{i})_+\in W^{1,p}_0(\mathcal B_1)$ for every $i$. We can therefore assume that $\mu>0$. We proceed using a global variant of De Giorgi's technique as in \cite[page 318] {dibe2}. Sobolev embedding gives 
$
\kk_{\infty}^p2^{-pi}|A_{i+1}|
\leq c_1^p\nr{D(u-\kk_i)_+}_{L^{p}(\mathcal B_1)}^p \snr{A_i}^{p\sigma }
$
where $c_1\geq 1$ and $\sigma >0$ depend on $n,p$. Using \rif{caciotta} we arrive at $\snr{A_{i+1}}\leq 2^{pi}[c_1c_0\kk_{\infty}^{-1}]^p\snr{A_{i}}^{1+p\sigma}$ for every $i$. Using \cite[Lemma 15.1]{dibe2} we have that 
$\snr{A_{1}}\leq 2^{-1/(p\sigma^2)}(c_1c_0)^{-1/\sigma}\kk_{\infty}^{1/\sigma}$ implies $\snr{A_{i}}\to 0$ as $i\to \infty$ so that $u_+\leq 2 \kk_{\infty}$. Estimating $\snr{A_1}\leq \snr{\mathcal B_1}$, this last condition is satisfied provided $\kk_{\infty}\geq 2^{1/(p\sigma)} c_1c_0 \snr{\mathcal B_1}^{\sigma}\approx \mu$ so that the first inequality in \rif{massimino} follows up to the initial scaling argument. The second inequality in \rif{massimino} follows from the first one observing that $-u$ solves \rif{massimino0} with $-u^*$ replacing $u^*$ and the integrand $F(x, -z)$, that still satisfies \eqref{assfr}$_{1}$, replacing  $F(x,z)$. 
\end{proof}
\section{$C^{0,1}$-estimates with improved exponents (after Bella \& Sch\"affner)}\label{bellasec}  Let ${\rm B}\subset \mathbb{R}^{n}$ be a ball and $F_{0}\colon\mathbb{R}^{n}\to \mathbb{R}$ be an autonomous integrand satisfying $\eqref{assfr}_{1,2,3}$ (no $x$-dependence here) with $p,q$ satisfying 
\eqn{pq0}
$$
\frac qp < 1 +  \frac{2}{n-1}\,.
$$
Note that, according to \eqref{assfr}, here $\mu$ is positive. With $\ti{u}$ being a fixed, general $W^{1,\infty}({\rm B})$-regular function, we define $v\in \ti{u}+W^{1,q}_{0}({\rm B})$ by solving the Dirichlet problem
\eqn{pd0}
$$
v\mapsto \min_{w\in \ti{u}+W^{1,q}_{0}({\rm B})} \int_{{\rm B}}F_{0}(Dw)\dx\,,
$$
that admits a unique solution by strict convexity and coercivity. Standard regularity theory yields that $v\in W^{1,\infty}_{\loc}({\rm B})\cap W^{2,2}_{\loc}({\rm B})$ and $E_{\mu}(Dv)\in W^{1,2}_{\loc}({\rm B})$; see \cite[Section 4]{BM}, \cite[Section 5.2]{piovra} and related references. Moreover, \cite[Lemma 5.1]{piovra}, see also \cite{BM},  
yields that whenever $B_{\rr}\Subset {\rm B}$ is a ball, the (renormalized) Caccioppoli type inequality
\eqn{c0}
$$
\int_{B_{\rr/2}}\snr{D(E_{\mu}(Dv)-\kk)_{+}}^{2}\dx\le \frac{c\mf{M}^{q-p}}{\rr^{2}}\int_{B_{\rr}}(E_{\mu}(Dv)-\kk)_{+}^{2}\dx\,,
$$
holds with $c\equiv c(n,p,q,\ti{L})$, whenever $\mf{M}\ge \max \{\nr{Dv}_{L^{\infty}(B_{\rr})}, 1 \}$ and $\kk\ge 0$. Recall that the function $E_{\mu}(\cdot)$ has been introduced in \rif{defiE}. We define the gap exponent $\mf{s}$ via
\eqn{sss}
$$
\mf{s}:=\begin{cases}
\displaystyle 
\, \frac{2q}{(n+1)p-(n-1)q}\qquad \mbox{if either $n\ge 4 $ or $n=2$}\vspace{3mm}\\
\displaystyle
\, \mbox{any number} > \frac{q}{2p-q}\qquad \mbox{if $n=3$ and $q>p$} \vspace{3mm}
\\ 
\displaystyle
\, 1 \qquad \mbox{if $n=3$ and $q=p$}\,.
\end{cases}
$$
Notice that $\mf{s}\geq 1$ and $\mf{s}=1$ iff $p=q$.
Recalling \eqref{pq0}, we can use \cite[Theorem 5.1]{BS1} when $n\geq 3$ and \cite[Theorem 1]{schag} when $n=2$, that yield 
\eqn{lippi}
$$
\nr{Dv}_{L^{\infty}({\rm B}/2)} \le  c\left(\mint_{{\rm B}}F_{0}(Dv)\dx+1\right)^{\mf{s}/q}
$$
with $c\equiv c(n,p,q,\ti{L})$. 
Comments on \rif{sss} are in Section \ref{proof6} below. 
Using minimality of $v$ and then \eqref{assfr}$_{1}$, the last inequality implies
\begin{flalign}
 \notag \nr{Dv}_{L^{\infty}({\rm B}/2)} & \leq  c\left(\mint_{{\rm B}}F_{0}(D\ti{u})\dx+1\right)^{\mf{s}/q}\\
 &  \leq c\left(\mint_{{\rm B}}\snr{D\ti{u}}^{q}\dx+1\right)^{\mf{s}/q}\le c\nr{D\ti{u}}_{L^{\infty}({\rm B})}^{\mf{s}}+ c \label{lip0}
\end{flalign}
again for $c\equiv c(n,p,q,\ti{L})$. It  follows that 
\eqn{lip0.0}
$$
M\ge \max\left\{\nr{D\ti{u}}_{L^{\infty}({\rm B})},1\right\}\Longrightarrow \nr{Dv}_{L^{\infty}({\rm B/2})}\leq cM^{\mf{s}}\,,
$$
with $c\equiv c(n,p,q,\ti{L})$. In particular, \eqref{lip0.0} allows to choose $\mf{M}=M^{\mf{s}}$ in \eqref{c0} thereby getting
\eqn{lip000pre}
$$
\int_{B_{\rr/2}}\snr{D(E_{\mu}(Dv)-\kk)_{+}}^{2}\dx\le \frac{cM^{\mf{s}(q-p)}}{\rr^{2}}\int_{B_{\rr}}(E_{\mu}(Dv)-\kk)_{+}^{2}\dx
$$
provided $B_{\rr}\subseteq {\rm B}/2$. 
\begin{remark}\label{discuss}{\em The reader might wonder why the case $n=3$ is special in \rif{sss}. This distinction arises from the differing nature of the proofs for \rif{lippi} in dimensions $n> 2$ and $n=2$, as established in \cite{BS1} and \cite{schag}, respectively. For $n>2$, the proof of \rif{lippi} is based on a differentiation of the Euler-Lagrange equation of the functional with a De Giorgi's type iteration in $W^{1,2}$ involving truncations of $\snr{Du}^p$ (see also \cite{BM}). Notably, Sobolev embedding theorem is employed on surfaces of balls rather than on solid balls, which allows for an improved estimate on the ratio $q/p$, effectively gaining an additional dimension compared to earlier results in the literature \cite{BM, ma2}. Indeed, in De Giorgi's iteration, applying the Sobolev embedding theorem on surfaces leads to a different critical dimension for the embedding, determined by imposing $n-1=2$ instead of $n=2$. In this case the imbedding is in a sense suboptimal as it does lead directly to $L^\infty$ but to any finite power. This results in a slight increase in the exponent $\mf{s}$ when $n=3$. However, this still enables the authors to improve  the existing bounds. Further details on such an aspect are reported in Section \ref{proof6}. In contrast, the case $n=2$, addressed in \cite{schag}, relies on a different underlying principle. Specifically, by projecting the analysis onto lines and using a simpler one-dimensional embedding argument, a corresponding improvement is achieved via maximum principle rather than De Giorgi's iteration. As an ultimate outcome, no increase in the exponent $\mf{s}$ occurs for $n=2$. We shall apply \rif{lippi} in Section \ref{proofmain} below, when proving Theorem \ref{main}, and we shall indeed treat the cases $n\not =3$ and $n=3$ separately, in Sections \ref{proof3} and \ref{proof4}, respectively. We also note that, nevertheless, the difference in the proofs of the two cases is essentially minimal}.
\end{remark}

\section{Proof of Theorem \ref{main}}\label{proofmain}
The proof goes along Sections \ref{proof1}-\ref{proof7}.  The focal point of Theorem \ref{main}, as usual in nonuniformly elliptic problems, is the Lipschitz estimate of Proposition \ref{lipt} below. This is stated for minima of uniformly elliptic integrals with standard $q$-growth and eventually made effective via the approximation $\App{\rm B}$ in Section \ref{proof7}. The key tool is the fractional Caccioppoli type inequality in Section \ref{proof2}. At this stage the span of different parameters \rif{bkm} necessary to make \rif{28} produce useful estimates is allowed by the higher integrability result of Proposition \ref{scat}. Finally, a nonlinear potential theoretic version of De Giorgi's iteration in Section \ref{proof3} completes the picture. Once a priori Lipschitz estimates are at hand, gradient H\"older continuity for some exponent $\alpha_*$ as in \rif{m.2} can be reached using the methods in \cite{piovra}. Finally, in the non-degenerate case $\mu>0$, the full exponent $\alpha$ as in \rif{m.3} is achieved appealing to Proposition \ref{piovrapert}; this is done in Section \ref{proof7}.
\subsection{Uniform Lipschitz}\label{proof1} That is
\begin{proposition}\label{lipt}
Let $u\in W^{1,q}_{\loc}(\Omega)$ be a minimizer of the functional  $\mathcal F$ in  \eqref{ggg2}, under assumptions \trif{pq} and \eqref{assfr}. Then
 \eqn{hhhlbissa}
$$
\nr{Du}_{L^{\infty}(B_{\rr/2})} \le c\left(\mint_{B_{\rr}}\snr{Du}^{p}\dx+1\right)^{\mf{b}}
$$
holds for every ball $B_{\rr}\Subset \Omega$, $\rr \leq 1$, where $c\equiv c(n,p,q,\alpha,\ti{L})\geq 1$ and $\mf{b}\equiv \mf{b}(n,p,q,\alpha)\geq 1$.
\end{proposition}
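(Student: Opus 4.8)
Under \eqref{assfr}, standard regularity \eqref{areg} already gives $u\in C^{1,\gamma_{0}}_{\loc}(\Omega)$, so the content of Proposition \ref{lipt} is purely \emph{quantitative}: the Lipschitz bound on $B_{\rr/2}$ must be controlled by $\mint_{B_\rr}\snr{Du}^p\dx$ alone, with no dependence on $\nr{Du}_{L^\infty(B_\rr)}$. The plan is to prove \eqref{hhhlbissa} by a fractional De Giorgi iteration for $w:=\snr{Du}^p$, run through the nonlinear potential Lemma \ref{revlem}. Throughout one sets $M:=\nr{Du}_{L^\infty(B_\rr)}$ --- a finite quantity by \eqref{areg} --- derives an estimate carrying powers of $M$ on the right-hand side, and reabsorbs $M$ at the very end; the sharp gap bound \eqref{pq} is exactly what makes this reabsorption work. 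The intermediate fact that $Du\in L^{\mf q}_{\loc}$ for every finite $\mf q$, in quantitative form, is Proposition \ref{scat}.

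\textbf{Step 1: renormalized fractional Caccioppoli.} Following the nonlinear Littlewood--Paley decomposition sketched in Section \ref{prevsec}, I would freeze $x\mapsto\partial_zF(x,\cdot)$ on a dyadic family of balls, compare $u$ with the minimizers $v$ of the corresponding autonomous frozen Dirichlet problems, and feed into the comparison the improved interior bounds for such $v$ recorded in Section \ref{bellasec}: the Bella--Sch\"affner Lipschitz estimate \eqref{lippi} and the renormalized Caccioppoli inequality \eqref{lip000pre} for $E_\mu(Dv)$. Summing the dyadic contributions, and reading the $\alpha$-H\"older continuity of the coefficients as fractional differentiability of the Euler--Lagrange equation \eqref{els}, this should yield, for every ball $B_\rho$ inside the reference ball and every $\kappa\ge 0$, an inequality of the type \eqref{fraziona},
$$
\rho^{2\beta-n}[(w-\kappa)_+]_{\beta,2;B_{\rho/2}}^2\;\lesssim\;M^{\textnormal{\texttt{b}}}\mint_{B_\rho}(w-\kappa)_+^2\dx\,+\,M^{\textnormal{\texttt{b}}}\,\rho^{2\sigma}\Big(\mint_{B_\rho}\snr{f}\dx\Big)^{2\vartheta}\,,
$$
with $\beta\equiv\beta(\alpha)\in(0,1)$, with the exponent $\textnormal{\texttt{b}}\equiv\textnormal{\texttt{b}}(n,p,q,\alpha)>0$ dictated by the ellipticity-ratio growth $q-p$ through \eqref{sss}, with $\sigma,\vartheta>0$ satisfying $n\vartheta>\sigma$, and with $f\in L^1_{\loc}$ a fixed function built from a power of $1+\snr{Du}$ prescribed by \eqref{assfr}. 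The cases $p\ge 2$ and $1<p<2$ are treated separately, the latter using the $V_\mu$-inequalities \eqref{Vm} in place of the cleaner $p\ge 2$ computations.

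\textbf{Steps 2--3: potential-theoretic De Giorgi and reabsorption.} Applying the fractional Sobolev embedding $W^{\beta,2}\hookrightarrow L^{2\chi}$ with $\chi:=n/(n-2\beta)>1$ on the inner cubes, Step 1 upgrades to precisely the hypothesis \eqref{revva} of Lemma \ref{revlem}, with $v\equiv w$, $M_0\approx M_*\approx M^{\textnormal{\texttt{b}}/2}$, $\kappa_0$ as in \eqref{defitante}, and the same $(\sigma,\vartheta)$. Lemma \ref{revlem} then gives, at every Lebesgue point $x_0$ of $w$,
$$
w(x_0)\;\le\;\kappa_0\,+\,c\,M_0^{\frac{\chi}{\chi-1}}\Big(\mint_{B_\rho}(w-\kappa_0)_+^2\dx\Big)^{1/2}\,+\,c\,M_0^{\frac{1}{\chi-1}}M_*\,\mathbf{P}^{\vartheta}_{\sigma}(f;x_0,2\rho)\,;
$$
since $n\vartheta>\sigma$ while $Du$ has arbitrarily high local integrability by Proposition \ref{scat}, Lemma \ref{crit} bounds $\mathbf{P}^{\vartheta}_{\sigma}(f;x_0,2\rho)$ by $\nr{f}_{L^m}^{\vartheta}$ for a suitable finite $m$, hence --- using Proposition \ref{scat} once more --- by $(\mint_{B_\rho}\snr{Du}^p\dx+1)^{\gamma_1}$ for some $\gamma_1>0$. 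Taking the supremum over Lebesgue points and over radii $t_1<t_2$ in $[\rr/2,\rr]$, the last display reads schematically $\nr{Du}_{L^\infty(B_{t_1})}^p\le\kappa_0+c(t_2-t_1)^{-\gamma_2}\nr{Du}_{L^\infty(B_{t_2})}^{\mf c}(\mint_{B_\rr}\snr{Du}^p\dx+1)^{\gamma_1}$, where $\mf c$ combines $\textnormal{\texttt{b}}$ and $\chi/(\chi-1)$. The decisive point is that \eqref{pq} --- via the explicit dependence of $\textnormal{\texttt{b}}$ on $q-p$ and of $\beta$, hence $\chi$, on $\alpha$ --- forces $\mf c<p$; Young's inequality then splits off $\tfrac12\nr{Du}_{L^\infty(B_{t_2})}^p$, and Lemma \ref{iterlem} applied to $\hhh(t):=\nr{Du}_{L^\infty(B_t)}^p$ removes it and delivers \eqref{hhhlbissa} after rescaling.

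\textbf{Main obstacle.} The crux is the sharp numerology knitting Steps 1--3 together: the renormalized Caccioppoli inequality must come with the \emph{optimal} exponent $\textnormal{\texttt{b}}$, which is exactly where the Bella--Sch\"affner improvement \eqref{lippi}--\eqref{sss} --- gaining one dimension over classical bounds such as \eqref{beep0} --- is indispensable; then $\textnormal{\texttt{b}}$, the admissible window for $\beta$ (constrained by the coefficient regularity $\alpha$), the Sobolev exponent $\chi$, and the potential-theoretic requirement $n\vartheta>\sigma$ must all be reconciled so that $\mf c<p$ holds under precisely \eqref{pq} rather than a worse gap bound. Keeping uniform control of every constant along the dyadic decomposition --- where the ``atoms'' are solutions of frozen nonlinear problems rather than Fourier blocks --- and the separate treatment of the exceptional dimension $n=3$ imposed by \eqref{sss} (cf. Remark \ref{discuss}) are the remaining delicate points, after which the passage from the a priori Lipschitz bound to the full H\"older estimates \eqref{m.2}--\eqref{m.3} follows by the perturbative arguments of \cite{piovra} and Proposition \ref{piovrapert}.
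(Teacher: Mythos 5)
Your proposal is correct and follows essentially the same route as the paper: a renormalized fractional Caccioppoli inequality for (a truncation of) a power of $\snr{Du}$ obtained by comparison with frozen autonomous minimizers and the Bella--Sch\"affner bounds of Section \ref{bellasec}, fed into the potential-theoretic De Giorgi Lemma \ref{revlem}, with the potential term controlled via Lemma \ref{crit} and Proposition \ref{scat}, and the $M$-power reabsorbed by Young's inequality and Lemma \ref{iterlem} precisely because \eqref{pq} forces the relevant exponent (your $\mf c<p$, the paper's $\mf t<1$) to be subcritical, including the separate $n=3$ discussion. The only cosmetic deviations are that the paper truncates $E_{\mu}(Du)$ rather than $\snr{Du}^p$ and runs the final iteration with $\kk_0=0$ in Lemma \ref{revlem} (your $\kappa_0$ from \eqref{defitante} plays a different role), neither of which affects the argument.
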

Before going to the proof of Proposition \ref{lipt}, let us remark that 
we can confine ourselves to prove that \rif{hhhlbissa} holds in an (only apparently) weaker form, i.e., we prove that 
 \eqn{hhhl}
$$
\nr{Du}_{L^{\infty}(B_{\rr/4})} \le c\left(\mint_{B_{2\rr}}\snr{Du}^{p}\dx+1\right)^{\mf{b}}
$$
holds for every ball $B_{2\rr}\Subset \Omega$, $\rr \leq 1$, where $c\equiv c(n,p,q,\alpha,\ti{L})\geq 1$ and $\mf{b}\equiv \mf{b}(n,p,q,\alpha)\geq 1$.
Indeed, \rif{hhhlbissa} follows from \rif{hhhl} by a standard covering argument, eventually enlarging the constant $c$, but with the same exponent $\mf{b}$. Therefore, for the rest of the proof of Proposition \ref{lipt} we concentrate on \rif{hhhl}. 

\subsection{A Fractional (renormalized) Caccioppoli inequality} \label{proof2} This is in\begin{lemma}\label{fracacci}
Let $u\in W^{1,q}_{\loc}(\Omega)$ be a minimizer of the functional  $\mathcal F$ in  \eqref{ggg2}, under assumptions \eqref{assfr} and \trif{pq0}. Then for all balls $B_{\rr}\equiv B_{\rr}(x_{0}) \Subset \Omega$, $\rr\leq 1$ 
\begin{flalign}
& \nonumber \rr^{\beta-n/2} [(E_{\mu}(Du)-\kk)_{+}]_{\beta,2;B_{\rr/2}}+ \left(\mint_{B_{\rr/2}}(E_{\mu}(Du)-\kk)_{+}^{2\chi}\dx\right)^{\frac{1}{2\chi}} \\
&\notag \qquad  \qquad \le \frac{cM^{\mf{s}(q-p)/2}}{(\tilde \alpha(\kkk, \delta)-\beta)^{1/2}}\left(\mint_{B_{\rr}}(E_{\mu}(Du)-\kk)_{+}^{2}\dx\right)^{1/2}\\ & \qquad  \qquad  \qquad \qquad  +\frac{cM^{\mf{s}(q-p)/2}}{(\tilde \alpha(\kkk, \delta)-\beta)^{1/2}}\rr^{\alpha\kkk}\left(\mint_{B_{\rr}}(\snr{Du}+1)^{\mf{p}}\dx\right)^{1/2}\label{28}
\end{flalign}
holds for every $\kk \geq 0$  provided 
\eqn{MMM}
$$M\ge \max\left\{\nr{Du}_{L^{\infty}(B_{\rr})},1\right\}\,,$$ where $\mf{s}$  is in \eqref{sss} and $\beta, \chi, \kkk, \delta,  \mf{p}$ are such that
\eqn{bkm}
$$
\begin{cases}
\displaystyle  \, \chi \equiv \chi(\beta)  :=\frac{n}{n-2\beta} \,, \quad 0 < \beta<\tilde \alpha(\kkk, \delta), \quad \tilde \alpha(\kkk, \delta):=\frac{\alpha\kkk-p\delta}{1+\alpha\kkk-p\delta}\vspace{2mm}\\
\displaystyle \,  0 <  \kkk < 1\,, \quad 0 < \delta <\alpha\kkk/p\vspace{2mm}\\
\displaystyle \, \mf{p}:=\mf{m}+p\mf{s}, \quad \mf{m}:=2+2\textnormal{\texttt{b}}_{\kkk}+\frac{n\mf{s}}{2\delta},\quad \textnormal{\texttt{b}}_{\kkk}:= \max\left\{\frac{2q-p}{2},\frac{p(q-1)}{2(p-1)},\frac{p(2-\kkk)}{2(1-\kkk)}\right\}\,.
\end{cases}
$$
The constant $c$ appearing in \eqref{28} depends on $n,p,q,\alpha,\ti{L}, \beta$. 
\end{lemma}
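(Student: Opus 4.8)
\emph{Plan.} We follow the ``nonlinear Littlewood--Paley'' scheme outlined in Section~\ref{prevsec}: on small balls we compare $u$ with the minimizer $v$ of the \emph{frozen autonomous} functional, for which Section~\ref{bellasec} --- thanks to the gap bound \eqref{pq0} --- provides the Bella--Sch\"affner Lipschitz estimate \eqref{lip0.0} and, decisively, the \emph{renormalized} Caccioppoli inequality \eqref{lip000pre} for $E_\mu(Dv)$; these frozen minimizers play the role of atoms. The H\"older continuity \eqref{assfr}$_4$ of the integrand in $x$ is then traded for fractional differentiability of $E_\mu(Du)$, the exponent $\tilde\alpha(\kkk,\delta)$ arising by balancing the radius of the comparison ball against the step $|h|$ of a finite difference. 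Note that \eqref{areg} holds by standard regularity theory, so $M$ as in \eqref{MMM} is finite and all the integrals below are meaningful.

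\emph{Comparison on an intermediate scale.} Fix $B_\rr\equiv B_\rr(x_0)\Subset\Omega$, $M$ as in \eqref{MMM}, and the parameters $\beta,\chi,\kkk,\delta,\mf p$ as in \eqref{bkm}. For $h\in\ern$ with $|h|$ suitably small (depending on $\rr$) we cover a slight enlargement of $B_{\rr/2}$ by balls $B_\ell(y_k)$ taken from a lattice \eqref{lattice} of mesh $\approx\ell$, where the intermediate scale $\ell=\ell(|h|)\gg|h|$ is chosen so that $|h|/\ell$ matches the comparison-error exponent; this forces $\ell=|h|^{1/(1+\alpha\kkk-p\delta)}$, whence $|h|/\ell=\ell^{\alpha\kkk-p\delta}=|h|^{\tilde\alpha(\kkk,\delta)}$. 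On each $B_\ell(y_k)$ let $v_k\in u+W^{1,q}_{0}(B_\ell(y_k))$ minimize $w\mapsto\int F(y_k,Dw)\dx$; by Section~\ref{bellasec}, $E_\mu(Dv_k)\in W^{1,2}_{\loc}$, $\nr{Dv_k}_{L^\infty(B_{\ell/2}(y_k))}\le cM^{\sss}$ by \eqref{lip0.0}, and \eqref{lip000pre} holds for $v_k$ with $M^{\sss}$ in place of $M$. Testing the Euler--Lagrange equations of $u$ and of $v_k$ with $u-v_k$, subtracting, using the monotonicity \eqref{1d}$_2$ on the left and the H\"older bound \eqref{assfr}$_4$ with Young's inequality on the right, then passing between $V_\mu$ and $E_\mu$ via \eqref{diffH}--\eqref{Vm}, and finally interpolating the gain $\ell^\alpha$ against higher-integrability norms that carry no small factor and are controlled through minimality --- a two-parameter step governed by $\kkk\in(0,1)$ and $\delta\in(0,\alpha\kkk/p)$ --- one obtains a comparison estimate of the form
$$
\nr{E_\mu(Du)-E_\mu(Dv_k)}_{L^2(B_{\ell/2}(y_k))}^{2}\le c\,M^{\sss(q-p)}\,\ell^{2(\alpha\kkk-p\delta)}\int_{B_\ell(y_k)}(\snr{Du}+1)^{\mf p}\dx\,.
$$
It is precisely this interpolation that pins down $\textnormal{\texttt{b}}_{\kkk}$ --- whose last entry $p(2-\kkk)/(2(1-\kkk))$ appears only for $p<2$, originating from the correction term in \eqref{Vm}$_2$ --- together with $\mf m=2+2\textnormal{\texttt{b}}_{\kkk}+\tfrac{n\sss}{2\delta}$ (the summand $\tfrac{n\sss}{2\delta}$ coming from trading the number $\approx(\rr/\ell)^n$ of lattice balls against a power of $\ell$) and $\mf p=\mf m+p\sss$.

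\emph{Finite differences and conclusion.} Put $w:=(E_\mu(Du)-\kk)_+$ and, on each $B_\ell(y_k)$, $w_k:=(E_\mu(Dv_k)-\kk)_+$, so that $\snr{w-w_k}\le\snr{E_\mu(Du)-E_\mu(Dv_k)}$. Splitting $\nr{\tau_h w}_{L^2(B_{\ell/2}(y_k))}\le\nr{\tau_h w_k}_{L^2}+2\nr{w-w_k}_{L^2(B_\ell(y_k))}$, estimating the first term by \eqref{gh} and the renormalized inequality \eqref{lip000pre} (legitimate since $\nr{Dv_k}_{L^\infty}\le cM^{\sss}$) as $\le c\,(|h|/\ell)\,M^{\sss(q-p)/2}\nr{w}_{L^2(B_\ell(y_k))}$ up to lower-order terms, the second by the comparison estimate, and then summing squares over the boundedly overlapping lattice balls, one arrives --- using $|h|/\ell=\ell^{\alpha\kkk-p\delta}=|h|^{\tilde\alpha(\kkk,\delta)}$ --- at
$$
\nr{\tau_h w}_{L^2(B_{3\rr/4})}\le c\,M^{\sss(q-p)/2}\,|h|^{\tilde\alpha(\kkk,\delta)}\left[\left(\mint_{B_\rr}w^2\dx\right)^{1/2}+\rr^{\alpha\kkk}\left(\mint_{B_\rr}(\snr{Du}+1)^{\mf p}\dx\right)^{1/2}\right]
$$
for every such $h$. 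Normalizing $w$ by the bracket and applying Lemma~\ref{l4} with $m=2$, $\tta=\tilde\alpha(\kkk,\delta)$ on $B_{\rr/2}\Subset B_{3\rr/4}$ gives the bound for $\rr^{\beta-n/2}[w]_{\beta,2;B_{\rr/2}}$ in \eqref{28}, the factor $(\tilde\alpha(\kkk,\delta)-\beta)^{-1/2}$ being exactly the one supplied by Lemma~\ref{l4}, while the residual $\nr{w}_{L^2(B_{\rr/2})}\le\nr{w}_{L^2(B_\rr)}$ is absorbed on the right (recall $M\ge1$). The fractional Sobolev embedding $W^{\beta,2}(B_{\rr/2})\hookrightarrow L^{2\chi}(B_{\rr/2})$, with $\chi=n/(n-2\beta)$ as in \eqref{bkm}, then upgrades the Gagliardo-seminorm bound to the $L^{2\chi}$ bound on the left of \eqref{28}, and the proof is complete.

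\emph{Main obstacle.} The scheme above is robust; the real difficulty is the sharp numerology. Keeping the exponent of $M$ equal to $\sss(q-p)/2$ is what forces the use of the Bella--Sch\"affner exponent $\sss$ of \eqref{sss} --- hence of the refined gap bound \eqref{pq0} --- and of the renormalized Caccioppoli \eqref{lip000pre} rather than a classical one; the choice $\ell=|h|^{1/(1+\alpha\kkk-p\delta)}$ is what makes the two pieces of the finite-difference splitting carry the common power $|h|^{\tilde\alpha(\kkk,\delta)}$; and the careful propagation of the free parameters $\kkk,\delta$ through the comparison and the summation is what produces the remainder exponent $\mf p=\mf m+p\sss$ with $\mf m$ as in \eqref{bkm}. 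The sub-quadratic case $1<p<2$ is the most delicate, because of the extra term in \eqref{Vm}$_2$ responsible for the last entry of $\textnormal{\texttt{b}}_{\kkk}$.
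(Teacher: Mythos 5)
Your blueprint is the paper's own: freeze the integrand on intermediate-scale balls, use the Bella--Sch\"affner sup bound \eqref{lip0.0} and the renormalized Caccioppoli inequality \eqref{lip000pre} for the frozen minimizers, trade the H\"older dependence \eqref{assfr}$_4$ for a fractional difference of $E_\mu(Du)$ via a $\kkk,\delta$-interpolation, cover, and conclude with Lemma \ref{l4} plus fractional Sobolev embedding. However, two steps --- precisely the delicate ones --- do not work as written. First, the intermediate scale: you take $\ell=|h|^{1/(1+\alpha\kkk-p\delta)}$ independently of $\rr$. For small $\rr$ this scale can exceed $\rr$, so the balls $B_\ell(y_k)$ (let alone their dilates needed for \eqref{lip000pre}) need not lie in $B_\rr$, or even in $\Omega$; more importantly, with that choice the total freezing gain is $\ell^{2\alpha\kkk}=|h|^{2\tilde\alpha(\kkk,\delta)}$, leaving no room for the factor $\rr^{2\alpha\kkk}$, so your comparison estimate cannot simultaneously yield the decay $|h|^{\tilde\alpha(\kkk,\delta)}$ and the prefactor $\rr^{\alpha\kkk}$ that appear in your final finite-difference display and in \eqref{28}. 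That $\rr$-dependence is not cosmetic: it is the $\rr^{\sigma}$ of Lemma \ref{revlem} with $\sigma=\alpha\kkk$ and makes the potential $\mathbf{P}^{1/2}_{\alpha\kkk}$ finite in Section \ref{proof3}. The correct scale is $\ell=\rr^{1-\beta_0}|h|^{\beta_0}$ with $\beta_0$ as in \eqref{sceltachoose}, which the paper gets automatically by first rescaling to the unit ball as in \eqref{riscalamento}--\eqref{riscalone}; then both pieces of the splitting carry the common power $(|h|/\rr)^{\tilde\alpha(\kkk,\delta)}$ and the comparison piece the extra $\rr^{\alpha\kkk}$.

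Second, the power of $M$. You invoke $\nr{Dv_k}_{L^\infty}\le cM^{\sss}$ from \eqref{lip0.0} both to run \eqref{lip000pre} (correct: this is the sole source of $M^{\sss(q-p)}$) and, implicitly, in converting the $V_\mu$-distance into the $E_\mu$-distance. In that second place the bound $\nr{Dv_k}_{L^\infty}\lesssim M^{\sss}$ forces an additional multiplicative $M^{p\sss(1-\kkk)}$ (indeed up to $M^{p\sss}$) on the comparison term, which is fatal for the lemma as stated: keeping only $M^{\sss(q-p)/2}$ in \eqref{28} is exactly what makes $\mf t<1$ in \eqref{ext}--\eqref{obs2} under the sharp bound \eqref{pq}. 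The paper avoids this by bounding $\nr{Dv}_{L^\infty}$ on the small ball through the $L^{\mf m}$-norm of the gradient on the \emph{large} ball at the price of $|h|^{-2\beta_0\delta}$ --- this is \eqref{22}, and it is there, via $\mf m\ge n\sss/(2\delta)$, that the summand $n\sss/(2\delta)$ in \eqref{bkm}$_3$ originates (not from trading the number $\approx(\rr/\ell)^n$ of lattice balls, as you assert); the resulting factor $\nr{Du}_{L^{\mf m}}^{p\sss}$ is global and is absorbed only after the covering, by H\"older, into the $\mf p$-integral (whence $\mf p=\mf m+p\sss$), cf.\ \eqref{25}. For the same reason your comparison display, which localizes this contribution to $\int_{B_\ell(y_k)}(\snr{Du}+1)^{\mf p}\dx$, is not what the step actually yields. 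With these two corrections --- rescale (or take $\ell$ depending on $\rr$ as above) and replace the sup-bound by the \eqref{22}-type bound in the conversion step --- your argument coincides with the paper's proof.
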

\begin{proof} We rescale $u$ in $\mathcal B_1$ by considering $\ti{u}\in C^{1,\gamma_{0}}(\mathcal B_{1})$ defined in \rif{riscalamento}. This minimizes the functional 
$$
W^{1,q}(\mathcal B_1)\ni w\mapsto  \int_{\mathcal B_1}\ti{F}(x,Dw)\dx
$$
with $\ti{F}(\cdot)$ as in \eqref{funr}$_2$; recall that $\ti{F}(\cdot)$ satisfies \rif{assfr}$_{1,2,3}$ and \rif{riscalone} and observe that \rif{MMM} implies
\eqn{MMManc}
$$M\ge \max\left\{\nr{D\tilde u}_{L^{\infty}(\mathcal B_1)},1\right\}\,.$$
{\em Step 1: Estimates on balls.} To proceed, fix $\beta_{0}\in (0,1)$, $h\in \mathbb{R}^{n}$ such that 
\eqn{sogliah}
$$0< \snr{h} < \frac1{2^{8/\beta_{0}}}$$ and, with $x_{\rm c}\in \mathcal B_1$ being such that $\snr{x_{\rm c}} \leq  1/2+ 2|h|^{\beta_{0}}$, we consider $B_{h}:=B_{\snr{h}^{\beta_{0}}}(x_{\rm c})$, so that  $8B_{h}\Subset \mathcal{B}_{1}$. We then define $v\equiv v(B_{h})\in W^{1,q}(8B_{h})$ as the solution to the Dirichlet problem
\begin{flalign}\label{pd}
 \ti{u}+W^{1,q}_{0}(8B_{h})\ni v\mapsto \min_{w\in  \ti{u}+W^{1,q}_{0}(8B_{h})} \int_{8B_{h}} \ti{F}_{0}(Dw)  \dx
\end{flalign}
where $ \ti{F}_{0}(z):=\ti{F}(x_{\rm c},z)$, which clearly verifies $\eqref{assfr}_{1,2,3}$ (with no $x$-dependence). 
By the assumed \eqref{pq0} the content of Section \ref{bellasec} is available so that $
v $ satisfies \eqref{lip0}-\eqref{lip000pre} with $ {\rm B}\equiv 8B_{h}$ and with the choice of $\ti{u}$ made in \rif{riscalamento}. Specifically, \eqref{lip000pre} reads as
\eqn{lip000}
$$
\int_{2B_{h}}\snr{D(E_{\mu}(Dv)-\kk)_{+}}^{2}\dx\le \frac{cM^{\mf{s}(q-p)}}{\snr{h}^{2\beta_{0}}}\int_{4B_{h}}(E_{\mu}(Dv)-\kk)_{+}^{2}\dx\,,
$$
where $c\equiv c(n,p,q,\ti{L})$. Noting that $\mf{m}\geq 2q-p+2> q$, we can further refine \eqref{lip0} via Jensen's inequality as follows:
$$
 \nr{Dv}_{L^{\infty}(4B_{h})}\stackrel{\eqref{lip0}}{\le}  c\left(\mint_{8B_{h}}\snr{D\ti{u}}^{q}\dx+1\right)^{\mf{s}/q}
\leq  c |h|^{-n\beta_{0}\mf{s}/\mf{m}}\left(\int_{8B_{h}}\snr{D\ti{u}}^{\mf{m}}\dx\right)^{\mf{s}/\mf{m}}+c$$
and therefore, as it is also $\mf{m}>n\mf{s}/(2\delta)$, we conclude with
\eqn{22}
$$
 \nr{Dv}_{L^{\infty}(4B_{h})} \leq c\snr{h}^{-2\beta_{0}\delta}\nr{D\ti{u}}_{L^{\mf{m}}(\mathcal{B}_{1})}^{\mf{s}}+c
$$
with $c\equiv c(n,p,q,\ti{L})$. 
By minimality $v$ solves the Euler-Lagrange equation
\eqn{euleri}
$$\int_{8B_{h}} \partial_{z} \ti{F}_{0}(Dv)\cdot  D\varphi  \dx=0 \qquad \mbox{for all} \ \ \varphi\in W^{1,q}_{0}(8B_{h})\,.
$$
As $\varphi\equiv \ti{u}-v\in W^{1,q}_{0}(8B_{h})$ is admissible in both \eqref{els} and \eqref{euleri}, setting 
$$\begin{cases}
\, \mathcal{V}_{\mu}:=\snr{V_{\mu}(D\ti{u})-V_{\mu}(Dv)} \vspace{1.5mm}\\
\,    H_{\mu}:=H_{\mu}(D\tilde u)=\snr{D\ti{u}}^{2}+\mu^2 \vspace{1.5mm}\\
\,  H:=H_{1}(D\tilde u)=\snr{D\ti{u}}^{2}+1\vspace{1.5mm}\\
\, \mathcal{E}_{\kk} :=\snr{(E_{\mu}(D\ti{u})-\kk)_{+}-(E_{\mu}(Dv)-\kk)_{+}}, \quad \kappa \geq 0\,,
\end{cases}$$ we have
\begin{eqnarray}
\int_{8B_{h}}\mathcal{V}_{\mu}^{2}\dx&\stackrel{\eqref{1d}_{2}}{\le}&c\int_{8B_{h}}(\partial_{z} \ti{F}_{0}(D\ti{u})-\partial_{z} \ti{F}_{0}(Dv))\cdot (D\ti{u}-Dv)\dx\nonumber \\
&\stackrel{\eqref{els},\eqref{euleri}}{=}&c\int_{8B_{h}}(\partial_{z} \ti{F}(x_{\rm c},D\ti{u})-\partial_{z} \ti{F}(x,D\ti{u}))\cdot (D\ti{u}-Dv)\dx\nonumber \\
&\stackrel{\eqref{riscalone}}{\le}&c\rr^{\alpha}\snr{h}^{\alpha\beta_{0}}\int_{8B_{h}}(H^{(q-1)/2}_{\mu}+H^{(p-1)/2}_{\mu})\snr{D\ti{u}-Dv}\dx:=\mbox{(I)}.\label{abacab}
\end{eqnarray}
If $p\ge 2$, by Young's inequality we get
\begin{eqnarray*} 
\mbox{(I)}&=&c\rr^{\alpha}\snr{h}^{\alpha\beta_{0}}\int_{8B_{h}}(H^{(q-1)/2}_{\mu}+H^{(p-1)/2}_{\mu})(\snr{D\ti{u}}^{2}+\snr{Dv}^{2}+\mu^{2})^{\pm (p-2)/4}\snr{D\ti{u}-Dv}\dx\nonumber \\
&\stackrel{\eqref{Vm}_{1}}{\le}&\frac{1}{2}\int_{8B_{h}}\mathcal{V}_{\mu}^{2}\dx+c\rr^{2\alpha}\snr{h}^{2\alpha\beta_{0}}\int_{8B_{h}}(H^{q-1}_{\mu}+H^{p-1}_{\mu})(\snr{D\ti{u}}^{2}+\snr{Dv}^{2}+\mu^{2})^{1-p/2}\dx\nonumber \\
&\stackrel{p\ge 2}{\le}&\frac{1}{2}\int_{8B_{h}}\mathcal{V}_{\mu}^{2}\dx+c\rr^{2\alpha}\snr{h}^{2\alpha\beta_{0}}\int_{8B_{h}}(H^{q-1}_{\mu}+H^{p-1}_{\mu})H_{\mu}^{1-p/2}\dx\nonumber \\
&\le&\frac{1}{2}\int_{8B_{h}}\mathcal{V}_{\mu}^{2}\dx+c\rr^{2\alpha}\snr{h}^{2\alpha\beta_{0}}\int_{8B_{h}}H^{\frac{2q-p}{2}}\dx\,.
\end{eqnarray*}
When $1<p<2$ we instead have
\begin{eqnarray*}
\mbox{(I)}&\leq &c\rr^{\alpha}\snr{h}^{\alpha\beta_{0}}\int_{8B_{h}}H^{(q-1)/2}\snr{D\ti{u}-Dv}\dx\\
&\stackrel{\eqref{Vm}_{2}}{\le}&c\rr^{\alpha}\snr{h}^{\alpha\beta_{0}}\int_{8B_{h}}H^{(q-1)/2}\mathcal{V}_{\mu}^{2/p}\dx+c\rr^{\alpha}\snr{h}^{\alpha\beta_{0}}\int_{8B_{h}}H^{q/2-p/4}\mathcal{V}_{\mu}\dx\nonumber \\
&\le&\frac{1}{2}\int_{8B_{h}}\mathcal{V}_{\mu}^{2}\dx+c\rr^{\frac{p\alpha}{p-1}}\snr{h}^{\frac{p\alpha\beta_{0}}{p-1}}\int_{8B_{h}}H^{\frac{p(q-1)}{2(p-1)}}\dx\nonumber +c\rr^{2\alpha}\snr{h}^{2\alpha\beta_{0}}\int_{8B_{h}} H^{\frac{2q-p}{2}}\dx\nonumber\\
&\stackrel{1<p<2}{\le}&\frac{1}{2}\int_{8B_{h}}\mathcal{V}_{\mu}^{2}\dx+c\rr^{2\alpha}\snr{h}^{2\alpha\beta_{0}}\int_{8B_{h}} H^{\frac{p(q-1)}{2(p-1)}}\dx\,.
\end{eqnarray*}
In both the last two displays it is $c\equiv c(n,p,q,\ti{L})$; using their content in \rif{abacab} and reabsorbing the $\mathcal{V}_{\mu}^{2}$-term we obtain, in the full range $p>1$
\begin{flalign}\label{23}
\int_{8B_{h}}\mathcal{V}_{\mu}^{2}\dx\le c\rr^{2\alpha}\snr{h}^{2\alpha\beta_{0}}\int_{8B_{h}}\left(H^{\frac{2q-p}{2}}+H^{\frac{p(q-1)}{2(p-1)}}\right)\dx=:c\rr^{2\alpha}\snr{h}^{2\alpha\beta_{0}}\textnormal{\texttt{H}}\,,
\end{flalign}
again with $c\equiv c(n,p,q,\ti{L})$. Using \rif{diffH}, we have that 
\eqn{24}
$$
\mathcal{E}_{\kk}^{2}  \leq \snr{E_{\mu}(D\ti{u})-E_{\mu}(Dv)}^{2}\lesssim_{p}(\snr{D\ti{u}}^{2}+\snr{Dv}^{2}+\mu^{2})^{p/2}\mathcal{V}_{\mu}^{2}
$$
holds for every $\kk\ge 0$ (recall that $t \mapsto (t-\kk)_+$ is $1$-Lipschitz) and then, with $\kkk \in (0,1)$ as in \rif{bkm}, and via repeated use of \rif{elmy0}, we estimate as follows:
\begin{eqnarray*}
&& \int_{4B_{h}}\mathcal{E}_{\kk}^{2}\dx\\ &&\qquad\stackrel{\eqref{24}}{\le}c\int_{4B_{h}}H^{p/2}\mathcal{V}_{\mu}^{2(1-\kkk)}\mathcal{V}_{\mu}^{2\kkk}\dx+c\nr{Dv}_{L^{\infty}(4B_{h})}^{p}\int_{4B_{h}}\mathcal{V}_{\mu}^{2}\dx\nonumber \\
&& \qquad\stackrel{\eqref{22}}{\le}c\int_{4B_{h}}H^{p/2}\left(\snr{V_{\mu}(D\ti{u})}^{2(1-\kkk)}+\snr{V_{\mu}(Dv)}^{2(1-\kkk)}\right)\mathcal{V}_{\mu}^{2\kkk}\dx\nonumber \\
&&\qquad\qquad+c\left(\snr{h}^{-2\beta_{0}p\delta }\nr{D\ti{u}}_{L^{\mf{m}}(\mathcal{B}_{1})}^{p\mf{s}}+1\right)\int_{4B_{h}}\mathcal{V}_{\mu}^{2}\dx\nonumber \\
&&\qquad\stackrel{\eqref{23}}{\le}c\int_{4B_{h}} H^{p(2-\kkk)/2}\mathcal{V}_{\mu}^{2\kkk}\dx+c\left(\nr{Dv}_{L^{\infty}(4B_{h})}^{p(1-\kkk)}+1\right)\int_{4B_{h}}H^{p/2}\mathcal{V}_{\mu}^{2\kkk}\dx\nonumber \\
&&\qquad\qquad+c\rr^{2\alpha}\snr{h}^{2\beta_{0}(\alpha-p\delta)}\left(\nr{D\ti{u}}_{L^{\mf{m}}(\mathcal{B}_{1})}^{p\mf{s}}+1\right)\textnormal{\texttt{H}}\nonumber \\
&&\qquad\stackrel{\eqref{22}}{\le}c\left(\int_{4B_{h}}H^{\frac{p(2-\kkk)}{2(1-\kkk)}}\dx\right)^{1-\kkk}\left(\int_{4B_{h}}\mathcal{V}_{\mu}^{2}\dx\right)^{\kkk}\nonumber \\
&&\qquad\qquad+c\snr{h}^{-2\beta_{0}p(1-\kkk)\delta}\left(\nr{D\ti{u}}_{L^{\mf{m}}(\mathcal{B}_{1})}^{p\mf{s}(1-\kkk)}+1\right)\left(\int_{4B_{h}}H^{\frac{p}{2(1-\kkk)}}\dx\right)^{1-\kkk} \left(\int_{4B_{h}}\mathcal{V}_{\mu}^{2}\dx\right)^{\kkk}\nonumber \\
&&\qquad\qquad+c\rr^{2\alpha}\snr{h}^{2\beta_{0}(\alpha-p\delta)}\left(\nr{D\ti{u}}_{L^{\mf{m}}(\mathcal{B}_{1})}^{p\mf{s}}+1\right)\textnormal{\texttt{H}}\nonumber \\
&&\qquad\stackrel{\eqref{23}}{\le}c\rr^{2\alpha\kkk}\snr{h}^{2\alpha\beta_{0} \kkk}\left(\int_{4B_{h}}H^{\textnormal{\texttt{b}}_{\kkk}}\dx\right)^{1-\kkk}\textnormal{\texttt{H}}^{\kkk}\nonumber\\
&&\qquad\qquad+c\rr^{2\alpha\kkk}\snr{h}^{2\beta_{0}[\alpha\kkk-p(1-\kkk)\delta]}\left(\nr{D\ti{u}}_{L^{\mf{m}}(\mathcal{B}_{1})}^{p\mf{s}(1-\kkk)}+1\right)\left(\int_{4B_{h}}H^{\textnormal{\texttt{b}}_{\kkk}}\dx\right)^{1-\kkk}\textnormal{\texttt{H}}^{\kkk}\nonumber \\
&&\qquad\qquad+c\rr^{2\alpha}\snr{h}^{2\beta_{0}(\alpha-p\delta)}\left(\nr{D\ti{u}}_{L^{\mf{m}}(\mathcal{B}_{1})}^{p\mf{s}}+1\right)\textnormal{\texttt{H}}\,.
\end{eqnarray*}
We have used that $H\geq 1$. Recalling the definition of $\textnormal{\texttt{H}}$ in \rif{23}, and that of $\texttt{b}_{\kkk}$ in \rif{bkm}, observe that 
$$
\textnormal{\texttt{H}}\leq 2\int_{8B_{h}}H^{\textnormal{\texttt{b}}_{\kkk}}\dx\Longrightarrow
\left(\int_{4B_{h}}H^{\textnormal{\texttt{b}}_{\kkk}}\dx\right)^{1-\kkk}\textnormal{\texttt{H}}^{\kkk}\leq 2\int_{8B_{h}}H^{\textnormal{\texttt{b}}_{\kkk}}\dx
$$
 and therefore we conclude with 
\eqn{25}
$$\int_{4B_{h}}\mathcal{E}_{\kk}^{2}\dx\le c\snr{h}^{2\beta_{0}(\alpha\kkk-p\delta)}\left(\nr{D\ti{u}}_{L^{\mf{m}}(\mathcal{B}_{1})}^{p\mf{s}}+1\right)\rr^{2\alpha\kkk}\int_{8B_{h}}H^{\textnormal{\texttt{b}}_{\kkk}}\dx\,,
$$
where $c\equiv c(n,p,q,\ti{L})$. Note that for any $f\in L^{2}(2B_{h})$, since $|h|\leq |h|^{\beta_{0}}$ implies
\eqn{biglie}
$$
h + B_h \equiv h+ B_{\snr{h}^{\beta_{0}}}(x_{\rm c}) \subset  B_{\snr{h}^{\beta_{0}}+|h|}(x_{\rm c})\subset B_{2\snr{h}^{\beta_{0}}}(x_{\rm c})\equiv  2 B_{h}\,,
$$
then, recalling \rif{gh} it follows that
\eqn{motley}
$$
\begin{cases}
\, \displaystyle \nr{f(\cdot+h)}_{L^2(B_{h})} \leq \nr{f}_{L^2(2B_{h})}, \ \  \nr{\tau_h f}_{L^2(B_{h})}  \leq 2  \nr{f}_{L^2(2B_{h})}\vspace{1.2mm}\\
\, \displaystyle \nr{\tau_h f}_{L^2(B_{h})} \leq |h|  \nr{Df}_{L^2(2B_{h})}\,.
\end{cases}
$$
Then we have 
\begin{eqnarray*}
&&\int_{B_{h}}\snr{\tau_{h}(E_{\mu}(D\ti{u})-\kk)_{+}}^{2}\dx\\
&& \qquad \ \, \,   \le c\int_{B_{h}}\snr{\tau_{h}(E_{\mu}(Dv)-\kk)_{+}}^{2}\dx \\
&& \qquad \qquad +c \int_{B_{h}}\snr{\tau_{h}((E_{\mu}(D\ti{u})-\kk)_{+}-(E_{\mu}(Dv)-\kk)_{+})}^{2}\dx\nonumber \\
&& \qquad \stackleq{motley} c \int_{B_{h}}\snr{\tau_{h}(E_{\mu}(Dv)-\kk)_{+}}^{2}\dx+c\int_{2B_{h}}\mathcal{E}_{\kk}^{2}\dx\nonumber \\
&&\qquad \stackleq{motley}c \snr{h}^{2}\int_{2B_{h}}\snr{D(E_{\mu}(Dv)-\kk)_{+}}^{2}\dx+c\int_{2B_{h}}\mathcal{E}_{\kk}^{2}\dx\nonumber \\
&&\qquad \stackrel{\eqref{25}}{\le}c\snr{h}^{2}\int_{2B_{h}}\snr{D(E_{\mu}(Dv)-\kk)_{+}}^{2}\dx\nonumber \\
&&\qquad \qquad +c\snr{h}^{2\beta_{0}(\alpha\kkk-p\delta)}\left(\nr{D\ti{u}}_{L^{\mf{m}}(\mathcal{B}_{1})}^{p\mf{s}}+1\right)\rr^{2\alpha\kkk}\int_{8B_{h}}H^{\textnormal{\texttt{b}}_{\kkk}}\dx\nonumber \\
&&\qquad \stackrel{\eqref{lip000}}{\le}c\snr{h}^{2(1-\beta_{0})}M^{\mf{s}(q-p)}\int_{4B_{h}}(E_{\mu}(Dv)-\kk)_{+}^{2}\dx\nonumber \\
&&\qquad \qquad +c\snr{h}^{2\beta_{0}(\alpha\kkk-p\delta)}\left(\nr{D\ti{u}}_{L^{\mf{m}}(\mathcal{B}_{1})}^{p\mf{s}}+1\right)\rr^{2\alpha\kkk}\int_{8B_{h}}H^{\textnormal{\texttt{b}}_{\kkk}}\dx\nonumber \\
&&\qquad \ \, \,   \le c\snr{h}^{2(1-\beta_{0})}M^{\mf{s}(q-p)}\int_{4B_{h}}(E_{\mu}(D\ti{u})-\kk)_{+}^{2}\dx\\ 
&& \qquad\qquad +c\snr{h}^{2(1-\beta_{0})}M^{\mf{s}(q-p)}\int_{4B_{h}}\mathcal{E}_{\kk}^{2}\dx\nonumber \\
&&\qquad \qquad +c\snr{h}^{2\beta_{0}(\alpha\kkk-p\delta)}\left(\nr{D\ti{u}}_{L^{\mf{m}}(\mathcal{B}_{1})}^{p\mf{s}}+1\right)\rr^{2\alpha\kkk}\int_{8B_{h}}H^{\textnormal{\texttt{b}}_{\kkk}}\dx\nonumber \\
&&\qquad \stackrel{\eqref{25}}{\le} c\snr{h}^{2(1-\beta_{0})}M^{\mf{s}(q-p)}\int_{8B_{h}}(E_{\mu}(D\ti{u})-\kk)_{+}^{2}\nonumber \\
&&\qquad \qquad +c\snr{h}^{2\beta_{0}(\alpha\kkk-p\delta)}M^{\mf{s}(q-p)}\left(\nr{D\ti{u}}_{L^{\mf{m}}(\mathcal{B}_{1})}^{p\mf{s}}+1\right) \rr^{2\alpha\kkk}\int_{8B_{h}}H^{\textnormal{\texttt{b}}_{\kkk}}\dx,
\end{eqnarray*}
for $c\equiv c(n,p,q,\ti{L})$. We optimize the choice of $\beta_{0}\in (0,1)$ by choosing
\eqn{sceltachoose}
$$
1-\beta_{0}=\beta_{0}(\alpha\kkk-p\delta) \Longleftrightarrow \beta_{0}=\frac{1}{1+\alpha\kkk-p\delta}
$$
so that we finally arrive at
\begin{flalign}\label{26.1}
\int_{B_{h}}\snr{\tau_{h}(E_{\mu}(D\ti{u})-\kk)_{+}}^{2}\dx&\le c\snr{h}^{2\tilde \alpha(\kkk, \delta)}M^{\mf{s}(q-p)}\int_{8B_{h}}(E_{\mu}(D\ti{u})-\kk)_{+}^{2}\dx \nonumber \\
&\quad +c\snr{h}^{2\tilde \alpha(\kkk, \delta)}M^{\mf{s}(q-p)}\left(\nr{D\ti{u}}_{L^{\mf{m}}(\mathcal{B}_{1})}^{p\mf{s}}+1\right)\rr^{2\alpha\kkk}\int_{8B_{h}}H^{\textnormal{\texttt{b}}_{\kkk}}\dx\,,
\end{flalign}
where it is again $c\equiv c(n,p,q,\ti{L})$ and $\tilde \alpha(\kkk, \delta)$ is defined in \rif{bkm}. The choice in \rif{sceltachoose} eventually determines the upper bound for $\snr{h}$ as dictated by \rif{sogliah}.  

{\em Step 2: Covering and conclusion}. We glue estimates \eqref{26.1} via a covering argument.  
To this aim, we take a standard lattice $\mathcal L_{\snr{h}^{\beta_{0}}/\sqrt{n}}$ of open, disjoint hypercubes $$\{Q_{\snr{h}^{\beta_{0}}/\sqrt{n}}(y)\}_{y\in (2\snr{h}^{\beta_{0}}/\sqrt{n})\mathbb Z^n}$$ as in \rif{lattice}. From this lattice we select those hypercubes centred at points $$\{x_{\gamma}\}_{\gamma\le \mathfrak{n}}\subset (2\snr{h}^{\beta_{0}}/\sqrt{n})\mathbb Z^n\,, \quad \mbox{such that} \ \snr{x_\gamma} \leq  1/2+ 2|h|^{\beta_{0}}$$  (with $\mathfrak n\approx_{n} \snr{h}^{-n\beta_{0}}$ being the number of such points) and we determine the corresponding family $\{Q(\gamma)\equiv Q_{\snr{h}^{\beta_{0}}/\sqrt{n}}(x_\gamma)\}_{\gamma \leq  \mathfrak{n} }$. This family covers $\mathcal B_{1/2}$ up to a zero measure set (the sides of the cubes). Indeed note that if $\snr{x} >  1/2+ 2|h|^{\beta_{0}}$, then $Q_{\snr{h}^{\beta_{0}}/\sqrt{n}}(x) \cap \mathcal B_{1/2}=\emptyset$ as in fact $B_{\snr{h}^{\beta_0}}(x)\cap \mathcal B_{1/2}=\emptyset$ and $Q_{\snr{h}^{\beta_{0}}/\sqrt{n}}(x)  \subset B_{\snr{h}^{\beta_0}}(x)$.  All in all, we have 
\begin{flalign}\label{11.1}
\left| \  \mathcal B_{1/2}\setminus \bigcup_{\gamma\le \mathfrak{n}}Q(\gamma) \ \right|=0,\quad Q(\gamma_{1})\cap Q(\gamma_{2})=\emptyset \ \Longleftrightarrow \ \gamma_{1}\not =\gamma_{2}\,.
\end{flalign}
The family $\{Q(\gamma)\}_{\gamma \leq  \mathfrak{n} }$ can be realized as the family of inner cubes of the family of  balls  $\{B(\gamma)\equiv  B_{\snr{h}^{\beta_{0}}}(x_{\gamma}) \}_{\gamma\le \mathfrak{n}}$, i.e., $Q(\gamma)\equiv Q_{\textnormal{inn}}(B(\gamma))$ in the sense of \rif{inner} for every $\gamma \leq \mathfrak n$, and each ball $B(\gamma)$ is of the type considered in \rif{26.1}. By construction $
8B(\gamma)\Subset \mathcal B_1$ holds for every $\gamma$, and each of the dilated 
balls $8B(\gamma)$ intersects the similarly dilated ones fewer than $\mathfrak{c}_n$ times, which is a number depending only on $n$ (uniform finite intersection property). This can easily be checked considering the corresponding family of outer hypercubes $\{Q_{\snr{h}^{\beta_{0}}}(x_\gamma)\}_{\gamma \leq  \mathfrak{n} }$ and observing that it has the same property  and that $B(\gamma)\subset Q_{\snr{h}^{\beta_{0}}}(x_\gamma)$. The above properties imply that 
\eqn{sommamis}
$$
\sum_{\gamma\le \mf{n}}\lambda(8B(\gamma)) \leq \mathfrak{c}_n
\lambda(\mathcal B_{1})
$$
holds for every Borel measure $\lambda(\cdot)$ defined on $\mathcal B_{1}$. Needless to say, by \rif{11.1} also $\{B(\gamma)\}_{\gamma \leq  \mathfrak{n} }$ is a (measure) covering of $ \mathcal B_{1/2}$, i.e., 
\eqn{11.11}
$$
\left| \  \mathcal B_{1/2}\setminus \bigcup_{\gamma\le \mathfrak{n}}B(\gamma) \ \right|=0\,.
$$ 
We then rewrite estimate \eqref{26.1} on balls $B_{h}\equiv B(\gamma)$, and sum up over $\gamma$, in order to get
\begin{eqnarray*}
&& \int_{\mathcal B_{1/2}}\snr{\tau_{h}(E_{\mu}(D\ti{u})-\kk)_{+}}^{2}\dx\\
&&\qquad \stackrel{\eqref{11.11}}{\le}\sum_{\gamma\le \mf{n}}\int_{B(\gamma)}\snr{\tau_{h}(E_{\mu}(D\ti{u})-\kk)_{+}}^{2}\dx\nonumber \\
&& \qquad \stackrel{\eqref{26.1}}{\le} c\snr{h}^{2\tilde \alpha(\kkk, \delta)}M^{\mf{s}(q-p)}\sum_{\gamma\le \mf{n}}\int_{8B(\gamma)}(E_{\mu}(D\ti{u})-\kk)_{+}^{2}\dx\nonumber \\
&&\qquad \qquad \ \ +c\snr{h}^{2\tilde \alpha(\kkk, \delta)}M^{\mf{s}(q-p)}\left(\nr{D\ti{u}}_{L^{\mf{m}}(\mathcal{B}_{1})}^{p\mf{s}}+1\right) \rr^{2\alpha\kkk}\sum_{\gamma\le \mf{n}}\int_{8B(\gamma)}H^{\textnormal{\texttt{b}}_{\kkk}}\dx\nonumber \\
&&\qquad \stackrel{\eqref{sommamis}}{\le}c\snr{h}^{2\tilde \alpha(\kkk, \delta)}M^{\mf{s}(q-p)}\int_{\mathcal{B}_{1}}(E_{\mu}(D\ti{u})-\kk)_{+}^{2}\dx\nonumber \\
&&\qquad \qquad \ \ +c\snr{h}^{2\tilde \alpha(\kkk, \delta)}M^{\mf{s}(q-p)}\left(\nr{D\ti{u}}_{L^{\mf{m}}(\mathcal{B}_{1})}^{p\mf{s}}+1\right)\rr^{2\alpha\kkk}\int_{\mathcal{B}_{1}}H^{\textnormal{\texttt{b}}_{\kkk}}\dx\,.
\end{eqnarray*}
Note that \eqref{bkm} yields
\begin{flalign*}
\nr{D\ti{u}}_{L^{\mf{m}}(\mathcal{B}_{1})}^{p\mf{s}}\int_{\mathcal{B}_{1}}H^{\textnormal{\texttt{b}}_{\kkk}}\dx \leq c\left(\int_{\mathcal{B}_{1}}H^{(\mf{m}+p\mf{s})/2}\dx\right)^{\frac{p\mf{s}}{\mf{m}+p\mf{s}}} \int_{\mathcal{B}_{1}}H^{\mf{m}/2}\dx 
\leq  c\int_{\mathcal{B}_{1}}H^{\mf{p}/2}\dx
\end{flalign*}
and here $c\equiv c (n)$. 
Combining the content of the last two displays yields
\begin{flalign}
\notag \int_{\mathcal B_{1/2}}\snr{\tau_{h}(E_{\mu}(D\ti{u})-\kk)_{+}}^{2}\dx & \leq \bar{c}\snr{h}^{2\tilde \alpha(\kkk, \delta)}M^{\mf{s}(q-p)}\int_{\mathcal{B}_{1}}(E_{\mu}(D\ti{u})-\kk)_{+}^{2}\dx \nonumber \\
&\qquad +\bar{c}\snr{h}^{2\tilde \alpha(\kkk, \delta)}M^{\mf{s}(q-p)}\rr^{2\alpha\kkk}\int_{\mathcal{B}_{1}}H^{\mf{p}/2}\dx \label{26.2}
\end{flalign}
 and $\bar{c}\equiv \bar{c}(n,p,q,\ti{L})\geq 1$. This holds whenever $h \in \er^n$ satisfies \rif{sogliah} and $\beta_{0}$ is defined in \rif{sceltachoose},and in particular whenever $h$ satisfies 
 $$
 \snr{h} < \frac 1{2^{16}}  \left(\, \leq \frac{1}{2^{8(1+\alpha\kkk-p\delta)}} \right)\,.
 $$
Inequality \rif{26.2} implies \rif{cru1} with $m\equiv 2$, $s\equiv \tilde \alpha(\kkk, \delta)$ and $w \equiv (E_{\mu}(D\ti{u})-\kk)_{+}/\mathcal H_\eps$, where 
$$
\mathcal H_\eps \equiv \sqrt{\bar{c}}M^{\mf{s}(q-p)/2}\left(\nr{(E_{\mu}(D\ti{u})-\kk)_{+}}_{L^{2}(\mathcal{B}_{1})}+\rr^{\alpha\kkk}\nr{H^{\mf{p}/2}}_{L^{1}(\mathcal{B}_{1})}^{1/2}\right)+\eps\,,
$$
for every $\eps\in (0,1)$. Applying Lemma \ref{l4} we obtain $$(E_{\mu}(D\ti{u})-\kk)_{+}\in W^{\beta,2}(\mathcal  B_{1/2}) \quad \mbox{whenever $0< \beta <\tilde \alpha(\kkk, \delta)$}\,.$$
Scaling back \rif{cru2} from $w$ to $(E_{\mu}(D\ti{u})-\kk)_{+}$, and eventually letting $\eps \to 0$, we conclude with 
\begin{flalign}
\notag \nr{(E_{\mu}(D\ti{u})-\kk)_{+}}_{W^{\beta,2}(\mathcal B_{1/2})}&\leq \frac{c(M^{\mf{s}(q-p)/2}+1)}{(\tilde \alpha(\kkk, \delta)-\beta)^{1/2}}\nr{(E_{\mu}(D\ti{u})-\kk)_{+}}_{L^{2}(\mathcal{B}_{1})}\nonumber \\
&\quad +\frac{cM^{\mf{s}(q-p)/2}}{(\tilde \alpha(\kkk, \delta)-\beta)^{1/2}}\rr^{\alpha\kkk}\nr{\snr{D\ti{u}}+1}_{L^{\mf{p}}(\mathcal{B}_{1})}^{\mf{p}/2}\,,\label{fracci1}
\end{flalign}
where $\chi$ and $\beta$ obey \rif{bkm} and $c\equiv c(n,p,q,\alpha,\ti{L})$. 
We recall that if $w\in W^{\beta,2}(\mathcal B_{1/2})$ with $\beta \in (0,1)$, then fractional Sobolev embedding holds in the form
$$ 
\| w \|_{L^{\frac{2n}{n-2\beta}}(\mathcal B_{1/2})}\lesssim_{n,\beta}\| w \|_{W^{\beta,2}(\mathcal B_{1/2})}\,,
$$
see for instance \cite[Section 6]{guide}. This and \rif{fracci1} then provide
\begin{flalign}
\notag \nr{(E_{\mu}(D\ti{u})-\kk)_{+}}_{L^{2\chi}(
\mathcal B_{1/2})}&\leq \frac{c(M^{\mf{s}(q-p)/2}+1)}{(\tilde \alpha(\kkk, \delta)-\beta)^{1/2}}\nr{(E_{\mu}(D\ti{u})-\kk)_{+}}_{L^{2}(\mathcal{B}_{1})}\nonumber \\
&\quad +\frac{cM^{\mf{s}(q-p)/2}}{(\tilde \alpha(\kkk, \delta)-\beta)^{1/2}}\rr^{\alpha\kkk}\nr{\snr{D\ti{u}}+1}_{L^{\mf{p}}(\mathcal{B}_{1})}^{\mf{p}/2},\label{fracci2}
\end{flalign}
where $\chi$ and $\beta$ obey \rif{bkm}$_1$ and $c\equiv c(n,p,q,\alpha,\ti{L},\beta)$. Summing \rif{fracci1} and \rif{fracci2}, recalling that $M\geq 1$ so that $M^{\mf{s}(q-p)/2}+1 \leq 2M^{\mf{s}(q-p)/2}$, and finally scaling back to $u$, we obtain \eqref{28} and the proof of Lemma \ref{fracacci} is complete. \end{proof}
\begin{remark}[Tracking parameters]\label{sceltine} {\em All conditions in \rif{bkm} can be fulfilled as follows, starting from $\kkk, \delta$ and $\beta$. Once $\kkk \in (0,1)$ is fixed, we can choose $\delta$ in the range $0 < \delta <\alpha\kkk/p$, and finally, this allows to choose $\beta$ in the range $0<\beta<\tilde \alpha(\kkk, \delta)$. The (large) exponent $\ppp$ is then determined after $\kkk$ and $\delta$ have been fixed, via the intermediate parameters $\mf{m}, \textnormal{\texttt{b}}_{\kkk}$.}
\end{remark}

\subsection{Proof of \rif{hhhl} when $n \geq 4$ or $n= 2$}\label{proof3} There is no loss of generality in assuming 
\eqn{recalla}
$$0< \alpha <1\,.$$ Indeed, \rif{pq} involves an open bound on $q/p$, so that the case $\alpha=1$ is automatically implied  by the one in \rif{recalla} with $\alpha$ sufficiently close to $1$ (i.e., if $q/p< 1+ 1/n$, then we can find $\alpha <1$ such that  \rif{pq} holds). In this respect also note that, since all the a priori estimates we are going to derive are on balls $B_{\rr}$ with $\rr \leq 2$, then  \rif{assfr}$_4$ for $\alpha=1$ implies the same condition (with $\ti{L}$ replaced by $4^{1-\alpha}\ti{L}$) for any $\alpha<1$.  

We start applying Lemma \ref{fracacci} for general parameters $\kkk, \delta$ and $\beta$ (and therefore $\ppp$) as in \rif{bkm} to be chosen later;  see also Remark  \ref{sceltine}. Note that there is no loss of generality in assuming $\nr{Du}_{L^{\infty}(B_{\rr/4})}\ge 1$, otherwise  \rif{hhhl} follows trivially. We take a ball $B_{2\rr}\Subset \Omega$ as in \rif{hhhl}, and concentric balls $B_{\rr/4}\Subset  B_{\tau_{1}}\Subset   B_{\tau_{2}}\Subset B_{\rr/2}$. We set $r_{0}:=(\tau_{2}-\tau_{1})/8$ so that, for any arbitrary point $x_{0}\in B_{\tau_1}$, it follows $B_{2r_{0}}(x_{0})\Subset B_{\tau_{2}}$. By means of \eqref{28}, that we use with $M:= \nr{Du}_{L^{\infty}(B_{\tau_{2}})}\ge 1$, we are able to apply Lemma \ref{revlem} on $B_{r_{0}}(x_{0})$ with $\kk_{0}=0$, $M_{0}\equiv M_{*}:= M^{\mf{s}(q-p)/2}$, $f:= (\snr{Du}+1)^{\mf{p}}$, $\sigma:= \alpha\kkk$, $\vartheta:= 1/2$, $\chi:=\chi(\beta)>1$ to deduce that
\begin{flalign}\label{29}
\notag    E_{\mu}(Du(x_{0}))&\le c\nr{Du}_{L^{\infty}(B_{\tau_{2}})}^{\mf{t}p}\left(\mint_{B_{r_{0}}(x_{0})}E_{\mu}(Du)^{2}\dx\right)^{1/2}\\
    & \quad +c\nr{Du}_{L^{\infty}( B_{\tau_{2}})}^{\mf{t}p}\mathbf{P}^{1/2}_{\alpha\kkk}\left((\snr{Du}+1)^\mf{p};x_{0},2r_{0}\right),
\end{flalign}
where $c\equiv c(n,p,q,\alpha,\ti{L}, \kkk, \delta, \beta)$ and 
\eqn{ext} 
$$
\mf{t}:=\frac{\mf{s}}{2}\frac{\chi(\beta)}{\chi(\beta)-1}\left(\frac qp-1\right)=\frac{\sss n}{4\beta}\left(\frac qp-1\right)\,.
$$
As $x_{0}\in B_{\tau_{1}}$ is arbitrary, noting that $\snr{Du}^p \leq p \snr{E_{\mu}(Du)} + \mu^p$, a few standard manipulations lead to 
\begin{flalign}\label{30.10}
\nr{Du}_{L^{\infty}(B_{\tau_{1}})}&\le\frac{c\nr{Du}_{L^{\infty}(B_{\tau_{2}})}^{\mf{t}}}{(\tau_{2}-\tau_{1})^{\frac{n}{2p}}}\left(\int_{B_{\rr/2}}(\snr{Du}^2+1)^{p}\dx\right)^{\frac{1}{2p}}\nonumber \\
&\quad +c\nr{Du}_{L^{\infty}(B_{\tau_{2}})}^{\mf{t}}\left\|\mathbf{P}^{1/2}_{\alpha\kkk}\left((\snr{Du}+1)^\mf{p};\cdot,2r_{0}\right)\right\|_{L^{\infty}(B_{\tau_{1}})}^{1/p}+c\,,
\end{flalign}
with $c\equiv c(n,p,q,\alpha,\ti{L}, \kkk,\delta, \beta)$. Note that, in order to apply Lemma \ref{revlem}, we have used that all points are Lebesgue points of $E_{\mu}(Du)$ by \eqref{areg}. 
Since $n>2\alpha\kkk$, Lemma \ref{crit} applies and gives
\eqn{30.2}
$$
\left\|\mathbf{P}^{1/2}_{\alpha\kkk}\left((\snr{Du}+1)^\mf{p};\cdot,2r_{0}\right)\right\|_{L^{\infty}(B_{\tau_{1}})}\le c\nr{\snr{Du}+1}_{L^{\mf{q}}(B_{\rr/2})}^{\mf{p}/2}\qquad \mbox{for all} \ \ \mf{q}>\frac{n\mf{p}}{2\alpha\kkk}\,,
$$
with $c\equiv c(n,\alpha,\kkk, \mf{q})$, and therefore from now on we fix 
\eqn{ququ0}
$$\mf{q}\equiv \mf{q}(m,p,q,\alpha,\delta, \kkk):=\frac{n\mf{p}}{\alpha\kkk}\,. $$ 
Merging \rif{30.10} and \rif{30.2}, we arrive at
\begin{flalign}\label{30.1}
\nr{Du}_{L^{\infty}(B_{\tau_{1}})}&\le\frac{c\nr{Du}_{L^{\infty}(B_{\tau_{2}})}^{\mf{t}}}{(\tau_{2}-\tau_{1})^{\frac{n}{2p}}}\left(\int_{B_{\rr/2}}(\snr{Du}^2+1)^{p}\dx\right)^{\frac{1}{2p}}\nonumber \\
&\quad +c\nr{Du}_{L^{\infty}(B_{\tau_{2}})}^{\mf{t}}\nr{\snr{Du}+1}_{L^{\mf{q}}(B_{\rr/2})}^{\frac{\mf{p}}{2p}}+c
\end{flalign} 
with $c\equiv c(n,p,q,\alpha,\ti{L}, \kkk,\delta, \beta)$. The constant $c$ in \rif{30.1} tends to infinity as $\beta\to \tilde \alpha(\kkk, \delta)$, this behaviour originally coming from the factor $1/(\tilde \alpha(\kkk, \delta)-\beta)^{1/2}$ appearing in \rif{28}. Keeping Remark \ref{sceltine} in mind, we now choose the parameters $\kkk, \delta$ and $\beta$. By the very definition of $\tilde \alpha(\kkk, \delta)$ in \rif{bkm} we have 
\eqn{limiti}
$$ \lim_{(\kkk,\delta) \to (1,0)} \tilde \alpha(\kkk, \delta) = \frac{\alpha}{1+\alpha} \stackrel{\rif{recalla}}{>} \frac \alpha 2\,.
$$
We can therefore find 
 \eqn{cappone}
 $$\kkk, \delta\equiv \kkk, \delta(p, \alpha)\in (0,1) \quad \mbox{such that} \quad  
 \frac{\alpha}{2 \tilde \alpha(\kkk, \delta)}<1\,.$$ 
This allows to find $\beta\equiv \beta(p, \alpha)<\tilde  \alpha(\kkk, \delta)$ as in \rif{bkm}$_1$, close enough to $\tilde  \alpha(\kkk, \delta)$, in order to meet
\eqn{bbb}
$$
\frac{\alpha}{2\beta}<1\,.
$$
As also noted in Remark \ref{sceltine}, the choice of $\kkk$ and $\delta$ also determines the numbers 
\eqn{pupupu}
$$\mf{p}, \qqq\equiv \mf{p}, \qqq(n,p,q,\alpha)$$
 from \rif{bkm}$_3$ and \rif{ququ0}. Such choices finally fix the constant $c\equiv c(n,p,q,\alpha,\ti{L})$ in \rif{30.1}. Observe that up to \rif{pupupu} this proof works in the full range $n\geq 2$. Next, recalling \rif{sss} and \rif{ext}, we rewrite $\mf{t}$ as
\eqn{ttt}
$$
\mf{t}=\frac{q/p}{n+1-(n-1)q/p}\cdot \frac{n(q/p-1)}{2\beta}\,.$$
Observe that
\eqn{obs1}
$$
\eqref{pq} \ \Longrightarrow \frac qp<1+\frac{1}{n} \ \Longleftrightarrow \frac{q/p}{n+1-(n-1)q/p}<1
$$
and
\eqn{obs2}
$$
\ \frac{n(q/p-1)}{2\beta}\stackrel{\eqref{pq}}{<}\frac{\alpha}{2\beta}\stackrel{\eqref{bbb}}{<} 1\,.
$$
This means that $\mf{t}<1$, so we can use Young's inequality in \eqref{30.1} concluding with
\begin{flalign}
\notag \nr{Du}_{L^{\infty}(B_{\tau_{1}})} &\le\frac{1}{2}\nr{Du}_{L^{\infty}(B_{\tau_{2}})}
+\frac{c}{(\tau_{2}-\tau_{1})^{\frac{n}{2p(1-\mf{t})}}}\left(\int_{B_{\rr/2}}(\snr{Du}^2+1)^{p}\dx\right)^{\frac{1}{2p(1-\mf{t})}}\\& \qquad  +c\nr{Du}_{L^{\mf{q}}(B_{\rr/2})}^{\frac{\mf{p}}{2p(1-\mf{t})}}+c\label{comedopo}
\end{flalign}
where $c\equiv c(n,p,q,\alpha,\ti{L})$ (note that Young's inequality is actually not needed when $q=p$, as then $\mf{t}=0$). Lemma \ref{iterlem} applied with $r_1\equiv \rr/4$, $r_2\equiv \rr/2$ and $\hhh(\tau)\equiv \nr{Du}_{L^{\infty}(B_{\tau})}$, which is always finite thanks to \rif{areg}, now yields 
\eqn{after}
$$
\nr{Du}_{L^{\infty}(B_{\rr/4})}\le c\left(\mint_{B_{\rr/2}}\snr{Du}^{2p}\dx\right)^{\frac{1}{2p(1-\mf{t})}}+c\nr{Du}_{L^{\mf{q}}(B_{\rr/2})}^{\frac{\mf{p}}{2p(1-\mf{t})}}+c\,, 
$$
with $c\equiv c(n,p,q,\alpha,\tilde{L})$, where $\mf{p}, \qqq$ in \rif{pupupu} and $\mf{t}$ in \rif{ext} are all functions of $n,p,q,\alpha$. The right-hand side in the above display can be then estimated using Proposition \ref{scat}, eventually leading to \rif{hhhl} for a suitable (large) exponent $\mf{b}\equiv \mf{b}(n,p,q,\alpha)$, after a few standard manipulations. 
\subsection{Proof of \rif{hhhl} when $n=3$}\label{proof4} The proof is essentially the same of Section \ref{proof3} and it is exactly the same when $q=p$. Indeed, when $n=3$ and $q>p$, recalling \rif{sss} and \rif{ext}, we just have to replace \rif{ttt} by
\eqn{ttteps}
$$
\mf{t}=\left(\frac{q/p}{n+1-(n-1)q/p}+\eps\right)\cdot \frac{n(q/p-1)}{2\beta}=
\left(\frac{q/p}{4-2q/p}+\eps\right)\cdot \frac{3(q/p-1)}{2\beta}$$
for every $\eps > 0$. As  
$$\frac{q/p}{4-2q/p} <1 \Longleftrightarrow \frac qp<1+\frac 1n=\frac 43\,,$$ 
we can pick $\eps>0$ such that
$$
\frac{q/p}{4-2q/p}+\eps<1
$$
so that, as in \rif{obs1}-\rif{obs2}, again we conclude with $\mf{t}<1$ and the rest goes as for the previous case.

\subsection{Two-steps improvement of the gap bounds}\label{proof6} Here we comment on the basic ingredients allowing to upgrade the previously known bound \rif{vecchiobound} from \cite{piovra} to the sharp one \rif{pq}. The first, and actually most important step is to improve \rif{vecchiobound} in something of the form
\eqn{vecchiobound2}
$$
\frac qp   <1+ s \frac{\alpha}{n}\,, \qquad \quad  s \in (0,1)
$$
thereby getting a linear rather than quadratic decay of the gap bound in terms of $\alpha/n$. This the {\em structural} improvement. The crucial ingredient in order to achieve \rif{vecchiobound2} is the higher integrability result of Proposition \ref{scat}. Once this result is secured, in order to arrive at \rif{pq} we build on the methods of \cite{piovra}, that necessitate a different use of certain a priori estimates for minima of autonomous functionals;  see, in particular, the treatment to get \rif{25}.  Anyway, reaching the sharp bound in \rif{pq} still requires a last ingredient, namely, \rif{lippi} with the specific exponent $\mf{s}$ in \rif{sss}. This is the {\em fine} improvement. Estimate \rif{lippi} is due to Bella \& Schäffner \cite{BS01, BS0, BS1, schag} and improves a classical one of Marcellini  \cite{BM, piovra, ma2, masurvey}; moreover, it requires the gap bound in \rif{pq0}, which is weaker than the one required by Marcellini. Specifically, Bella \& Schäffner's results allow to use the exponent $\mf{s}$ in \rif{sss}, while the exponent previously used in the literature was the slightly larger
\eqn{sssbis} 
$$
\mf{s}_{\textnormal{old}}\equiv \begin{cases}
\displaystyle 
\, \frac{2q}{(n+2)p-nq}\quad \mbox{if} \ \ n\ge 3\vspace{3mm}  \\
\displaystyle
\, \mbox{any number} > \frac{q}{2p-q}\qquad \mbox{if $n=2$ and $q>p$}
\vspace{3mm}\\ 
\displaystyle
\, 1 \qquad \mbox{if $n=2$ and $q=p$}\,.
\end{cases} 
$$
Employing $\mf{s}_{\textnormal{old}}$ instead of the one in \rif{sss} would lead to 
$
q/p   < 1+  \alpha/(n+1),
$
which is still of the type in \rif{vecchiobound2}, but not yet the sharp one in \rif{pq}.  
\subsection{Proof of Theorem \ref{main} completed}\label{proof7} 
\hfill \break 

{\em Proof of \eqref{m.1} and of the $\mathcal F$-minimality of $u$.} We employ $\App{B_{\rr}}$ from Section \ref{appisec}. Using \rif{hhhlbissa} (in a manner similar to that described in Remark \ref{scemino}) in conjunction to \rif{c.4}, we obtain
\eqn{ggii1-dopo}
$$
\nr{Du_{\eps,\delta}}_{L^{\infty}(B_{\rr/2})} \le c \rr^{-n\mf{b}}\left[\bar{\mathcal{F}}(u,B_{\rr})+\texttt{o}(\varepsilon,B_{\rr})+\texttt{o}_{\varepsilon}(\delta,B_{\rr})\right]^{\mf{b}}+c\,,
$$ where $c\equiv c(n,p,q,\alpha,L)\geq 1$, $\mf{b}\equiv \mf{b}(n,p,q,\alpha)\geq 1$, so that \rif{m.1} follows letting first $\delta \to 0$ and then $\eps \to 0$ in \rif{ggii1-dopo} \footnote{The full, standard argument is indeed as follows. Note that \rif{ggii1-dopo} trivially implies 
$$
\left(\mint_{B_{\rr/2}}\snr{Du_{\eps,\delta}}^{\qqq}\dx\right)^{1/\qqq} \le c \rr^{-n\mf{b}}\left[\bar{\mathcal{F}}(u,B_{\rr})+\texttt{o}(\varepsilon,B_{\rr})+\texttt{o}_{\varepsilon}(\delta,B_{\rr})\right]^{\mf{b}}+c\,,
$$
for every $\qqq>1$. Then we use weak lower semicontinuity of convex functionals to let first $\delta \to 0$ and then $\eps \to 0$. We finally let $\qqq\to \infty$ in the resulting inequality, and this yields \rif{m.1}.}. A standard covering argument then implies that $Du \in L^{\infty}_{\loc}(\Omega;\er^n)$. Let now ${\rm B}\Subset \Omega$ be a ball and $\varphi \in C^{\infty}_0({\rm B})$. For $t \in \er$ define the function $g(t):=\mathcal F(u+t\varphi,{\rm B} )$, which is differentiable at zero and finite by $Du \in L^{\infty}({\rm B};\er^n)$. Lemma \ref{bf.0} and the $\bar{\mathcal F}$-minimality of $u$ imply $g(0) =\mathcal F(u,{\rm B}) = \bar{\mathcal F}(u,{\rm B})\leq  \bar{\mathcal F}(u+t\varphi,{\rm B} )=\mathcal F(u+t\varphi,{\rm B} )=g(t)$. It follows $g'(0)=0$, that is, the Euler-Lagrange equation $\diver\, \partial_z F(x, Du)=0$ of the original functional $\mathcal F$ is satisfied in ${\rm B}\Subset \Omega$. Moreover, again $Du \in L^{\infty}({\rm B};\er^n)$ and a standard density argument imply that 
\eqn{elel3}
$$
\int_{\rm B}\partial_z F(x, Du)\cdot D\varphi\dx =0
$$
actually holds whenever $\varphi \in W^{1,1}_0({\rm B})$. Convexity of $z  \mapsto F(\cdot, z)$ then implies that $u$ is a minimizer of $\mathcal F$ in the sense of Definition \ref{defi-min}. Indeed  
$$
\mathcal F(w,{\rm B}) \geq \mathcal F(u,{\rm B})+  \int_{B}\partial_z F(x, Du)\cdot (Dw-Du)\dx \stackrel{\eqref{elel3}}{=}  \mathcal F(u,{\rm B})
$$
holds whenever $w \in u+W^{1,1}_0({\rm B})$\,. 

\vspace{2mm}

{\em Proof of \eqref{m.2}.} We take $B_{r}\equiv B_{r}(x_{\rm c})$, with  $x_{\rm c} \in B_{\theta \rr}$, and $0< r= (1-\theta)\rr/4$, so that $B_{r} \Subset B_{\rr}$. We use $\App{B_r}$ and, similarly to \rif{ggii1-dopo}, we get
\begin{flalign}
\notag\nr{Du_{\eps, \delta}}_{L^{\infty}(B_{\tau})}&\le cr^{-n\mf{b}}\left[\bar{\mathcal{F}}(u,B_{\rr})+1\right]^{\mf{b}}+ cr^{-n\mf{b}}[\texttt{o}(\varepsilon,B_{r}) + \texttt{o}_{\eps}(\delta,B_{r})]\label{stim1bis}
\\ & =:M+ cr^{-n\mf{b}}[\texttt{o}(\varepsilon,B_{r}) + \texttt{o}_{\eps}(\delta,B_{r})]\,,
\end{flalign}
where 
$B_{\tau}(x_{\rm c})\Subset B_{r}(x_{\rm c})$ are concentric, and $\tau \leq r/2$; note here that the minimizers $u_{\eps, \delta}$ are only defined on $B_{r}(x_{\rm c})$. 
We have used that $A \mapsto \bar{\mathcal{F}}(u,A)$ is non-decreasing set function in order to estimate $\bar{\mathcal{F}}(u,B_{r})\leq \bar{\mathcal{F}}(u,B_{\rr})$. Estimate \rif{stim1bis} is the precise analogue of \cite[(6.62)]{piovra} with $f\equiv \hhh\equiv 0$, with $B_{\rr}\equiv \Omega$ and $B_{\theta \rr}\equiv \Omega_0$ in the language of \cite{piovra}. From now on we can proceed until the end of \cite[Section 6.4]{piovra}. Specifically, also recalling \cite[Section 7.3]{piovra} and arguing as in \cite[pp. 1174-1175]{piovra}, we obtain the following analogue of \cite[(6.71)]{piovra}:
$$
 \mint_{B_s(x_{\rm c})}|Du-(Du)_{B_s(x_{\rm c})}|^p   \dx
\leq  c  s^{p\alpha_*} \,, \quad \mbox{for every $s \leq ((1-\theta)\rr/16)^{1+p/n}$}\,,$$
where $c, \alpha_*$ are as in \rif{m.2}. Being $x_{\rm c}\in B_{\theta\rr}$ arbitrary, the above inequality implies \rif{m.2} by the standard Campanato-Meyers integral characterization of H\"older continuity. Just note that, once the a priori estimate \rif{stim1bis} is available, in order to apply the arguments of \cite[Section 6.4]{piovra} and of \cite[Section 7.3]{piovra} it is sufficient to assume $q/p< 1+2/n$, which is implied by \rif{pq}, and that serves to gain\cite[(6.65)]{piovra} (which is in fact implied by \rif{lip0.0} and \rif{stim1bis} here). The more restrictive bounds on $q/p$ otherwise assumed in \cite{piovra} are not needed in \cite[Section 6.4]{piovra} once \rif{stim1bis} is available, as derived here by different means from those in \cite{piovra}.

\vspace{2mm}
{\em Proof of \eqref{m.3}.} 
With $\bar \theta := (1+\theta)/2\in (\theta, 1)$, estimates \rif{m.1}-\rif{m.2} imply $
\|Du\|_{L^{\infty}(B_{\bar \theta \rr})} +[Du]_{0, \alpha_*;B_{\bar \theta \rr}}  \leq M,$ 
where $M, \alpha_*$ depend on $n,p,q,\alpha,L, \theta, \rr$ and $ \bar{\mathcal{F}}(u,B_{\rr})$. We can now apply Proposition \ref{piovrapert} to $\diver\, \partial_z F(x, Du)=0$, with $\Omega \equiv B_{\bar \theta \rr}$ and $\Omega_0\equiv B_{\theta \rr}$, thereby concluding with \rif{m.3}, with the described dependence on the constants.

\begin{remark}\label{remarkfinale}{\em In the proof of \rif{m.3} we can replace the assumption of continuity of $\partial_{zz}F(\cdot)$ made in Theorem \ref{main}, with a weaker one, that is, exactly as in Proposition \ref{piovrapert}, we can assume that for every $H>0$ there exists a modulus of continuity $ \omega_{H}(\cdot)$ such that
\rif{modulus} holds with $A(\cdot)\equiv \partial_zF(\cdot)$ for every $x\in \Omega$. We also note that the proof of the $\mathcal F$-minimality of relaxed minimizers rests of the validity of the Euler-Lagrange equation \rif{elel3}, specifically for every test function $\varphi \in W^{1,1}_0(B)$. This result adds up to a large literature devoted to the validity of the Euler-Lagrange equation under non-standard growth conditions, see for instance \cite{menita} and related references.}
\end{remark} 
\begin{remark}\label{bere}{\em The Lavrentiev phenomenon and condition \rif{pq} are deeply intertwined, the connection passing through the LSM-relaxation from Definition \ref{LSMdef}. This approach, pioneered in \cite{ma0, madg, ma1}, is nowadays standard. Indeed, consider the so-called double phase functional \cite{CM}
\eqn{ildoppio}
$$
w \mapsto \mathcal D(w,  \mathcal B_1):=\int_{ \mathcal B_1} \left(\snr{Dw}^p+\aax(x)\snr{Dw}^q\right)\dx \,, \qquad 0\leq \aax(\cdot) \in C^{0, \alpha}(\Omega)\;.
$$ 
This is still uniformly elliptic since its ellipticity ratio is uniformly bounded and its integrand only satisfies a milder form of nonuniform ellipticity described in \cite[Section 2.2.4]{cimemin} and originally introduced in \cite{ciccio}. If $q/p\leq 1+\alpha/n$, then $ \mathcal D$ coincides with its LSM-relaxation $ \mathcal D=\tilde{\mathcal D}$ as commented in connection to \rif{implica-2}. Moreover, such a condition on $q/p$ guarantees the local H\"older gradient continuity of minima of $\mathcal D$ \cite{BCM, CM}. On the other hand, the existence of bounded, yet discontinuous minimizers that do not belong to $W^{1,q}_{\loc}$ it is shown in \cite{sharp,FMM} provided $p<n<n+\alpha <q$. This last condition always implies $q/p>1+\alpha/n$. Simultaneously, the Lavrentiev phenomenon \rif{l.0} appears. This also shows the optimality of Theorem \ref{main3} when $p\geq 2$. Note also that in the case $q/p >1+\alpha/n$ the LSM-relaxation $ \tilde{\mathcal D}$ stops being a variational integral and features a representation via potentially infinite singular measures \cite[Section 7]{MiMu} that makes it very different from the kind of functionals that we are considering here. Similar representations involving singular measures occur in the vectorial setting in the context of cavitation \cite{BFM, FMa}; see \cite{ABF} for the convex case. It is exactly the occurrence of singular parts in the measure representation to make the LSM-relaxation of a functional particularly suitable when modelling problems featuring singular solutions, as for instance in cavitation \cite{ma0, madg}. Of course, this is only possible when conditions such as \rif{pq} are violated. The borderline case of the condition \rif{pq}, namely $q/p=1+\alpha/n$, is not covered by Theorem \ref{main} and deserves a few words. Missing limit cases of the gap bounds is a typical outcome when establishing Hölder continuity of solutions to problems involving nonuniform ellipticity, as seen in \cite{BS0, BS01, BS1, ma1, ma2, masurvey}. This is essentially due to the level of generality of the problems considered. In certain special situations, additional structure properties allow to catch the borderline case. This is for instance the case of the functional $\mathcal D$ in \rif{ildoppio}, being ultimately a consequence of the fact that $\mathcal D$ is uniformly  
elliptic and this allows for some additional higher integrability in Gehring's style, which is otherwise unachievable in general cases as for instance considered in Theorem \ref{main}}. 
\end{remark}

\end{document}